\definecolor{dkblue}{RGB}{30,90,140} 
\definecolor{mydarkbluett}{RGB}{12,111,174}
\newcommand\reallywidehat[1]{%
\savestack{\tmpbox}{\stretchto{%
  \scaleto{%
    \scalerel*[\widthof{\ensuremath{#1}}]{\kern-.6pt\bigwedge\kern-.6pt}%
    {\rule[-\textheight/2]{1ex}{\textheight}}
  }{\textheight}%
}{0.5ex}}%
\stackon[1pt]{#1}{\tmpbox}%
}
\theoremstyle{plain}
\newtheorem{teor}{Theorem}
\newtheorem{obs}[teor]{Remark}
\newtheorem{prop}[teor]{Proposition}
\newtheorem{coro}[teor]{Corollary}
\newtheorem{lema}[teor]{Lemma}
\newtheorem{defi}[teor]{Definition}
\newtheorem{asu}[teor]{Assumption}
\numberwithin{equation}{section}
\numberwithin{teor}{section}
\newcommand{\norma}[1]{{\left\vert\kern-0.25ex\left\vert\kern-0.25ex\left\vert #1 
    \right\vert\kern-0.25ex\right\vert\kern-0.25ex\right\vert}}
\def\R{\mathbb{R}}
\def\N{\mathbb{N}_0}
\def\T{\mathbb{T}}
\def\Z{\mathbb{Z}}
\def\z{\mathcal{Z}}
\def\fou{e^{i(r-\zeta)\cdot\xi}}
\def\foul{e^{ir\cdot\xi}}
\def\p{\text{Op}}
\def\f{\frac{1}{(2\pi)^2}}
\def\dive{\text{div}\,}
\newcommand{\norm}[1]{\left\lVert#1\right\rVert} 
\DeclarePairedDelimiter\abs{\lvert}{\rvert}%
\begin{document}

	\title[The 3D MHS equation with Grad-Rubin boundary conditions]{The three dimensional magneto-hydrostatic equations with Grad-Rubin boundary value}
	\author[D. Alonso-Or\'an]{Diego Alonso-Or\'an}
	\address{Departamento de An\'{a}lisis Matem\'{a}tico and Instituto de Matem\'aticas y Aplicaciones (IMAULL), Universidad de La Laguna, C/Astrof\'{i}sico Francisco S\'{a}nchez s/n, 38271, La Laguna, Spain.}
	\email{\href{mailto:dalonsoo@ull.edu.es}{dalonsoo@ull.edu.es}}
 
	\author[D.S\'anchez-Sim\'on del Pino]{Daniel S\'anchez-Sim\'on del Pino}
 \address{University of Bonn, Institute for Applied Mathematics Endenicher Allee 60, D-53115, Bonn, Germany.}
	\email{\href{mailto:sanchez@iam.uni-bonn.de}{sanchez@iam.uni-bonn.de}}
	
 \author[J. J.L. Vel\'azquez]{Juan J.L. Vel\'azquez }
	\address{University of Bonn, Institute for Applied Mathematics Endenicher Allee 60, D-53115, Bonn, Germany.}
	\email{\href{mailto:velazquez@iam.uni-bonn.de}{velazquez@iam.uni-bonn.de}}

	\setcounter{tocdepth}{1}
	\begin{abstract}
    In this work, we study the well-posedness of the three dimensional magneto-hydrostatic equation under Grad-Rubin boundary value conditions. The proof relies on a fixed point argument to construct solutions to an elliptic-hyperbolic problem in a perturbative regime by means of pseudo-differential operators with symbols with limited regularity in H\"older spaces. As a byproduct, the employed technique in this work is more flexible and simplifies the arguments of the proof in \cite{Alo-Velaz-2022} for the two-dimensional setting.
        \end{abstract}
	\maketitle
	\tableofcontents
	\section{Introduction and prior results}\label{section1:intro}
Plasma is an ionized gas consisting of freely moving positively charged ions, electrons and neutrals. It is by far the most common phase of ordinary matter present in the Universe. Plasma processes can be described mathematically by the so called magnetohydrodynamic equations (MHD). These equations, first  proposed by H. Alfv\'{e}n,  describe electrically conducting fluids in which a magnetic field is present. The applications of plasma physics are many and are divided in two main areas: laboratory thermonuclear fusion and astrophysical plasmas. An important class of equations in magnetohydrodynamics are the so called magneto-hydrostatic (MHS) equations which describe steady magnetic fields with trivial flow and are given by
\begin{equation}\label{mhs1}
    \left\{
\begin{array}{ll}
    j\times B=\nabla p & \text{in }\Omega,\\
    \nabla\times B=j &\text{in }\Omega, \\
    \nabla\cdot B=0 &\text{in }\Omega
\end{array}
    \right.
\end{equation}
where $B$ denotes the magnetic field of a perfectly conducting plasma, $j=\nabla\times B$ is the current density and $p$ is the plasma pressure on a suitable three-dimensional smooth bounded domain $\Omega$.
 
Equations \eqref{mhs1} are particularly relevant to understand coronal field structures, stellar winds as well as in the study of equilibrium configurations in plasma confinement fusion \cite{Goedbloed-Poedts-2010,Goedbloed-Poedts-2010-2}. It should be noted that these equations also describe stationary solutions to the 3D Euler equations in fluid mechanics, with $B$ playing the role of the velocity field $v$ of the fluid, $j$ the role of the vorticity and $p$ being the negative of the Bernoulli function. Indeed, using the vectorial identity $j\times B= B\cdot\nabla B -\frac{1}{2}\nabla (|B|^{2})$, we have that equations \eqref{mhs1} read
\begin{equation}\label{mhs1:eul}
    \left\{
\begin{array}{ll}
    B\cdot\nabla B=\nabla (p+\frac{1}{2}|B|^{2}), & \text{in }\Omega,\\
    \nabla\cdot B=0, &\text{in }\Omega,
\end{array}
    \right.
\end{equation}
which is equivalent to the steady 3D Euler equation for incompressible fluids after the identification of variable $B\leftrightarrow v$ and $p+\frac{1}{2}|B|^{2}\leftrightarrow -p$. Subsequently, we will elaborate on the boundary conditions considered for the MHS equations \eqref{mhs1}. From a mathematical perspective, systems \eqref{mhs1} and \eqref{mhs1:eul} are identical, allowing us to apply analogous methods for the 3D stationary incompressible Euler equations \eqref{mhs1:eul}.  

In this manuscript, our focus relies on examining particular boundary value problems associated with \eqref{mhs1} where components of the magnetic field are prescribed in different parts of the boundary $\partial\Omega$. The study of boundary value problems for the magneto-hydrostatic equations (and also steady incompressible Euler equations) can be traced back at least to the seminal work of H. Grad and H. Rubin, cf.~\cite{Grad-Rubin-1958}. In this paper, the authors presented several meaningful boundary value problems related to the magneto-hydrostatic equations \eqref{mhs1} in two dimensional and three dimensional cases. As it is pointed out in \cite{Grad-Rubin-1958}, the structure of the equations \eqref{mhs1} suggest that the natural domains $\Omega$ to solve boundary value problems have the property that
$\partial\Omega=\partial\Omega_{+}\cup \partial\Omega_{-}$ where
\begin{equation}\label{domain:decom}
\partial\Omega_{-}=\{ x\in\partial\Omega: (B\cdot n)(x)<0\}, \quad \partial\Omega_{+}=\{ x\in\partial\Omega: (B\cdot n)(x)>0\}.
\end{equation}
Here $n$ denotes the outer normal to $\partial\Omega$. In \cite{Grad-Rubin-1958}, the authors proposed to prescribe in addition to the normal component of the magnetic field $B$ in the whole $\partial\Omega$ one of the following additional boundary conditions
\begin{align}
& p \ \mbox{given on } \partial\Omega_{-} , \quad p  \mbox{ given on }  \partial\Omega_{+}, \label{first:problem} \\
&  p \ \mbox{given on } \partial\Omega_{-} , \quad j\cdot n  \mbox{ given on }  \partial\Omega_{-}, \label{second:problem} \\
&  p \ \mbox{given on } \partial\Omega_{-} , \quad j\cdot n  \mbox{ given on }  \partial\Omega_{+}, \\
 &B_{\tau}=B-(B\cdot n)n \ \mbox{given on } \partial\Omega_{-}. \label{our:problem}
\end{align}
Notice that $B\cdot n$ has to be chosen in such a way that \eqref{domain:decom} holds. The role of the boundaries $\partial\Omega_{-},\partial\Omega_{+}$ can be exchanged by using a transformation $B \leftrightarrow -B$. The rigorous construction of solutions to the previous boundary value problems was not addressed in \cite{Grad-Rubin-1958}. 

To the best of our knowledge, solutions to boundary value problems for \eqref{mhs1} were first constructed by D. Lortz in \cite{Lortz-1970}, although the boundary conditions used in that article are different from \eqref{first:problem}-\eqref{our:problem}. Besides Lortz approach, there are two main techniques to construct solutions to \eqref{mhs1}, namely, the Grad-Shafranov method \cite{Grad-1967,Safranov-1966} and the vorticity transport method introduced by Alber \cite{Alber-1992}. The core principle of the Grad-Shafranov method is to transform the steady Euler or the MHS equations into an elliptic equation for which a plethora of techniques are available. The Grad-Shafranov approach is restricted to two-dimensional problems or 3D problems with some additional symmetry.  See for instance, \cite{Alo-Velaz-2021, CDG-2020,CDG-2020-2,CLV,Hamel-Nadirashvili-2017}. Recently, this technique has been also used to tackle the question of controllability of the full magneto-hydrodynamic equations, cf. \cite{KukaNovVic22}.

On the other hand, in \cite{Alber-1992}, Alber introduced an alternative method to derive solutions with non-vanishing vorticity for the 3D steady incompressible Euler equation. Essentially, Alber constructed solutions wherein the velocity field  $v=v_{0}+V$ is decomposed as a base flow and a suitable small perturbation.  The boundary value problem for the Euler equations is reduced to a fixed point problem for a function $V$ combining the fact that the vorticity satisfies a suitable transport equation and that the velocity can be recovered from the vorticity using the Biot-Savart law. In particular, using this method, Alber showed the well-posedness for the system \eqref{mhs1} together with boundary conditions \eqref{second:problem}. Several extensions adapting Alber's idea to deal with similar boundary value problems have appeared in the literature, cf. \cite{Molinet-1999,Seth-2020,Tang-Xin-2009}.

In \cite{Alo-Velaz-2021}, using the methods of both Alber and Grad-Shafranov, the first and last authors constructed two-dimensional $C^{2,\alpha}$ solutions to \eqref{mhs1} satisfying different types of boundary value conditions, cf. \cite[Table 1]{Alo-Velaz-2021} to see the class of boundary value conditions considered. Furthermore, in \cite{Alo-Velaz-2022}, introducing a novel idea based on solving an integral non-local equation the same authors studied the solvability of \eqref{mhs1} with boundary value conditions \eqref{our:problem} in the two-dimensional case. To that purpose, the authors derived H\"older estimates for a class of non-convolution singular integral operators.

The main objective of this article is to study the three dimensional boundary value problem extending the methods initiated in \cite{Alo-Velaz-2022}, i.e., our goal is to study the system \eqref{mhs1} with boundary value conditions \eqref{our:problem} in the three-dimensional setting. This issue has been addressed in \cite{Sanchez-2023}. We will delve later (c.f. Subsection \ref{sec:noveltiesandstrategy}) into the novelties and key ideas needed to extend the previous result to the three-dimensional setting. 

The time dependent analogue of the boundary value problem \eqref{mhs1} (or equivalently \eqref{mhs1:eul}) has been studied in \cite{monographrusos,Gieetal,Gieetal2,Petcu06}. 
The strategy followed in those articles consists on writing an elliptic equation for the pressure $p$ that allows to obtain the pressure in terms of the magnetic field B (or equivalently the velocity field $V$). After computing the pressure $p$, using a fixed point problem for small times $t$ one can directly compute the magnetic field (velocity field). This allows to show local existence of solutions to the time dependent boundary value problem. Notice that the existence of solutions to the time dependent problem (even globally in time solutions) does not imply the existence of a steady state solution. This particular issue has actually been remarked in the monograph \cite[Chapter 4, Section 7.4]{monographrusos}.

 \subsection{Notation}
In this subsection, we collect the different notations that will be used throughout the manuscript. 
\subsubsection*{Geometrical setting} The equations are studied on the manifold $\Omega=\mathbb{T}^{2}\times [0,L]$ with $L>0$. We also use the notation $\partial\Omega$ to denote the boundary of $\Omega$. The two components of the manifold are denoted by $\partial\Omega_{-}=\mathbb{T}^{2}\times\{0\}$ and $\partial\Omega_{+}=\mathbb{T}^{2}\times\{L\}$. We denote by $n$ the normal in $\partial\Omega$. More precisely, $n$ equals the outer normal in $\partial\Omega_+=\T^2\times \{L\}$, and the inner normal in $\partial\Omega_-=\T^2\times \{0\}$. We write the subscript $B_{\tau}$ to denote the tangential component of a generic vector field $B$ on $\partial \Omega$.
 
 \subsubsection*{Functional spaces and Fourier transform}
 We define the space of H\"older continuous functions as 
	$$ C^{\alpha}(\Omega)=\{ f\in C_{b}(\Omega): \norm{f}_{C^\alpha(\Omega)}< \infty\},$$
    where $C_{b}(\Omega)$ denotes the set of bounded continuous functions on $\Omega$. The H\"older space is equipped with the norm
 \[ \norm{f}_{C^{\alpha}(\Omega)}=\displaystyle \sup_{x\in \overline{\Omega}}\abs{f(x)}+  \left[ f \right]_{\alpha,\Omega}, \quad \left[ f \right]_{\alpha,\Omega}= \displaystyle\sup_{x\neq y; x , y\in \overline{\Omega}} \frac{\abs{f(x)-f(y)}}{\abs{x-y}^\alpha}.\]
   Similarly, for any non-negative integer $k$ we define the H\"older spaces $C^{k,\alpha}(\Omega)$ as 
	$$ C^{\alpha}(\Omega)=\{ f\in C^{k}_{b}(\Omega): \norm{f}_{C^{k,\alpha}(\Omega)}< \infty\},$$
	equipped with the norm
	$$ \norm{f}_{C^{k,\alpha}(\Omega)}=\displaystyle\max_{\abs{\beta}\leq k }\sup_{x\in \overline{\Omega}}\abs{\partial^{\beta}f(x)}+\displaystyle\sum_{\abs{\beta}=k}\left[\partial^{\beta}f \right]_{\alpha,\Omega},$$
	where $\beta=(\beta_1,\beta_2)\in \N^{2}$ and $\N=\{0,1,2,\ldots\}$. We will sometimes write the shortcut $\norm{\cdot}_{k,\alpha}$ instead of $\norm{\cdot}_{C^{k,\alpha}(\Omega)}$. Similarly, for the supremum norm of function $\norm{\cdot}_{L^{\infty}(\Omega)}$ we just write $\norm{\cdot}_{\infty}.$ 
 Notice that in the definitions above the H\"older regularity holds up to the boundary, i.e in $\overline{\Omega}$.
	We omit in the functional spaces whether we are working with scalars or vectors fields, this is $C^{k,\alpha}(\Omega,\mathbb{R})$ or $C^{k,\alpha}(\Omega,\mathbb{R}^3)$, and instead just write $C^{k,\alpha}(\Omega)$. The specific type of functional space (scalar or vector) will be clear from the context. Moreover, we will denote H\"older spaces on the boundary of the manifold, namely on $\partial\Omega$, $\partial\Omega_{+}$ and $\partial\Omega_{-}$ by $C^{k,\alpha}(\partial\Omega)$, $C^{k,\alpha}(\partial\Omega_{+})$ and $C^{k,\alpha}(\partial\Omega_{-})$ respectively.  We identify functions $f\in C^{k,\alpha}(\T\times \T)$, $k=0,1,\ldots$ and $\alpha\in (0,1)$ with the subspace of functions in $C^{k,\alpha}(\mathbb{R}\times \mathbb{R})$ such that $f(x+2\pi,y+2\pi)=f(x,y)$. 
	
The Fourier transform of an integrable $2\pi-periodic$ function $f$ in variables $x,y$ is defined as
	\[\reallywidehat{f}(m,n)=\int_{0}^{2\pi}\int_{0}^{2\pi} f(x,y){\rm e}^{-{\rm i}(x,y)\cdot (m,n)}\, dx dy.\]
 Moreover, if the function $f$ is sufficiently regular we can reconstruct it out of these Fourier coefficients according to the formula
	\[f(x,y)=\frac{1}{(2\pi)^2}\sum_{(m,n)\in{\mathbb Z}^2}\reallywidehat{f}(m,n){\rm e}^{{\rm i}(x,y)\cdot (m,n)}.\] 
To shorten the notation, we will use $\xi=(m,n)$ and $r=(x,y).$ 
We define the average of a two periodic function $f$ as
 \[ \langle f \rangle = \frac{1}{(2\pi)^{2}}\int_{\mathbb{T}^{2}} f(x,y) \, dx dy. \]
 We will write $\widetilde{f}$ to denote the mean free function $\widetilde{f}=f- \langle f \rangle$.
\subsubsection*{Operators and constants}

 Let $X$ and $Y$ be  Banach spaces. We say that $T$ is a bounded operator from $X$ to $Y$ if there exists a constant $c\geq 0$ such that $\norm{Tu}_{Y}\leq c\norm{u}_{X}$, $\forall u\in E.$ The norm of the bounded operator $T$ is defined and denoted as
\[\norm{T}_{\mathcal{L}(X,Y)}= \displaystyle\sup_{u\neq 0}\frac{\norm{Tu}_{Y}}{\norm{u}_{X}}. \]
Moreover, if $X=Y$, we just write $\mathcal{L}(X)$ instead of $\mathcal{L}(X,X)$. We will also use the brackets $\big[ \cdot \big]$, in order to denote the dependence of an operator on the bracketed function, namely $T\big[ f\big]$ denotes that the operator $T$ depends in a certain way on the function $f$. 

 We will denote with $C$ a positive generic constant that depends only on fixed parameters. More precisely, they will depend on the parameter $L$ and the H\"older exponent $\alpha\in (0,1)$. Note also that this constant might differ from line to line.

  \subsection{The main result: novelties and strategy towards the proof}\label{sec:noveltiesandstrategy}
The main result in this manuscript deals with the well-posedness (existence and uniqueness) of a the three-dimensional problem \eqref{mhs1} with boundary conditions \eqref{our:problem}, i.e,  we prescribe the normal component of the magnetic field $B\cdot n$ on $\partial \Omega$ as well as the tangential component of $B$, this is $B_{\tau}$, in one part on the boundary. The precise statement of the main result reads
\begin{teor}\label{mainteor}Let $\alpha\in (0,1)$, $\Omega=\T^2\times [0,L]$, and denote $\partial\Omega_-=\T^2\times \{0\}$. There exists a (small) constant $M=M(\alpha,L)$ such that  for every $f\in C^{2,\alpha}(\partial\Omega)$ and $g\in C^{2,\alpha}(\partial\Omega_-)$ satisfying 
$$\|f\|_{C^{2,\alpha}(\partial\Omega)}+\|g\|_{C^{2,\alpha}(\partial\Omega_-)}\leq M,$$
and 
\begin{equation}\label{integrability:condition}
\int_{\Omega_-}fdx=\int_{\Omega_+}fdx,
\end{equation}
there exists a solution $(B,p)\in C^{2,\alpha}(\Omega)\times C^{2,\alpha}(\Omega) $ for the problem 
\begin{equation}\label{mhs2}
    \left\{
\begin{array}{ll}
    j\times B=\nabla p,& \text{in }\Omega,\\
    \nabla\times B=j,&\text{in }\Omega,\\
    \nabla\cdot B=0,&\text{in }\Omega,\\
    B\cdot n=1+f, &\text{on }\partial\Omega,\\
    B_{\tau}=g,&\text{on }\partial \Omega_{-},
\end{array}
    \right.
\end{equation}
where $n$ equals the outer normal in $\partial\Omega_+=\T^2\times \{L\}$, and the inner normal in $\partial\Omega_-=\T^2\times \{0\}$. Furthermore, this is the unique solution that satisfies 
$$\|B-(0,0,1)\|_{C^{2,\alpha}(\Omega)}\leq M.$$
\end{teor}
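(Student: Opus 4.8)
The plan is to construct $(B,p)$ by a perturbative fixed-point argument around the trivial solution $B\equiv(0,0,1)$, $p\equiv\mathrm{const}$. Write $B=(0,0,1)+V$ with $V$ small in $C^{2,\alpha}(\Omega)$ and regard $(V,p)$ as the unknowns. In these variables \eqref{mhs2} becomes $\nabla\cdot V=0$, $(\nabla\times V)\times\big((0,0,1)+V\big)=\nabla p$, together with $V\cdot n=f$ on $\partial\Omega$ and $V_{\tau}=g$ on $\partial\Omega_-$. The force balance splits into the linear part $(\nabla\times V)\times(0,0,1)-\nabla p$ and the quadratic remainder $-(\nabla\times V)\times V$; dotting it with $B$ yields $B\cdot\nabla p=0$, so $p$ is constant along the integral curves of $B$, which in the perturbative regime are near-vertical graphs over $\T^2$ joining $\partial\Omega_-$ to $\partial\Omega_+$. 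Crucially, the trace of $p$ on the inflow boundary $\partial\Omega_-$ is \emph{not} prescribed; it is an extra unknown of the problem, which will in the end be forced by the requirement $B\cdot n=1+f$ on $\partial\Omega_+$. Collecting everything, \eqref{mhs2} is recast as a fixed-point identity
\[
\mathcal{L}(V,p)=\mathcal{N}(V,p)+\mathcal{F}(f,g),
\]
with $\mathcal{L}$ the linearized operator, $\mathcal{N}$ at least quadratic (it gathers $(\nabla\times V)\times V$ together with the contribution of the curvature of the integral curves to the transport of $p$), and $\mathcal{F}$ built from the data $f,g$.

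\textbf{Inverting $\mathcal{L}$.} Taking the Fourier transform in the horizontal variable $r=(x,y)\in\T^2$, the linearized system decouples mode by mode into elementary ODEs in $z\in[0,L]$. At leading order $p$ is independent of $z$; the vertical component solves a one-dimensional Helmholtz problem $-\partial_z^2\widehat V_3+\abs{\xi}^2\widehat V_3=-\abs{\xi}^2\widehat p$, while the horizontal part $V_h=(V_1,V_2)$ is recovered by integrating $\partial_z\widehat V_h=\mathrm{i}\xi(\widehat p+\widehat V_3)$ upward from its trace $\widehat g$ on $\partial\Omega_-$. For $\widehat V_3$ one has \emph{three} vertical traces --- the Dirichlet data $V_3|_{\partial\Omega_\pm}=f|_{\partial\Omega_\pm}$ and the Neumann datum $\partial_z V_3|_{\partial\Omega_-}=-\nabla_r\!\cdot g$ forced by $\nabla\cdot V=0$ --- which overdetermines the second-order ODE; the overdetermination is exactly absorbed by solving for the free quantity $\widehat p(\xi)$, which comes out as a Fourier multiplier of the form
\[
\widehat p(\xi)=\frac{\widehat{f|_{\partial\Omega_+}}(\xi)-\cosh(\abs{\xi}L)\,\widehat{f|_{\partial\Omega_-}}(\xi)+\abs{\xi}^{-1}\sinh(\abs{\xi}L)\,\widehat{\nabla_r\!\cdot g}(\xi)}{\cosh(\abs{\xi}L)-1},\qquad \xi\neq0,
\]
with $\widehat p(0)$ the free additive constant. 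At $\xi=0$ the ODE is $\partial_z^2\widehat V_3=0$ with vanishing $z$-derivative at the bottom, so solvability against the two Dirichlet data reduces precisely to the hypothesis \eqref{integrability:condition}. Since $(\cosh(\abs{\xi}L)-1)^{-1}$ decays exponentially in $\abs{\xi}$, every operator entering $\mathcal{L}^{-1}$ is a smooth --- indeed smoothing --- Fourier multiplier, hence bounded on the Hölder scale; this is exactly where the pseudodifferential/multiplier estimates in Hölder spaces are invoked. One thus obtains a bounded right inverse $\mathcal{L}^{-1}$ on the codimension-one space of data cut out by \eqref{integrability:condition}.

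\textbf{Fixed point.} The problem is then equivalent to $(V,p)=\Phi(V,p):=\mathcal{L}^{-1}\big(\mathcal{N}(V,p)+\mathcal{F}(f,g)\big)$. Since $\mathcal{N}$ is at least quadratic, it maps a ball of radius $\rho$ (in the norm $\norm{V}_{C^{2,\alpha}(\Omega)}+\norm{p}_{\ast}$, with $\norm{\cdot}_{\ast}$ the appropriate Hölder norm for the pressure component) into one of radius $\le C\rho^{2}$, with a matching Lipschitz bound on differences. The delicate contribution is the pressure transport: the flow map of a merely $C^{2,\alpha}$ field is only $C^{1,\alpha}$, so the operator sending $V$ to ``$p$ advected along $B$'' has a symbol of limited Hölder regularity, and its boundedness and contraction properties are precisely what the pseudodifferential calculus with limited-regularity symbols developed earlier provides; here one must exploit that only $\nabla p$ and the curvature of the field lines enter $\mathcal{N}$, so that no derivative is lost. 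Choosing $M=M(\alpha,L)$ small enough that $\norm{\mathcal{F}(f,g)}\le CM$ with $\rho\simeq M$ and that $\Phi$ is a contraction on $\{\norm{V}_{C^{2,\alpha}(\Omega)}+\norm{p}_{\ast}\le\rho\}$, the Banach fixed-point theorem produces a unique fixed point there; unwinding the reduction yields a solution $(B,p)\in C^{2,\alpha}(\Omega)\times C^{2,\alpha}(\Omega)$ of \eqref{mhs2}, and the contraction estimate gives the stated uniqueness among $B$ with $\norm{B-(0,0,1)}_{C^{2,\alpha}(\Omega)}\le M$. As a last step one checks a posteriori that the reconstructed one-form $\nabla p$ is exact on $\T^2\times[0,L]$ --- the periods over the two generators of $H_1(\T^2)$ vanish, again by \eqref{integrability:condition} and $\nabla\cdot B=0$ --- so that $p$ is genuinely single-valued, and that all the equations and boundary conditions of \eqref{mhs2} hold, which is automatic since $\mathcal{L}^{-1}$ was built to respect them.

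\textbf{Main difficulty.} The crux is reconciling the smoothing elliptic inversion of $\mathcal{L}$ with the transport of $p$ along a vector field of the \emph{same} regularity as the unknown, keeping everything in $C^{2,\alpha}$ without losing derivatives; this is why the machinery of pseudodifferential operators with limited-regularity Hölder symbols is needed, and it is also the step where the specific algebraic structure of the magneto-hydrostatic force balance --- rather than a generic quasilinear perturbation --- is essential.
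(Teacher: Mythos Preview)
Your route is substantially different from the paper's and, as sketched, has real gaps.

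The paper never treats the pressure as a primary unknown. It uses Alber's vorticity--transport scheme: given a perturbation $b$, one first solves the transport equation $(B\cdot\nabla)j=(j\cdot\nabla)B$ for the current with \emph{unknown} initial datum $j_0$ on $\partial\Omega_-$, then solves the div-curl system $\nabla\times W=j$, $\nabla\cdot W=0$, $W\cdot n=f$ together with two horizontal-flux constants $J_1,J_2$ (degrees of freedom coming from the topology of $\T^2\times[0,L]$), and finally imposes $W_\tau=g$ on $\partial\Omega_-$. This last step yields an integral equation for $j_0$ of the schematic form $j_0=\textsf{T}[b]\,j_0+\text{data}$, where $\textsf{T}[b]$ is built from the Green's function of the Laplacian composed with the characteristic flow of $b$. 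The limited-regularity pseudodifferential calculus is used \emph{there}: the entire machinery is devoted to proving $\|\textsf{T}[b]\|_{\mathcal L(C^{1,\alpha})}\lesssim\|b\|_{C^{2,\alpha}}$, so that the $j_0$-equation inverts by Neumann series. The fixed point is for $b$ alone; $p$ is recovered \emph{a posteriori} by line-integrating $j\times B$, and $J_1,J_2$ are chosen specifically to kill the two periods of this one-form over the generators of $H_1(\T^2)$ --- this is not a consequence of \eqref{integrability:condition}, which only handles the zero-mode solvability of the div-curl problem.

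Concerning your sketch: your claim that the multipliers in $\mathcal L^{-1}$ are smoothing because $(\cosh(|\xi|L)-1)^{-1}$ decays exponentially is wrong --- the numerators in your formula for $\widehat p$ carry matching $\cosh$ and $\sinh$ factors, so the net multipliers on $f|_{\partial\Omega_-}$ and $g$ are order zero, not order $-\infty$. This is survivable (they are still Mikhlin--H\"ormander and hence H\"older-bounded), but it means $\mathcal L^{-1}$ gains exactly one derivative, and you must track orders carefully rather than appeal to smoothing. More seriously, you never exhibit the concrete operator on which the limited-regularity calculus is to act: in the paper it is $\textsf{T}[b]$, whose symbol involves the flow map explicitly; in your scheme ``the operator sending $V$ to $p$ advected along $B$'' is composition with a diffeomorphism, which does not obviously belong to any $S^m(s)$ class, and the assertion that ``no derivative is lost'' in $\mathcal N$ is not substantiated (nor is your claim that the flow of a $C^{2,\alpha}$ field is only $C^{1,\alpha}$ --- it is $C^{2,\alpha}$, cf.\ the paper's Proposition on ODE regularity). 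Finally, you omit the flux constants $J_1,J_2$ altogether and assert that the periods of $j\times B$ vanish by \eqref{integrability:condition} and $\nabla\cdot B=0$; the paper shows this requires an explicit choice of $J_1,J_2$ depending nonlinearly on $(j_0,f,g,b)$, and without it the reconstructed pressure need not be single-valued on the torus.
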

\subsubsection*{Novelties and strategy towards the proof}

The strategy of the proof of Theorem \ref{mainteor} follows closely the ideas used in \cite{Alo-Velaz-2022} to study the two-dimensional case of Theorem \ref{mainteor}. The key difficulty to solve the problem is to obtain the value of $j(x)$ in $\partial\Omega_{-}$, i.e., $j_0=j|_{\partial\Omega_{-}}$.  To fix ideas, let us assume that $(B,p)$ is a sufficiently smooth solution to \eqref{mhs1}. Taking the curl of \eqref{mhs1} we obtain the following transport equation for the current density $j$ that reads
\begin{equation}\label{transport:1}
    (B\cdot \nabla)j=(j\cdot \nabla)B \mbox{ in } \Omega.
\end{equation}
This equation allows, using characteristics, to compute $j$ on the full domain $\Omega$ if $j_0$ and $B$ are known. Using this value of $j$, we can obtain a unique magnetic field $B$ by solving the div-curl type problem
\begin{align}
    \nabla\times B=j, \mbox{ in } \Omega \label{div:curl}\\
    \nabla\cdot B=0,\mbox{ in } \Omega \label{div:curl2}
\end{align}
if we prescribe the normal component of the magnetic field on the full boundary, i.e., 
\begin{equation}\label{normal:component}
B\cdot n=f \ \mbox{ on } \partial\Omega,
\end{equation} 
as well as some horizontal fluxes of the magnetic field, i.e.
\begin{equation}\label{topology:constraint}
 \int_{\{x=0\}}Bd\vec{S}=J_1, \quad \int_{\{y=0\}}Bd\vec{S}=J_2.
\end{equation}
We remark that in the Biot-Savart problem \eqref{div:curl}-\eqref{normal:component} the values $J_{1},J_{2}$ are two degrees of freedoms that can be understood as part of the source term in the same way as the density current $j$.  These degrees of freedoms are due to the non-trivial topology of the domain $\Omega=\mathbb{T}^{2}\times[0,L]$ and correspond to the two linearly independent harmonic vector fields tangential to the boundaries. Notice that the equations \eqref{div:curl}-\eqref{normal:component} with $j=0$ and $f=0$ have one trivial solution satisfying \eqref{topology:constraint} given by $\frac{L}{2\pi} (J_{1},J_{2},0)$.

In order to obtain the value of $j_0(x)$ we notice that we have an additional boundary condition, namely, the tangential component of the magnetic field $B_{\tau}$ on $\partial\Omega_{-}$. Imposing that the constructed magnetic field $B$ satisfies this tangential condition yields a non-local integral equation for the function $j_0(x)$ on $\partial\Omega_{-}$. More precisely, we derive an integral equation for $j_0$ of the following type
\begin{equation}\label{current:j_0}
j_{0}(x)= \mathsf{F}[j_0,B[j_0], f, g],\mbox{ on } \partial\Omega_{-},
\end{equation}
where $\mathsf{F}$ is a non-local functional. Obtaining $j_0$ solving \eqref{current:j_0}, we find at the same time the magnetic field $B$ which solves \eqref{mhs2}. 

In order to make precise the previous ideas, we will define an operator $B\to \Gamma[B]$ whose fixed point $B$ leads to a solution $(B,p)$ to \eqref{mhs2}, see Section \ref{sec:linearized} and Section \ref{sec:6} for details.

One of the main technical challenges in this manuscript is to solve \eqref{current:j_0} in suitable H\"older spaces. One of the main novelties of this article in comparison with \cite{Alo-Velaz-2022}, is that we will use the theory of pseudo-differential operators with symbols of limited regularity while in \cite{Alo-Velaz-2022},  H\"older estimates were derived for a class of singular integral operators. This new approach has two major advantages: firstly, it simplifies and unifies the tedious \textit{hand-made} computations obtained in \cite{Alo-Velaz-2022} and secondly, it seems robust enough to provide similar results as the ones in Theorem \ref{mainteor} but in more complicated and physically relevant geometrical settings. 

It is also important to notice that, compared to the two-dimensional situation, Theorem \ref{mainteor} has new difficulties intrinsic to the three-dimensional nature of the problem. Indeed, in the two-dimensional case, the current density $j$ is simply a scalar quantity while in our case $j$ is a three-dimensional vector which in addition has to be divergence free. These constraints increase the difficulty to derive \eqref{current:j_0}. The vectorial nature of $j$ is also apparent in the transport equation \eqref{transport:1}. In this case, we have an extra stretching term on the right hand side of \eqref{transport:1} which trivially cancels in the two-dimensional case. Although this issue makes the analysis more cumbersome, it does not affect the regularity of the solution of the equation. 

As noted in the statement of Theorem \ref{mainteor}, we will restrict ourselves to a particular geometric setting, namely, $\Omega=\T^2\times [0,L]$. Similarly, as in the two-dimensional case, such choice of domain ensures there is no pathological behaviour of the characteristic curves associated to \eqref{transport:1} resulting from the fact that $B\cdot n$ might be zero in $\partial \Omega$. Indeed, for $\Omega=\T^2\times [0,L]$ we can choose $B\cdot n$ in such a way that $\Omega_-\cap\Omega_+\neq 0$ and hence $B\cdot n\neq 0$ at all points $x\in\partial\Omega$. Whether Theorem \ref{mainteor} holds true for domains where some singular behaviour for $B$ can arise, is an intriguing open problem even in the two-dimensional case.

To conclude this introduction, let us remark that in the analysis of the time-dependent problem studied in \cite{monographrusos,Gieetal,Gieetal2}, one key problem is to calculate the value of $j_0$ on the entering part of the domain $\partial\Omega_{-}$. As discussed previously, this is also a crucial issue of the problem considered in this article. Nevertheless, the value of $j_0$ can be obtained in $\partial\Omega_{-}$ integrating in time, which allows to formulate a fixed point problem for the velocity for small times. However, in the stationary case considered in this paper, to compute $j_{0}$ we need to study an integral type equation \eqref{current:j_0} which has source terms that depend on the values of $B\cdot n$ in $\partial\Omega$ as well as the tangential component $B_{\tau}$ in $\partial\Omega_{-}$. This rigidity is one of the main reasons why we have to work with magnetic fields which are small perturbations of trivial magnetic fields.  A question that would be interesting to study is whether it is possible to construct perturbed solutions  around more general magnetic fields or more general geometries.

\subsection{Plan of the paper} In Section \ref{sec:2} we first collect some important functional results about Littlewood-Paley theory and Besov spaces. Moreover, we present several key auxiliary results regarding estimates in Besov spaces for pseudo-differential operators, existence and uniqueness results for the two building block of the fixed-point operator: a div-curl system and a transport type problem. We devote Section \ref{sec:linearized} to present the construction of the method to find the solutions $(B,p)$ of the magnetohydrostatic equations satisfying the boundary value conditions \eqref{mhs2} in the linearized setting. In Section \ref{sec:4} we extend the analysis presented in Section \ref{sec:linearized} to the full non-linear setting. In particular, we derive an integral equation for the current $j$ on $\partial\Omega_{-}$. In Section \ref{sec:5} 
we provide a rigorous approach to the formal computations introduced in the previous section and show the existence of a solution in H\"older spaces to the integral equation for the current $j$ on $\partial\Omega_{-}$. More precisely, in Subsections \ref{subsec:51} and Subsections \ref{subsec:52} we derive a priori H\"older estimates for the main operators involved in the integral equation. Subsection \ref{subsec:5:3} - Subsection \ref{subsec:5:5} are devoted to establish H\"older estimates for the remaining terms of the integral equations. Later, in Subsection \ref{subsec:5:6} we show the existence and uniqueness of a solution to the integral equation by using previous derived estimates. Finally, in Section \ref{sec:6} we provide the precise definitions of the operators required to formulate the fix-point argument and prove Theorem \ref{mainteor} as a direct application of the fixed point theorem. To conclude, we gather in Appendix \ref{appendix1} complementary details of some previous stated results of Section \ref{sec:2}.

 \section{Preliminaries: functional setting and auxiliary results}\label{sec:2}
 In this section we collect the main tools needed in our analysis. More precisely, we start by recalling some well-known facts about Littlewood-Paley theory and Besov spaces. Then we present, into some extent, the theory of pseudo-differential operators in periodic H\"older spaces (and Besov spaces) together with some estimates for symbols of pseudo-differential operators that will be important in the subsequent arguments of the present article. To conclude, we establish estimates for the two building block of the fixed-point operator: a div-curl system and a transport type problem. For the sake of completeness we have also included in the Appendix \ref{appendix1} the proof of one of the stated results contained in this section. 
\subsection{Littlewood-Paley theory and Besov spaces}\label{subsec:21}
We recall here the main ideas of Littlewood-Paley theory in $\mathbb{R}^{d}$ and the relation with Besov spaces. These are classical results which can be found with full detail in \cite{Chemin-Danchin-Bahouri-2011,Triebel-1983}.

We begin by defining the so called Littlewood-Paley projections, which rely on a technical construction: the dyadic partition of unity. We have the following result
\begin{prop}
    Let $\mathcal{C}$ be the annulus $\{\xi\in \R^d\,:\, 3/4\leq |\xi|\leq 8/3\}.$ There exist radial functions $\chi$, $\varphi$ such that $\chi\in C^\infty_c(B_{4/3}(0);[0,1])$ and $\varphi\in C^\infty_c(\mathcal{C};[0,1])$ satisfying
\[ \forall \xi\in \R^d,\quad \chi(\xi)+\sum_{j\geq 0}\varphi(2^{-j}\xi)=1, \quad \sum_{j\in \Z}\varphi(2^{-j}\xi)=1.\]
Furthermore, their support satisfy 
\[ |j-j'|\geq 2\Rightarrow \text{supp }\varphi(2^{-j}\cdot)\cap \text{supp }\varphi(2^{-j'}\cdot)=\emptyset,\]
\[j\geq 0\Rightarrow \text{supp }\chi\cap \text{supp }\varphi(2^{-j}\cdot)=\emptyset.\]
\end{prop}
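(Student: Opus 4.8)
The statement is the classical Littlewood--Paley dyadic partition of unity, and the plan is to reduce it to the existence of a single smooth radial ``plateau'' function plus a telescoping trick. First I would fix a radial $\theta\in C^\infty_c(\R^d;[0,1])$ with $\theta\equiv 1$ on $\overline{B_{3/4}(0)}$, $\operatorname{supp}\theta\subset B_{4/3}(0)$, and --- this is the one point needing a little care --- $\theta$ \emph{radially non-increasing}, i.e.\ $\theta(\xi)=\Theta(|\xi|)$ with $\Theta:[0,\infty)\to[0,1]$ non-increasing. Such a $\theta$ is standard: one builds a smooth non-increasing $\Theta$ equal to $1$ on $[0,3/4]$ and to $0$ on $[4/3,\infty)$ from the usual function $t\mapsto e^{-1/t}\mathbf{1}_{\{t>0\}}$, and sets $\theta(\xi)=\Theta(|\xi|)$; smoothness at the origin is automatic since $\Theta$ is constant near $0$. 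Then I would define
\[\chi:=\theta,\qquad \varphi(\xi):=\theta(\xi/2)-\theta(\xi),\]
both smooth and radial. Since $|\xi/2|\le|\xi|$ and $\Theta$ is non-increasing we get $0\le\varphi(\xi)\le\theta(\xi/2)\le 1$; and $\varphi(\xi)=0$ both for $|\xi|\le 3/4$ (there $\theta(\xi/2)=\theta(\xi)=1$) and for $|\xi|\ge 8/3$ (there $\theta(\xi/2)=\theta(\xi)=0$), so $\operatorname{supp}\varphi\subset\mathcal{C}$. Hence $\chi\in C^\infty_c(B_{4/3}(0);[0,1])$ and $\varphi\in C^\infty_c(\mathcal{C};[0,1])$, as required.

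The two partition-of-unity identities then come out of telescoping. Writing $a_j:=\theta(2^{-j}\xi)$ we have $\varphi(2^{-j}\xi)=a_{j+1}-a_j$, so for every integer $N\ge 0$,
\[\chi(\xi)+\sum_{j=0}^N\varphi(2^{-j}\xi)=\theta(\xi)+\bigl(a_{N+1}-a_0\bigr)=\theta(2^{-N-1}\xi),\]
and for fixed $\xi$ the right-hand side equals $1$ as soon as $2^{-N-1}|\xi|\le 3/4$; this simultaneously shows the series $\sum_{j\ge 0}\varphi(2^{-j}\xi)$ is locally finite and that $\chi(\xi)+\sum_{j\ge 0}\varphi(2^{-j}\xi)=1$ for all $\xi$. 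Similarly, for $M,N\ge 0$,
\[\sum_{j=-M}^{N}\varphi(2^{-j}\xi)=a_{N+1}-a_{-M}=\theta(2^{-N-1}\xi)-\theta(2^{M}\xi),\]
and when $\xi\neq 0$ the first term is $1$ for $N$ large while the second vanishes for $M$ large (since $2^M|\xi|\to\infty$ and $\theta$ has compact support), giving $\sum_{j\in\Z}\varphi(2^{-j}\xi)=1$.

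Finally I would check the disjointness of supports by bookkeeping with radii: $\operatorname{supp}\varphi(2^{-j}\cdot)\subset\{\tfrac34 2^{j}\le|\xi|\le\tfrac83 2^{j}\}$, so if $j'\ge j+2$ the inner radius $\tfrac34 2^{j'}\ge 3\cdot 2^{j}$ of the $j'$-shell already exceeds the outer radius $\tfrac83 2^{j}$ of the $j$-shell; and $\operatorname{supp}\chi\subset B_{4/3}(0)$ is disjoint from $\operatorname{supp}\varphi(2^{-j}\cdot)\subset\{|\xi|\ge\tfrac34 2^{j}\ge\tfrac32\}$ for $j\ge 1$ since $\tfrac32>\tfrac43$. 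The argument is entirely elementary; the only genuinely delicate step is producing the building block $\theta$ that is at once smooth, supported in $B_{4/3}$, identically $1$ on $B_{3/4}$ \emph{and} radially non-increasing, the last property being exactly what forces $\varphi=\theta(\cdot/2)-\theta$ to be nonnegative and $[0,1]$-valued. (The last displayed support property should be read with $j\ge 1$, as in the standard construction; for $j=0$ the shell $\{\tfrac34\le|\xi|\le\tfrac83\}$ genuinely meets $B_{4/3}(0)$.)
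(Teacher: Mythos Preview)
Your proof is correct and is precisely the standard telescoping construction found in the references the paper cites (Bahouri--Chemin--Danchin, Triebel); the paper itself does not prove this proposition but simply quotes it as a classical result. Your closing remark is also accurate: the last support condition as stated in the paper cannot hold for $j=0$, and the standard formulation indeed requires $j\ge 1$.
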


 By means of the previous functions $\chi$ and $\varphi$, we can define the non-homogeneous dyadic blocks $(\Delta_{j})_{j\in\mathbb{Z}}$ as
\begin{equation}\label{dyadicbloks:def1}
\Delta_j=0\quad \mbox{if } j\leq -2, \quad  \Delta_{-1}=\chi(D), \mbox{ and } \Delta_{j}=\varphi(2^{-j}D) \quad \mbox{if } j\geq 0.
\end{equation}
Here, $f(D)$ stands for the operator $u\mapsto \mathcal{F}^{-1}\left(f(\xi)\reallywidehat{u}(\xi)\right)$. Therefore, 
\begin{equation*}
        \Delta_ju=2^{jd}\widecheck{\varphi}(2^{j}\cdot)*u\, \text{ for all } j\geq 0,
\end{equation*}
and hence $\|\Delta_j\|_{L^p\rightarrow L^p}<\infty$ for every $p\in [1,\infty]$, and that these norms are independent of $j$. 
We also introduce the following low frequency cut-off operators
\begin{equation}\label{lowfreq:def1}
S_j=\chi(2^{-j}D)=\sum_{k\leq (j-1)}\Delta_{k}, \quad j\geq 0.
\end{equation}
As expected, this cut-offs operators approximate distributions in the following sense
\[ \forall u\in \mathcal{S}'(\R^d), \quad \quad  u=\displaystyle\sum_{j\geq -1}\Delta_{j} u \quad \mbox{ in the sense of  } \mathcal{S}'(\R^d). \]
By means of the Littlewood-Paley decomposition, we can define the following class of non-homogeneous Besov spaces
\begin{defi}
    Let $s\in \R$ and $p,q\in[1,\infty]$. We say that a tempered distribution $u\in \mathcal{S}'(\R^d)$ belongs to the non-homogeneous Besov space $B^{s}_{p,q}$ if
    \begin{equation}\label{besov}\|u\|_{B^s_{p,q}}=\left\|\left(2^{js}\|\Delta_j u\|_{L^p}\right)_{j\in \Z}\right\|_{\ell ^q}<\infty.
    \end{equation}
\end{defi}
Besov spaces are complete normed spaces $(B^s_{p,q},\|\cdot\|_{B^s_{p,q}})$ and are independent of the partition of the unity used to construct them. Moreover, non-homogeneous Besov spaces have nice properties of duality. More precisely, for $p\in [1,\infty)$ we have that the dual space of $B^{s}_{p,q}$, denoted by $(B^{s}_{p,q})'$, coincides with
\[ (B^{s}_{p,q})'=B^{-s}_{p',q'},\]
where $p',q'$ are the conjugate exponents of $p,q$, respectively, cf. \cite{Triebel-1983}.

Besov spaces are interpolation spaces between Sobolev spaces $W^{k,p}(\R^d)$. In fact, an immediate application of Littlewood-Paley theory gives us that the spaces $W^{k,p}(\R^d)$ coincides with the Besov spaces $B^k_{p,p}$ and their norms are equivalent for $p\in (1,\infty)$. In then endpoint case $p=\infty$, Besov spaces can be identified with H\"older spaces. In particular, the H\"older space $C^{m,s}(\R^d)$ corresponds with the space $B^{m+s}_{\infty,\infty}(\R^d)$ for any $s\in (0,1)$. Moreover, the norms in both spaces are equivalent. We will use the following identification (cf. \cite{Triebel-1983}):
   \begin{equation}
B^s_{\infty,\infty}(\R^d)\sim C^{\lfloor s\rfloor, s-\lfloor s\rfloor}(\R^d), \mbox{ for } s\notin\N.
    \end{equation}
In the sequel we will assume that $s\notin \N$.

\subsection{Symbols, pseudo-differential operators and adjoints}\label{subsec:22}
In this section, we collect some classical results on pseudo-differential operators. Our analysis will be restricted to pseudo-differential operators with symbols of limited regularity. We will make stress on the notion of adjoint of a pseudo-differential operator and the boundedness of those operators in H\"older spaces. For a thorough and systematic review of smooth pseudo-differential operators we refer the reader to \cite{Abels-2012, Jurgen-87, Taylor-1981}. 

Let us start by defining the class of symbols we will deal with throughout the article:
\begin{defi}\label{symbol:class:def}
Let $s\in\mathbb{R}_+\setminus\N$, and $m\in\R$. We define the symbol class $S^{m}(s)$ consisting on  functions $a(\cdot,\cdot)$ such that for every multi-index $\gamma$ 
\begin{equation}\label{symbol}
\|\partial_\xi^\gamma a(\cdot,\xi)\|_{C^s}\leq C_{\gamma,s}(1+|\xi|)^{m-|\gamma|}.
\end{equation}
\end{defi}

Therefore, if $a\in S^{m}(s)$ is a symbol, then
\begin{equation}\label{op}
\p(a)u(x)=\frac{1}{(2\pi)^d}\int_{\R^d} e^{ix\cdot \xi}a(x,\xi)\reallywidehat{u}(\xi)d\xi, \quad \mbox{for all }  x\in\mathbb{R}^{d}
\end{equation}
defines the associated pseudo-differential operator. If $u\in \mathcal{S}(\mathbb{R}^{d})$, then $\reallywidehat{u}\in \mathcal{S}(\mathbb{R}^{d}) $ and therefore $a(x,\xi)\reallywidehat{u}(\xi)\in \mathcal{S}(\mathbb{R}^{d})$ for every $x\in\mathbb{R}^{d}$.  As a result, the integral \eqref{op} exists and is well-defined. 

Moreover, using Fubini's theorem, we can calculate the formal adjoint of $\p(a)u(x)$
\begin{align*}
\left(\p(a)u, v\right)_{L^{2}(\mathbb{R}^{d})}&=\frac{1}{(2\pi)^d}\int \int e^{ix\cdot \xi}a(x,\xi)\reallywidehat{u}(\xi) d\xi \overline{v(x)} \ dx \\
&=\frac{1}{(2\pi)^d}\int \int e^{ix\cdot \xi}a(x,\xi)\overline{v(x)} \ dx\reallywidehat{u}(\xi) d\xi \\ 
&=\frac{1}{(2\pi)^d}\int  \reallywidehat{u}(\xi)  \overline{\int e^{-ix\cdot \xi} \overline{a(x,\xi)}v(x) \ dx} d\xi. 
\end{align*}
Since $v,\reallywidehat{u}\in\mathcal{S}(\mathbb{R}^{d})$, one can check that $e^{ix\cdot\xi}a(x,\xi)\reallywidehat{u}(\xi)v(x)\in L^{1}(\mathbb{R}^{d}\times\mathbb{R}^{d}).$ Using the fact that $\left(\reallywidehat{u}, v\right)_{L^{2}}=\left(u,\widehat{v}\right)_{L^{2}}$ we obtain that
\begin{equation}\label{op:adjoint}
\p(a)^{\star}v(x)=\frac{1}{(2\pi)^d}\int\int e^{i(x-y)\cdot \xi} \overline{a(y,\xi)}v(y) \ dy d\xi. 
\end{equation}
We refer the interested reader to the complete monograph \cite{Taylor-1981} for further details.

It will be also convenient when estimating the norms of pseudo-differential operators \eqref{op} to define the following semi-norms
\begin{defi}
   Given $s\in\mathbb{R}_+\setminus\N$ and $l\in \N$, we define the following family of semi-norms in the symbol class $S^m(s)$:

    $$\|a\|_{m,s,l}=\sup_{|\gamma|\leq l}\sup_{\xi\in\R^n}(1+|\xi|)^{|\gamma|-m}\|\partial_\xi^\gamma a(\cdot,\xi)\|_{C^s}.$$
\end{defi}

Next, we collect an important property regarding the boundednesss of a pseudo-differential operator acting on Besov spaces. The result can be found in \cite[Lemma 4.5]{Jurgen-87}. However, we include a more detailed and complete version of the proof in Appendix \ref{appendix1}.

\begin{teor}\label{boundedness:Besov}
    Let $a\in S^m(s)$, then $\p(a)$ defined in \eqref{op} extends to a bounded operator 
    \begin{equation}
        \p(a):B^{m-s}_{1,1}(\mathbb{R}^{d})\longrightarrow B^{-s}_{1,1}(\mathbb{R}^{d}).
    \end{equation}
    More precisely, we have that
        \begin{equation}
         \|\p(a)\|_{\mathcal{L}(B^{m-s}_{1,1},\,B^{-s}_{1,1})}\leq C\|a\|_{m,s,2|\gamma|}, \quad 
        \end{equation}
 for all multi-index $\gamma$ with $|\gamma|>d$  
\end{teor}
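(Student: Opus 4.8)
The statement is a mapping property for a pseudo-differential operator with a symbol of limited H\"older regularity, asserting boundedness $\p(a):B^{m-s}_{1,1}\to B^{-s}_{1,1}$ with the quantitative bound in terms of the semi-norm $\|a\|_{m,s,2|\gamma|}$. The natural route is to estimate the ``pieces'' $\Delta_j \p(a) \Delta_k u$ in $L^1$ and then reassemble the Besov norms; by the definition \eqref{besov} of $B^{s}_{p,q}$ with $p=q=1$, it suffices to control $\sum_j 2^{-js}\|\Delta_j \p(a) u\|_{L^1}$ by $C\|a\|_{m,s,2|\gamma|}\sum_k 2^{k(m-s)}\|\Delta_k u\|_{L^1}$. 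First I would write $u=\sum_k \Delta_k u$ and decompose $\p(a) = \sum_k \p(a)\Delta_k$, so that the kernel of $\Delta_j \p(a) \Delta_k$ can be written explicitly via \eqref{op}: it is an oscillatory integral $K_{j,k}(x,y)=\frac{1}{(2\pi)^d}\int e^{i(x-y)\cdot\xi}\,(\text{cutoffs})(\xi)\,a(x,\xi)\,d\xi$ after inserting the Littlewood-Paley multipliers. The key is a pointwise kernel bound of the form $|K_{j,k}(x,y)|\lesssim \|a\|_{m,s,2|\gamma|}\,2^{kd}2^{km}(1+2^k|x-y|)^{-2|\gamma|}$ when $|j-k|\le N_0$ for some fixed $N_0$, together with almost-orthogonality: $\Delta_j \p(a)\Delta_k$ is negligible (gains arbitrary powers of $2^{-|j-k|}$) when $|j-k|$ is large, because the $x$-frequencies produced by $a(x,\cdot)$ that are relevant live in a bounded-regularity range and one can trade smoothness of the cutoff $\varphi$ against the $C^s$-regularity of $a$ in $x$.

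The core technical step is the kernel estimate. For fixed $\xi$ in the support region (size $\sim 2^k$), one integrates by parts in $\xi$ using the identity $e^{i(x-y)\cdot\xi}=|x-y|^{-2}(-i)(x-y)\cdot\nabla_\xi e^{i(x-y)\cdot\xi}$ repeatedly, say $|\gamma|$ times, picking up $\partial_\xi^\gamma a(x,\xi)$; each derivative in $\xi$ improves the decay by a factor $(1+|\xi|)^{-1}$ by \eqref{symbol}, and the measure of the $\xi$-support is $\sim 2^{kd}$. This yields the claimed bound with the factor $(1+2^k|x-y|)^{-|\gamma|}$; to reach $(1+2^k|x-y|)^{-2|\gamma|}$ one either iterates the integration by parts further or, more cleanly, exploits that the relevant power $2|\gamma|$ with $|\gamma|>d$ guarantees the kernel is integrable in $y$ uniformly in $x$ (and in $x$ uniformly in $y$), giving $\sup_x\int |K_{j,k}(x,y)|\,dy \lesssim \|a\|_{m,s,2|\gamma|}\,2^{km}$ — this is precisely a Schur test. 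Then $\|\Delta_j \p(a)\Delta_k u\|_{L^1}\lesssim \|a\|_{m,s,2|\gamma|}\,2^{km}\,\|\Delta_k u\|_{L^1}$ for $|j-k|\le N_0$, and the almost-orthogonality handles the off-diagonal terms; I would isolate the $L^1\to L^1$ Schur bound as the main lemma and treat the $C^s$-in-$x$ regularity of $a$ only where it is genuinely needed, namely to justify that the cutoffs $S_{k-N_0}$ vs.\ $\Delta_j$ interaction is controlled (the ``paraproduct'' bookkeeping).

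Finally I would assemble: $2^{-js}\|\Delta_j \p(a)u\|_{L^1}\le \sum_{k:\,|j-k|\le N_0} 2^{-js}\|\Delta_j\p(a)\Delta_k u\|_{L^1} + (\text{rapidly decaying tail}) \lesssim \|a\|_{m,s,2|\gamma|}\sum_{k:\,|j-k|\le N_0} 2^{-js}2^{km}\|\Delta_k u\|_{L^1}$, and summing over $j$ and using $2^{-js}2^{km}=2^{k(m-s)}2^{(k-j)s}$ with $|k-j|\le N_0$ gives $\sum_j 2^{-js}\|\Delta_j\p(a)u\|_{L^1}\lesssim \|a\|_{m,s,2|\gamma|}\sum_k 2^{k(m-s)}\|\Delta_k u\|_{L^1}$, which is exactly $\|\p(a)u\|_{B^{-s}_{1,1}}\lesssim \|a\|_{m,s,2|\gamma|}\|u\|_{B^{m-s}_{1,1}}$. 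A density argument (Schwartz functions are dense in $B^{m-s}_{1,1}$) upgrades this from a priori estimate to the claimed bounded extension. The step I expect to be the main obstacle is making the off-diagonal almost-orthogonality quantitative with only $C^s$-regularity of $a$ in $x$: unlike the smooth-symbol case one cannot integrate by parts in $x$ arbitrarily many times, so one must carefully exploit the low-frequency cutoff structure (the mismatch between the frequency $2^j$ selected by $\Delta_j$ on the left and the frequencies the symbol can inject) and accept only a finite — but sufficient, given $|\gamma|>d$ — power of $2^{-|j-k|}$, which is why the hypothesis is stated with the semi-norm of order $2|\gamma|$ rather than a fixed order.
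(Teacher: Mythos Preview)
Your decomposition $\sum_{j,k}\Delta_j\p(a)\Delta_k$ with a Schur-type kernel bound is a legitimate alternative to the paper's route, but the argument as written has a genuine gap precisely where you flag ``the main obstacle''. Two concrete issues:

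\medskip
\textbf{(1) The off-diagonal decay is not ``arbitrary powers''.} With $a(\cdot,\xi)\in C^{s}$ only, $\Delta_j\p(a)\Delta_k$ does \emph{not} gain arbitrary powers of $2^{-|j-k|}$; the best one can extract is a single factor $2^{-s\max(j,k)}$, coming from the fact that a frequency mismatch of size $2^{\max(j,k)}$ forces the $x$-frequency of $a$ to be $\sim 2^{\max(j,k)}$, and $\|\Delta^x_{\ell}a(\cdot,\xi)\|_\infty\lesssim 2^{-\ell s}$. Your kernel bound $|K_{j,k}(x,y)|\lesssim 2^{kd+km}(1+2^k|x-y|)^{-2|\gamma|}$ uses only the $\xi$-derivatives of $a$; it never sees the $C^{s}$-regularity in $x$, so by itself it cannot produce any $|j-k|$-decay at all. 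Relegating the off-diagonal piece to ``(rapidly decaying tail)'' is exactly the step that is unproven.

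\medskip
\textbf{(2) The role of $|\gamma|>d$ is misidentified.} The order $2|\gamma|$ in $\|a\|_{m,s,2|\gamma|}$ counts $\xi$-derivatives and is there solely to make the kernel integrable in the space variable (your Schur test). It has nothing to do with the off-diagonal decay in $|j-k|$; that decay is governed by $s$ alone.

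\medskip
What the paper does instead is decompose the \emph{symbol} in $x$-frequency: set $a_{jk}(x,\xi)=(\widecheck{\varphi}_j\!*_x a)(x,\xi)\,\varphi_k(\xi)$. The moment cancellation of $\widecheck{\varphi}_j$ combined with a Taylor expansion of $a(\cdot,\xi)$ gives $\|\widecheck{\varphi}_j\!*_x\partial_\xi^\beta a(\cdot,\xi)\|_\infty\lesssim 2^{-js}\|\partial_\xi^\beta a(\cdot,\xi)\|_{C^s}$, which is where the exponent $s$ enters. One then splits $a=a_1+a_2$ with $a_1=\sum_k\sum_{j\le k-5}a_{jk}$ (low $x$-frequency of the symbol: here $\p(b_{1k})u_k$ has Fourier support in an annulus $\sim 2^k$, so the Besov sum is immediate) and $a_2=\sum_k\sum_{j\ge k-4}a_{jk}$ (high $x$-frequency: the $2^{-js}$ gain sums in $j$ to $2^{-ks}$, and one concludes via $\|\p(a_2)u\|_{L^1}\lesssim\sum_k 2^{k(m-s)}\|u_k\|_{L^1}$ together with $L^1\hookrightarrow B^{-s}_{1,1}$). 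Your $\Delta_j\p(a)\Delta_k$ scheme can be made to work, but only after inserting this same $x$-frequency decomposition of $a$; without it the off-diagonal control is simply missing.
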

\begin{coro}
Let $a\in S^{m}(s)$, then $\p(a)^{\star}$ defined in \eqref{op:adjoint}
extends to a bounded operator
\begin{equation}
\p(a)^{\star}:B^{s}_{\infty,\infty}\longrightarrow B^{s-m}_{\infty,\infty}.
\end{equation}
Moreover, 
\[ \|\p(a)^{\star}\|_{\mathcal{L}(B^{s}_{\infty,\infty}, B^{s-m}_{\infty,\infty})}\leq C\|a\|_{m,s,2|\gamma|}.\]
\end{coro}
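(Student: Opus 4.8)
The corollary is the dual statement of Theorem \ref{boundedness:Besov}, so the plan is to deduce it purely by duality. The starting observation is that for $a\in S^m(s)$, the conjugate symbol $\overline{a}$ (or more precisely the symbol whose pseudo-differential operator realizes the adjoint) also lies in $S^m(s)$ with the same semi-norm bounds, since complex conjugation commutes with $\partial_\xi^\gamma$ and preserves $C^s$ norms: $\|\partial_\xi^\gamma \overline{a}(\cdot,\xi)\|_{C^s}=\|\partial_\xi^\gamma a(\cdot,\xi)\|_{C^s}$. Hence Theorem \ref{boundedness:Besov} applies to $\overline{a}$ as well, giving $\p(\overline{a}):B^{m-s}_{1,1}\to B^{-s}_{1,1}$ bounded with $\|\p(\overline{a})\|_{\mathcal{L}(B^{m-s}_{1,1},B^{-s}_{1,1})}\leq C\|a\|_{m,s,2|\gamma|}$.

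Next I would identify $\p(a)^\star$ with (a conjugate of) $\p(\overline{a})$ via the formula \eqref{op:adjoint}: reading off that expression, $\p(a)^\star v(x)=\frac{1}{(2\pi)^d}\int\int e^{i(x-y)\cdot\xi}\overline{a(y,\xi)}v(y)\,dy\,d\xi$, which is exactly the pseudo-differential operator with symbol $\overline{a(x,\xi)}$ applied to $v$ up to the standard bookkeeping (for real-valued test functions it is literally $\p(\overline{a})v$; in general the $L^2$-adjoint relation $(\p(a)u,v)_{L^2}=(u,\p(a)^\star v)_{L^2}$ is what we use, and conjugation does not affect Besov norms since $\|\overline{w}\|_{B^s_{p,q}}=\|w\|_{B^s_{p,q}}$). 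Then the duality pairing identity $(B^{m-s}_{1,1})'=B^{s-m}_{\infty,\infty}$ and $(B^{-s}_{1,1})'=B^{s}_{\infty,\infty}$ recalled in Subsection \ref{subsec:21} lets me transpose: if $T:=\p(\overline{a}):B^{m-s}_{1,1}\to B^{-s}_{1,1}$ is bounded, then its Banach-space adjoint $T^\star:(B^{-s}_{1,1})'\to (B^{m-s}_{1,1})'$, i.e.\ $T^\star:B^{s}_{\infty,\infty}\to B^{s-m}_{\infty,\infty}$, is bounded with the same operator norm. Since $\p(a)^\star$ coincides with $T^\star$ on the relevant spaces, this yields $\|\p(a)^\star\|_{\mathcal{L}(B^s_{\infty,\infty},B^{s-m}_{\infty,\infty})}\leq \|\p(\overline{a})\|_{\mathcal{L}(B^{m-s}_{1,1},B^{-s}_{1,1})}\leq C\|a\|_{m,s,2|\gamma|}$, which is the claim.

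The only genuine point requiring care — and the step I expect to be the main obstacle — is justifying that the operator defined by the oscillatory integral \eqref{op:adjoint} really is the Banach-space transpose of $\p(\overline{a})$ on the stated Besov spaces, rather than merely the formal $L^2$-adjoint. Concretely, one needs to check that the $L^2$-pairing $(\p(a)u,v)_{L^2}=(u,\p(a)^\star v)_{L^2}$ extends by density from $\mathcal{S}(\R^d)$ to the duality pairing between $B^{m-s}_{1,1}$ and $B^{s-m}_{\infty,\infty}$ (and likewise for the target spaces), using that Schwartz functions are dense in $B^{m-s}_{1,1}$ (true since the second index is finite) and that the $B^s_{\infty,\infty}$–$B^{-s}_{1,1}$ pairing is the continuous extension of the $L^2$ inner product. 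This is standard but deserves a sentence or two; once it is in place the rest is a one-line duality argument, and the semi-norm bound is inherited verbatim from Theorem \ref{boundedness:Besov} because $\|\overline{a}\|_{m,s,l}=\|a\|_{m,s,l}$ for every $l$.
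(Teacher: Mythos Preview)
Your proposal is correct and matches the paper's approach: the paper states this corollary without proof, treating it as the immediate duality consequence of Theorem~\ref{boundedness:Besov} via the identification $(B^{-s}_{1,1})'=B^{s}_{\infty,\infty}$ and $(B^{m-s}_{1,1})'=B^{s-m}_{\infty,\infty}$ recalled in Subsection~\ref{subsec:21}. One minor simplification: the detour through $\overline{a}$ is unnecessary---you can transpose $\p(a)$ itself and then observe that $\p(a)^{\star}v=\overline{\p(a)^{t}\,\overline{v}}$ (the standard relation between the sesquilinear $L^2$-adjoint and the bilinear Banach transpose), with conjugation being a Besov isometry; this avoids having to track whether $\p(a)^{\star}$ is exactly the transpose of $\p(\overline{a})$ or differs by a reflection $\xi\mapsto-\xi$ (it does, though this is harmless since $S^m(s)$ is invariant under that reflection with identical semi-norms).
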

Therefore, recalling that $B^{s}_{\infty,\infty}(\R^d)\sim C^{s}(\R^d)$, we have shown that for symbols $a\in S^{m}(s)$
\[ \|\p(a)^{\star}\|_{\mathcal{L}(C^{s}(\R^{d}), C^{s-m}(\R^{d}))}\leq C\|a\|_{m,s,2|\gamma|}. \]

\subsection{Transferring properties to periodic pseudo-differential operators}\label{subsec:23}
It is worth to notice that the presented objects and results in the previous subsections are defined on the whole Euclidean space $\R^d$. In particular, we have shown that the adjoint of the pseudo-differential operator given by a symbol $a\in S^m(s)$ is bounded from $C^{s}(\R^d)$ to $C^{s-m}(\R^d)$. However, as discussed in Section \ref{section1:intro} and as stated in the main result of this work (cf. Theorem \ref{mainteor}), this is not directly well-suited to our geometrical setting. Indeed, we need to work in periodic domains, namely, in the periodic torus $\T^{d}$. For instance, we have to define pseudo-differential operators $\p(a)^{\star}u$ for $u\in C^s(\T^d)$. Moreover, the involved operators we will deal with are given in terms of formal expressions involving Fourier series. In this section, we will prove that these formal expressions can be rigorously identified with the adjoint of a pseudo-differential operator as long as the space of functions we are working with is sufficiently smooth.

In the following, let us show that pseudo-differential operators map periodic functions into periodic function as along as the symbol $a(x,\xi)$ is also periodic in the $x$ variable.

\begin{lema}\label{periodico}Let $s\notin \mathbb{N}$, and $v\in C^s$ be periodic. Assume that $a\in S^m(s)$ is periodic in the $x$ variable with the same period as $v$. Then $\p(a)^{\star}v$ is also periodic, and has the same period as $v$.
\end{lema}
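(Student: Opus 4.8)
The plan is to work directly from the formula \eqref{op:adjoint} for the adjoint, write out $\p(a)^{\star}v(x+\omega)$ for a period vector $\omega$ of $v$, and recover $\p(a)^{\star}v(x)$ after a change of variables. The only subtlety is that the integrals in \eqref{op:adjoint} are oscillatory and do not converge absolutely, so we cannot naively change variables and commute limits; we must first justify the manipulations on a dense subclass and then pass to the limit using the boundedness already established.

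\textbf{Step 1: reduce to Schwartz data.} First I would recall that, by the corollary to Theorem \ref{boundedness:Besov}, $\p(a)^{\star}$ is bounded from $C^{s}(\R^d)\sim B^{s}_{\infty,\infty}$ to $C^{s-m}(\R^d)$. Since $v\in C^s$ is periodic, it suffices to prove the periodicity identity on a dense subset and then use continuity of $\p(a)^{\star}$ together with the fact that the translation operator $\tau_\omega$ (translation by the period $\omega$) is an isometry on $C^s$: if $\p(a)^{\star}(\tau_\omega u)=\tau_\omega(\p(a)^{\star}u)$ holds for all $u$ in a dense set and both sides are continuous in $u$, it holds for $v$. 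Actually a cleaner route avoids density: for $v\in C^s$ one has $\widehat{v}$ well-defined as a tempered distribution and \eqref{op:adjoint} must be interpreted as an oscillatory integral, so I would instead test against a Schwartz function $\phi$ and move the translation onto $\phi$, using the identity $\left(\p(a)^{\star}v,\phi\right)=\left(v,\p(a)\phi\right)$ (which is \eqref{op:adjoint} read as a duality pairing, valid because $\p(a)\phi\in\mathcal{S}$ when $\phi\in\mathcal{S}$ and $a\in S^m(s)$).

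\textbf{Step 2: the translation computation.} With $\omega$ a period of $v$ (and of $a(\cdot,\xi)$), compute for $\phi\in\mathcal{S}$:
\begin{align*}
\left(\tau_{-\omega}\p(a)^{\star}v,\phi\right)&=\left(\p(a)^{\star}v,\tau_{\omega}\phi\right)=\left(v,\p(a)(\tau_{\omega}\phi)\right).
\end{align*}
Now the key computation is that $\p(a)(\tau_\omega\phi)=\tau_\omega(\p(a)\phi)$, which follows from \eqref{op}: writing $\widehat{\tau_\omega\phi}(\xi)=e^{-i\omega\cdot\xi}\widehat{\phi}(\xi)$,
\begin{align*}
\p(a)(\tau_\omega\phi)(x)&=\frac{1}{(2\pi)^d}\int e^{ix\cdot\xi}a(x,\xi)e^{-i\omega\cdot\xi}\widehat{\phi}(\xi)\,d\xi=\frac{1}{(2\pi)^d}\int e^{i(x-\omega)\cdot\xi}a(x,\xi)\widehat{\phi}(\xi)\,d\xi,
\end{align*}
and since $a(x,\xi)=a(x-\omega,\xi)$ by periodicity of the symbol, this equals $\p(a)\phi(x-\omega)=\tau_\omega(\p(a)\phi)(x)$. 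Plugging back, $\left(v,\tau_\omega(\p(a)\phi)\right)=\left(\tau_{-\omega}v,\p(a)\phi\right)=\left(v,\p(a)\phi\right)=\left(\p(a)^{\star}v,\phi\right)$, where the middle equality uses $\tau_{-\omega}v=v$. Since $\phi\in\mathcal{S}$ was arbitrary and $\p(a)^{\star}v\in C^{s-m}\subset\mathcal{S}'$ is determined by its action on $\mathcal{S}$, we conclude $\tau_{-\omega}\p(a)^{\star}v=\p(a)^{\star}v$, i.e. $\p(a)^{\star}v$ has period $\omega$.

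\textbf{Main obstacle.} The main point requiring care is the justification that \eqref{op:adjoint} really does define a continuous operator whose action is captured by the duality pairing $\left(v,\p(a)\phi\right)$ for $\phi\in\mathcal{S}$ — i.e. that $\p(a)$ maps $\mathcal{S}$ to $\mathcal{S}$ (or at least to a space paired with $C^{s-m}$) when $a$ has only limited regularity $C^s$ in $x$. This is exactly where the hypothesis $s\notin\N$ and the symbol estimates \eqref{symbol} enter, and it is implicitly supplied by Theorem \ref{boundedness:Besov} and its corollary; once that mapping property is in hand, the periodicity argument above is a short formal computation with no further analytic difficulty. The remaining bookkeeping — that $\widehat{\tau_\omega\phi}(\xi)=e^{-i\omega\cdot\xi}\widehat{\phi}(\xi)$ and that periodicity of $a$ in $x$ is preserved under the integral — is routine.
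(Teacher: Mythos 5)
Your proof follows essentially the same route as the paper's: reduce to the duality pairing $\langle \p(a)^{\star}v,u\rangle=\langle v,\p(a)u\rangle$ against $u\in\mathcal{S}$, show that $\p(a)$ commutes with translation by a period using periodicity of the symbol in $x$, and finish with periodicity of $v$. One small slip in Step 1: $\p(a)\phi$ does \emph{not} lie in $\mathcal{S}(\R^d)$ when $a$ is merely $C^s$ in $x$ (the output inherits the limited smoothness of the symbol), so that parenthetical justification is wrong; you catch this yourself in the final paragraph, and the paper's actual fix is to use the $B^{s}_{1,1}$--$B^{-s}_{\infty,\infty}$ dual pairing formula via Littlewood-Paley blocks, together with the observation that $\p(a)u$ decays faster than any polynomial, to make the pairing $\langle v,\p(a)u\rangle$ rigorous. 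Apart from supplying that detail, your argument matches the paper's.
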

\begin{proof}[Proof of Lemma \ref{periodico}]
We need to check that
$$\int_{\R^d} \p(a)^{\star}v(x+h)\cdot u(x) dx=\int_{\R^d} \p(a)^{\star}v(x)\cdot u(x)dx,$$
for $h$ the period of $v$ and $u\in\mathcal{S}(\R^n)$. First of all, since $\p(a)^{\star}v\in C^s(\R^d)\hookrightarrow \mathcal{S}'(\R^d)$, we find that
$$S_j\left( \p(a)^{\star}v\right)\longrightarrow \p(a)^{\star}v \text{ in }\mathcal{S}'.$$
Therefore, using the dual pairing formula for $B^{s}_{1,1}$ and $B^{-s}_{\infty,\infty}$ (cf. \cite[Remark 2.2.8]{Chemin-Danchin-Bahouri-2011}) given by
 \[v(u)=\sum_{j=-1}^\infty\,\sum_{|j-j'|\leq 1}\langle \Delta_{j'}v,\Delta_{j}u\rangle,\]
and the definition of adjoint operator yields
\begin{align*}
\left\langle \p(a)^{\star}v,u\right\rangle=\sum_{j=-1}^\infty \left\langle \Delta_j\left(\p(a)^{\star}v\right),u\right\rangle&=\sum_{j=-1}^\infty \sum_{|j'-j|\leq 1}\left\langle \Delta_j\left(\p(a)^{\star}v\right),\Delta _{j'}u\right\rangle \\
&=\sum_{j=-1}^\infty \langle \Delta_j v,\p(a)u\rangle.
\end{align*}
Finally since $\sum_{j=-1}^\infty \Delta_j v$ converges uniformly to $v$, we obtain that
\[ \left\langle \p(a)^{\star}v,u\right\rangle=\sum_{j=-1}^\infty \sum_{|j'-j|\leq 1}\left\langle \Delta_j v,\Delta _{j'}\left(\p(a)u\right)\right\rangle=\sum_{j=-1}^\infty \langle \Delta_j v,\p(a)u\rangle=\langle v,\p(a)u\rangle,\]
where we have used the fact that $\p(a)u$ decays faster that any polynomial. Consequently for $u\in\mathcal{S}(\mathbb{R}^{d})$, 
\begin{align*}\int_{\R^d} \p(a)^{\star}v(x+h)\cdot u(x) dx&=\int_{\R^d} v(x)\cdot\p(a)\left(u(x-h)\right)dx\\
&=\frac{1}{(2\pi)^d}\int_{\R^d} v(x) \int_{\R^d} a(x-h,\xi)\reallywidehat{u}(\xi)e^{i(x-h)\cdot \xi}d\xi dx\\
&=\int_{\R^d} v(x)\cdot \left(\p(a)u\right)(x-h)dx\\
&=\int_{\R^d} \p(a)^{\star}v(x)\cdot u(x)dx,
\end{align*}
as desired.
\end{proof}

In the following, let us present the main result of this subsection which relates the regularity of the adjoint of a pseudo-differential operator acting on periodic functions and its precise Fourier coefficient expression. The proof relies on a lemma which relates the regularity of a function with the decay of its Fourier coefficients using Wiener spaces. For the sake of completeness, we recall the definition of Wiener spaces and the aforementioned lemma.

\begin{defi} Denote by $A(\T^d)$ the space of continuous functions such that 
\begin{equation*}
  \|u\|_{A(\T^{d})}=\sum_{\xi\in\Z^d}|\reallywidehat{u}(\xi)|<\infty.  
\end{equation*}
The space $(A(\T^d),\|\cdot\|_{A(\T^d)})$ usually known as Wiener algebra is a Banach space. 
\end{defi}
The following result can be found in \cite[Section~3.3.3]{Graf-2014}:
\begin{lema}\label{decay}
    Suppose that $u\in C^{\lfloor d/2\rfloor}(\T^d)$, and that all partial derivatives of order $\lfloor \frac{d}{2}\rfloor$ of $u$ are of class $C^\gamma$ with $\frac{d}{2}-\lfloor \frac{d}{2}\rfloor<\gamma<1$. Then, $u\in A(\T^d)$, and 
\[ \|u\|_{A(\T^d)}\leq |\reallywidehat{u}(0)|+C(n,\gamma)\sup_{|\alpha|=\lfloor \frac{d}{2}\rfloor}[\partial^\alpha u]_{C^\gamma(\T^d)}\leq C(d,\gamma)\|u\|_{C^{\lfloor n/2\rfloor,\gamma}}.\]
\end{lema}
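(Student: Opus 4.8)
Write $k=\lfloor d/2\rfloor$. The hypothesis $d/2-k<\gamma$ lets us fix once and for all an exponent $\gamma'$ with
\[ d/2-k<\gamma'<\gamma , \]
and from now on every constant may depend on $d,\gamma$ through this choice. The plan is to control $\|u\|_{A(\T^d)}=|\widehat u(0)|+\sum_{\xi\neq 0}|\widehat u(\xi)|$ by a Sobolev-type square sum. By Cauchy--Schwarz,
\[ \sum_{\xi\neq 0}|\widehat u(\xi)|\le\Big(\sum_{\xi\in\Z^d\setminus\{0\}}|\xi|^{-2(k+\gamma')}\Big)^{1/2}\Big(\sum_{\xi\neq 0}|\xi|^{2(k+\gamma')}|\widehat u(\xi)|^2\Big)^{1/2}, \]
and the whole point of requiring $\gamma'>d/2-k$ is that then $2(k+\gamma')>d$, so the first factor is finite (comparison with $\int_{|x|\ge 1}|x|^{-2(k+\gamma')}\,dx$). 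It therefore suffices to bound the second factor by $\sum_{|\alpha|=k}[\partial^\alpha u]_{C^\gamma(\T^d)}^2$.

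The core estimate is a statement about a single H\"older function: if $v\in C^\gamma(\T^d)$ with $0<\gamma<1$, then $\sum_{\xi\in\Z^d}|\xi|^{2\gamma'}|\widehat v(\xi)|^2\le C[v]_{C^\gamma(\T^d)}^2$. To prove it I would start from Parseval applied to the finite difference $v(\cdot+h)-v(\cdot)$, whose Fourier coefficients are $(e^{ih\cdot\xi}-1)\widehat v(\xi)$, together with $\|v(\cdot+h)-v\|_{L^2}\le C[v]_{C^\gamma}|h|^\gamma$, to get $\sum_\xi|e^{ih\cdot\xi}-1|^2|\widehat v(\xi)|^2\le C[v]_{C^\gamma}^2|h|^{2\gamma}$. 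Integrating over a ball $\{|h|<r\}$ and using the elementary lower bound $\int_{|h|<r}|e^{ih\cdot\xi}-1|^2\,dh\ge c_d\,r^d$ valid once $|\xi|\ge c_0/r$ (which follows because $\big|\int_{|h|<r}e^{ih\cdot\xi}\,dh\big|\le C_d r^{d-1}/|\xi|$ after one integration by parts in the variable parallel to $\xi$, hence is $\le\tfrac12|B_r|$ for $|\xi|r$ large) yields the tail bound $\sum_{|\xi|\ge R}|\widehat v(\xi)|^2\le C[v]_{C^\gamma}^2 R^{-2\gamma}$. Splitting into dyadic shells $2^{j-1}\le|\xi|<2^j$ and inserting this tail bound converts it into $\sum_\xi|\xi|^{2\gamma'}|\widehat v(\xi)|^2\le C[v]_{C^\gamma}^2\sum_{j\ge 1}2^{2(\gamma'-\gamma)j}<\infty$, the geometric series converging precisely because $\gamma'<\gamma$. (Alternatively one can shortcut this step by invoking $C^\gamma(\T^d)=B^\gamma_{\infty,\infty}(\T^d)$ from Subsection \ref{subsec:21} and the embeddings $B^\gamma_{\infty,\infty}\hookrightarrow B^\gamma_{2,\infty}\hookrightarrow B^{\gamma'}_{2,2}=H^{\gamma'}(\T^d)$, the first using $L^\infty\hookrightarrow L^2$ on a bounded domain.)

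Finally I would assemble the pieces. Since $\widehat{\partial^\beta u}(\xi)=(i\xi)^\beta\widehat u(\xi)$ and, by the multinomial theorem, $\sum_{|\beta|=k}\binom{k}{\beta}|\xi^\beta|^2=(\xi_1^2+\cdots+\xi_d^2)^k=|\xi|^{2k}$, one has the pointwise identity $|\xi|^{2(k+\gamma')}|\widehat u(\xi)|^2=\sum_{|\beta|=k}\binom{k}{\beta}|\xi|^{2\gamma'}|\widehat{\partial^\beta u}(\xi)|^2$. Summing over $\xi\neq 0$ and applying the single-function estimate to each $v=\partial^\beta u$ (which is of class $C^\gamma$ by hypothesis) gives $\sum_{\xi\neq 0}|\xi|^{2(k+\gamma')}|\widehat u(\xi)|^2\le C\sum_{|\alpha|=k}[\partial^\alpha u]_{C^\gamma(\T^d)}^2$; feeding this into the Cauchy--Schwarz bound above, and then adding back the zero mode $|\widehat u(0)|$ and using $[\partial^\alpha u]_{C^\gamma}\le\|u\|_{C^{k,\gamma}}$ (together with $|\widehat u(0)|\le(2\pi)^d\|u\|_\infty$), produces both asserted inequalities. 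The only genuinely delicate step is the single-function estimate, and within it the oscillatory lower bound on $\int_{|h|<r}|e^{ih\cdot\xi}-1|^2\,dh$ and the dyadic summation upgrading the tail bound to the weighted square sum; the rest is bookkeeping with Parseval and the multinomial identity.
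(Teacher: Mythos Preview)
Your argument is correct. Note that in the final paper this lemma is not proved but merely cited to Grafakos; the LaTeX source does contain a drafted proof inside a \texttt{comment} environment, and it is against that draft that a comparison makes sense.

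The drafted proof and yours share the same core ingredients (Parseval on finite differences, Cauchy--Schwarz, dyadic summation) but organise them differently. The paper performs the dyadic decomposition first, applies Cauchy--Schwarz on each shell $2^l\le|\xi|<2^{l+1}$, and then for each shell picks a \emph{single} translation $h_j=2^{-l-3}\pi e_j$ in the coordinate direction where $|\xi_j|$ is maximal, so that $|e^{ih_j\cdot\xi}-1|\ge c$ holds pointwise; this reduces immediately to Plancherel for $\partial_j^{k}u(\cdot+h_j)-\partial_j^{k}u$. You instead apply a single global weighted Cauchy--Schwarz, reduce via the multinomial identity $\sum_{|\beta|=k}\binom{k}{\beta}|\xi^\beta|^2=|\xi|^{2k}$ to a fractional-Sobolev estimate for each $\partial^\beta u$, and establish the latter by \emph{averaging} the Parseval identity over a ball of translations. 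The paper's route is slightly more elementary (no auxiliary exponent $\gamma'<\gamma$ is needed, and only pure derivatives $\partial_j^{k}$ appear), while yours is conceptually cleaner and, as you observe, can be shortcut entirely through the Besov embeddings $C^\gamma=B^\gamma_{\infty,\infty}\hookrightarrow B^{\gamma'}_{2,2}=H^{\gamma'}$ already recorded in Subsection~\ref{subsec:21}.
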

The main result of this subsection reads
\begin{teor}\label{solucion}
Let $k\geq1$, and $a\in S^{0}(k+\alpha)$ with $0<\alpha<1$, periodic on the first variable and $v\in C^{k,\alpha}(\T^2)$. Then, $\p(a)^{\star}v$ equals a $C^{k,\alpha}(\T^2)$ function with Fourier coefficients given by 
   \begin{equation}\label{adj:per:coef:four}
   \reallywidehat{\p(a)^{\star}v}(\xi)=\int_0^{2\pi}\int_0^{2\pi}v(r)\overline{a(r,\xi)}e^{-ix\cdot \xi} \,dr.
   \end{equation}  
\end{teor}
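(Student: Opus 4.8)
The goal is to show that the tempered distribution $\p(a)^\star v$ (which by the Corollary lives in $C^{k,\alpha}(\R^2)$ since $a\in S^0(k+\alpha)$) is periodic and to identify its Fourier coefficients with the explicit integral \eqref{adj:per:coef:four}. Periodicity is already in hand: by Lemma \ref{periodico}, since $a$ is periodic in $x$ with the same period as $v$, the function $\p(a)^\star v$ is periodic with that period, hence may be regarded as an element of $C^{k,\alpha}(\T^2)$. So the content of the theorem is the formula for $\reallywidehat{\p(a)^\star v}(\xi)$.

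\textbf{Main steps.} First, I would like to simply plug the Fourier basis function $e_\xi(x)=e^{ix\cdot\xi}$ into the defining pairing for the adjoint and compute: formally,
\[
\reallywidehat{\p(a)^\star v}(\xi)=\bigl\langle \p(a)^\star v,\, e_{-\xi}\bigr\rangle=\bigl\langle v,\, \p(a) e_{-\xi}\bigr\rangle,
\]
and since $\p(a)e^{ix\cdot\xi}=a(x,\xi)e^{ix\cdot\xi}$ (the symbol acts on a pure frequency by multiplication), this gives $\int_{\T^2} v(r)\,\overline{a(r,\xi)}\,e^{-ir\cdot\xi}\,dr$, which is exactly the claimed expression. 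The obstacle is that $e_{-\xi}\notin\mathcal S(\R^d)$, so the pairing $\langle \p(a)^\star v, e_{-\xi}\rangle$ is not literally one of the pairings used to define $\p(a)^\star$ on $\mathcal S$, and the identity $\langle \p(a)^\star v,u\rangle=\langle v,\p(a)u\rangle$ was only established for $u\in\mathcal S(\R^d)$. So the real work is an approximation/regularization argument that makes this rigorous on the torus.

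\textbf{Carrying out the rigorous argument.} Since $\p(a)^\star v\in C^{k,\alpha}(\T^2)$ with $k\geq 1\geq \lfloor 2/2\rfloor$, Lemma \ref{decay} applies: $\p(a)^\star v\in A(\T^2)$, so its Fourier series converges absolutely and uniformly, and in particular $\p(a)^\star v$ is genuinely recovered from the coefficients $c_\xi:=\reallywidehat{\p(a)^\star v}(\xi)$. To compute $c_\xi$, test $\p(a)^\star v$ (viewed in $\mathcal S'(\R^d)$) against a Schwartz function $u_\varepsilon$ that localizes near the computation of the $\xi$-th coefficient. Concretely, I would take a fixed $\phi\in C_c^\infty$ with $\int\phi=1$, set $\phi_\varepsilon(x)=\varepsilon^{d}\phi(\varepsilon x)$ (so $\reallywidehat{\phi_\varepsilon}(\eta)=\reallywidehat\phi(\eta/\varepsilon)$ concentrates near $0$), and test against $u_\varepsilon(x)=\phi_\varepsilon(x)e^{-ix\cdot\xi}$, which \emph{is} Schwartz. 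On one side, $\langle \p(a)^\star v, u_\varepsilon\rangle\to$ (up to the normalization $\tfrac1{(2\pi)^2}$ or a multiplicative constant fixed by the conventions in the excerpt) the value $c_\xi$ of the coefficient, because $\phi_\varepsilon\to$ constant in a suitable weak sense against the almost-periodic absolutely-convergent series $\p(a)^\star v(x)e^{-ix\cdot\xi}$, only the $\xi$-mode surviving the average. On the other side, $\langle \p(a)^\star v,u_\varepsilon\rangle=\langle v,\p(a)u_\varepsilon\rangle$, and since $\p(a)u_\varepsilon(x)=\tfrac1{(2\pi)^d}\int e^{ix\cdot\eta}a(x,\eta)\reallywidehat{u_\varepsilon}(\eta)\,d\eta$ with $\reallywidehat{u_\varepsilon}$ concentrating near $\eta=\xi$ and $a(x,\cdot)$ continuous, a dominated-convergence argument (using the symbol bound $\|a(\cdot,\eta)\|_{C^{k+\alpha}}\lesssim 1$ from $m=0$, so uniform integrable control) gives $\p(a)u_\varepsilon(x)\to a(x,\xi)e^{ix\cdot\xi}$ appropriately, hence $\langle v,\p(a)u_\varepsilon\rangle\to \int_{\T^2}v(r)\overline{a(r,\xi)}e^{-ir\cdot\xi}\,dr$. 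Matching the two limits yields \eqref{adj:per:coef:four}.

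\textbf{Expected main obstacle.} The delicate point is the interchange of limits on \emph{both} sides: justifying that testing against the concentrating family $u_\varepsilon$ extracts precisely the $\xi$-th Fourier coefficient of the periodic function $\p(a)^\star v$ (this is where absolute convergence from Lemma \ref{decay} is essential, to dominate the series and pass $\varepsilon\to0$ termwise), and symmetrically that $\p(a)u_\varepsilon\to a(\cdot,\xi)e_\xi$ in a topology strong enough to pair against $v\in C^{k,\alpha}$ while weak enough that the symbol's limited regularity in $x$ causes no trouble — here the $S^0(k+\alpha)$ bounds, together with the decay of $\reallywidehat{u_\varepsilon}$, provide the needed uniform control. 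Once these two convergences are in place the theorem follows immediately, and the bookkeeping of the $(2\pi)^{-d}$ normalization constants should be checked against the conventions fixed earlier in the paper.
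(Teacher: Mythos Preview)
Your strategy—test $\p(a)^\star v$ against spread-out cutoffs $u_\varepsilon=\phi_\varepsilon\,e^{-ix\cdot\xi}$ of the plane wave and let $\varepsilon\to0$—is genuinely different from the paper's: there one pairs the distributional Fourier transform with \emph{arbitrary} $u\in\mathcal S$, truncates $\int_{\R^2}$ to boxes $[-\pi m,\pi m]^2$, uses Plancherel on the periodic factor $v(\cdot)\overline{a(\cdot,\zeta)}$ together with Lemma~\ref{decay} to justify Fubini, and finally an oscillatory-integral integration-by-parts in $\zeta$ to gain decay in the spatial variable and pass $m\to\infty$. Your route is more direct and avoids the truncation/oscillatory-integral machinery, but there is a genuine gap in your treatment of the second side.

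With your scaling $\phi_\varepsilon(x)=\varepsilon^d\phi(\varepsilon x)$ one has $\widehat{\phi_\varepsilon}(\eta)=\widehat\phi(\eta/\varepsilon)$, which is bounded and effectively supported in $|\eta|\lesssim\varepsilon$ but has total mass $\int\widehat{\phi_\varepsilon}=(2\pi)^d\varepsilon^d\phi(0)\to0$; so $\widehat{u_\varepsilon}$ does \emph{not} approximate a Dirac mass, and it is simply false that $\p(a)u_\varepsilon(x)\to a(x,\xi)e^{ix\cdot\xi}$. In fact the change of variables $\eta=-\xi+\varepsilon\zeta$ gives $|\p(a)u_\varepsilon(x)|\le C\varepsilon^d\to0$ pointwise, so no dominated-convergence argument of the kind you sketch can work. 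The correct mechanism is the \emph{same} averaging you used on the first side: write $\p(a)u_\varepsilon(x)=a(x,-\xi)e^{-ix\cdot\xi}\phi_\varepsilon(x)+R_\varepsilon(x)$, where a first-order expansion of $a(x,\cdot)$ about $-\xi$ together with integration by parts in $\zeta$ (using only the $\partial_\xi^\gamma$-bounds from $a\in S^0$) gives $\|R_\varepsilon\|_{L^1(\R^2)}=O(\varepsilon)$. Then $\langle v,\p(a)u_\varepsilon\rangle=\int_{\R^2}\overline{v(x)}\,a(x,-\xi)e^{-ix\cdot\xi}\phi_\varepsilon(x)\,dx+o(1)$, and since $\overline v(\cdot)\,a(\cdot,-\xi)e^{-i(\cdot)\cdot\xi}\in C^{k,\alpha}(\T^2)\subset A(\T^2)$ by Lemma~\ref{decay}, this integral converges to its zeroth Fourier coefficient exactly as on your first side. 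After sorting out the conjugation and sign conventions this yields \eqref{adj:per:coef:four}. So your programme can be made to work, but both sides rely on ergodic averaging of a periodic function against the spreading weight $\phi_\varepsilon$, not on Fourier-side concentration producing a pointwise limit of $\p(a)u_\varepsilon$.
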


\begin{proof}[Proof of Theorem \ref{solucion}]
 To that purpose, take $u\in \mathcal{S}(\R^2)$. Then, by the same reasoning as in the proof of Lemma \ref{periodico}, we find that 
 \begin{align*}
     \left\langle\mathcal{F}\left[\p(a)^{\star}v\right],\,u \right\rangle=\left\langle v,\, \p(a)\reallywidehat{u}\right\rangle&=\f\int_{\R^2} \overline{v(r)}\left(\int_{\R^2} a(r,\zeta)\reallywidehat{\reallywidehat{u}}(\zeta)e^{ir\cdot\zeta}d\zeta\right) dr\\
     &=\lim_{m\rightarrow \infty}\int_{[-\pi m,\pi m]^2} \overline{v(r)}\left(\int_{\R^2} a(r,\zeta)u(-\zeta)e^{ir\cdot \zeta}d\zeta \right)dr\\
     &=\lim_{m\rightarrow \infty}\int_{\R^2} u(-\zeta)\overline{\left(\int_{[-\pi m,\pi m]^2}v(r) \overline{a(r,\zeta)}e^{-ir\cdot \zeta}dr\right) }d\zeta.
 \end{align*}
Here, $\mathcal{F}$ represents the Fourier transform in the sense of distributions.
 Due to Plancherel's theorem and the fact that $v(r)\overline{a(r,\xi)}$ is $2\pi$-periodic on the \textit{r} variable,
 \begin{align*}
     \int_{[-\pi m,\pi m]^2}v(r) \overline{a(r,\zeta)}e^{-ir\cdot \zeta}dr=&\sum_{\xi\in\Z^2}\f\left(\int_{\T^2}v(r)\overline{a(r,\zeta)}e^{-i \xi\cdot r}dr\right)\cdot \left(\int_{[-m\pi,m\pi]^2}e^{ir\cdot (\xi-\zeta)}dr\right).
 \end{align*}
Plugging it into the previous expression, 
 \begin{align*}
     \langle \p(a)^{\star}v,\, u\rangle &=\lim_{m\rightarrow \infty}\frac{1}{(2\pi)^2}\int_{\R^2}\sum_{\xi\in\Z^2}\left(\int_{[-\pi,\pi]^2}\overline{v(r)}a(r,\zeta)e^{i\xi\cdot r}dr\right)\left(\int_{[-\pi m,\pi m]^2}e^{i(\zeta-\xi)\cdot r}dr\right)u(-\zeta)d\zeta.
     \end{align*}
     We are now interested in taking the integral over $[-\pi m, \pi m]^2$ outside to be able to take the limit. We see that, for every $\zeta\in \R^2$,

     $$\overline{v(r)}a(r,\zeta)\in C^{k,\alpha}(\T^2)\,\text{ and } \,\|\overline{v}(\cdot)a(\cdot ,\zeta)\|_{C^{k,\alpha}(\T^2)}\leq C,$$
     with $C$ independent of $\zeta$, so by means of Lemma \ref{decay}, the Fourier coefficients
     \[c(\xi,\zeta)=\int \overline{v(r)} a(r,\zeta)e^{i\xi\cdot x} \ dx\in A(\mathbb{T}). \]
     Moreover, 
     \[ \norm{c(\xi,\zeta)}_{\ell^{1}_\xi}\leq C \norm{\overline{v(\cdot)}a(\cdot,\zeta)}_{k,\alpha}\leq C,\]
     with $C$ independent of $\zeta$. Therefore,
     \begin{equation*}
     \begin{split} \int_{\R^2}\sum_{\xi\in\Z^2}\int_{[-\pi m,\pi m]^2}&\left|\left(\int_{[-\pi,\pi]^2}\overline{v(r)}a(r,\zeta)e^{i\xi\cdot r}dr\right)e^{i(\zeta-\xi)\cdot x}u(\zeta)\right|\,dx  \ d\zeta\\
     &\leq C (2\pi m)^2 \int_{\R^2}|u(\zeta)|d\zeta <\infty.
     \end{split}
     \end{equation*}
Invoking Fubini's theorem, we can exchange the order of the $x$, the $\zeta$ and the $\xi$ integrals, where the latter is taken with respect to the counting measure. Hence,
\begin{equation}\label{limit:integral:exchange}
     \langle \p(a)^{\star}v,\, u\rangle=\lim_{m\longrightarrow \infty}\int_{[-\pi m,\pi m ]^2}\sum_{\xi\in\Z^2}\left(\int_{\R^2}\left(\int_{[-\pi,\pi]^2}v(r)\overline{a(r,\zeta)}e^{-i\xi\cdot r }dr\right)e^{i(\zeta-\xi)\cdot x }u(\zeta)d\zeta\right)dx.
 \end{equation}
 To conclude the proof we would like to check that the limit tends to the integral in the whole space, but it is not even clear whether such limit even exists. To show the existence of the limit, we have to provide a better decay for the integrand in the $x$ variable via an oscillatory integral type argument. More precisely, we have that
 \begin{align*}
     (1+|x|^2)&\sum_{\xi\in\Z^2}\left(\int_{\R^2}\left(\int_{[-\pi,\pi]^2}\overline{v(r)}a(r,\zeta)e^{i\xi\cdot r}dr\right)e^{i(-\zeta+\xi)\cdot x}u(\zeta)d\zeta\right)dx\\
     &=\sum_{\xi\in\Z^2}\left(\int_{\R^2}\left(\int_{[-\pi,\pi]^2}\overline{v(r)}a(r,\zeta)e^{i\xi\cdot r}dr\right)(1-\Delta_\zeta)e^{i(\zeta-\xi)\cdot x}u(\zeta)d\zeta\right)dx.
 \end{align*}
Using integration by parts combined with the fact that $u\in \mathcal{S}(\mathbb{R}^{2})$ (and therefore has very fast decay) we  get a combination of terms of the following type
 $$\sum_{\xi\in\Z^2}\left(\int_{\R^2}\left(\int_{[-\pi,\pi]^2}v(r)\partial^\alpha_\zeta\overline{a(r,\zeta)}e^{-i\xi\cdot r}dr\right)e^{i(\zeta-\xi)\cdot x}\partial^\beta_\zeta u(\zeta)d\zeta\right)dx.$$
Consequently, since $a\in S^0(1+\alpha)$,we can invoke Proposition \ref{decay} once again, to show that the resulting integral is finite. This fact, allows to take the limit in $m$ and also to exchange the $x$ integral and the sum in \eqref{limit:integral:exchange} to find that
\[ \langle \p(a)^{\star}v,\, u\rangle=\frac{1}{(2\pi)^2}\sum_{\xi\in\Z^2}\int_{\R^2}\left(\int_{\R^2}\left(\int_{[-\pi,\pi]^2}\overline{v(r)}a(r,\zeta)e^{i\xi\cdot r}dr\right)e^{i(\zeta-\xi)\cdot x}u(\zeta)d\zeta\right)dx. \]
Finally, notice that the function 
$$\zeta\mapsto u(-\zeta)\int_{[-\pi,\pi]^2}\overline{v(r)}a(r,\zeta)e^{i\xi\cdot r}dr$$
belongs to the Schwarz class and hence by the Fourier inversion formula, it follows that
$$\langle \p(a)^{\star}v,\, u\rangle = \sum_{\xi\in\Z^2}\left(\int_{[-\pi,\pi]^2}\overline{v(r)}a(r,\xi)e^{i\xi\cdot r}dr\right)u(-\xi).$$

Now, since that for any $v\in C(\T^2)$ and $u\in \mathcal{S}(\R^2)$, it holds that 

$$\langle v,\widehat{u}\rangle =\sum_{\xi\in\Z^2}\overline{\widehat{v}(\xi)}u(-\xi),$$
the result follows.
\end{proof}
\begin{obs}\label{observacionperiodica}
Theorem \ref{solucion} shows that if $a\in S^m(1+\alpha)$, we can define $\p(a)^{\star}v$ for $v\in C^{1,\alpha}(\T^2)$, and obtain formula \eqref{adj:per:coef:four} for its Fourier coefficients. One can notice that such expression only depends on the values of $a(x,\xi)$ for $\xi\in\Z^2$. This implies that $\p(a)^{\star}v=\p(b)^{\star}v$ for every other symbol $b\in S^{m}(1+\alpha)$ that coincides with $a(x,\xi)$ on $\xi\in\Z^2$.

In this article the situation in somehow reversed, i.e. we are given a certain operator $\textsf{T}$ acting on $C^{1,\alpha}(\T^2)$ that formally resembles the adjoint of a pseudodifferential operator with a symbol $\tilde{a}(x,\xi)$ that is \textit{a priori} only defined for $\xi\in\Z^2$. Thus, we can invoke Theorem \ref{solucion} if we extend the symbol $\widetilde{a}(x,\xi)$ to all $\R^2$ smoothly so that the extension $a(x,\xi)\in S^m(1+\alpha)$. Typically, there is one obvious extension of $\widetilde{a}(x,\xi)$ in such a way that the new symbol $a(x,\xi)$ is smooth outside the origin and satisfies 
\begin{equation}\label{constantes}
\|\partial_\xi^\gamma a(\cdot,\xi)\|_{C^{1,\alpha}}\leq C_{\gamma}|\xi|^{m-|\gamma|}.
\end{equation}
However, extending the symbol is such a way lacks the correct behavior at the origin. Nevertheless, taking a cut-off function $\varphi\in C^\infty_c(\R^2)$ such that $\phi\equiv 1$ on small neighborhood around the origin and $\phi\equiv 0$ outside a ball of radius $\frac{1}{2}$, the symbol
\[ S^{m}(1+\alpha)\ni b(x,\xi)=\varphi(0)\cdot \tilde{a}(x,0)+(1-\varphi(\xi))a(x,\xi)),\]
and extends $\tilde{a}(x,\xi)$. As a consequence, due to Theorem \ref{solucion}, the operator $\textsf{T}$ defines a bounded operator in $C^{1,\alpha}$ and its norm can be estimated by the constants $C_\alpha$ in \eqref{constantes}.
\end{obs}

\subsubsection{Classical Mikhlin-H\"ormander multipliers in $\R^{d}$ and $\T^{d}$}
A particular case of pseudodifferential operators are Mikhlin-H\"ormander multipliers. Mikhlin-H\"ormander multipliers of degree $m$ are ``pseudodifferential operators'' with symbols $a$ that only depend on $\xi$ and that satisfy a modified version of the decay estimates: 
$$|\partial^\gamma_\xi a(\xi)|\leq C_{\gamma}|\xi|^{m-|\gamma|}, \ \forall \gamma\in\N^{d}.$$

These can be defined for a large class of functions, and some multiplier theory give us boundedness in $L^2(\R^d)$, for instance. Note that, due to the Remark \ref{observacionperiodica}, Mihklin-H\"ormander multipliers are bounded from $C^{m+\alpha}(\T^d)$ to $C^{\alpha}(\T^d)$ and more in general  
$C^{k+m,\alpha}(\T^d)$ to $C^{k,\alpha}(\T^d)$ for $k,m\in\N$, cf. \cite{Taylor-1981}.

There are two prominent examples operators whose symbols are Mihklin-H\"ormander multipliers: the Hilbert transform $\mathcal{H}$ and the Riesz operator $\mathcal{R}_{j}$. The former is given by the principal value integral
\[\mathcal{H}f(x)=\mbox{p.v.} \frac{1}{\pi} \int_{\mathbb{R}} f(x-y) \  \frac{dy}{y},\]
and has the associated multiplier
\[ m(\xi)=-i\text{sgn}(\xi), \quad \xi\in\mathbb{R}.\]
The latter is also expressed by the principal value integral
\[ \mathcal{R}_{j}f(x)=\int_{\mathbb{R}^{d}} \frac{y_{j
}}{|y|^{d+1}}f(x-y) \  dy,\]
where $j=1,\ldots, d.$ The multiplier is given by
\[ m_{j}(\xi)=-i\frac{\xi_{j}}{|\xi|}. \]
We take as a convention that $m_{j}(0)=0$. 
The Riesz transform operator will appear throughout this work repeatedly. Moreover, as a consequence of the previous exposed arguments we infer that the Riesz transform is a bounded linear operator such that
\begin{equation}
\mathcal{R}_{j}:C^{k,\alpha} (\T^d)\to C^{k,\alpha} (\T^d).
\end{equation}
\subsection{H\"older estimates for the div-curl system and transport type problem}\label{subsec:24}
In this subsection, we present two auxiliary results regarding H\"older estimates for the two building blocks, the div-curl and transport problem, employed in order to apply the so called vorticity-transport method mentioned in Section \ref{section1:intro}. Although the two auxiliary results rely on well-known and classical ideas (cf. \cite{Giaquinta-2012, Gilbarg-Trudinger-2001}), we prefer to include them in this subsection to make the article as self-contained as possible.

\subsubsection{H\"older estimates for the div-curl system.} The first auxiliary result we present deals with the well-posedness and regularity of solutions to the div-curl system. More precisely, we show the following result:

\begin{prop}\label{wellposedness}
    Let $\Omega=\T^2\times [0,L]$, $j\in C^{1,\alpha}(\Omega,\R^{3})$,  $f\in C^{2,\alpha}(\partial\Omega)$, and $J_1$, $J_2\in \R$. Assume that $j$ is divergence free, and that that $f$ and $j_3$ satisfy the following compatibility condition
\begin{equation}\label{compati:divcurl:prop}
\int_{\partial\Omega_{-}}j_3=0\quad \text{and}\quad \int_{\partial\Omega_{-}}f=\int_{\partial\Omega_{+}}f.
\end{equation}

    Then, there exists a unique solution $W\in C^{2,\alpha}(\Omega,\mathbb{R}^{3})$ for the div-curl problem 

    \begin{equation}\label{divcurlotravez}
        \left\{\begin{array}{rl}
           \nabla\times W=j  & \text{in }\Omega, \\
            \nabla\cdot W=0 & \text{in }\Omega,\\
            W\cdot n=f& \text{on }\partial\Omega,\\
        \end{array}\right.
    \end{equation}
    with $\int_{\{x=0\}}Wd\vec{S}=J_1, \quad \int_{\{y=0\}}Wd\vec{S}=J_2$. Furthermore, there exists a constant $C>0$ such that 
\begin{equation}\label{estimate:div:curl}
\|W\|_{C^{2,\alpha}(\Omega)}\leq C\left( \|j\|_{C^{1,\alpha}(\Omega)}+\|f\|_{C^{2,\alpha}(\partial\Omega)}+|J_1|+|J_2|\right).
\end{equation}
\end{prop}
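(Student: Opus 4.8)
The plan is to combine a Helmholtz--Hodge type splitting with classical Schauder estimates for scalar elliptic problems on the slab $\Omega=\T^2\times[0,L]$. Throughout I would use that $\Omega$ retracts onto $\T^2$, so that a curl--free vector field on $\Omega$ equals, after subtracting a suitable combination of the constant fields $e_1,e_2$ (which are curl--free, divergence--free and tangent to $\partial\Omega$, i.e. the two harmonic fields alluded to after \eqref{topology:constraint}), the gradient of a periodic scalar. I would also use that, since $\nabla\cdot j=0$ and $\partial\Omega$ has no lateral part, the horizontal flux $z\mapsto\int_{\T^2}j_3(\cdot,\cdot,z)\,dxdy$ is constant in $z$ and equals $\int_{\partial\Omega_-}j_3$; hence the first identity in \eqref{compati:divcurl:prop} says exactly that this flux vanishes, and $\int_\Omega j_3=L\int_{\partial\Omega_-}j_3$.

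\emph{Uniqueness.} If $W$ solves \eqref{divcurlotravez} with $j=0$, $f=0$, $J_1=J_2=0$, then $\nabla\times W=0$ gives $W=\nabla\psi+c_1e_1+c_2e_2$ with $\psi$ periodic; $\nabla\cdot W=0$ gives $\Delta\psi=0$; $W\cdot n=0$ gives a vanishing normal derivative of $\psi$ on $\partial\Omega$, so $\psi$ is constant; and the two flux conditions give $2\pi L c_1=2\pi L c_2=0$. Hence $W=0$.

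\emph{Existence: inverting the curl.} First I would construct $V_0\in C^{2,\alpha}(\Omega)$ with $\nabla\times V_0=j$ and $\nabla\cdot V_0=0$. Solve, with periodic boundary conditions in $(x,y)$, the scalar problems $\Delta A_1=-j_1$, $\Delta A_2=-j_2$ with homogeneous Dirichlet data on $\T^2\times\{0,L\}$, and $\Delta A_3=-j_3$ with homogeneous Neumann data on $\T^2\times\{0,L\}$; the first two are uniquely solvable, and the third is solvable precisely because $\int_\Omega j_3=L\int_{\partial\Omega_-}j_3=0$ by \eqref{compati:divcurl:prop}. Boundary Schauder estimates (classical for these problems on the slab, cf. \cite{Gilbarg-Trudinger-2001}) give $A\in C^{3,\alpha}$ with $\|A\|_{C^{3,\alpha}}\le C\|j\|_{C^{1,\alpha}}$. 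The field $g=\nabla\cdot A$ is harmonic and vanishes on $\partial\Omega$ (the tangential derivatives of $A_1,A_2$ vanish there and $\partial_z A_3=0$), hence $g\equiv0$; therefore $V_0:=\nabla\times A$ satisfies $\nabla\times V_0=\nabla(\nabla\cdot A)-\Delta A=j$ and $\nabla\cdot V_0=0$, with $\|V_0\|_{C^{2,\alpha}}\le C\|j\|_{C^{1,\alpha}}$.

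\emph{Existence: adjusting the normal trace and the fluxes, and the main obstacle.} Now look for $W=V_0+\nabla\psi+c_1e_1+c_2e_2$. Since $e_i\cdot n=0$ on $\partial\Omega$ and $\nabla\cdot V_0=0$, the conditions $\nabla\cdot W=0$ and $W\cdot n=f$ reduce to the Neumann problem $\Delta\psi=0$ in $\Omega$, $\partial_n\psi=f-V_0\cdot n$ on $\partial\Omega$, whose solvability condition, via the divergence theorem and $\nabla\cdot V_0=0$, is exactly the second identity in \eqref{compati:divcurl:prop}; Schauder theory then gives $\psi\in C^{3,\alpha}$ (unique up to a constant) with $\|\nabla\psi\|_{C^{2,\alpha}}\le C(\|j\|_{C^{1,\alpha}}+\|f\|_{C^{2,\alpha}})$. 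The fluxes of $c_1e_1+c_2e_2$ through $\{x=0\}$ and $\{y=0\}$ being $2\pi L c_1$ and $2\pi L c_2$, the (already determined) fluxes of $V_0+\nabla\psi$ can be corrected to the prescribed $J_1,J_2$ by a unique $(c_1,c_2)$ with $|c_1|+|c_2|\le C(|J_1|+|J_2|+\|j\|_{C^{1,\alpha}}+\|f\|_{C^{2,\alpha}})$; since $\nabla\times(\nabla\psi+c_1e_1+c_2e_2)=0$, the resulting $W\in C^{2,\alpha}$ solves \eqref{divcurlotravez} and summing the three estimates gives \eqref{estimate:div:curl}. The only genuinely delicate point is the topology/compatibility bookkeeping: that the first identity in \eqref{compati:divcurl:prop} is exactly the solvability condition for the $A_3$ equation (equivalently, that $j$ lies in the range of the curl on this non-simply-connected domain), that the second is exactly the solvability condition for the Neumann problem for $\psi$, and that a curl--free field on $\Omega$ is a gradient plus a combination of $e_1,e_2$; the underlying elliptic estimates are entirely standard.
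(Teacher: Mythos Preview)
Your proof is correct and follows essentially the same strategy as the paper: both construct $W=\nabla\times A+\nabla\phi+c_1e_1+c_2e_2$, where $A$ solves $-\Delta A=j$ with Dirichlet data on $A_1,A_2$ and Neumann data on $A_3$ (so that $\nabla\cdot A=0$ by the maximum principle), $\phi$ solves the harmonic Neumann problem fixing the normal trace, and $c_1,c_2$ adjust the two fluxes. The compatibility conditions are used identically: the first in \eqref{compati:divcurl:prop} for the solvability of the $A_3$ Neumann problem (via constancy of $z\mapsto\int_{\T^2}j_3$), the second for the solvability of the $\phi$ Neumann problem. The only cosmetic differences are that the paper notes $(\nabla\times A)\cdot n=0$ on $\partial\Omega$ (so their $\phi$-problem has data $f$ rather than $f-V_0\cdot n$, which in fact coincide), and that for uniqueness the paper argues componentwise ($W_3$ harmonic with zero Dirichlet, then $W_1,W_2$ harmonic on $\T^2$) rather than invoking the Hodge decomposition of curl-free fields; both routes are equivalent.
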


\begin{proof}[Proof of Proposition \ref{wellposedness}]
The proof follows the lines in \cite[Proposition 3.11]{Alo-Velaz-2021}. However, we need to modify the argument accordingly to the three-dimensional setting. To solve the system \eqref{divcurlotravez} we first examine two complementary auxiliary problems, namely
\begin{equation}\label{potencial:1}
    \left\{\begin{array}{rl}
        \Delta Z=-j & \text{ in } \Omega, \\
        Z_1=0 & \text{ on }\partial \Omega, \\
        Z_2=0 & \text{ on }\partial \Omega,  \\
        \partial_3 Z_3=0 & \text{ on }\partial \Omega,
    \end{array}\right.
   \end{equation} 
  and
  \begin{equation}\label{potencial:2}
    \left\{\begin{array}{rl}
       \Delta \phi=0  & \text{ in } \Omega, \\
       \partial_\nu \phi=f &   \text{ on }\partial \Omega.
    \end{array}\right.
\end{equation}
Here, $Z=(Z_{1},Z_{2},Z_{3})$ is a vector, $\phi$ is a scalar function and $\nu$ is the outward unit normal vector to $\partial\Omega$ and $\partial_{\nu} \phi=\nabla \phi\cdot \nu$. Therefore, since $j\in C^{1,\alpha}(\Omega,\mathbb{R}^{3})$, the components 
$Z_{1},Z_{2}$ are the unique solutions to the classical Dirichlet problem. Moreover, by classical H\"older estimates it is well-known that
\begin{equation}\label{est:z1z2:dirichlet}
\|Z_1\|_{C^{3,\alpha}(\Omega)}\leq C\|j\|_{C^{1,\alpha}(\Omega)}, \quad \|Z_2\|_{C^{3,\alpha}(\Omega)}\leq C\|j\|_{C^{1,\alpha}(\Omega)}.
\end{equation}
On the other hand, due to Schauder theory for the Neumann problem and the compatibility condition \eqref{compati:divcurl:prop} there exists a unique zero mean solution $\phi$ to \eqref{potencial:2}. Moreover, we have that
\begin{equation}\label{est:phi:neumann}
\|\phi\|_{C^{3,\alpha}(\Omega)}\leq C\|f\|_{C^{2,\alpha}(\partial\Omega)}.
\end{equation}
We are left with the existence and uniqueness of the component $Z_{3}$ which solves a Neumann type problem. Therefore, to ensure the existence and uniqueness we just need to show that
\[ \int_{\Omega} j_{3} =0.\]
Indeed, we find that
\[\frac{d}{dz}\int_0^{2\pi}\int_0^{2\pi}j_3\,dxdy=-\int_0^{2\pi}\int_0^{2\pi} \partial_1 j_1 dxdy-\int_0^{2\pi}\int_0^{2\pi} \partial_2 j_2\, dxdy=0,\]
where we have used the divergence free condition on $j$ and the periodicity of $j$ in the $x,y$ variables. Thus, using the compatibility condition  \eqref{compati:divcurl:prop} on $\partial\Omega_{-}$ we conclude the assertion. Consequently, a unique mean zero $Z_3$ solution exists. Moreover, similarly as before, it satisfies 
\begin{equation}\label{est:z3:neumann}
\|Z_3\|_{C^{3,\alpha}(\Omega)}\leq C \|j\|_{C^{1,\alpha}(\Omega)}.
\end{equation}
Combining \eqref{est:z1z2:dirichlet} and \eqref{est:z3:neumann} we infer that
\begin{equation}\label{est:z3}
\|Z\|_{C^{3,\alpha}(\Omega)}\leq C \|j\|_{C^{1,\alpha}(\Omega)}.
\end{equation}
To conclude we define $W$ as 
\[ W=\nabla\times Z+\nabla\phi+A_1 \vec{e}_1+A_2\vec{e}_2,\]
where $A_1$ and $A_2$ are given by
\begin{align}
A_{1}=\frac{J_{1}}{2\pi L}-\int_{0}^L\int_0^{2\pi}\nabla\times Z(0,y,z)\cdot e_1 dydz-\int_0^L\int_0^{2\pi}\nabla\phi (0,y,z)\cdot e_1 dydz ,\\
A_{2}=\frac{J_{2}}{2\pi L}-\int_{0}^L\int_0^{2\pi}\nabla\times Z(x,0,z)\cdot e_2 dxdz -\int_0^L\int_0^{2\pi}\nabla\phi (x,0,z)\cdot e_2 dxdz  . 
\end{align}
Therefore, it can be readily check that 
\[ \nabla\times W= \nabla\times\nabla\times Z=\nabla (\nabla\cdot Z)-\Delta Z=j \quad \mbox{in } \Omega. \]
Notice that $\nabla \cdot Z=0$, since $Z\in C^{3,\alpha}(\Omega,\R^{3})$ solves \eqref{potencial:1} and by hypothesis $\nabla\cdot j=0.$ Similarly, we have that $\nabla\cdot W=\Delta \phi=0$  in  $\Omega$ and $W\cdot n =(\nabla\times Z)\cdot n+\partial_{n} \phi=f$ on  $\partial\Omega.$ Moreover, by construction of the constants $A_{1},A_{2}$ the flux condition on $W$ are also satisfied. Furthermore, by means of \eqref{est:phi:neumann} and \eqref{est:z3} we find that
\begin{equation}\label{est:W}
\|W\|_{C^{2,\alpha}(\Omega)}\leq C\left( \|j\|_{C^{1,\alpha}(\Omega)}+\|f\|_{C^{2,\alpha}(\partial\Omega)}+|J_1|+|J_2|\right).
\end{equation}
To show the uniqueness of solutions, assume that $W\in C^{2,\alpha}(\Omega,\mathbb{R}^{3})$ solution to \eqref{divcurlotravez} with $f=j=J_{1}=J_{2}=0$. Then, we have that
\[\nabla\times \left(\nabla \times W\right)=-\Delta W=0 \mbox{ in } \Omega.\]
Since $W_{3}=0$ on $\partial\Omega$, we conclude that $W_{3}=0$ in $\Omega$. Hence,
\[\nabla\times W=(-\partial_3 W_2,\partial_3 W_{1}, \partial_1 W_2-\partial_2 W_1)=0,\]
i.e. $W_i=W_{i}(x,y)$ for $i=1,2$ are harmonic functions in $\T^2$. Thus, 
\[W=(A_{1},A_{2},0) \mbox { in } \Omega, \]
for $A_{1},A_{2}\in\mathbb{R}$. However, since $J_{1}=J_{2}=0$ we obtain that $W\equiv 0$ in  $\Omega$. Then uniqueness follows by linearity.
\end{proof}

\subsubsection{H\"older estimates for the transport problem}
 The second auxiliary result focuses on H\"older estimates for a transport problem. We present first an elementary result regarding the uniqueness and regularity for a system of ODEs, using the method of characteristics. We just sketch the proof, since it is a straightforward adaption to the three dimensional setting of the result in \cite[Proposition 3.7]{Alo-Velaz-2021}.

 \begin{prop}\label{prop:regularidad:odes} Let $v\in C^{2,\alpha}(\Omega;\R^2)$. Then, there exists a unique solution $\Psi_z\in C^{2,\alpha}(\Omega)$ solving the system of ODEs 
\begin{equation}\label{ode}
    \left\{\begin{array}{ll}
        \frac{\partial\Psi_z}{\partial z}= v(\Psi_z,z), \quad z\in[0,L]  \\
         \Psi_{0}=(x,y), \quad (x,y)\in \T^2
    \end{array}
    \right. .
\end{equation}
Moreover, for every $z$, $\Psi_z$ describes a $C^{2,\alpha}$ embedding, so that there exists a constant $C=C\left(\|v\|_{2,\alpha},L\right)$ such that 
\begin{equation}\label{reg:bound:ode}
\|\Psi_z\|_{2,\alpha}\leq C\qquad \|\Psi^{-1}_z\|_{2,\alpha}\leq C.
\end{equation}
\end{prop}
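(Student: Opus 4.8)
The plan is to solve the ODE system \eqref{ode} by the classical Picard--Lindel\"of (Cauchy--Lipschitz) theory, treating $z$ as the time variable and $(x,y)\in\T^2$ as the initial datum, and then to upgrade the basic existence/uniqueness to the stated $C^{2,\alpha}$ regularity by differentiating the flow equation with respect to the initial conditions.

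First I would observe that since $v\in C^{2,\alpha}(\Omega;\R^2)$, in particular $v$ is Lipschitz in the spatial variables uniformly in $z\in[0,L]$, and continuous in $z$. Hence for each fixed initial point $(x,y)\in\T^2$ the Picard--Lindel\"of theorem yields a unique solution $z\mapsto\Psi_z(x,y)$ defined on a maximal interval; since $v$ is bounded and $\T^2$ is compact (no blow-up or escape is possible on the compact manifold $\T^2$), the solution extends to all of $[0,L]$. Uniqueness for each initial datum is immediate from the Lipschitz bound via Gr\"onwall. This handles existence and uniqueness of $\Psi_z$.

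Next, for the regularity in the initial variables $(x,y)$, I would use the standard theorem on smooth/H\"older dependence of ODE solutions on parameters and initial data: since the right-hand side $v(\cdot,z)$ is $C^{2,\alpha}$ in its spatial argument, the flow map $(x,y)\mapsto\Psi_z(x,y)$ is $C^{2,\alpha}$ as well, with norms controlled in terms of $\|v\|_{2,\alpha}$ and $L$. Concretely, the first-order derivatives $D_{(x,y)}\Psi_z$ satisfy the linear variational equation $\partial_z(D\Psi_z)=(D_x v)(\Psi_z,z)\,D\Psi_z$ with $D\Psi_0=\mathrm{Id}$, which by Gr\"onwall gives $\|D\Psi_z\|_\infty\le e^{C\|v\|_{2,\alpha}L}$; iterating this to the second derivative gives an analogous equation with a source term quadratic in $D\Psi_z$, hence a bound on $\|D^2\Psi_z\|_\infty$; and the H\"older seminorm of $D^2\Psi_z$ is obtained by subtracting the integral equations for two distinct initial points and using the $C^\alpha$-modulus of $D^2_x v$. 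This establishes \eqref{reg:bound:ode} for $\Psi_z$. For the inverse, I would note that $\Psi_z$ is invertible because the flow is reversible: $\Psi_z^{-1}$ is exactly the time-$z$ flow of the ODE run backwards (the solution at time $0$ of $\partial_s\Phi_s=-v(\Phi_s,z-s)$-type reasoning, or more simply, $\Psi_z\circ\Psi_z^{-1}=\mathrm{Id}$ where $\Psi_z^{-1}=\widetilde\Psi_0$ for the flow started at time $z$). Applying the same variational estimates to the backward flow yields $\|\Psi_z^{-1}\|_{2,\alpha}\le C$, and the fact that $\det D\Psi_z$ stays bounded away from zero (again by Gr\"onwall, since $\partial_z\log\det D\Psi_z=\operatorname{tr}(D_xv)(\Psi_z,z)$ is bounded) confirms that $\Psi_z$ is a genuine $C^{2,\alpha}$ embedding for every $z$.

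The only genuinely delicate point — though it is routine and, as the authors say, a straightforward adaptation of \cite[Proposition 3.7]{Alo-Velaz-2021} — is the bookkeeping for the H\"older seminorm of the \emph{second} derivative $D^2\Psi_z$: one must carefully track that differentiating the variational system twice produces only terms involving $D^2_xv$ composed with $\Psi_z$ (which is merely $C^\alpha$, matching the target regularity) times polynomial expressions in $D\Psi_z$ that are already controlled in $C^\alpha$ by the previous steps, so that Gr\"onwall closes the estimate without any loss. Since everything is linear in the highest-order unknown with source terms of the right regularity, no obstruction arises, and the constant depends only on $\|v\|_{2,\alpha}$ and $L$ as claimed.
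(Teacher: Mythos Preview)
Your proof is correct and follows essentially the same strategy as the paper: Picard--Lindel\"of for existence and uniqueness, variational equations plus Gr\"onwall for the $C^{2,\alpha}$ bounds on $\Psi_z$, and a careful treatment of the H\"older seminorm of $D^2\Psi_z$ via the $C^\alpha$-modulus of $D^2_xv$ composed with the Lipschitz map $\Psi_z$.

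The one genuine difference is in the treatment of $\Psi_z^{-1}$. You observe that $\Psi_z^{-1}$ is itself the time-$z$ flow of the backward ODE $\partial_s\Phi=-v(\Phi,z-s)$, whose right-hand side has the same $C^{2,\alpha}$ norm as $v$, so the forward estimates apply verbatim to the inverse. The paper instead writes down an explicit matrix identity expressing $D\Psi_z^{-1}$ in terms of the adjugate of $D\Psi_z$ (evaluated at $\Psi_z^{-1}$) and a Liouville-type exponential factor, then uses this formula to bootstrap: first Lipschitz, then $C^{1,\alpha}$, then $C^{2,\alpha}$. Your backward-flow route is conceptually cleaner and avoids the composition bookkeeping; the paper's explicit formula is more hands-on but requires establishing Lipschitz continuity of $\Psi_z^{-1}$ before the compositions in the formula can be estimated. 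Both are standard and yield the same constants.
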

In order to show Proposition \ref{prop:regularidad:odes} we will repeatedly use the following calculus result
\begin{lema}\label{calculus:composition}
    Let $u:\Omega\rightarrow\Omega$ be a Lipschitz function, and $f:\Omega\rightarrow \R^d$ be a $C^\alpha(\Omega,\R^d)$ function. Then, $f\circ u\in C^{\alpha}(\Omega;\R^d)$. 
\end{lema}








\begin{proof}[Proof of Proposition \ref{prop:regularidad:odes}]
The existence and uniqueness of solutions to \eqref{ode} follows from the classical Picard-Lindel\"of theorem. Moreover, since $v$ is a bounded, periodic and globally Lipschitz, the solution exists for all $z\in [0,L]$. The the function $\Psi_z:\T^2\rightarrow \Omega$ defines a topological embedding for each $z$ (cf. \cite[Theorem 7.1]{Coddington-Levison-1955}). In only remains to check the regularity bounds \eqref{reg:bound:ode}. Direct differentiation shows that
    \begin{align*}
    \frac{\partial\Psi_z}{\partial z}&= v(\Psi_z,z),\\
    \frac{\partial^2\Psi_z}{\partial z^2}&=\frac{\partial v(\Psi_z,z)}{\partial X}v_1(\Psi_z,z)+\frac{\partial v(\Psi_z,z)}{\partial Y}v_2(\Psi_z,z)+\frac{\partial v(\Psi_z,z)}{\partial z}.
    \end{align*}
Therefore, invoking Lemma \ref{calculus:composition} we find that
    \begin{align*}
        \|\partial_z \Psi_z\|_{\infty} & \leq \|v\|_\infty, \quad 
        \|\partial^2_z\Psi_z\|_{\infty}\leq C\|D v\|_{\infty}(\|v\|_\infty+1), \\
        [\partial^2_z\Psi_z]_\alpha & \leq C([D v]_{\alpha}\|v\|_\infty +\|D v\|_\infty [v]_\alpha+[D v]_\alpha) (\|D\Psi_z\|_{\infty}^\alpha+1).
    \end{align*}
    Since $[D v]_\alpha\leq C\|v\|_{C^2}$, we get the desired estimates for the $z$ derivatives of $\Psi_z$. To bound the $x$ derivatives, standard results of differentiability with respect to parameters of ordinary differential equations yield
   \[\left|\frac{\partial\Psi_z}{\partial x}\right|\leq   \left(1+\|v\|_{C^1}Le^{L\|v\|_{C^1}}\right), \]
and hence we obtain bounds for the $C^{1}$ norm of $\Psi_{z}.$  To estimate the $C^{\alpha
}$  H\"older semi-norm in the $x$ variable and higher order  $C^{2,\alpha}$ estimates we proceed in similar fashion. 
In order to show $C^{2,\alpha}$ bounds for the inverse $\Psi_{z}^{-1}$, we find the following relation between 
\[ \Psi_z=(X(x,y,z),Y(x,y,z)), \quad \Psi^{-1}_z=(X^{-1}(x,y,z),Y^{-1}(x,y,z)),\]
namely,
\begin{align*}\left(\begin{array}{cc}
    \frac{\partial X^{-1}}{\partial x} & \frac{\partial X^{-1}}{\partial y} \\\\
    \frac{\partial Y^{-1}}{\partial x} & \frac{\partial Y^{-1}}{\partial y}
\end{array}\right)(x,y,z) 
&=\exp\left(-\int_0^z D\, v(x,y,s)ds\right)\left(\begin{array}{cc}
    \frac{\partial Y}{\partial y} & -\frac{\partial X}{\partial y} \\\\
    -\frac{\partial Y}{\partial x} & \frac{\partial X}{\partial x}
\end{array}\right)(\Psi_z^{-1}(x,y),z).
\end{align*}
Using the previous relation we can readily check that $\Psi_z^{-1}$ is Lipschitz. Moreover, using the representation one can compute H\"older semi-norms and higher order derivatives in terms of $\Psi_z$, proving the desired bounds \eqref{reg:bound:ode} and concluding the proof.
\end{proof}

In the following we will derive H\"older estimates for the hyperbolic transport problem
    \begin{equation}\label{transporte:1}
    \left\{\begin{array}{ll}
        \partial_3 j +(b\cdot \nabla)j=(j\cdot\nabla) b & \text{in } \Omega, \\
         j=j_0& \text{on } \partial\Omega_{-}.
    \end{array}\right.
    \end{equation}
   
More precisely, we show the following result:
\begin{prop}\label{existenciatransporte}
    There exists $M_0$ small enough so that for every $b\in C^{2,\alpha}(\Omega;\R^3)$ with $\|b\|_{C^{2,\alpha}}\leq M<M_0$ and $j_0\in C^{1,\alpha}(\T^2;\R^3)$, the system \eqref{transporte:1} admits a unique solution $j\in C^{1,\alpha}(\Omega;\R^3)$. Moreover, there exists a constant $C=C(\alpha,L)$ such that 
    \begin{equation}\label{regularidad1}
        \|j\|_{C^{1,\alpha}}\leq C\|j_0\|_{C^{1,\alpha}}.
    \end{equation}
Furthermore, let $j^1, j^2\in C^{1,\alpha}(\Omega;\R^3)$ be two solutions to \eqref{transporte:1} with $b$ given by $b^{1}, b^{2}$ respectively. Then, we have the lower order estimate
    \begin{equation}\label{regularidad2}
        \|j^1-j^2\|_{C^{\alpha}}\leq C\left[\|j_0^1-j_0^2\|_{C^{\alpha}}+\|j_0^1\|_{C^{1,\alpha}}\|b^1-b^2\|_{C^{\alpha}}\right].
    \end{equation}
\end{prop}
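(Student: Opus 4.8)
The plan is to solve the transport system \eqref{transporte:1} by the method of characteristics, reducing it to an ODE along the flow and then transferring regularity via Proposition \ref{prop:regularidad:odes} and Lemma \ref{calculus:composition}. First I would introduce the characteristic flow: writing $b=(b_1,b_2,b_3)$ with $b_3$ close to $1$ (since $\|b\|_{C^{2,\alpha}}\le M<M_0$), the $z$-component of the characteristic speed is nonzero, so I can reparametrize by $z$ and consider the flow $\Psi_z$ solving $\partial_z\Psi_z = \tfrac{1}{b_3}(b_1,b_2)(\Psi_z,z)$ with $\Psi_0=(x,y)$, which is exactly of the form treated in Proposition \ref{prop:regularidad:odes} (after dividing by $b_3$, which is an invertible $C^{2,\alpha}$ function bounded below thanks to smallness of $M$). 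This gives a $C^{2,\alpha}$ embedding $\Psi_z$ with $\|\Psi_z\|_{2,\alpha}, \|\Psi_z^{-1}\|_{2,\alpha}\le C$. Along characteristics the PDE becomes the linear ODE system $\tfrac{d}{dz}\big(j(\Psi_z,z)\big) = \tfrac{1}{b_3}(Db)(\Psi_z,z)\, j(\Psi_z,z)$ with initial data $j_0$; this linear ODE with $C^{1,\alpha}$ (in particular Lipschitz in the parameter) coefficients has a unique solution given by a matrix exponential / resolvent $R(x,y,z)$ acting on $j_0$, and Gr\"onwall gives $\|R\|_\infty \le e^{CL\|b\|_{C^1}}\le C$. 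Then $j(\cdot,z) = R(\Psi_z^{-1}(\cdot),z)\, j_0(\Psi_z^{-1}(\cdot))$.

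Next I would establish the regularity bound \eqref{regularidad1}. The resolvent $R$ depends on $(x,y,z)$ through the $C^{1,\alpha}$ coefficient $\tfrac{1}{b_3}Db$ evaluated along $\Psi_z$; standard smooth-dependence-on-parameters estimates for linear ODEs, combined with the $C^{2,\alpha}$ bounds on $\Psi_z$ and Lemma \ref{calculus:composition}, show $\|R\|_{C^{1,\alpha}(\Omega)}\le C$. Since $j_0\in C^{1,\alpha}(\T^2)$ and $\Psi_z^{-1}\in C^{2,\alpha}$, the composition $j_0\circ\Psi_z^{-1}$ lies in $C^{1,\alpha}$ with norm $\le C\|j_0\|_{C^{1,\alpha}}$ (again using Lemma \ref{calculus:composition} and the chain rule), and the product $R\cdot (j_0\circ\Psi_z^{-1})$ stays in $C^{1,\alpha}$ with the stated bound. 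Uniqueness is immediate: any $C^{1,\alpha}$ solution is constant-of-the-ODE along characteristics, hence coincides with the constructed one. I should also check that the divergence-free structure is respected if needed, but since the statement only asks for a $C^{1,\alpha}$ solution, this is not required here.

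For the stability estimate \eqref{regularidad2}, I would compare the two flows $\Psi^1_z,\Psi^2_z$ and resolvents $R^1,R^2$ associated to $b^1,b^2$. A Gr\"onwall argument on the ODE for $\Psi^1_z-\Psi^2_z$ gives $\|\Psi^1_z-\Psi^2_z\|_{C^\alpha}\le C\|b^1-b^2\|_{C^\alpha}$ (one loses a derivative, which is why the estimate is in $C^\alpha$ rather than $C^{1,\alpha}$), and similarly $\|R^1-R^2\|_{C^\alpha}\le C\|b^1-b^2\|_{C^\alpha}$. Writing
\[
j^1-j^2 = (R^1-R^2)(j_0^1\circ(\Psi^1_z)^{-1}) + R^2\big((j_0^1-j_0^2)\circ(\Psi^1_z)^{-1}\big) + R^2\big(j_0^2\circ(\Psi^1_z)^{-1} - j_0^2\circ(\Psi^2_z)^{-1}\big),
\]
I would estimate each term in $C^\alpha$: the first by $\|b^1-b^2\|_{C^\alpha}\|j_0^1\|_{C^{1,\alpha}}$, the second by $\|j_0^1-j_0^2\|_{C^\alpha}$, and the third by $\|j_0^2\|_{C^{1,\alpha}}\|(\Psi^1_z)^{-1}-(\Psi^2_z)^{-1}\|_{C^\alpha}\le C\|j_0^1\|_{C^{1,\alpha}}\|b^1-b^2\|_{C^\alpha}$ after noting we may as well bound $\|j_0^2\|_{C^{1,\alpha}}$ by the same size (or keep it symmetric). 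Collecting terms yields \eqref{regularidad2}.

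\textbf{Main obstacle.} The genuinely delicate point is the third term in the decomposition above: estimating the difference of compositions $j_0^2\circ(\Psi^1_z)^{-1} - j_0^2\circ(\Psi^2_z)^{-1}$ in $C^\alpha$ requires controlling $(\Psi^1_z)^{-1}-(\Psi^2_z)^{-1}$ in a H\"older norm while only $j_0^2\in C^{1,\alpha}$, so one must carefully use that $j_0^2$ has one full derivative to absorb the H\"older modulus of the difference of inverse flows — this is the mechanism forcing the loss of one derivative and the appearance of $\|j_0^1\|_{C^{1,\alpha}}$ (rather than $\|j_0^1\|_{C^\alpha}$) on the right-hand side of \eqref{regularidad2}. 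Keeping track of how the smallness constant $M_0$ enters (it is needed both to keep $b_3$ bounded away from zero and to close the Gr\"onwall loops with constants depending only on $\alpha,L$) is the other bookkeeping point to be careful about.
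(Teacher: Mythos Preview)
Your proposal is correct and follows essentially the same approach as the paper: solve by characteristics via the flow $\Psi_z$ of Proposition \ref{prop:regularidad:odes}, reduce to a linear ODE along the flow, and use Gr\"onwall plus composition estimates for both \eqref{regularidad1} and \eqref{regularidad2}; your three-term splitting for $j^1-j^2$ is just a further decomposition of the paper's $\widetilde{w}\circ(\Phi^1)^{-1} + w^2\circ(\Phi^1)^{-1} - w^2\circ(\Phi^2)^{-1}$. One notational slip to fix: in \eqref{transporte:1} the field $b$ is the \emph{perturbation}, so the third component of the characteristic speed is $1+b_3$ (close to $1$), and the flow equation should read $\partial_z\Psi_z = \tfrac{1}{1+b_3}(b_1,b_2)$ rather than $\tfrac{1}{b_3}(b_1,b_2)$; with that correction your argument goes through verbatim.
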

\begin{obs}
If we compare Proposition \ref{existenciatransporte} with its two-dimensional analogue shown in \cite[\textit{Proposition 3.8}]{Alo-Velaz-2021}, \cite[\textit{Proposition 12}]{Alo-Velaz-2022}, we notice that in this case we have an extra stretching term in the hyperbolic equation \eqref{transporte:1}, namely, $(j\cdot \nabla )b$. Nevertheless, from the regularity point of view, this term has better regularity than the transport term, and thus we can show identical bounds \eqref{regularidad1}-\eqref{regularidad2} as for its two-dimensional analogue. The idea of the proof also follow closely the one in \cite[\textit{Proposition 3.8}]{Alo-Velaz-2021}, \cite[\textit{Proposition 12}]{Alo-Velaz-2022}.
\end{obs}
\begin{proof}[Proof of Proposition \ref{existenciatransporte}]
It is well-known that the first order hyperbolic transport problem \eqref{transporte:1} can be solved via method of characteristics. More precisely, let us introduce the flow map defined by (characteristic lines of \eqref{transporte:1})
    \begin{equation}\label{car}
      \left\{\begin{array}{ll}
        \frac{\partial X}{\partial z}&=\frac{b_1}{1+b_3}, \\  \frac{\partial Y}{\partial z}&=\frac{b_2}{1+b_3},
      \end{array}\right.  
    \end{equation}
  with initial data  $X(x,y,0)=x,\, Y(x,y,0)=y$. Here, we denoted by $b_{\ell}$ for $\ell=1,2,3$, the components of the vector field $b$.
System \eqref{car} yields formally two maps 
\begin{equation}\label{flows} 
\Psi_z=(X,Y), \quad \Phi=(\Psi_z,z).
\end{equation}
Moreover, note that if $M_{0}<1$, the right hand side in \eqref{car} 
\[ \frac{b_1}{1+b_3} \in C^{2,\alpha}(\Omega), \quad \frac{b_2}{1+b_3}\in C^{2,\alpha}(\Omega), \] 
and hence, Proposition \ref{prop:regularidad:odes} shows that $\Psi_z,\Phi\in C^{2,\alpha}(\Omega)$ with $C^{2,\alpha}(\Omega)$ inverse. Therefore, the solution $j$ to system \eqref{transporte:1} is explicitly given by
\begin{equation}\label{representation:formula}
j(x,y,z)=w\circ \Phi^{-1}(x,y,z),
\end{equation}
where $w$ satisfies the differential equation
\begin{equation}\label{regularidad4}\left\{\begin{array}{ll}
        \frac{\partial w}{\partial z}=A(b(\Phi))\cdot w & 0\leq z\leq L \\
        w=j_0 & z=0
    \end{array}\right.,
    \end{equation}
    where the matrix $A(b)$ has entries $A_{ij}=\frac{\partial_j b_i}{1+b_3}$ with $i,j=1,2,3.$ We know that $\Phi$ is Lipschitz, and thus (again, since
    $M_0<1$) we can deduce that the function $A(b(\Phi))\in C^{1,\alpha}(\R^{3},M_{3}(\R))$. Therefore, the system \eqref{regularidad4} has
    a unique solution $w$ for every $(x,y)\in \T^2$. As a consequence, the representation formula \eqref{representation:formula} gives us the unique solution for the hyperbolic transport equation  \eqref{transporte:1}.
    
    In order to prove estimates \eqref{regularidad1} and \eqref{regularidad2} we make use of the $C^{2,\alpha}$ regularity of $\Phi$ and the fact that $w$ satisfies a linear equation. Integrating \eqref{regularidad4} and using Gr\"onwall's inequality we find that 
    \[ \|w\|_{\infty}\leq \|j_0\|_\infty\left(1+\frac{\|b\|_{C^1}}{1-M_0}L\exp\left(\frac{L\|b\|_{C^1}}{1-M_0}\right)\right),\]
and hence by means of the representation formula \eqref{representation:formula} we deduce that
\begin{equation*} 
\|j\|_\infty\leq C\|j_0\|_\infty. 
\end{equation*}
To derive higher order estimates, we make use of the classical chain rule formula
\begin{equation}\label{chain:rule:high}Dj=\left(Dw\circ \Phi^{-1}\right)\cdot D\Phi^{-1}.
\end{equation}
One the one hand, recalling Proposition \ref{prop:regularidad:odes} we infer that $D\Phi^{-1}\in C^{1,\alpha}(\Omega)$, and thus it is Lipschitz. On the other hand, direct use of the representation formulas \eqref{representation:formula} and \eqref{regularidad4} shows that
\begin{equation*}
\left\|\frac{\partial w}{\partial z}\right\|_\infty \leq C\|b\|_{C^1}\frac{1}{1-M_0}\|w\|_\infty\leq C\|j_0\|_\infty.  
\end{equation*}
Similarly, differentiating w.r.t the $x$ variable in \eqref{regularidad4} we obtain
\[ \frac{\partial}{\partial z}\frac{\partial w_i}{\partial x}=\frac{\partial A_{ij}}{\partial X}\frac{\partial X}{\partial x}w_j+\frac{\partial A_{ij}}{\partial Y}\frac{\partial Y}{\partial x}w_j+A_{ij}\frac{\partial w_j}{\partial x}. \]
Integrating for $0\leq z\leq L$
and invoking once again Gr\"onwalls inequality we find that
\begin{equation*}
    \left\|\frac{\partial w}{\partial x}\right\|_{\infty}\leq \left(\|j\|_{C^1}+C\|b\|_{C^2}\|\Phi\|_{C^1}\|w\|_\infty\right)\left(1+\frac{1}{1-M_0}\|b\|_{C^1}Le^{\frac{1}{1-M_0}\|b\|_{C^1}}\right)\leq C\|j_0\|_{C^1}.
\end{equation*}
Mimicking the previous estimates w.r.t the $y$ variable and combining them together shows that $\|w\|_{C^1}\leq C \|j_0\|_{C^1}$. Hence, using formula \eqref{chain:rule:high} we conclude that
\begin{equation*}
\|j\|_{C^1}\leq C\|j_0\|_{C^1}.
\end{equation*}
Hence, since $\|D\Phi^{-1}\|_{\alpha}\leq C$ , estimate \eqref{regularidad1} follows if we prove that $[Dw]_\alpha\leq C\|j_0\|_{C^{1,\alpha}}$. To that purpose, denoting by $r_1=(x_1,y_1),\, r_2=(x_2,y_2)$ and differentiating \eqref{regularidad4} we have that

    \begin{align*}
        \frac{\partial}{\partial z}\left(\frac{\partial w_i}{\partial x}(r_1,z)-\frac{\partial w_i}{\partial x}(r_2,z)\right)&\leq \left.\sum_j \nabla A_{ij}\frac{\partial\Psi_s}{\partial x}w_j\right|_{(r_1,z)}-\left.\sum_j \nabla A_{ij}\frac{\partial\Psi_s}{\partial x}w_j\right|_{(r_2,z)}\\
        &+\left.\sum_jA_{ij}\frac{\partial w_i}{\partial x}\right|_{(r_1,z)}-\left.\sum_jA_{ij}\frac{\partial w_i}{\partial x}\right|_{(r_2,z)}.\\
    \end{align*}
Integrating between $0$ and $z$ and suitably pairing the terms, we obtain
\begin{align*}
    \left|\frac{\partial w_i}{\partial x}(r_{1},z)-\frac{\partial w_i}{\partial x}(r_{2},z)\right|&\leq \left|\frac{\partial j_0}{\partial x}(r_{1})-\frac{\partial j_0}{\partial x}(r_{2})\right|\\ 
    &+\sum_j\int_0^z\left|\nabla A_{ij}(r_{1},s)-\nabla A_{ij}(r_{2},s)\right|\left|\frac{\partial \Psi_s}{\partial x}(r_{1})\right|\left|w_j(r_{1},s)\right|ds\\
    &+\sum_j\int_0^z\left|\nabla A_{ij}(r_{2},s)\right|\left|\frac{\partial \Psi_s}{\partial x}(r_{1})-\frac{\partial \Psi_s}{\partial x}(r_{2})\right|\left|w_j(r_{1},s)\right|ds\\
    &+\sum_j\int_0^z\left|\nabla A_{ij}(r_{2},s)\right|\left|\frac{\partial \Psi_s}{\partial x}(r_{2})\right|\left|w_j(r_{1},s)-w_j(r_{2},s)\right|ds\\
    &+\sum_j\int_0^z\left|A_{ij}(r_{1},s)-A_{ij}(r_{2},s)\right|\left|\frac{\partial w_j}{\partial x}(r_{1},s)\right|ds\\
    &+\sum_j\int_0^z\left|A_{ij}(r_{2},s)\right|\left|\frac{\partial w_j}{\partial x}(r_{1},s)-\frac{\partial w_j}{\partial x}(r_{},s)\right|ds\\
    &\leq C|r_{1}-r_{2}|^\alpha \|j_0\|_{C^{1,\alpha}}+C\|b\|_{C^{1,\alpha}}\int_0^z\left|\frac{\partial w}{\partial x}(r_{1},s)-\frac{\partial w}{\partial x}(r_{2},s)\right|ds.
\end{align*}
Here, we have used the fact that $b\in C^{2,\alpha}(\Omega;\R^3)$ with $\|b\|_{C^{2,\alpha}}\leq M_0<1$ which implies  $A(b(\Phi))\in C^{1,\alpha}(\R^{3},M_{3}(\R))$, the fact that $\Psi_s\in C^{2,\alpha}(\Omega)$ and that $w\in C^{1}(\Omega;\R^3)$. Therefore, invoking Gr\"onwalls inequality yields
\[ \left|\frac{\partial w}{\partial x}(r_{1},z)-\frac{\partial w}{\partial x}(r_{2},z)\right|\leq  C\|j_0\|_{C^{1,\alpha}}|r_{1}-r_{2}|^\alpha.\]
The same computations can be carried out to show that 
\[ \left|\frac{\partial w}{\partial y}(r_{1},z)-\frac{\partial w}{\partial y}(r_{2},z)\right|\leq  C\|j_0\|_{C^{1,\alpha}}|r_{1}-r_{2}|^\alpha.\]
and conclude bound \eqref{regularidad1}. It should be stressed that     the constants we obtain depend on $\alpha$, and on $L$ and $\|b\|_{C^{2,\alpha}}$ via the estimates we get from Gr\"onwall's inequality. However, this $C$ is an increasing function of $\|b\|_{C^{2,\alpha}}$, which is bounded from above by $M_0$. Thus, we can get a $C=C(\alpha,L)$ as claimed  in the statement. 
   
To show the difference estimate \eqref{regularidad2}, we have to repeat the previous ideas. For the rest of the proof, we will use the notation $\widetilde{(\cdot)}=(\cdot)^{1}-(\cdot)^{2}$ to denote the difference of two quantities. For $i=1,2$, denote by $w_{i}$ the solution of \eqref{regularidad4} associated to the magnetic field $b^i$ with initial condition $j_0^i$. Analogously, denote by $\Phi^i$ to the change of variables arising from the characteristic equation with magnetic field $b^i$. Then, by using the explicit formula $j^i=w^i\circ(\Phi^i)^{-1}$, we have that
\begin{equation}\label{regularidad5} \widetilde{j}=\widetilde{w}\circ (\Phi^1)^{-1}+w^2\circ(\Phi^1)^{-1}-w^2\circ(\Phi^2)^{-1}.
\end{equation}
The last two terms on the right hand side in \eqref{regularidad5} are bounded by
\[ \left|w^2\circ(\Phi^1)^{-1}(x,y,z)-w^2\circ(\Phi^2)^{-1}(x,y,z)\right|\leq \|w\|_{C^1}\left|\widetilde{\Psi_{z}}\right|\leq \|j_{0}\|_{C^{1,\alpha}}\left|\widetilde{\Psi_{z}}\right|. \]
Moreover, $\widetilde{\Psi_{z}}$ satisfies the ODE
\begin{equation*}\label{eqs}
      \left\{\begin{array}{ll}
        \frac{\partial \widetilde{X}}{\partial z}=\frac{\widetilde{b_{x}}b_3^2-b_1^2\widetilde{b_{3}}}{(1+b^1_z)(1+b_3^2)}, \\ \\
        \frac{\partial \widetilde{Y}}{\partial z}=\frac{\widetilde{b_{y}}b_3^2-b_2^2\widetilde{b_{3}}}{(1+b^1_z)(1+b_3^2)}, 
      \end{array}\right.  
    \end{equation*}
with initial data $\widetilde{X}(x,y,0)=0, \widetilde{Y}(x,y,0)=0$. Integrating between $0$ and $z$, we readily see that 
\[\left|\widetilde{\Psi_z}(x,y)\right|\leq \frac{2M_0}{1-M_0}\|\widetilde{b}\|_\infty.\]
To bound the first on the right hand side in \eqref{regularidad5} we first notice that 
\begin{equation*} 
        \left\{\begin{array}{ll}
        \frac{\partial \widetilde{w}}{\partial z}=A_{b^2}(\Phi^2)\cdot  \widetilde{w}+w^1\left( A_{b^1}(\Phi^1)-A_{b^2}(\Phi^2)\right) & z>0, \\
         \widetilde{w}= \widetilde{j_{0}} & z=0. 
    \end{array}\right.
    \end{equation*}
Next, since 
\[ \|A(b^1(\Phi^1))-A(b^2(\Phi^2))\|_\infty\leq \|A(b^1(\Phi^1))\|_{\infty}\|\widetilde{\Phi}\|_\infty+\|A\widetilde{b}\|_\infty\leq C\|\widetilde{b}\|_\infty,\]
we conclude after integrating in $z$ that
\[\left|\widetilde{w}\right|\leq C\left(\|\widetilde{j_{0}}\|_\infty+\|j_0\|_\infty\|\widetilde{b}\|_\infty\right)\int_0^z|\widetilde{w}|ds. \]
An application of Gr\"onwalls inequality yields the desired estimate. To show the complete $C^{1,\alpha}$ norm estimate one can argue analogously.  
\end{proof}

For further reference, we include here the following result, which is a corollary of Proposition \ref{existenciatransporte}. This will be used later in the proof of Theorem \ref{mainteor}.

\begin{coro}\label{direct:coro:proptransport}
 Consider $M_0<1$, $b\in C^{2,\alpha}(\Omega;\R^3)$ and $j_0\in C^{1,\alpha}(\T^2,\R^3)$ as in Proposition \ref{existenciatransporte}. Let $j\in C^{1,\alpha}(\Omega;\R^3)$ be the unique solution of \eqref{transporte:1}, and $\bar{\jmath}\in C^{1,\alpha}(\Omega;\R^3) $ be the unique solution of 
    \begin{equation}\label{perturbacion}
    \left\{\begin{array}{ll}
        \partial_3 \bar{\jmath}+\left(b\cdot \nabla\right)\bar{\jmath}=0 & \text{in } \Omega, \\
        \bar{\jmath}=j_0 & \text{on } \partial\Omega_{-}.
    \end{array}\right. 
    \end{equation}

    Then, there exists a constant $C=C(\alpha,L)$ such that $\delta j=j-\bar{\jmath}$ satifies

    \begin{equation}\label{perturbacion1}
        \|\delta j\,\|_{C^{1,\alpha}}\leq 
    C\|b\|_{C^{2,\alpha}}\|j_0\|_{C^{1,\alpha}}.
    \end{equation}

    Moreover, if we have two magnetic fields $b^1$ and $b^2$, and two initial conditions $j_0^1$, and $j_0^2$, we can estimate their difference: 

    \begin{equation}\label{perturbation2}
    \|\delta j^1-\delta j^2\|_{C^{\alpha}}\leq C\|b^1-b^2\|_{C^{1,\alpha}}\left(\|j_0^1-j_0^2\|_{C^{\alpha}}+\|\delta j^1\|_{C^{1,\alpha}}\|b^1-b^2\|_{C^{\alpha}}\right).
    \end{equation}
\end{coro}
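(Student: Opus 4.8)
The plan is to view $\delta j = j - \bar\jmath$ as the solution of a transport equation with a forcing term, and then apply the a priori estimates of Proposition \ref{existenciatransporte} (together with its proof-level machinery). First I would subtract \eqref{perturbacion} from \eqref{transporte:1} to obtain
\begin{equation*}
\left\{\begin{array}{ll}
\partial_3 (\delta j)+(b\cdot\nabla)(\delta j)=(j\cdot\nabla)b & \text{in }\Omega,\\
\delta j=0 & \text{on }\partial\Omega_-.
\end{array}\right.
\end{equation*}
The right-hand side $(j\cdot\nabla)b$ is, by Proposition \ref{existenciatransporte}, a $C^{1,\alpha}(\Omega)$ function (indeed $j\in C^{1,\alpha}$ and $b\in C^{2,\alpha}$, so $\nabla b\in C^{1,\alpha}$), with norm bounded by $C\|b\|_{C^{2,\alpha}}\|j\|_{C^{1,\alpha}}\leq C\|b\|_{C^{2,\alpha}}\|j_0\|_{C^{1,\alpha}}$, using \eqref{regularidad1}.

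Next I would solve this inhomogeneous problem by the method of characteristics exactly as in the proof of Proposition \ref{existenciatransporte}: pulling back along the flow map $\Phi$ defined by \eqref{car}, the function $v=\delta j\circ\Phi$ satisfies a linear ODE $\partial_z v = A(b(\Phi))\cdot v + F$ with $v|_{z=0}=0$, where $F=((j\cdot\nabla)b)\circ\Phi \in C^{1,\alpha}$ with $\|F\|_{C^{1,\alpha}}\leq C\|b\|_{C^{2,\alpha}}\|j_0\|_{C^{1,\alpha}}$ (using the $C^{2,\alpha}$ regularity of $\Phi$ and Lemma \ref{calculus:composition}). Since $v|_{z=0}=0$, Duhamel's formula gives $v(\cdot,z)=\int_0^z \mathcal{E}(z,s) F(\cdot,s)\,ds$ with $\mathcal{E}$ the (matrix) propagator of the linear system; a Grönwall argument on $\|v\|_\infty$, then on $\|\partial w\|_\infty$ and finally on the $C^\alpha$ seminorm of $\partial v$ (mimicking line by line the estimates after \eqref{chain:rule:high} in the proof of Proposition \ref{existenciatransporte}, but now tracking the extra forcing term) yields $\|v\|_{C^{1,\alpha}}\leq C\|F\|_{C^{1,\alpha}}$ because $M_0<1$ keeps all the Grönwall constants bounded by a $C=C(\alpha,L)$. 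Composing back with $\Phi^{-1}$ (which is $C^{2,\alpha}$, hence Lipschitz, by Proposition \ref{prop:regularidad:odes}) and using the chain rule as in \eqref{chain:rule:high} gives $\|\delta j\|_{C^{1,\alpha}}\leq C\|F\|_{C^{1,\alpha}}\leq C\|b\|_{C^{2,\alpha}}\|j_0\|_{C^{1,\alpha}}$, which is \eqref{perturbacion1}.

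For the difference estimate \eqref{perturbation2}, I would subtract the two equations for $\delta j^1$ and $\delta j^2$, obtaining for $\widetilde{\delta j}=\delta j^1-\delta j^2$ a transport equation with drift $b^2$, zero boundary data, and right-hand side $(j^1\cdot\nabla)b^1-(j^2\cdot\nabla)b^2$ plus a commutator term $((b^2-b^1)\cdot\nabla)\delta j^1$ coming from the differing drifts. One then estimates, exactly in the style of the proof of \eqref{regularidad2}: split $(j^1\cdot\nabla)b^1-(j^2\cdot\nabla)b^2 = (j^1-j^2)\cdot\nabla b^1 + j^2\cdot\nabla(b^1-b^2)$, bound $\|j^1-j^2\|_{C^\alpha}$ using \eqref{regularidad2}, and absorb the characteristic-flow differences $\|\widetilde{\Psi_z}\|\leq C\|b^1-b^2\|_\infty$ as in \eqref{regularidad5}; each of these contributes a factor $\|b^1-b^2\|$ in either $C^\alpha$ or $C^{1,\alpha}$, matching the right-hand side of \eqref{perturbation2}. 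A final Grönwall inequality closes the estimate.

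The main obstacle I anticipate is purely bookkeeping rather than conceptual: carefully tracking, in the difference estimate, which norm ($C^\alpha$ versus $C^{1,\alpha}$) each factor of $\|b^1-b^2\|$ must carry so that the bound \eqref{perturbation2} comes out with exactly the stated combination — in particular the term $\|\delta j^1\|_{C^{1,\alpha}}\|b^1-b^2\|_{C^\alpha}$ quadratic in the perturbation arises from composing the (already first-order-small) $\delta j^1$ with the difference of characteristic flows, and one must be careful not to lose a derivative there. Everything else follows from the Grönwall/characteristics scheme already set up in the proof of Proposition \ref{existenciatransporte}.
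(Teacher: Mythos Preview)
Your approach is essentially the same as the paper's: write $\delta j=(w-j_0)\circ\Phi^{-1}$ and observe that $w-j_0$ satisfies a linear ODE with zero initial data and forcing proportional to $A(b)$, then run the Gr\"onwall machinery from Proposition~\ref{existenciatransporte}. One small slip: after pulling back, the ODE for $v=\delta j\circ\Phi=w-j_0$ reads
\[
\partial_z v = A(b(\Phi))\cdot w = A(b(\Phi))\cdot v + A(b(\Phi))\cdot j_0,
\]
so the forcing is $F=A(b(\Phi))\cdot j_0$, not $((j\cdot\nabla)b)\circ\Phi$ (you are missing the $1/(1+b_3)$ factor and the split $w=v+j_0$). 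This is harmless for the estimates, since your claimed bound $\|F\|_{C^{1,\alpha}}\leq C\|b\|_{C^{2,\alpha}}\|j_0\|_{C^{1,\alpha}}$ is exactly what the correct $F$ satisfies, and the rest of the argument goes through unchanged.
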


\begin{proof}
    We know that the solution for \eqref{perturbacion} is given by $\bar{\jmath}=j_0\circ \Phi^{-1}$, where $\Phi$ is given by \eqref{flows}. We need to estimate the difference $\delta j=(w-j_0)\circ\Phi^{-1}$ where $w-j_0$ satisfies the following problem
\[\left\{\begin{array}{ll}
        \frac{\partial (w-j_0)}{\partial z}=A_b\cdot w=A_b\cdot (w-j_0)+A_b\cdot j_0 & z>0, \\
        w-j_0=0 & z=0.
        \end{array}\right. 
        \]

The rest of the proof follows after applying the same ideas as in the Proposition above, with the main difference that now the initial condition is zero. This allows us to obtain estimates that depend directly on $\|b\|_{C^{2,\alpha}}$. For example, for the $L^\infty$ norm of $w-j_0$ we can integrate the ODE above, and after using Gr\"onwall's inequality,
\begin{equation*}
\|w-j_0\|_\infty \leq C\|b\|_{C^1}\|j_0\|_\infty.
\end{equation*}
The $C^{1,\alpha}$ bounds to show \eqref{perturbacion1} follow similarly.
In order to show estimate \eqref{perturbacion1} for the difference, one can mimic the same ideas done in Proposition \ref{existenciatransporte} to show \eqref{regularidad2}.
\end{proof}
\subsubsection{Transport divergence condition along characteristics}
To conclude this subsection, we provide an important result regarding the transport nature of the divergence free condition along characteristics. A similar result was already provided in Alber \cite{Alber-1992}. However, in \cite{Alber-1992} the magnetic field $B$ and hence the current $j$ (being more precise, the fluid velocity field $v$ and the vorticity $\omega$) is one derivative more regular which allows to understand the operators and the equation in the pointwise sense. We need to modify and adapt the argument to handle our regularity hypothesis.
\begin{prop}\label{transport:div:prop}
    Let $b\in C^{2,\alpha}(\Omega)$ be a divergence free vector field that satisfies the hypothesis in Proposition \ref{existenciatransporte} and $B=(0,0,1)+b$. Assume that $j_0\in C^{1,\alpha}(\partial\Omega)$ and denote the unique solution to \eqref{transporte:1} by $j$. Assume further that $\text{div}\, j|_{\partial\Omega_-}=0$. Then, $\text{div}\,j=0$ in $\Omega$.
\end{prop}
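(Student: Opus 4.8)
The plan is to prove that $\rho:=\dive j$, which a priori only belongs to $C^\alpha(\overline{\Omega})$, satisfies the homogeneous transport equation $(B\cdot\nabla)\rho=0$ in the weak sense, and then to propagate the vanishing of $\rho$ on the inflow boundary $\partial\Omega_-$ along the characteristics of $B\cdot\nabla$. Observe first that, since $(0,0,1)$ is constant, $B\cdot\nabla=\partial_3+b\cdot\nabla$ and $(j\cdot\nabla)B=(j\cdot\nabla)b$, so \eqref{transporte:1} is exactly the pointwise identity $(B\cdot\nabla)j=(j\cdot\nabla)B$ in $\Omega$ (valid because $j\in C^{1,\alpha}$ and $B\in C^{2,\alpha}$ make every term continuous). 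The main obstacle is precisely the limited regularity of $\rho$: being only $C^\alpha$, one cannot take the divergence of the equation pointwise, so the whole argument must be run weakly.

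The first step is to establish a weak transport identity for $\rho$. Fix $\varphi\in C^2_c(\T^2\times(0,L))$, pair the $i$-th component of $(B\cdot\nabla)j=(j\cdot\nabla)B$ with $\partial_i\varphi$, and sum over $i$:
\[\int_\Omega\sum_{i,k}B_k\,(\partial_k j_i)\,\partial_i\varphi\,dx=\int_\Omega\sum_{i,k}j_k\,(\partial_k B_i)\,\partial_i\varphi\,dx.\]
Two integrations by parts on the left (first in $x_k$, then in $x_i$; there are no boundary terms, as $\varphi$ is supported in the interior and periodic in $x_1,x_2$), combined with $\dive B=0$ and $\sum_i\partial_i j_i=\rho$, recast the left-hand side as $\int_\Omega\sum_{i,k}(\partial_i B_k)\,j_i\,\partial_k\varphi\,dx+\int_\Omega\rho\,(B\cdot\nabla\varphi)\,dx$; relabelling $i\leftrightarrow k$ identifies the right-hand side of the tested equation with the first of these integrals, so after cancellation
\begin{equation}\label{weakdivtransport}
\int_\Omega\rho\,(B\cdot\nabla\varphi)\,dx=0\qquad\text{for every }\varphi\in C^2_c(\T^2\times(0,L)),
\end{equation}
i.e. $(B\cdot\nabla)\rho=0$ weakly.

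The second step is to straighten $B$. Since $\|b\|_{C^{2,\alpha}}<M_0<1$ we have $B_3=1+b_3\ge 1-M_0>0$, so the characteristics of $B\cdot\nabla$ are, up to the positive reparametrization $B_3$, the flow map $\Phi=(\Psi_z,z)$ of \eqref{car}--\eqref{flows}. By Proposition \ref{prop:regularidad:odes}, $\Phi$ is a $C^{2,\alpha}$ diffeomorphism of $\overline{\Omega}$ that preserves every horizontal slice $\T^2\times\{z\}$, restricts to the identity on $\T^2\times\{0\}$, and has $\partial_{y_3}\Phi=B(\Phi)/(1+b_3(\Phi))$; hence $(B\cdot\nabla\varphi)\circ\Phi=(1+b_3\circ\Phi)\,\partial_{y_3}(\varphi\circ\Phi)$ for every $\varphi\in C^1$. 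Given an arbitrary $\psi\in C^\infty_c(\T^2\times(0,L))$, apply \eqref{weakdivtransport} with $\varphi=\psi\circ\Phi^{-1}$ (which lies in $C^2_c$ because $\Phi^{-1}\in C^{2,\alpha}$) and change variables $x=\Phi(y)$; this yields $\int_{\T^2\times(0,L)}G\,\partial_{y_3}\psi\,dy=0$, where $G:=(\rho\circ\Phi)\,(1+b_3\circ\Phi)\,|\det D\Phi|\in C(\overline{\Omega})$. Therefore $\partial_{y_3}G=0$ weakly, so $G$ is independent of $y_3$.

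Finally, evaluating $G$ at $y_3=0$, where $\Phi$ is the identity and $\rho|_{y_3=0}=\dive j|_{\partial\Omega_-}=0$ by hypothesis, gives $G\equiv 0$; since $1+b_3\circ\Phi\ge 1-M_0>0$ and $|\det D\Phi|>0$, it follows that $\rho\circ\Phi\equiv 0$, i.e. $\dive j=0$ in $\Omega$, and up to $\partial\Omega_+$ by continuity of $\dive j$. The only places where limited regularity is felt are the two integrations by parts in Step 1 (allowed because $B\in C^{2,\alpha}$, $j\in C^{1,\alpha}$) and the change of variables in Step 2 (allowed because $\Phi$ is a $C^1$ diffeomorphism with $C^1$ inverse); no pointwise derivative of $\rho$ is ever taken.
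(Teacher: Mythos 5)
Your proof is correct, but it takes a genuinely different route from the paper's.

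Both arguments begin with the same algebraic manipulation: testing $(B\cdot\nabla)j=(j\cdot\nabla)B$ against $\nabla\varphi$, integrating by parts twice, and using $\operatorname{div}B=0$ to extract a weak transport identity for $\rho=\operatorname{div}j$. The difference is in what class of test functions you allow and how you then exploit the resulting identity. The paper works with $\varphi\in C^2(\T^2\times[0,L])$, so boundary terms at $z=0$ and $z=L$ survive and must be tracked carefully; it obtains the weak identity with boundary contributions (their \eqref{weaksol}) and then invokes a duality argument, namely solving the \emph{backwards} transport problem $B\cdot\nabla\varphi=\psi$, $\varphi|_{\partial\Omega_+}=0$, by characteristics, so that for an arbitrary $\psi$ both boundary terms disappear (one because $\varphi$ vanishes on $\partial\Omega_+$, the other because $\rho$ vanishes on $\partial\Omega_-$ by hypothesis). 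You instead restrict to $\varphi\in C^2_c(\T^2\times(0,L))$, which kills all boundary terms from the outset and gives the clean interior identity $\int\rho\,(B\cdot\nabla\varphi)=0$; you then change variables by the flow map $\Phi$ of Proposition \ref{prop:regularidad:odes} to reduce $B\cdot\nabla$ to $\partial_{y_3}$ (up to a positive factor), conclude that the continuous function $G=(\rho\circ\Phi)(1+b_3\circ\Phi)\,|\det D\Phi|$ is independent of $y_3$, and propagate $G|_{y_3=0}=0$ through $\Omega$.

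Both routes rest on the same ODE machinery (the $C^{2,\alpha}$ flow of $B$), but yours applies it directly to straighten the vector field rather than indirectly through a well-posedness result for the adjoint problem, and it avoids the boundary-term bookkeeping entirely. The trade-off is that you need the extra continuity/propagation step at the end, and you must verify that $\psi\circ\Phi^{-1}$ remains compactly supported in $z$; you correctly use that $\Phi$ preserves horizontal slices, which makes this immediate. One minor point worth being explicit about if you flesh this out: in the conclusion $\partial_{y_3}G=0$ weakly $\Rightarrow$ $G$ constant in $y_3$, the test functions vanish near $y_3=0$ and $y_3=L$, so the conclusion is constancy on $(0,L)$; continuity of $G$ up to the boundary (which follows from $\rho\in C^\alpha(\overline\Omega)$ and $\Phi\in C^{2,\alpha}(\overline\Omega)$) is what lets you evaluate at $y_3=0$ and close the argument — you state this, and it is correct.
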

\begin{proof}
It is sufficient to show that 
 \begin{equation}\label{diver}
     B\cdot \nabla \text{div} j=0, \ \mbox{ in } \Omega,
     \end{equation}
is satisfied in the sense of distributions. More precisely, we will show that 
\begin{equation}\label{weaksol}
    \int_{\T^2\times [0,L]}\left(\text{div}j\right)B\cdot \nabla \varphi= \left. \int_{\mathbb{T}^{2}}  \left(\text{div}j\right)\, B_3\,  \varphi  \right|^{z=L}_{z=0}, \ \forall \varphi\in C^2 (\T^2\times [0,L]).
\end{equation}
Testing the left hand side of \eqref{transporte:1} against $\nabla\varphi$ and integrating by parts we find that 
     \begin{equation*}
         \begin{split}
             \sum_{\ell,j=1}^3 \int_{\T^2\times [0,L]}B_\ell \,\partial_\ell j_j\, \partial_j \varphi&= \sum_{\ell,j=1}^3 \int_{\T^2\times [0,L]} \partial_\ell \left(B_\ell \,j_j \,\partial_j \varphi\right) -\sum_{\ell,j=1}^3 \int_{\T^2\times [0,L]} B_\ell\,j_j\partial_\ell \,\partial_j \varphi\\
             &=\left.\left\langle B_3\, j\cdot\nabla \varphi\right\rangle \right|^{z=L}_{z=0}-\sum_{\ell,j=1}^3\int_{\T^2\times [0,L]} B_\ell\, j_j\, \partial_\ell\partial_j \varphi,
         \end{split}
     \end{equation*}
     using that $B$ is divergence-free. The same argument applied to the right hand side of \eqref{transporte:1} yields
     \begin{equation*}
         \begin{split}
             \sum_{\ell,j=1}^3 \int_{\T^2\times [0,L]} j_\ell\, \partial_ \ell B_j\, \partial_j\varphi &=\left.\left\langle j_3\, B\cdot \nabla \varphi\right\rangle\right|^{z=L}_{z=0} -\sum_{\ell,j=1}^3 \int_{\T^2\times [0,L]}\left(\partial_\ell j_\ell\right)B_j\, \partial_j\varphi \\
             &-\sum_{\ell,j=1}^3 \int_{\T^2\times [0,L]}j_\ell \,B_j\,\partial_\ell\partial_j \varphi.
         \end{split}
     \end{equation*}
Combining both identities we obtain that
$$\int_{\T^2\times [0,L]} \left(\text{div}\,j\right)\, B\cdot\nabla\varphi = \left.\left\langle j_3\, B\cdot \nabla \varphi\right\rangle\right|^{z=L}_{z=0}-\left.\left\langle B_3 \,j\cdot\nabla \varphi\right\rangle \right|^{z=L}_{z=0}.$$
The terms on the right hand side containing $\partial_{3}\varphi$ cancel out. Therefore, integrating by parts in $x,y$ we deduce that
\begin{align}
        \left\langle j_3\, B\cdot \nabla \varphi\right\rangle-\left\langle B_3 \,j\cdot\nabla \varphi\right\rangle&=\left\langle j_3\, B_1\,\partial_1 \varphi\right\rangle+\left\langle j_3\, B_2\,\partial_2 \varphi\right\rangle-\left\langle B_3 \,j_1\,\partial_1 \varphi\right\rangle-\left\langle B_3 \,j_2\,\partial_2\varphi\right\rangle \nonumber \\
&\hspace{-2.5cm}= -\left\langle \partial_1 j_3\, B_1\,\varphi\right\rangle -\left\langle \partial_2j_3\,B_2\,\varphi\right\rangle+\left\langle j_3\,\partial_3B_3\,\varphi\right\rangle +\left\langle \partial_1 B_3\, j_1\,\varphi\right\rangle +\left\langle \partial_2B_3\,j_2\,\varphi\right\rangle-\left\langle B_3\,\partial_3j_3\,\varphi\right\rangle+\left\langle B_3\,\text{div}j\,\varphi\right\rangle \nonumber  \\
&=\langle (j\cdot \nabla)B \rangle- \langle (B\cdot \nabla)j\rangle+\left\langle B_3\,\text{div}j\,\varphi\right\rangle= \left\langle B_3\,\text{div}j\,\varphi\right\rangle, \nonumber
\end{align}
yielding \eqref{weaksol}. The result follows after we show the uniqueness of weak solutions. To that purpose, we will use the following duality argument. Take $\psi\in C^{2}(\T^2\times[0,L])$ an arbitrary function and consider the following \textit{backwards} transport problem
\begin{equation}\label{eq:backwards}
\left\{\begin{array}{ll}
   B\cdot\nabla\varphi=\psi, &  \text{in }\Omega,\\
   \varphi=0,  & \text{on }\partial\Omega_+.
\end{array}\right.
\end{equation} 
Recalling that  $B=(0,0,1)+ (b_1,b_2,b_3)$ and $\|b\|_{C^2}< 1$ by hypothesis, we can use the method of characteristics to solve equation \eqref{eq:backwards}. The resulting characteristic curves yield a $C^2$ change of coordinates, and given that $\psi\in C^2$ we have that $\varphi\in C^2(\T^2\times[0,L])$. To conclude, using the fact that $\varphi=0$ on $\partial\Omega_{+}$, and by hypothesis $\text{div}j=0$ on $\partial\Omega_{-}$, we find that both terms in the right hand side of \eqref{weaksol} lead to no contribution. Thus,
$$\int_{\T^2\times[0,L]} \text{div}j\cdot\psi =0, \forall \psi\in C^{2}(\T^2\times[0,L]),$$
showing that $\text{div}j=0$ in $\Omega$.
\end{proof}





\section{The linearized problem}\label{sec:linearized}
The main purpose of this section is to describe the formal idea behind the method illustrated in the introduction to construct a solution $(B,p)$ of the magnetohydrostatic equations satisfying the boundary value conditions \eqref{mhs2}. Although we avoid any technicalities and remark that the exposed ideas are purely formal, we believe that the following digression is instructive to understand the core ideas towards the proof of Theorem \ref{mainteor}. 

To that purpose, we have to define an operator $\Gamma$ which has a fixed point $b$ such that $B=(0,0,1)+b$ is a solution to \eqref{mhs2}. Following the ideas developed by Alber in \cite{Alber-1992} (as well as in subsequent works \cite{Alo-Velaz-2021,Alo-Velaz-2022}) we divide the problem into two building blocks: a first order transport-type equation for the current $j$, and a div-curl problem for the magnetic field $B$. Given, $b\in C^{2,\alpha}$, we define $j\in C^{1,\alpha}(\Omega)$ as the solution to the transport equation
\begin{equation}\label{transport:2}
\begin{cases}
((0,0,1)+b)\cdot\nabla)j=(j\cdot\nabla)((0,0,1)+b) \quad \mbox{in } \Omega,\\
j=j_{0} \quad \mbox{on } \partial\Omega_{-}
\end{cases}
\end{equation}
Expanding the equation, we readily check that 
\begin{equation*}
    \partial_3 j+\frac{b_1}{1+b_3}\partial_1 j+\frac{b_2}{1+b_3}\partial_2 j=A(b)j, 
\end{equation*}
where the matrix $A(b)$ reads 
\begin{equation}\label{expression:matrix:A}
    A(b)=(a_{\ell j})_{\ell,j},\text{ with } a_{\ell j}=\frac{\partial_jb_\ell}{1+b_3}, \quad \mbox{ for } \ell,j=1,2,3.
\end{equation}
If $b$ is a small perturbation,  $\norm{b}_{C^{2,\alpha}(\Omega)}\ll 1$, and dropping the small nonlinear terms, the resulting equation simply reads  $\partial_{3}j=0$ in $\Omega$ and hence, $j$ remains constant along the vertical variable, this is
\begin{equation}\label{j_0:independent:z}
    j(x,y,z)=j(x,y)=j_{0}(x,y)  \mbox{ in } \Omega.
\end{equation}
Notice that $j_0(x,y)$ is an unknown of the problem. 
Taking into account this approximation, we solve the div-curl problem to recover a new magnetic field $B$. Since $B=(0,0,1)+b$, then the resulting problem is given by
\begin{equation}\label{divcurl:1}
\begin{cases}
\nabla\times B=j_{0}(x,y) \quad \mbox{in } \Omega,\\
\nabla \cdot B=0  \quad \mbox{in } \Omega, \\
B\cdot n =f \quad \mbox{on } \partial\Omega,
\end{cases}
\end{equation}
and complemented with the integral conditions 
$$\int_{\{x=0\}}B\cdot d\vec{S}=J_1\quad \text{ and }\quad \int_{\{y=0\}}B\cdot d\vec{S}=J_2.$$ 
The fluxes along the two directions of the period of the torus are a degree of freedom of the div-curl problem \eqref{divcurl:1}. Those conditions will be used in the sequel to obtain a uni-valued pressure function. Later, we will also impose that the constructed solution $B=(0,0,1)+b$ also satisfies the boundary condition 
\begin{equation}\label{imposing:tangential}
 B_{\tau}=g=(g_{1},g_{2})  \quad \mbox{on } \partial\Omega_{-}.
 \end{equation}

In order to solve \eqref{divcurl:1}, we use a vector potential $Z$. More precisely, we transform \eqref{divcurl:1} into an elliptic problem for $Z$. To that purpose, we define
\begin{equation}\label{potencialvector}
B=\nabla\times Z+A(0,0,1), \quad \mbox{ where  } A=\int_{\partial\Omega_-}f=\int_{\partial\Omega_+}f. 
\end{equation}
The newly defined potential vector $Z$ solves the following auxiliary problem 
\begin{equation}\label{aux:1}
    \left\{
    \begin{array}{ll}
       -\Delta Z=j_{0}  &  \text{in }\Omega,\\
        Z_2=h_2^{-},\,Z_1=h_1^{-} & \text{on }\partial\Omega_{-},\\
        Z_2=h_2^{+}-\frac{J_2}{(2\pi)^2},\, Z_1=h_1^{+} +\frac{J_1}{(2\pi)^2} & \text{on }\partial\Omega_{+},\\  
        \partial_3 Z=-\partial_1 Z_1-\partial_2 Z_2 & \text{on } \partial \Omega,
    \end{array}
    \right.
\end{equation}
where the different $h^{\pm}_\ell$ ($\ell=1,2$) are defined as 
\begin{equation} \label{boundary:f:h}
\begin{split}
h_2^-(x,y)&=\int_0^x f(s,y,0)\,ds-\frac{x}{2\pi}\int_0^{2\pi}f(s,y,0)\,ds,\\
h_1^{-}(x,y)&=-\frac{1}{2\pi}\int_0^{2\pi}\int_0^yf(s,t,0)\,dtds+\frac{y}{(2\pi)^2}\int_0^{2\pi}\int_0^{2\pi}f(s,t,0)\,dtds,\\
h_2^+(x,y)&=\int_0^xf(s,y,L)\,ds-\frac{x}{2\pi}\int_0^{2\pi}f(s,y,L)\,ds,\\
h_1^{+}(x,y)&=-\frac{1}{2\pi}\int_0^{2\pi}\int_0^yf(s,t,L)\,dtds+\frac{y}{(2\pi)^2}\int_0^{2\pi}\int_0^{2\pi}f(s,t,L)\,dtds.
\end{split}
\end{equation}
System \eqref{aux:1} consists on two uncoupled Dirichlet problems for $Z_1$ and $Z_2$, and a Neumann problem for $Z_3$. The first two problems for $Z_{1},Z_{2}$ can be solved without any further complication, but the Neumann problem for $Z_{3}$ requires that $j$ satisfies an integrability condition, which is indeed a consequence of $j_0^3$ having zero mean and imposing that $j$ is divergence free. Furthermore, one obtains as an easy consequence that $Z$ has zero divergence. Indeed, the divergence free condition on $j$ and the choice of $Z_3$ implies that $\text{div}\,Z$ is harmonic with zero boundary conditions, i.e. $\text{div}\,Z$ vanishes. Therefore, formula \eqref{potencialvector} gives us a solution for the div-curl system.
Similarly as in the two dimensional case \cite[Section 2]{Alo-Velaz-2022}, in order to solve the Dirichlet elliptic problem for $Z_{1}$ and $Z_{2}$, one can apply Fourier transform in the two periodic variables $x,y$. This transforms the two elliptic problems into two uncoupled second-order non-homogeneous ODE with constant coefficients. After using variation of parameters method we find that
\begin{align} 
    Z_1(r,z)&=\sum_{\xi\in\Z^2}\frac{\sinh(|\xi|L)-\sinh(|\xi|z)-\sinh(|\xi|(L-z))}{|\xi|^2\sinh(|\xi|L)}\reallywidehat{j}_0^1(\xi)e^{ir\cdot\xi}\nonumber\\
    &+\sum_{\xi\in \Z^2}\reallywidehat{h}_1^{-}(\xi)\frac{\sinh(|\xi|(L-z))}{\sinh(|\xi|L)}\foul+\sum_{\xi\in\Z^2}\reallywidehat{h}_1^+(\xi)\frac{\sinh(|\xi|z)}{\sinh(|\xi|L)}\foul+\frac{z}{(2\pi)L}J_1 \label{Z:1:computation} \\
    \nonumber\\
    Z_2(r,z)&=\sum_{\xi\in\Z^2}\frac{\sinh(|\xi|L)-\sinh(|\xi|z)-\sinh(|\xi|(L-z))}{|\xi|^2\sinh(|\xi|L)}\reallywidehat{j}_0^2(\xi)e^{ir\cdot\xi},\nonumber\\
    &+\sum_{\xi\in \Z^2}\reallywidehat{h}_2^{-}(\xi)\frac{\sinh(|\xi|(L-z))}{\sinh(|\xi|L)}\foul+\sum_{\xi\in\Z^2}\reallywidehat{h}_2^+(\xi)\frac{\sinh(|\xi|z)}{\sinh(|\xi|L)}\foul-\frac{z}{(2\pi)L},J_2.\label{Z:2:computation}
\end{align}
where we used the notation $r=(x,y)\in\mathbb{T}^{2}$. In the previous sums and throughout the rest of the article, the zero Fourier mode $\xi=0$ is understood as the limit as $\xi\rightarrow 0$. More precisely, if $\xi=0$ we will use that
\[\frac{\sinh(|\xi|(L-z))}{\sinh(|\xi|L)}\rightarrow \frac{L-z}{L},\qquad \frac{\sinh(|\xi|z)}{\sinh(|\xi|L)}\rightarrow \frac{z}{L},\]
and 
\[\frac{\sinh(|\xi|L)-\sinh(|\xi|z)-\sinh(|\xi|(L-z))}{|\xi|^2\sinh(|\xi|L)}\rightarrow \frac{Lz}{2}\left(1-\frac{z}{L}\right).\]
Furthermore, for further reference, if $\xi=0$ we follow the convention
\begin{equation}
\frac{\cosh(|\xi|L)-1}{|\xi|\sinh(|\xi|L)}\rightarrow \frac{L}{2}, \quad \frac{|\xi|}{\sinh(|\xi|L)}\rightarrow \frac{1}{L} , \quad \frac{|\xi|\cosh(|\xi|L)}{\sinh(|\xi|L)}\rightarrow \frac{1}{L}.
\end{equation}

In order to compute $Z_{3}$, one can solve the Neumann problem, but it can be easily deduced \textit{a posteriori} by means of the divergence free condition. Indeed, assuming that the constructed solution is divergence free we obtain 
\begin{equation}\label{div:nula}
\partial_1 Z_1+\partial_2 Z_2+\partial_3 Z_3=0,
\end{equation}
we have that $\partial_3^2Z_3=-\partial_3\partial_1Z_1-\partial_3\partial_2Z_2.$
Taking the Fourier transform in the first two variables, the equation reads
\begin{equation}\label{div:nula2}
|\xi|^2\reallywidehat{Z_3}(\xi,z)=\reallywidehat{j}_0^3(\xi)-im\partial_3\reallywidehat{Z}_1(\xi,z)-in\partial_3\reallywidehat{Z}_2(\xi,z). 
\end{equation}
Notice that the previous equation is trivially satisfied for $\xi=0$ for an arbitrary choice of $\reallywidehat{Z}_{3}(0,z)$  since 
 $\reallywidehat{j}_0^3(0)=0$. Therefore, combining \eqref{Z:1:computation}, \eqref{Z:2:computation} with \eqref{div:nula2} we obtain that
 \begin{align}\label{Z:3:computation}
   Z_3(r,z)&=\sum_{\xi\in \Z^2_*}\frac{1}{|\xi|^2}\reallywidehat{j}_0^3(\xi)\foul\,+\sum_{\xi\in\Z^2_*}\frac{im}{|\xi|^2}\frac{\cosh(|\xi|z)-\cosh(|\xi|(L-z))}{|\xi|\sinh(|\xi|L)}\reallywidehat{j}_0^2(\xi)\foul \nonumber\\
    &+\sum_{\xi\in\Z^2_*}\frac{im}{|\xi|^2}\frac{\cosh(|\xi|z)-\cosh(|\xi|(L-z))}{|\xi|\sinh(|\xi|L)}\reallywidehat{j}_0^1(\xi)\foul +\sum_{\xi\in \Z_{*}^2}\reallywidehat{h}_1^{-}(\xi)\frac{im}{|\xi|}\frac{\cosh(|\xi|(L-z))}{\sinh(|\xi|L)}\foul \nonumber \\
    &-\sum_{\xi\in\Z_{*}^2}\reallywidehat{h}_1^+(\xi)\frac{im}{|\xi|}\frac{\cosh(|\xi|z)}{\sinh(|\xi|L)}\foul
    +\sum_{\xi\in \Z_{*}^2}\reallywidehat{h}_2^{-}(\xi)\frac{in}{|\xi|}\frac{\cosh(|\xi|(L-z))}{\sinh(|\xi|L)}\foul \nonumber \\
    &\hspace{2cm} -\sum_{\xi\in\Z_{*}^2}\reallywidehat{h}_2^+(\xi)\frac{in}{|\xi|}\frac{\cosh(|\xi|z)}{\sinh(|\xi|L)}\foul + f(z).
\end{align} 
Here $f(z)$ denotes an arbitrary function and each node of the lattice $\xi\in \Z^2$ is written as $\xi=(m,n)$. Moreover,  we denote $\Z^2_*=\Z^2\setminus\{0\}$. Using the zero Fourier mode of the divergence free condition \eqref{div:nula} we find that $f'(z)=0$. Thus, $f(z)$ is constant and since $B$ is defined as \eqref{potencialvector} we can assume without loss of generality that $f(z)=0$.

 Hence, we have computed the potential vector field $Z=(Z_{1},Z_{2},Z_{3})$ given by the formulas \eqref{Z:1:computation}-\eqref{Z:3:computation} solving \eqref{aux:1}. Hence, recalling \eqref{potencialvector}, we can recover $B$ that satisfies \eqref{divcurl:1}. The main issue of this construction is that $j_0(x,y)$ is still an unknown of the problem. On the other hand, the constructed solution $B$ does not satisfies the tangential boundary condition \eqref{imposing:tangential}.  The key idea to circumvent this problem is to choose $j_0(x,y)$ in \eqref{Z:1:computation}, \eqref{Z:2:computation} and \eqref{Z:3:computation} in such a way
that \eqref{imposing:tangential} is satisfied. This will yield an integral equation for the unknown current $j_0(x,y)$.  More precisely, since  $B_{\tau}=g$ on $\partial\Omega_{-}$, we have that 
\begin{align*}
\partial_2 Z_3(r,0)-\partial_3 Z_2(r,0)=g_1(r) & \text{ on }\partial\Omega_{-},\\
 \partial_3 Z_1(r,0)-\partial_1 Z_3(r,0)=g_2(r) & \text{ on }\partial\Omega_{-}.
\end{align*}

Therefore, imposing the first condition on the vector potential $Z$ and using the explicit formulas for $Z_{2}$ and $Z_{3}$ given in \eqref{Z:2:computation}, \eqref{Z:3:computation} respectively, we obtain
\begin{align}\label{lin1}
    g_1(r)&=\sum_{\xi\in \Z^2_*}\frac{in}{|\xi|^2}\reallywidehat{j}_0^3(\xi)\foul \nonumber+\sum_{\xi\in\Z^2}\left(\frac{mn}{|\xi|^2}\chi_{\{\xi\neq0\}}\right)\frac{\cosh(|\xi|L)-1}{|\xi|\sinh(|\xi|L)}\,\reallywidehat{j}^{1}_0(\xi)\,\foul  \nonumber   \\ 
    &\quad +\sum_{\xi\in\Z^2}\left(\frac{n^2}{|\xi|^2}\chi_{\{\xi\neq0\}}-1\right)\frac{\cosh(|\xi|L)-1}{|\xi|\sinh(|\xi|L)}\,\reallywidehat{j}^{1}_0(\xi)\,\foul +\z_{1}(r)+\frac{J_2}{(2\pi)L}.
\end{align}
The function $\z_{1}$ that depends only on the boundary value $f$ (cf. \eqref{boundary:f:h}) is given by
\begin{align}
\z_{1}(r)&= -\sum_{\xi\in \Z^2}\reallywidehat{h}_1^{-}(\xi)\left(\frac{mn}{|\xi|^2}\chi_{\{\xi\neq0\}}\right)\frac{|\xi|\cosh(|\xi|L)}{\sinh(|\xi|L)}\foul +\sum_{\xi\in \Z^2}\reallywidehat{h}_1^{+}(\xi)\left(\frac{mn}{|\xi|^2}\chi_{\{\xi\neq0\}}\right)\frac{|\xi|}{\sinh(|\xi|L)}\foul\nonumber \\
&-\sum_{\xi\in \Z^2}\reallywidehat{h}_2^{-}(\xi)\left(\frac{n^2}{|\xi|^2}\chi_{\{\xi\neq0\}}-1\right)\frac{|\xi|\cosh(|\xi|L)}{\sinh(|\xi|L)}\foul +\sum_{\xi\in \Z^2}\reallywidehat{h}_2^{+}(\xi)\left(\frac{n^2}{|\xi|^2}\chi_{\{\xi\neq0\}}-1\right)\frac{|\xi|}{\sinh(|\xi|L)}\foul. \label{freakz1:computation}
\end{align}
Analogously, using the explicit formulas for $Z_{1}$ and $Z_{3}$ given in \eqref{Z:1:computation}, \eqref{Z:3:computation} respectively, we find that
\begin{align}\label{lin2}
g_2(r)&=-\sum_{\xi\in \Z^2_*}\frac{im}{|\xi|^2}\reallywidehat{j}_0^3(\xi)\foul -\sum_{\xi\in\Z^2}\left(\frac{m^2}{|\xi|^2}\chi_{\{\xi\neq0\}}-1\right)\frac{\cosh(|\xi|L)-1}{|\xi|\sinh(|\xi|L)}\,\reallywidehat{j}^{1}_0(\xi)\,\foul \nonumber   \\ 
    &-\sum_{\xi\in\Z^2}\left(\frac{mn}{|\xi|^2}\chi_{\{\xi\neq0\}}\right)\frac{\cosh(|\xi|L)-1}{|\xi|\sinh(|\xi|L)}\,\reallywidehat{j}^{2}_0(\xi)\,\foul +\z_{2}(r)+\frac{J_1}{(2\pi)L}.
\end{align}
Here, the function $\z_{2}$ is given by
\begin{align}
\z_{2}(r)&=\sum_{\xi\in \Z^2}\reallywidehat{h}_1^{-}(\xi)\left(\frac{m^2}{|\xi|^2}\chi_{\{\xi\neq0\}}-1\right)\frac{|\xi|\cosh(|\xi|L)}{\sinh(|\xi|L)}\foul -\sum_{\xi\in \Z^2}\reallywidehat{h}_1^{+}(\xi)\left(\frac{m^2}{|\xi|^2}\chi_{\{\xi\neq0\}}-1\right)\frac{|\xi|}{\sinh(|\xi|L)}\foul \nonumber \\
&+\sum_{\xi\in \Z^2}\reallywidehat{h}_2^{-}(\xi)\left(\frac{mn}{|\xi|^2}\chi_{\{\xi\neq0\}}\right)\frac{|\xi|\cosh(|\xi|L)}{\sinh(|\xi|L)}\foul-\sum_{\xi\in \Z^2}\reallywidehat{h}_2^{+}(\xi)\left(\frac{mn}{|\xi|^2}\chi_{\{\xi\neq0\}}\right)\frac{|\xi|}{\sinh(|\xi|L)}\foul. \label{freakz2:computation}
\end{align}

Moreover, we denote $\widetilde{g}_{\ell}(r)=g_{\ell}(r)-\z_{\ell}(r)$ for $\ell=1,2$. Notice that these functions depend only on the boundary values imposed on the function $B$ (cf.  \eqref{imposing:tangential} and \eqref{boundary:f:h}). We introduce the multiplier function 
\begin{equation}\label{multiplier:m:lineal}
\mathfrak{m}(\xi)=\frac{\cosh(|\xi|L)-1}{|\xi|\sinh(|\xi|L)},
\end{equation} that will be associated to the operator $\mathsf{T}$ defined as 
\[ \reallywidehat{\mathsf{T}f}(\xi)=\mathfrak{m}(\xi) \reallywidehat{f}(\xi).\]

Combining the previous notations, equations \eqref{lin1}-\eqref{lin2} read
\begin{align}
\widetilde{g}_{1}(r)&=\sum_{\xi\in \Z^2_*}\frac{in}{|\xi|^2}\reallywidehat{j}_0^3(\xi)\foul+\sum_{\xi\in\Z^2}\left(\frac{mn}{|\xi|^2}\chi_{\{\xi\neq0\}}\right)\mathfrak{m}(\xi)\,\reallywidehat{j}^{1}_0(\xi)\,\foul \nonumber \\
&\quad  \quad +\sum_{\xi\in\Z^2}\left(\frac{n^2}{|\xi|^2}\chi_{\{\xi\neq0\}}-1\right)\mathfrak{m}(\xi)\,\reallywidehat{j}^{1}_0(\xi)\,\foul+\frac{J_{2}}{(2\pi)L}, \label{lin3} \\
\widetilde{g}_{2}(r)&=-\sum_{\xi\in \Z^2_*}\frac{im}{|\xi|^2}\reallywidehat{j}_0^3(\xi)\foul+\sum_{\xi\in\Z^2}\left(\frac{m^2}{|\xi|^2}\chi_{\{\xi\neq0\}}\right)\mathfrak{m}(\xi)\,\reallywidehat{j}^{1}_0(\xi)\,\foul  \nonumber \\
&\quad  -\sum_{\xi\in\Z^2}\left(\frac{mn}{|\xi|^2}\chi_{\{\xi\neq0\}}\right)\mathfrak{m}(\xi)\,\reallywidehat{j}^{2}_0(\xi)\,\foul+\frac{J_{1}}{(2\pi)L}. \label{lin4}
\end{align}
Using the fact that the multiplier $\mathfrak{m}(\xi)\neq 0, \forall \xi\in\mathbb{Z}^{2}$, it turns out that the operator $\mathsf{T}^{-1}$ exists. Hence, applying $\mathsf{T}^{-1}$ to both sides of equations \eqref{lin3}-\eqref{lin4} we find that
 \begin{align}
     \label{preintegrallineal1}\reallywidehat{\textsf{G}_1}(\xi)&=\frac{in}{|\xi|^2}(\mathfrak{m}(\xi))^{-1}\chi_{\{\xi\neq0\}}\reallywidehat{j}^3_0(\xi)+\left(\frac{n^{2}}{|\xi|^{2}}\chi_{\{\xi\neq0\}}-1\right)\reallywidehat{j}_0^2(\xi)+\frac{mn}{|\xi|^{2}}\chi_{\{\xi\neq0\}}\reallywidehat{j}_0^1(\xi)+\frac{J_2}{\pi L^2}\delta_{0,n}\delta_{0,m}, \\
     \label{preintegrallineal2}\reallywidehat{\textsf{G}_2}(\xi)&=-\frac{im}{|\xi|^2}(\mathfrak{m}(\xi))^{-1}\chi_{\{\xi\neq0\}}\reallywidehat{j}^3_0(\xi)-\left(\frac{m^{2}}{|\xi|^{2}}\chi_{\{\xi\neq0\}}-1\right)\reallywidehat{j}_0^1(\xi)-\frac{mn}{|\xi|^2}\chi_{\{\xi\neq0\}}\reallywidehat{j}_0^2(\xi)+\frac{J_1}{\pi L^2}\delta_{0,n}\delta_{0,m},
 \end{align}
where $\reallywidehat{\textsf{G}_{\ell}}(\xi)=(\mathfrak{m}(\xi))^{-1}\reallywidehat{\widetilde{g}}_{\ell}(\xi)$ for $\ell=1,2$. For $\xi=0$ the solution of the system \eqref{preintegrallineal1}-\eqref{preintegrallineal2} is given by 
\begin{equation}\label{final:lin:xi0}
 \reallywidehat{j}_{0}^{1}(0)=\reallywidehat{\textsf{G}_2}(0)-\frac{J_1}{\pi L^2}, \quad \reallywidehat{j}_{0}^{2}(0)=-\reallywidehat{\textsf{G}_1}(0)+\frac{J_2}{\pi L^2}. 
 \end{equation}
On the other hand if $\xi\neq 0$ the system \eqref{preintegrallineal1}-\eqref{preintegrallineal2} has three unknowns $j_{0}=(j_{0}^{1},j_{0}^{2},j_{0}^{3})$. Furthermore, recall that we have a constraint on the divergence of the current $j$, i.e. , $\nabla\cdot j=0$. This allows to reduce the number of independent functions from three to two. More precisely, in the particular linearized setting under consideration, \eqref{j_0:independent:z} implies that $\partial_{3}j_{0}^{3}=0$. Therefore the divergence free condition for $j$ yields
\begin{equation}\label{divergencialineal}
    im\reallywidehat{j}_0^1+in\reallywidehat{j}_0^2=0.
\end{equation}
Then, for $\xi\neq 0$,  the system of equations \eqref{preintegrallineal1}-\eqref{divergencialineal} can be inverted since 
\begin{equation}\label{cuenta:det:mxi}
 \mbox{det} A= -(\mathfrak{m}(\xi))^{-1}\neq 0, \mbox{ where } A= \left(\begin{array}{ccc}
        \frac{mn}{|\xi|^2} & \frac{n^2}{|\xi|^2}-1   &\frac{in}{|\xi|^2}(\mathfrak{m}(\xi))^{-1}\\
        -\frac{m^2}{|\xi|^2}+1 & -\frac{mn}{|\xi|^2} &-\frac{im}{|\xi|^2}(\mathfrak{m}(\xi))^{-1}\\
        im & in & 0
    \end{array}\right).
\end{equation}
In addition, we have that
\begin{equation}
  A^{-1}=\left(\begin{array}{ccc}
        \frac{mn}{|\xi|^2} & \frac{n^2}{|\xi|^2}  &\frac{-im}{|\xi|^2} \\
        -\frac{m^2}{|\xi|^2} & -\frac{mn}{|\xi|^2} &-\frac{in}{|\xi|^2}\\
        -in \mathfrak{m}(\xi) & im  \mathfrak{m}(\xi) & 0
    \end{array}\right),
\end{equation}
 and therefore, we can invert the linear system and obtain that for $\xi\neq 0$
 \begin{equation}\label{final:lin:xi1}
 \begin{split}
\reallywidehat{j}_{0}^{1}(\xi)&=\frac{mn}{|\xi|^2} \reallywidehat{\textsf{G}_1}(\xi)+\frac{n^2}{|\xi|^2}\reallywidehat{\textsf{G}_2}(\xi), \\
\reallywidehat{j}_{0}^{2}(\xi)&=-\frac{m^{2}}{|\xi|^2} \reallywidehat{\textsf{G}_1}(\xi)-\frac{mn}{|\xi|^2}\reallywidehat{\textsf{G}_2}(\xi), \\
\reallywidehat{j}_{0}^{3}(\xi)&=in\reallywidehat{g_{1}}(\xi)-im\reallywidehat{g_{2}}(\xi),
 \end{split}
 \end{equation}
where in the last equality we used the fact that $\reallywidehat{\textsf{G}_{\ell}}(\xi)=(\mathfrak{m}(\xi))^{-1}\reallywidehat{\widetilde{g}}_{\ell}(\xi)$ for $\ell=1,2$. Therefore, combining \eqref{final:lin:xi0} and \eqref{final:lin:xi1} we conclude that for $\xi\in\mathbb{Z}^{2}$
 \begin{equation}\label{final:lin}
 \begin{split}
j_{0}^{1}(r)&=\displaystyle\sum_{\xi\in\mathbb{Z}^{2}_{*}}\frac{mn}{|\xi|^2} \reallywidehat{\textsf{G}_1}(\xi)\foul-\displaystyle\sum_{\xi\in\mathbb{Z}^{2}_{*}}\frac{m^2}{|\xi|^2} \reallywidehat{\textsf{G}_2}(\xi)\foul+ \textsf{G}_2(r)-\frac{J_1}{\pi L^2}, \\
j_{0}^{2}(r)&= - \displaystyle\sum_{\xi\in\mathbb{Z}^{2}_{*}}\frac{mn}{|\xi|^2}\reallywidehat{\textsf{G}_2}(\xi)\foul +\displaystyle\sum_{\xi\in\mathbb{Z}^{2}_{*}}\frac{n^2}{|\xi|^2} \reallywidehat{\textsf{G}_1}(\xi)\foul-\textsf{G}_1(r)+\frac{J_2}{\pi L^2}, \\
j_{0}^{3}(r)&=\partial_{1}g_{2}(r)-\partial_{2}g_{1}(r).
\end{split}
 \end{equation}
Hence, expression for $j_{0}$ in \eqref{final:lin} yields the value of $Z$ by means of formulas \eqref{Z:1:computation}, \eqref{Z:2:computation} and \eqref{Z:3:computation} and thus the value of a solution $B$ via \eqref{potencialvector} solving the desired boundary value problem \eqref{divcurl:1}-\eqref{imposing:tangential}. However, the values $J_{1}, J_{2}$ appearing in \eqref{final:lin} are until now undetermined. We will fix both values in order to guarantee that the pressure $p$ (which is also an unknown of the problem) is a uni-valued function in $\Omega$. Indeed using the first equation in \eqref{mhs2} and linearizing around $B=(0,0,1)+b$ we find that
\begin{equation}\label{pressure:cond:j:lin}
j^{2}(x,y,z)=\partial_{x}p(x,y,z), \quad -j^{1}(x,y,z)=\partial_{y}p(x,y,z), \quad 0=\partial_{z}p(x,y,z).
\end{equation}
Equation \eqref{pressure:cond:j:lin} shows that $p(x,y,z)=p(x,y)$ and therefore $j^{1}(x,y,z)=j^{1}_{0}(x,y)$ and $j^{2}(x,y,z)=j^{2}_{0}(x,y)$ as well. Moreover, \eqref{pressure:cond:j:lin} implies that $\nabla p=\nabla p_{|\partial\Omega_{-}}=(j^{1}_{0},j^{2}_{0})$. Hence, we can obtain a solution to \eqref{pressure:cond:j:lin} 
\[ p(x,y,z)=\int_{0}^{\textbf{\textit{x}}}j_{0}(x,y) \ d\textbf{\textit{x}}, \]
where the integral on the right hand side in the line integration computed along any contour that connects $(0,0,0)$ and $\textbf{\textit{x}}\in \Omega$. Therefore, to ensure that the pressure $p(x,y,z)$ is a uni-valued function in $\Omega=\T^2\times[0,L]$ we need that 
\[
  \int_{0}^{2\pi}\int_{0}^{2\pi} j^{1}(x,y,0) \ dxdy=0,  \int_{0}^{2\pi}\int_{0}^{2\pi} j^{2}(x,y,0) dx dy=0.
\]
Using \eqref{final:lin} we obtain that
\[ \frac{J_{1}}{\pi L^{2}}=\langle \textsf{G}_{2}\rangle, \quad  \frac{J_{2}}{\pi L^{2}}=\langle \textsf{G}_{1}\rangle.\]

\section{The non-linear setting: the integral equation for the current}\label{sec:4}
In this section, we will follow the same arguments used in Section \ref{sec:linearized} but taking into account the whole non-linear problem. In particular, we will derive an integral equation for the current $j$ on $\partial\Omega_{-}$, namely, $j_0(x,y)$ for $(x,y)\in\mathbb{T}^{2}$. As expected, the integral equation will be a perturbation of equation \eqref{final:lin} in terms of the perturbed magnetic field $b$ and the boundary values $f$ and $g$. During this section, the computations will be again formal and we will not consider convergence issues that might arise from the Fourier series. The following section (cf. Section \ref{sec:5}) is devoted to justify the convergence of the Fourier series and the precise definitions of the operators derived hereafter.

Given, $b\in C^{2,\alpha}$ and $B=(0,0,1)+b$ we define $j\in C^{1,\alpha}(\Omega)$ as the solution to 
\begin{equation}\label{transport}
\begin{cases}
((0,0,1)+b)\cdot\nabla)j=(j\cdot\nabla)((0,0,1)+b) \quad \mbox{in } \Omega,\\
j=j_{0} \quad \mbox{on } \partial\Omega_{-}.
\end{cases}
\end{equation}
More precisely, we can write the previous system as 
\begin{equation}\label{transport:modified}
    \partial_3 j+\frac{b_1}{1+b_3}\partial_1 j+\frac{b_2}{1+b_3}\partial_2 j=A(b)j, 
\end{equation}
where the matrix $A(b)$ reads 
\begin{equation}\label{transport:matrix:modified}
    A(b)=(a_{\ell j})_{\ell,j},\text{ with } a_{\ell j}=\frac{\partial_jb_\ell}{1+b_3}.
\end{equation}
The system can be solved by means of characteristics. Indeed, using Proposition \ref{existenciatransporte},
the solution is explicitly given by 
\begin{equation}\label{j:current:nonlinear1}
j(x,y,z)=w\circ\Phi^{-1}(x,y,z), 
\end{equation}
where $w$ satisfies the differential equation \eqref{regularidad4} and $\Phi$ is the flow mapping defined by the characteristic lines of \eqref{transport} given in \eqref{car}-\eqref{flows}. Notice that a priori, the value of the $j_0 \mbox{ on } \partial\Omega_{-}$ is unknown. 

As in the linearizing setting (cf. equations \eqref{divcurl:1}-\eqref{imposing:tangential} in Section \ref{sec:linearized}), we next solve the div-curl problem to obtain a new field $W$, namely,
\begin{equation}\label{divcurl}
\left\{
\begin{array}{ll}
    \nabla\times W=j& \text{in }\Omega, \\
    \text{div}\,W=0 & \text{in }\Omega, \\
    W\cdot n=f & \text{on }\partial\Omega, \\
    W_{\tau}=g & \text{on }\partial\Omega_{-},
\end{array}
\right.    
\end{equation}
complemented with the integral conditions 
$$\int_{\{x=0\}}W\cdot d\vec{S}=\frac{J_1}{(2\pi)^2}\quad \text{ and }\quad \int_{\{y=0\}}W\cdot d\vec{S}=\frac{J_2}{(2\pi)^2}.$$ 
In order to solve \eqref{divcurl}, we use a divergence free vector potential $Z$, namely 
\[W=\nabla\times Z+A(0,0,1), \quad \mbox{ where  } A=\int_{\partial\Omega_-}f=\int_{\partial\Omega_+}f\] The resulting problem (identical as in \eqref{aux:1} in Section \ref{sec:linearized}) reads
\begin{equation}\label{aux}
    \left\{
    \begin{array}{ll}
       -\Delta Z=j  &  \text{in }\Omega, \\
         Z_2=h_2^{-},\,Z_1=h_1^{-} & \text{on }\partial\Omega_{-}, \\
        Z_2=h_2^{+}-\frac{J_2}{(2\pi)^2},\, Z_1=h_1^{+} +\frac{J_1}{(2\pi)^2}& \text{on }\partial\Omega_{+},\\
         \partial_3 Z=-\partial_1 Z_1-\partial_2 Z_2, & \text{on } \partial \Omega.
    \end{array}
    \right. 
\end{equation}
where the different $h^{\pm}_i$ ($i=1,2$) are defined as in \eqref{boundary:f:h} and $j=j(x,y,z)$ is given by \eqref{j:current:nonlinear1}. This leads again to an uncoupled problem for the first two components $Z_1$ and $Z_2$. Following the approach in \cite[Section 3.1]{Alo-Velaz-2022}, by means of the Green's function for the Laplace operator, the unique solutions $Z_1$ and $Z_2$ are  
\begin{align*}
    Z_1(r,z)&=\int_0^L\int_{\T^2}\sum_{\xi\in\Z^2}\mathfrak{G}(\xi,z,z_0)\,j^{1}(\zeta,z_0)\,\fou d\zeta dz_0 \nonumber\\
    &+\sum_{\xi\in \Z^2}\reallywidehat{h}_1^{-}(\xi)\frac{\sinh(|\xi|(L-z))}{\sinh(|\xi|L)}\foul+\sum_{\xi\in\Z^2}\reallywidehat{h}_1^+(\xi)\frac{\sinh(|\xi|z)}{\sinh(|\xi|L)}\foul+\frac{z}{(2\pi)L}J_1,  \\
    Z_2(r,z)&=\int_0^L\int_{\T^2}\sum_{\xi\in\Z^2}\mathfrak{G}(\xi,z,z_0)\,j^{2}(\zeta,z_0)\,\fou d\zeta dz_0 \nonumber\\
    &+\sum_{\xi\in \Z^2}\reallywidehat{h}_2^{-}(\xi)\frac{\sinh(|\xi|(L-z))}{\sinh(|\xi|L)}\foul+\sum_{\xi\in\Z^2}\reallywidehat{h}_2^+(\xi)\frac{\sinh(|\xi|z)}{\sinh(|\xi|L)}\foul-\frac{z}{(2\pi)L}J_2. 
\end{align*}
where 
\begin{equation}\label{green:function:laplace}
\mathfrak{G}(\xi,z;z_0)= \left\{\begin{array}{ll}
  \frac{1}{(2\pi)^2}  \frac{\sinh(|\xi|z)\sinh(|\xi|(L-z_0))}{|\xi|\sinh(|\xi|L)} & z<z_0, \\\\\
   \frac{1}{(2\pi)^2} \frac{\sinh(|\xi|z_0)\sinh(|\xi|(L-z)) }{|\xi|\sinh(|\xi|L)}& z\geq z_0,
\end{array}\right. 
\end{equation}
is the Green function solving the problem
\begin{equation}
\begin{cases}
-\frac{\partial^{2}}{\partial z^{2}}\mathfrak{G}(\xi,z;z_0) + |\xi|^{2}\mathfrak{G}(\xi,z;z_0)=\frac{1}{(2\pi)^2}\delta(z-z_{0}),\ \mbox{ in } \Omega, \\ 
\mathfrak{G}=0, \ \mbox{ on } \partial\Omega.
\end{cases}
\end{equation}
Following the computations \eqref{div:nula}-\eqref{Z:3:computation} in Section \ref{sec:linearized} using the divergence free condition, we can determine $Z_{3}$, namely,
\begin{align}
   Z_3(r,z)&=\frac{1}{(2\pi)^2}\int_{\T^2}\sum_{\xi\in \Z^2_*}\frac{1}{|\xi|^2}j^3(\zeta,z)\fou\, d\zeta-\int_0^L\int_{\T^2}\sum_{\xi\in\Z_{*}^2}\frac{im}{|\xi|^2}\partial_3\mathfrak{G}(\xi,z,z_0)\,j^{1}(\zeta,z_0)\,\fou d\zeta dz_0 \nonumber\\
    &\hspace*{-1cm}-\int_0^L\int_{\T^2}\sum_{\xi\in\Z_{*}^2}\frac{in}{|\xi|^2}\partial_3\mathfrak{G}(\xi,z,z_0)\,j^{2}(\zeta,z_0)\,\fou d\zeta dz_0 + \sum_{\xi\in \Z_{*}^2}\reallywidehat{h}_1^{-}(\xi)\frac{im}{|\xi|}\frac{\cosh(|\xi|(L-z))}{\sinh(|\xi|L)}\foul \nonumber \\
    &\hspace*{-0.5cm}-\sum_{\xi\in\Z_{*}^2}\reallywidehat{h}_1^+(\xi)\frac{im}{|\xi|}\frac{\cosh(|\xi|z)}{\sinh(|\xi|L)}\foul +\sum_{\xi\in \Z_{*}^2}\reallywidehat{h}_2^{-}(\xi)\frac{in}{|\xi|}\frac{\cosh(|\xi|(L-z))}{\sinh(|\xi|L)}\foul-\sum_{\xi\in\Z_{*}^2}\reallywidehat{h}_2^+(\xi)\frac{in}{|\xi|}\frac{\cosh(|\xi|z)}{\sinh(|\xi|L)}\foul. \label{Z3:computaton:non}
\end{align}
Once we have found the only possible solutions for the vector potential $Z$, we can use the remaining tangential boundary condition $W_{\tau}=g$ on $\partial\Omega_{-}$ which in terms of potential $Z$ reads
\begin{align*}
\partial_2 Z_3(r,0)-\partial_3 Z_2(r,0)=g_1(r) & \text{ on }\partial\Omega_{-},\\
 \partial_3 Z_1(r,0)-\partial_1 Z_3(r,0)=g_2(r) & \text{ on }\partial\Omega_{-},
\end{align*}
to obtain an integral equation for the current $j$. Notice that the resulting formulas are very similar to those obtained in the linearized case with the only difference that the current expression for $j$ is more involved.  After some lengthy but straightforward computations we find that
\begin{align}\label{prim1}
    \widetilde{g}_1(r)&=\frac{1}{(2\pi)^2}\int_{\T^2}\sum_{\xi\in \Z^2_*}\frac{in}{|\xi|^2}j^3(\zeta,0)\fou\, d\zeta +\int_0^L\int_{\T^2}\sum_{\xi\in\Z^2}\left(\frac{mn}{|\xi|^2}\chi_{\{\xi\neq0\}}\right)\partial_3 \mathfrak{G}(\xi,0,z_0)\,j^{1}(\zeta,z_0)\,\fou d\zeta dz_0 \nonumber   \\ 
    &+\int_0^L\int_{\T^2}\sum_{\xi\in\Z^2}\left(\frac{n^2}{|\xi|^2}\chi_{\{\xi\neq0\}}-1\right)\partial_3 \mathfrak{G}(\xi,0,z_0)\,j^{2}(\zeta,z_0)\,\fou d\zeta dz_0+\frac{J_2}{(2\pi)L},
\end{align}
where $\widetilde{g}_{1}(r)= g_{1}(r)-\z_{1}(r)$ and $\z_{1}(r)$ depends only on the boundary value $f$ and is given in \eqref{freakz1:computation}. Analogously,
\begin{align}\label{prim2}
\widetilde{g}_{2}(r)&=-\frac{1}{(2\pi)^2}\int_{\T^2}\sum_{\xi\in \Z^2_*}\frac{im}{|\xi|^2}j^3(\zeta,0)\fou\, d\zeta -\int_0^L\int_{\T^2}\sum_{\xi\in\Z^2}\left(\frac{m^2}{|\xi|^2}\chi_{\{\xi\neq0\}}-1\right)\partial_3 \mathfrak{G}(\xi,0,z_0)\,j^{1}(\zeta,z_0)\,\fou d\zeta dz_0 \nonumber   \\ 
    &-\int_0^L\int_{\T^2}\sum_{\xi\in\Z^2}\left(\frac{mn}{|\xi|^2}\chi_{\{\xi\neq0\}}\right)\partial_3 \mathfrak{G}(\xi,0,z_0)\,j^{2}(\zeta,z_0)\,\fou d\zeta dz_0 +\frac{J_1}{(2\pi)L}.
\end{align}
Here, $\widetilde{g}_{2}(r)= g_{2}(r)-\z_{2}(r)$ and $\z_{2}(r)$ is defined in \eqref{freakz2:computation}.  In order to write a more convenient expression for the current, let us introduce the operator
\begin{equation}\label{operator:A:general}
\mathcal{A}[\theta](r,z)=\int_0^{L}\int_{\T^2}\sum_{\xi\in\Z^2} \partial_3\mathfrak{G}(\xi,0,z_0)\theta(\zeta,z_{0})\fou\,d\zeta dz_0.
\end{equation}
In particular, for $\theta(r,z)=j^{\ell}_{0}\circ \Phi^{-1}$, $\ell\in\{1,2\}$ we have that
\begin{align}
\mathcal{A}[j^{\ell}_{0}\circ \Phi^{-1}](r,z)&=\int_0^{L}\int_{\T^2}\sum_{\xi\in\Z^2} \partial_3\mathfrak{G}(\xi,0,z_0)\left(j^{\ell}_{0}\circ \Phi^{-1}\right)(\zeta,z_{0})\fou\,d\zeta dz_0 \nonumber \\
&=\frac{1}{(2\pi)^2}\int_0^{L}\int_{\T^2}\sum_{\xi\in\Z^2}\frac{\sinh(|\xi|(L-z_0))}{\sinh(|\xi|L)}\left(j^{\ell}_{0}\circ \Phi^{-1}\right)(\zeta,z_{0})\fou\,d\zeta dz_0, \label{expression:A:operator1}
\end{align}
where the normal derivative of $\mathfrak{G}(\xi,z;z_{0})$ at $z=0$, i.e. $\partial_3\mathfrak{G}(\xi,0,z_0)$ has been computed using \eqref{green:function:laplace}. Furthermore, we make the change of variable given by diffeomorphism $g(\eta)=(X(\eta,s),Y(\eta,s))$ with $X,Y$ satisfying \eqref{car} and with inverse $(X^{-1},Y^{-1})$. The Jacobian is defined by
$$J_g=\left(\begin{array}{cc}
    \partial_1 X & \partial_2 X \\
    \partial_1 Y & \partial_2 Y
\end{array}\right)(\eta,s)=\left(\begin{array}{cc}
    \partial_1 X^{-1} & \partial_2 X^{-1} \\
    \partial_1 Y^{-1} & \partial_2 Y^{-1}
\end{array}\right)^{-1}\left(X(\eta,s),Y(\eta,s)\right).$$
Denoting by  $\Theta(\eta,s)=|J_g|-1$,  \eqref{expression:A:operator1} becomes 
\begin{align}\mathcal{A}[j^{\ell}_{0}\circ \Phi^{-1}](r,z)&=\frac{1}{(2\pi)^2}\int_0^{L}\int_{\T^2}\sum_{\xi\in\Z^2}\frac{\sinh(|\xi|(L-s))}{\sinh(|\xi|L)}\frac{j^{\,\ell}_0\left(\eta\right)}{1+\Theta(\eta,s)}e^{i\left(r-\Phi(\eta,s)\right)\cdot \xi}\,d\eta ds \nonumber \\
&=\frac{1}{(2\pi)^2}\int_{\T^2}\mathcal{G}(r-\eta,\eta)j_0^{\,\ell}(\eta)d\eta, \label{operatorA:newvariable}
\end{align}
with 
\begin{equation}\label{kernel:original:G}
\mathcal{G}(r,\eta)=\sum_{\xi\in\Z^2}a_{\xi}(\eta)e^{ir\cdot \xi}, \ \mbox{where } a_{\xi}(\eta)=\int_0^L\frac{\sinh(|\xi|(L-s))}{\sinh(|\xi|L)}\frac{e^{i\Lambda(\eta,s)\cdot \xi}}{1+\Theta(\eta,s)}ds.
\end{equation}
Furthermore,  using the same decomposition as in \cite[Section 3.2]{Alo-Velaz-2022}, we can write the kernel \eqref{kernel:original:G} as  \[ \mathcal{G}(r,\eta)=\mathcal{G}_{0}(r,\eta)+\displaystyle\sum_{\kappa=1}^4\mathcal{G}_{\kappa}(r,\eta), \] where
\begin{align}
       \label{op0} \mathcal{G}_0(r)&=\sum_{\xi\in \Z^2}\mathfrak{m}(\xi) \foul,\\
    \label{op1}
    \mathcal{G}_1(r,\eta)&=\sum_{\xi\in\Z^2} \foul\int_0^Le^{-|\xi|s}\left(\frac{e^{-i\xi\cdot\Lambda(\eta,s)}-1}{1+\Theta(\eta,s)}\right)ds,\\
    \label{op2}\mathcal{G}_2(r,\eta)&=\sum_{\xi\in\Z^2}\foul\int_0^Le^{-|\xi|s}\left(\frac{\Theta(\eta,s)}{1+\Theta(\eta,s)}\right)ds,\\
    \label{op3}\mathcal{G}_3(r,\eta)&=\sum_{\xi\in\Z^2} \foul\int_0^LM(\xi,s)\left(\frac{e^{-i\xi\cdot\Lambda(\eta,s)}-1}{1+\Theta(\eta,s)}\right)ds,\\
    \label{op4}\mathcal{G}_4(r,\eta)&=\sum_{\xi\in\Z^2}\foul\int_0^LM(\xi,s)\left(\frac{\Theta(\eta,s)}{1+\Theta(\eta,s)}\right)ds.
\end{align}
We recall that the multiplier $\mathfrak{m}(\xi)$ is defined in \eqref{multiplier:m:lineal} and the function $M(\xi,s)$ is defined as
\begin{equation}\label{definition:M:smoothing}
 M(\xi,s)=\frac{e^{-2|\xi|L}\left(e^{|\xi|s}-e^{-|\xi|s}\right)}{1-e^{-2|\xi|L}}.
 \end{equation}
Using decomposition \eqref{op0}-\eqref{op4}, we rewrite the operator $\mathcal{A}[j^{\ell}_{0}\circ \Phi^{-1}]$ in \eqref{operatorA:newvariable} as
\begin{equation} \label{operatorA:decomp}
\mathcal{A}[j^{\ell}_{0}\circ \Phi^{-1}](r,z)=\mathcal{T}j^{\ell}_{0}= \mathcal{T}_{0}j^{\ell}_{0}+\displaystyle\sum_{\kappa=1}^{4} \mathcal{T}_{\kappa}j^{\ell}_{0}
\end{equation}
where
\begin{align}
\mathcal{T}_{0}j^{\ell}_{0}&=\frac{1}{(2\pi)^2}\int_{\T^2}\mathcal{G}_{0}(r-\eta)j_0^{\ell}(\eta)d\eta, \mbox{ for } \ell=1,2, \label{opT0} \\
\mathcal{T}_{\kappa}j^{\ell}_{0}&=\frac{1}{(2\pi)^2}\int_{\T^2}\mathcal{G}_{\kappa}(r-\eta,\eta)j_0^{\ell}(\eta)d\eta, \mbox{ for } \kappa=1,\ldots, 4, \ \ell=1,2. \label{opTk}
\end{align}
It is important to notice the operator $\mathcal{T}_{0}$ coincides with the operator $\mathsf{T}$ in Section \ref{sec:linearized} given by means of the multiplier function $\mathfrak{m}(\xi)=\frac{\cosh(|\xi|L)-1}{|\xi|\sinh(|\xi|L)}$. Therefore, $\mathcal{T}_{0}$ is an operator that can be inverted as long as  $(\mathfrak{m}(\xi))^{-1}\neq 0$, fact that we have already shown in \eqref{cuenta:det:mxi} in Section \ref{sec:linearized}.

We may use now the previous digression to help us deriving the integral equation for $j_0$. To that purpose, we denote by $\delta j=j-\bar{\jmath}$ (notation introduced in Corollary \ref{direct:coro:proptransport}) to write
\begin{equation}\label{current:decom:1}
j(x,y,z)=\delta j + \bar{\jmath}= [(w-j_{0})\circ\Phi^{-1}+ j_{0}\circ \Phi^{-1}](x,y,z) , \ \mbox{ in } \Omega,
\end{equation}
where $w-j_{0}$ satisfies
\[\left\{\begin{array}{ll}
        \frac{\partial (w-j_0)}{\partial z}=A_b\cdot (w-j_0)+A_b\cdot j_0 & z>0, \\
        w-j_0=0, & z=0.
        \end{array}\right. 
        \]
Hence, using \eqref{current:decom:1} and the definition of the operator \eqref{operator:A:general}, the second and third term on the right hand side in \eqref{prim1} are given by
\begin{align*}
\int_0^L\int_{\T^2}\sum_{\xi\in\Z^2}\left(\frac{mn}{|\xi|^2}\chi_{\{\xi\neq0\}}\right)\partial_3 \Phi(\xi,0,z_0)\,j^{1}(\zeta,z_0)\,\fou d\zeta dz_0&=  \mathcal{R}_{x}\mathcal{R}_{y}\left(\mathcal{A}(\delta j^1)+\mathcal{A}( j^1_{0}) \right),\\ 
\int_0^L\int_{\T^2}\sum_{\xi\in\Z^2}\left(\frac{n^2}{|\xi|^2}\chi_{\{\xi\neq0\}}-1\right)\partial_3 \mathfrak{G}(\xi,0,z_0)\,j^{2}(\zeta,z_0)\,\fou d\zeta dz_0&= \left(\mathcal{R}^{2}_{y}-\text{id}\right) \left(\mathcal{A}(\delta j^2)+\mathcal{A}( j^2_{0}) \right).
 \end{align*}
We recall that $\mathcal{R}_x$ and $\mathcal{R}_y$ are the Riesz transforms. Thus, we infer using \eqref{operatorA:decomp} that
\begin{align}\label{prim1:new}
    \tilde{g}_1-\frac{J_2}{(2\pi)L}-\mathcal{S}_{1}&=\mathcal{R}_x\mathcal{R}_y\mathcal{T}j_0^1+\left(\mathcal{R}^{2}_y-\text{id}\right)\mathcal{T}j_0^2+\sum_{\xi\in\Z^2_{*}}\frac{in}{|\xi|^2}\reallywidehat{j}^3_0(\xi)\foul,
    \end{align}
  with $\mathcal{S}_{1}=\mathcal{R}_x\mathcal{R}_y\mathcal{A}(\delta j^1)+\left(\mathcal{R}^{2}_y-\text{id}\right)\mathcal{A}(\delta j^2)$.  Similarly, for  \eqref{prim2} we find that
    \begin{align}\label{prim2:new}
    \tilde{g}_2-\frac{J_1}{(2\pi)L}+\mathcal{S}_{2}&=-\left(\mathcal{R}^{2}_x-\text{id}\right)\mathcal{T}j_0^1-\mathcal{R}_x\mathcal{R}_y\mathcal{T}j_0^2-\sum_{\xi\in\Z^2_{*}}\frac{im}{|\xi|^2}\reallywidehat{j}^3_0(\xi)\foul,
\end{align}
with $\mathcal{S}_{2}=\left(\mathcal{R}^{2}_x-\text{id}\right)\mathcal{A}(\delta j^1)+\mathcal{R}_x\mathcal{R}_y\mathcal{A}(\delta j^2)$. Furthermore, invoking decomposition \eqref{operatorA:decomp}, inverting the operator $\mathcal{T}_{0}$ and denoting by
\begin{equation}\label{varios:def:operators}
 \textsf{T}_{\kappa}=\mathcal{T}_0^{-1}\mathcal{T}_{\kappa}, \quad \textsf{G}_{\ell}=\mathcal{T}_0^{-1}\tilde{g}_{\ell}, \quad \mathsf{S}_{\ell}= \mathcal{T}_0^{-1}\mathcal{S}_{\ell},
 \end{equation}
for $\kappa=1,\ldots, 4$, $\ell=1,2,$ we obtain that \eqref{prim1:new} and \eqref{prim2:new} read
\begin{align}
 \label{alm1} \textsf{G}_1-\frac{J_2}{\pi L^2}+\mathsf{S}_{1}&=\mathcal{R}_x\mathcal{R}_y\sum_{\kappa=1}^4\textsf{T}_{\kappa} j_0^1+\left(\mathcal{R}^{2}_y-\text{id}\right)\sum_{\kappa=1}^4\textsf{T}_{\kappa} j_0^2+\mathcal{R}_x\mathcal{R}_y j_0^1+\left(\mathcal{R}^{2}_y-\text{id}\right)j_0^2  \nonumber \\
 &\quad \quad +\sum_{\xi\in\Z^2_{*}}\frac{in}{|\xi|^2}(\mathfrak{m}(\xi))^{-1}\reallywidehat{j}^3_0(\xi)\foul,  \\
  \label{alm2}  \textsf{G}_2-\frac{J_1}{\pi L^2}+\mathsf{S}_{2}=&-\left(\mathcal{R}^{2}_x-\text{id}\right)\sum_{\kappa=1}^4\textsf{T}_{\kappa} j_0^1-\mathcal{R}_x\mathcal{R}_y\sum_{\kappa=1}^4\textsf{T}_{\kappa} j_0^2-(\mathcal{R}^{2}_x-\text{id})j_0^1-\mathcal{R}_x\mathcal{R}_yj_0^2\nonumber \\
&\quad \quad -\sum_{\xi\in\Z^2_{*}}\frac{im}{|\xi|^2}(\mathfrak{m}(\xi))^{-1}\reallywidehat{j}^3_0(\xi)\foul. 
\end{align}
The resulting integral equations \eqref{alm1}-\eqref{alm2} resemble equations \eqref{preintegrallineal1}-\eqref{preintegrallineal2} in Section \ref{sec:linearized}. The main difference is that in this nonlinear setting we have an extra perturbation we will have to estimate and control later on. Furthermore, similarly as in Section \ref{sec:linearized}, we need to include the div-free condition to obtain a well-defined system of equations (three unknowns and three equations). Using Proposition \ref{transport:div:prop}, and recalling that $B$ is divergence free, it is enough to impose that $\nabla\cdot j=0$ on $\partial\Omega_{-}$, i.e. $\nabla\cdot j_0=0$. Furthermore, $\nabla\cdot j_{0}= \partial_1 j^1_{0}+\partial_2 j^2_{0}+\partial_3 j^3_{0}$, so we can use the transport equation \eqref{transport:modified} to write $\partial_{3}j^{3}_{0}$ in terms of $B$ and the tangential derivatives of $j_{0}$, resulting in 
\[
0=\dive j_{0}=\partial_1 j^1_{0}+\partial_2 j^2_{0}+A(b)_{3\ell}j^\ell_{0}-\frac{b_1}{1+b_3}\partial_1 j^3_{0}-\frac{b_2}{1+b_3}\partial_2 j^3 _{0},
\]
and hence
\begin{equation}\label{label:expansion1}
(1+b_3)\partial_1j^1_{0}+(1+b_3)\partial_2 j^2_{0}+(1+b_3)A(b)_{3\ell}j^{\ell}_{0}-b_1\partial_1j^3_{0}-b_2\partial_2j^3_{0}=0,  \quad \ell=1,2,3,
\end{equation}
where the matrix $A(b)$ is given in \eqref{transport:matrix:modified}. Here, notation $A(b)_{3\ell}j^{\ell}_{0}$ means summation over repeated indices, namely
\[ A(b)_{3\ell}j^{\ell}_{0}= A(b)_{31}j^{1}_{0}+ A(b)_{32}j^{2}_{0}+A(b)_{33}j^{3}_{0}.\]
Recalling \eqref{expression:matrix:A} we can simplify expression \eqref{label:expansion1} to find that
\begin{equation}\label{label:expansion2}
(1+b_3)\partial_1j^1_{0}+(1+b_3)\partial_2 j^2_{0}+\partial_{\ell}b_{3}j^{\ell}_{0}-b_1\partial_1j^3_{0}-b_2\partial_2j^3_{0}=0,  \quad \ell=1,2,3,
\end{equation}

By means of the previous computations, we infer that
\begin{align}
 \label{alm:new1} \textsf{G}_1-\frac{J_2}{\pi L^2}+\mathsf{S}_{1}-\mathsf{H}_{1}&=\mathcal{R}_x\mathcal{R}_y\sum_{\kappa=1}^4\textsf{T}_{\kappa} j_0^1+\left(\mathcal{R}^{2}_y-\text{id}\right)\sum_{\kappa=1}^4\textsf{T}_{\kappa} j_0^2-j_0^2 \nonumber \\
& \quad \quad +\sum_{\xi\in\Z^2_{*}}\frac{in}{|\xi|^2}(\mathfrak{m}(\xi))^{-1}\reallywidehat{j}^3_0(\xi)\foul,  \\
  \label{alm:new2}  \textsf{G}_2-\frac{J_1}{\pi L^2}+\mathsf{S}_{2}+\mathsf{H}_{2}=&-\left(\mathcal{R}_x^{2}-\text{id}\right)\sum_{\kappa=1}^4\textsf{T}_{\kappa} j_0^1-\mathcal{R}_x\mathcal{R}_y\sum_{\kappa=1}^4\textsf{T}_{\kappa} j_0^2+j_0^{1} \nonumber \\
 & \quad \quad -\sum_{\xi\in\Z^2_{*}}\frac{im}{|\xi|^2}(\mathfrak{m}(\xi))^{-1} \reallywidehat{j}^3_0(\xi)\foul. 
\end{align}
where
\begin{align}
\mathsf{H}_{1}&=\mathcal{B}_y \left(b_1\partial_1j^3_0+b_2\partial_2j^3_0-\partial_{\ell}b_{3}j^\ell_0-b_3\partial_1j_0^1-b_3\partial_2j_0^2\right), \\
\mathsf{H}_{2}&=\mathcal{B}_x \left(b_1\partial_1j^3_0+b_2\partial_2j^3_0-\partial_{\ell}b_{3}j^\ell_0-b_3\partial_1j_0^1-b_3\partial_2j_0^2\right),
\end{align}
and $\mathcal{B}_x, \mathcal{B}_y$ are defined via the multipliers 
\[ \widehat{\mathcal{B}_{x}f}(\xi)=-i\frac{m}{|\xi|^{2}}\widehat{f}(\xi), \ \widehat{\mathcal{B}_{y}f}(\xi)=-i\frac{n}{|\xi|^{2}}\widehat{f}(\xi).\]


Furthermore, the expression for $j_0^{3}$ can be directly computed by means of equations \eqref{alm:new1} and \eqref{alm:new2}. Indeed, we have that
\begin{equation}\label{alm:new3}
\sum_{\xi\in\Z^2_{*}}(\mathfrak{m}(\xi))^{-1}\reallywidehat{j}_0^3e^{ir\cdot\xi}=\partial_1 \textsf{G}_2-\partial_2 \textsf{G}_1, 
\end{equation}
and hence since by the definition of the functions $\textsf{G}_1$ and $\textsf{G}_2$ in \eqref{varios:def:operators} we find that
\begin{equation}\label{alm:new3:final}
 j_0^{3}(r)=\partial_1 g_2(r)-\partial_2 g_1(r).
\end{equation} 

%
Combining \eqref{alm:new1}-\eqref{alm:new3} we conclude that the first two components of the current are explicitly given by
\begin{align}
    j_0^1&=(\mathcal{R}^{2}_x-\text{id})\sum_{\kappa=1}^4\textsf{T}_{\kappa} j_0^1+\mathcal{R}_x\mathcal{R}_y\sum_{\kappa=1}^4\textsf{T}_{\kappa} j_0^2 + \mathsf{S}_{2}-\mathsf{H}_{2}+\left(\text{id}-\mathcal{R}_x^{2}\right)\textsf{G}_2+\mathcal{R}_x\mathcal{R}_y\textsf{G}_1-\frac{J_1}{\pi L^2}, \label{alm:new4} \\
    j_0^2&=\left(\mathcal{R}^{2}_y-\text{id}\right)\sum_{\kappa=1}^4\textsf{T}_\kappa j_0^2+\mathcal{R}_x\mathcal{R}_y\sum_{\kappa=1}^4\textsf{T}_\kappa j^1_0- \mathsf{S}_{1}+\mathsf{H}_{1}  -\left(\text{id}-\mathcal{R}_y^{2}\right)\textsf{G}_1-\mathcal{R}_x\mathcal{R}_y\textsf{G}_2+\frac{J_2}{\pi L^2}. \label{alm:new5}
\end{align}
To conclude the formal derivation of the integral equation $j_0$ we have to determine the values of the constants $J_1, J_2 \in \mathbb{R}$. As in the linearized case, the values $J_1,J_2$ are crucial to ensure that  the constructed pressure $p$ is a uni-valued function. We take the constants to be 
\begin{align}
\frac{J_2}{\pi L^2}&=\sum_{\kappa=1}^{4}\left\langle \textsf{T}_\kappa j_0^2\right\rangle + \left\langle \textsf{A}(\delta j^2)\right\rangle +\langle\textsf{G}_1\rangle-\langle j_0^2 f\rangle +\langle g_2(\partial g_2-\partial_2 g_1)\rangle. \label{J1}\\
\frac{J_1}{\pi L^2}&-\sum_{\kappa=1}^{4}\left\langle \textsf{T}_\kappa j_0^2\right\rangle - \left\langle \textsf{A}(\delta j^2)\right\rangle +\langle\textsf{G}_1\rangle-\langle j_0^2 f\rangle +\langle g_2(\partial g_2-\partial_2 g_1)\rangle. \label{J2}
\end{align}
The reason of this choice will be clarified in Section \ref{sec:6}. All in all, we conclude that the system of integral equations we want to solve is formally given by the expression
\begin{align}
    j_0^1&=(\mathcal{R}^{2}_x-\text{id})\sum_{\kappa=1}^4 \widetilde{\textsf{T}_{\kappa} j_0^1}+\mathcal{R}_x\mathcal{R}_y\sum_{\kappa=1}^4\textsf{T}_{\kappa} j_0^2 + \mathsf{S}_{2}[\widetilde{\delta j^{1}},\delta j^{2}]-\mathsf{H}_{2} +\left(\text{id}-\mathcal{R}_x^{2}\right)\widetilde{\textsf{G}_2}+\mathcal{R}_x\mathcal{R}_y\textsf{G}_1 \nonumber \\
    &\quad \quad -\langle(\partial_1 g_2-\partial_2 g_1)g_1\rangle-\langle j^1_0 f\rangle, \label{alm:new4:final} \\
    j_0^2&=\left(\mathcal{R}^{2}_y-\text{id}\right)\sum_{\kappa=1}^4\widetilde{\textsf{T}_\kappa j_0^2}+\mathcal{R}_x\mathcal{R}_y\sum_{\kappa=1}^4\textsf{T}_\kappa j^1_0- \mathsf{S}_{1}[\delta j^{1},\widetilde{\delta j^{2}}]+\mathsf{H}_{1}  -\left(\text{id}-\mathcal{R}_y^{2}\right)\widetilde{\textsf{G}_1}-\mathcal{R}_x\mathcal{R}_y\textsf{G}_2  \nonumber \\
    &\quad \quad  -\langle(\partial_1 g_2-\partial_2 g_1)g_2\rangle-\langle j^2_0 f\rangle.  \label{alm:new5:final}
\end{align}
We recall that the overline notation $\widetilde{v}$ is defined as $\widetilde{v}=v-\langle v\rangle$.

\section{Rigorous derivation of integral current equation and a priori estimates}\label{sec:5}
In this section, we provide a rigorous approach to the formal computations introduced in Section \ref{sec:4}. More precisely, we use the pseudo-differential approach in H\"older spaces (mainly contained in Subsection \ref{subsec:21}-Subsection \ref{subsec:23}) to show that the operators  $\textsf{T}_\kappa$ in \eqref{alm:new4}-\eqref{alm:new5} are well defined bounded operators in H\"older spaces, cf. Theorem \ref{teoremon1}. We will also show estimates for the differences of those operators in lower order norms, cf. Proposition \ref{diferencia}. Furthermore, we will also provide several estimates for the remaining operators $\mathsf{S}_{\ell}, \mathsf{H}_{\ell}$ and $\mathsf{G}_{\ell}$ for $\ell=1,2,$ which will be needed in Section \ref{sec:6} to solve the integral equation \eqref{alm:new4:final}-\eqref{alm:new5:final} via Neumann series. These bounds are contained in Proposition \ref{prop:estimate:S} , Proposition \ref{Prop:H:estimadas:ori} and Corollary \ref{coro:estimate:G}, respectively.

\subsection{H\"older estimates for the operators $\textsf{T}_\kappa$}\label{subsec:51}
Let us start this subsection by showing that the operators $\textsf{T}_\kappa$ are well-defined operators and that $\mathsf{T}_{\kappa}\in \mathcal{L}(C^{1,\alpha}(\mathbb{T}^{2}))$, $\kappa=1,\ldots, 4$.  To that purpose, we recall their precise expression: 
\begin{align}
    \textsf{T}_1 j_{0}^{\ell}(r)&=\int \sum_{\xi\in\Z^2}e^{i(r-\eta)\cdot\xi}(\mathfrak{m}(\xi))^{-1}\left(\int_0^Le^{-|\xi|s}\left(\frac{e^{-i\xi\cdot\Lambda(\eta,s)}-1}{1+\Theta(\eta,s)}\right)ds\right)j_{0}^{\ell}(\eta)d\eta,\label{T1j0} \\
    \textsf{T}_2 j_{0}^{\ell}(r)&=\int \sum_{\xi\in\Z^2}e^{i(r-\eta)\cdot\xi}(\mathfrak{m}(\xi))^{-1}\left(\int_0^Le^{-|\xi|s}\left(\frac{\Theta(\eta,s)}{1+\Theta(\eta,s)}\right)ds\right)j_{0}^{\ell}(\eta)d\eta,\\
    \textsf{T}_3 j_{0}^{\ell}(r)&=\int \sum_{\xi\in\Z^2}e^{i(r-\eta)\cdot\xi}(\mathfrak{m}(\xi))^{-1}\left(\int_0^LM(\xi,s)\left(\frac{e^{-i\xi\cdot\Lambda(\eta,s)}-1}{1+\Theta(\eta,s)}\right)ds\right)j_{0}^{\ell}(\eta)d\eta,\\
    \textsf{T}_4 j_{0}^{\ell}(r)&=\int \sum_{\xi\in\Z^2}e^{i(r-\eta)\cdot\xi}(\mathfrak{m}(\xi))^{-1}\left(\int_0^LM(\xi,s)\left(\frac{\Theta(\eta,s)}{1+\Theta(\eta,s)}\right)ds\right)j_{0}^{\ell}(\eta)d\eta, \label{T4j0} 
\end{align}
for $\ell=1,2,$ and where we recall that $(\mathfrak{m}(\xi))^{-1}=\frac{|\xi|\sinh(|\xi|L)}{\cosh(|\xi|L)-1}$. In order show that the operators $\mathsf{T}_{\kappa}$ for $\kappa=1,\ldots,4 $ are well-defined, the main idea is to define them as the adjoint of a pseudo-differential operator with associated symbols
\begin{align}
    a_1(x,\xi)&=(\mathfrak{m}(\xi))^{-1}\int_0^Le^{-|\xi|s}\frac{\left(e^{-i\xi\cdot\Lambda(x,s)}-1\right)}{1+\Theta(x,s)}ds,\label{symbol:a1} \\
    a_2(x,\xi)&=(\mathfrak{m}(\xi))^{-1}\int_0^Le^{-|\xi|s}\frac{\Theta(x,s)}{1+\Theta(x,s)}ds,   \label{symbol:a2} \\
        a_3(x,\xi)&=(\mathfrak{m}(\xi))^{-1}\int_0^{L} M(\xi,s) e^{-|\xi|s}\frac{\left(e^{-i\xi\cdot\Lambda(x,s)}-1\right)}{1+\Theta(x,s)}ds, \label{symbol:a3}\\ 
    a_{4}(x,\xi)&= (\mathfrak{m}(\xi))^{-1}\int_0^L M(\xi,s) e^{-|\xi|s}\frac{\Theta(x,s)}{1+\Theta(x,s)}ds.\label{symbol:a4}
\end{align}
Indeed, notice that formally
\[ \langle \phi, \mathsf{T}_{\kappa}j_0^{\ell}\rangle= \langle  \p(a)_{\kappa}\phi, j_0^{\ell}\rangle, \]
and hence due to Theorem \ref{solucion}, if we show that the symbols \eqref{symbol:a1}-\eqref{symbol:a4} belong to the symbol class $S^{0}(1+\alpha)$ the operators $\mathsf{T}_{\kappa}$ are well-defined and bounded in the corresponding H\"older spaces, cf. Theorem \ref{solucion} for precise statement. Notice that we are in the situation described in Remark \ref{observacionperiodica}, so we just need to prove that the obvious extensions of the symbols $a_\kappa(x,\xi)$,  $\kappa=1,\ldots, 4$,  satisfy estimates \eqref{constantes}.

In order to simplify the exposition, we define the symbols
\begin{align}
 b_{1}(x,\xi)&= |\xi|\int_0^Le^{-|\xi|s}\frac{\left(e^{i\xi\cdot\Lambda(x,s)}-1\right)}{1+\Theta(x,s)}ds,  \label{s1} \\
 b_{2}(x,\xi)&=|\xi|\int_0^Le^{-|\xi|s}\frac{\Theta(x,s)}{1+\Theta(x,s)}ds. \label{s2}
 \end{align}
Therefore, it is easy to check that
\[ a_{1}(x,\xi)=b_{1}(x,\xi)+ \mathsf{r}_{1}(x,\xi), \quad  a_{2}(x,\xi)= b_{2}(x,\xi)+ \mathsf{r}_{2}(x,\xi), \]
where $\mathsf{r}_{1}(x,\xi), \mathsf{r}_{2}(x,\xi)$ are smoothing remainders. Therefore, we will directly work with the new symbols $b_{1}(x,\xi), b_{2}(x,\xi)$ and later also show that the same estimates and conclusions trivially hold for the remainder terms $\mathsf{r}_{1}(x,\xi), \mathsf{r}_{2}(x,\xi)$, and hence for $a_{1}(x,\xi), a_{2}(x,\xi)$. The fast decay of the function $M(\xi,s)$ defined in \eqref{definition:M:smoothing}, provides that the kernels defined out of $a_{3}(x,\xi), a_{4}(x,\xi)$ are smooth and as a consequence the operators $\textsf{T}_3$ and $\textsf{T}_4$ are smoothing operators. This fact will be shown in Proposition \ref{prop:a3:a4}.

Furthermore, we consider that the functions $\Lambda$ and $\Theta$ are generic functions that satisfy the following two requirements: 
\begin{asu}\label{asu1} The function $\Lambda:\Omega\to \mathbb{T}^{2}$ belongs to $C^{2,\alpha}(\Omega)$ and satisfies that $\Lambda(x,0)=0$. Moreover, we have that there exists $\delta_{0}\in (0,\frac{1}{2})$ such that
\[ \norm{\Lambda}_{C^{2,\alpha}(\Omega)} \leq \delta_{0}. \] 
\end{asu}
\begin{asu}\label{asu2}
 The function $\Theta: \Omega \to \mathbb{T}^{2}$ belongs to $C^{1,\alpha}(\Omega)$ and satisfies that
  \[ \norm{\Theta}_{C^{1,\alpha}(\Omega)} \leq \delta_{1}, \mbox{ for } \delta_{1}\in (0,\frac{1}{2}). \]
  \end{asu}
We will see later that indeed the functions $\Lambda$ and $\Theta$ do indeed satisfy these requirements, cf. Proposition \ref{estimations:Lambda:Theta}. In the following result, we show that under Assumptions \ref{asu1} and \ref{asu2}, the symbols
\eqref{s1}-\eqref{s2} extend to symbols in  $S^{0}(1+\alpha)$, cf. Definition \ref{symbol:class:def}. If so, a direct application of Theorem \ref{solucion}, yields that the associated operators $\mathsf{T}_{1}, \mathsf{T}_{2}$ are well-defined bounded operators from $C^{2,\alpha}$ to $C^{2,\alpha}$.

\begin{prop}\label{propb1:b2}
 Let Assumptions \ref{asu1} and \ref{asu2} hold. Then, the symbols $b_{1},b_{2}$ given in \eqref{s1}-\eqref{s2} respectively, extend to symbols in $S^0(1+\alpha)$.
\end{prop}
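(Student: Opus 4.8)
The plan is to verify directly that $b_1$ and $b_2$ satisfy the symbol estimates of Definition~\ref{symbol:class:def} with $m=0$ and $s=1+\alpha$, i.e.\ that for every multi-index $\gamma$,
\[
\|\partial_\xi^\gamma b_\iota(\cdot,\xi)\|_{C^{1,\alpha}}\leq C_\gamma(1+|\xi|)^{-|\gamma|},\qquad \iota=1,2,
\]
after extending $b_1,b_2$ smoothly across $\xi=0$ in the canonical way described in Remark~\ref{observacionperiodica}. The essential point is that the factor $|\xi|e^{-|\xi|s}$ inside the $s$-integral behaves, upon integration in $s\in[0,L]$, like a quantity of size $O(1)$ (uniformly in $\xi$), while each derivative in $\xi$ either lands on $e^{-|\xi|s}$ (producing an extra factor $s$, which is then absorbed by integrating $|\xi|^2 s\, e^{-|\xi|s}\lesssim 1$ against $ds$) or on $e^{i\xi\cdot\Lambda(x,s)}$ (producing a factor $\Lambda(x,s)$, which is bounded and, crucially, vanishes at $s=0$ by Assumption~\ref{asu1}, giving the extra decay in $s$ needed near the origin). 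So the heart of the argument is a bookkeeping of these $s$-integrals.

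\textbf{Key steps.} First, I would treat $b_2$, which is simpler: since $\Theta\in C^{1,\alpha}$ with $\|\Theta\|_{C^{1,\alpha}}\leq\delta_1<\tfrac12$, the function $\Theta/(1+\Theta)$ is in $C^{1,\alpha}(\Omega)$ with norm $\lesssim\delta_1$ uniformly. Then
\[
\partial_\xi^\gamma b_2(x,\xi)=\int_0^L \partial_\xi^\gamma\!\big(|\xi|e^{-|\xi|s}\big)\,\frac{\Theta(x,s)}{1+\Theta(x,s)}\,ds,
\]
and one checks by induction that $|\partial_\xi^\gamma(|\xi|e^{-|\xi|s})|\leq C_\gamma\,(s^{|\gamma|-1}+|\xi|s^{|\gamma|})e^{-|\xi|s/2}\cdot(\dots)$; integrating in $s$ gives a bound $C_\gamma|\xi|^{-|\gamma|}$ for $|\xi|\geq 1$, and the $C^{1,\alpha}$ norm in $x$ is controlled by pulling the $C^{1,\alpha}(\Omega)$ norm of $\Theta/(1+\Theta)$ out of the integral (the $s$-dependence of the kernel is in a fixed bounded set). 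Second, for $b_1$ one writes the integrand as $|\xi|e^{-|\xi|s}\,\dfrac{e^{i\xi\cdot\Lambda(x,s)}-1}{1+\Theta(x,s)}$. When no $\xi$-derivative hits the oscillatory factor, the bound $|e^{i\xi\cdot\Lambda(x,s)}-1|\leq |\xi|\,|\Lambda(x,s)|\leq C|\xi|\,s$ (using $\Lambda(x,0)=0$ and $\Lambda\in C^{2,\alpha}$, so $|\Lambda(x,s)|\lesssim \|\Lambda\|_{C^1}s$) converts the apparent loss of one power of $|\xi|$ into a gain of one power of $s$, which is reabsorbed by $\int_0^L|\xi|^2 s\,e^{-|\xi|s}\,ds\lesssim 1$. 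When a $\xi$-derivative does hit $e^{i\xi\cdot\Lambda}$, it produces a factor $i\Lambda(x,s)$, again of size $\lesssim s$, so the same mechanism applies; higher $\xi$-derivatives produce products of such factors, each contributing a power of $s$. The $C^{1,\alpha}$-regularity in $x$ comes from differentiating $\Lambda$ and $\Theta$ up to order one and using their $C^{2,\alpha}$, resp.\ $C^{1,\alpha}$, bounds — here one also uses that $\partial_x^\beta e^{i\xi\cdot\Lambda(x,s)}$ produces factors of $|\xi|$ times derivatives of $\Lambda$, but these are paired with the Gaussian-type decay $e^{-|\xi|s}$ and the vanishing of $\Lambda$ at $s=0$, so everything stays bounded after the $s$-integral. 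Third, I would record the behaviour near $\xi=0$: the canonical extension of Remark~\ref{observacionperiodica} replaces $(\mathfrak m(\xi))^{-1}$ and $|\xi|$ by smooth functions, the resulting symbol is smooth in a neighbourhood of the origin, and the estimate \eqref{constantes} is automatic there; away from the origin it coincides with $b_1,b_2$ and the bounds just proved apply.

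\textbf{Main obstacle.} The bookkeeping of mixed $x$- and $\xi$-derivatives is where care is needed: a derivative $\partial_x^\beta\partial_\xi^\gamma$ applied to $|\xi|e^{-|\xi|s}\,(e^{i\xi\cdot\Lambda(x,s)}-1)/(1+\Theta(x,s))$ by Leibniz produces many terms, some of which carry extra positive powers of $|\xi|$ (from $\partial_x$ hitting $e^{i\xi\cdot\Lambda}$) and some extra powers of $s$ (from $\partial_\xi$ hitting $e^{-|\xi|s}$ or $e^{i\xi\cdot\Lambda}$). One must check in every such term that the total power of $|\xi|$, after integrating the $s$-dependent exponential factors, does not exceed $-|\gamma|$; the balance works precisely because each $|\xi|$ generated by $\partial_x e^{i\xi\cdot\Lambda}$ comes accompanied by a factor of $\partial_x\Lambda$ (bounded) and is controlled by $\int_0^L|\xi|^k s^{k-1}e^{-|\xi|s}\,ds\lesssim 1$, using $|\Lambda(x,s)|\lesssim s$ to supply the missing powers of $s$. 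Making this accounting clean — ideally by stating a single lemma of the form ``$\int_0^L |\xi|^{a}s^{b}\,e^{-|\xi|s}\,ds\leq C_{a,b}|\xi|^{a-b-1}$ for $b\geq 0$, $|\xi|\geq 1$'' and reducing everything to it — is the bulk of the work, but it is entirely routine once organized correctly. The smoothing remainders $\mathsf r_1,\mathsf r_2$ (involving $M(\xi,s)$, which decays like $e^{-|\xi|L}$) are even easier and yield symbols in $S^{-\infty}(1+\alpha)\subset S^0(1+\alpha)$, handled at the end in one line.
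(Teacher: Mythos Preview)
Your proposal is correct and takes essentially the same approach as the paper: reduce to estimating $\partial_\xi^\gamma(b_\iota/|\xi|)$ via Leibniz, use $|e^{i\xi\cdot\Lambda}-1|\leq s|\xi|\|\Lambda\|_{C^1}$ and $|\Lambda(x,s)|\lesssim s$ to balance the powers of $|\xi|$ against powers of $s$, and integrate $\int_0^L(|\xi|s)^k e^{-|\xi|s}\,ds\lesssim|\xi|^{-1}$ by the substitution $u=|\xi|s$. One small sharpening: in the ``Main obstacle'' paragraph you call the factor $\partial_x\Lambda$ merely ``bounded'' and then invoke $|\Lambda|\lesssim s$ for the missing $s$-power, but what is actually needed there is $|\nabla_x\Lambda(x,s)|\lesssim s\|\Lambda\|_{C^2}$, which follows because $\Lambda(x,0)=0$ for all $x$ forces $\nabla_x\Lambda(x,0)=0$ as well --- the paper makes this explicit, and without it the first-order $x$-derivative term would lose a full power of $|\xi|$.
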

\begin{proof}[Proof of Proposition \ref{propb1:b2}]
Let us first demonstrate the result for the symbol $b_{1}(x,\xi)$ defined in \eqref{s1}. To that purpose, recalling that the symbol $\xi$ is a Mihklin-H\"ormander multiplier of degree 1, we have that 
\[b_{1}\in S^{0}(1+\alpha)\Longleftrightarrow \frac{b_{1}}{|\xi|} \in S^{-1}(1+\alpha).\]
In order to show that claim, we begin by pointing out that for any $\gamma\in \mathbb{N}^2$ multi-index,
\begin{equation}\label{eq:exp:1}
\partial^\gamma_\xi e^{-|\xi|s}=e^{-|\xi|s}\sum_{\kappa}s^{k^\gamma_\kappa}p^\gamma_\kappa (\xi),
\end{equation}
with $k_\kappa^\gamma \leq |\gamma|$ and $p^\gamma_\kappa(\xi)\in C^\infty(\R^2\setminus \{0\})$ satisfying
\begin{equation}\label{eq:exp:2}
 \abs{\partial^\gamma_{\xi} p^\gamma_\kappa(\xi)}\lesssim |\xi|^{k^\gamma_\kappa-|\gamma|}.
 \end{equation}
 We remark that the summation in \eqref{eq:exp:1} is finite. For simplicity we do not precise where the summation index $\kappa$ is taken. Moreover, for $\gamma, \beta\in \mathbb{N}^{2}$ multi-index, we write $\beta <\gamma$ if the $\beta$ is less than $\gamma$ componentwise.
    
\subsubsection*{\underline{$L^{\infty}$ bounds for $\partial_{\xi}^{\gamma}\left(b_{1}(x,\xi)/|\xi|\right)$}}
Invoking Leibniz rule and recalling definition \eqref{s1} we infer that
 \begin{align}
    \partial^\gamma_\xi \left(\frac{b_1(x,\xi)}{|\xi|}\right)&=\int_0^L\partial^\gamma_\xi\left(e^{-|\xi|s}\right)\frac{\left(e^{i\xi\cdot \Lambda(x,s)}-1\right)}{1+\Theta(x,s)}ds +\displaystyle\sum_{\substack{\beta\leq\gamma\\ \beta\neq \gamma}}{\gamma\choose\beta} \int_0^L\partial^\beta_\xi \left(e^{-|\xi|s}\right)\partial_\xi^{\gamma-\beta}\left(e^{i\Lambda(x,s)\cdot\xi}-1\right)\frac{ds}{1+\Theta(x,s)} \nonumber  \\
    &=b_{11}(x,\xi)+b_{12}(x,\xi). \label{structure:same}
    \end{align}

Taking into account Assumption \ref{asu1} we find that 
\begin{equation}\label{eq:cota:exp1}
 \left|e^{i\Lambda(x,s)\cdot \xi}-1\right|=\left|\int_0^si\xi\cdot\partial_w\Lambda(x,w)e^{-i\xi\cdot \Lambda(x,w)}dw\right|\leq s |\xi| \|\Lambda\|_{C^1}.
\end{equation}
Combining \eqref{eq:exp:1}-\eqref{eq:cota:exp1}, we obtain that
  \begin{align}\left|b_{11}(x,\xi)\right|\leq \frac{\|\Lambda\|_{C^1}}{1-\|\Theta\|_{\infty}}\sum_{\kappa}\int_0^L|\xi|^{1-|\gamma|+k^\gamma_\kappa}s^{k^\gamma_\kappa+1}e^{-|\xi|s}ds&\leq \frac{\|\Lambda\|_{C^1}}{1-\|\Theta\|_{\infty}}|\xi|^{-|\gamma|-1}\displaystyle\sum_{\kappa_\gamma}\int_0^\infty u^{k^\gamma_\kappa+1}e^{-u}du \nonumber \\
        &\lesssim \frac{\|\Lambda\|_{C^1}}{1-\|\Theta\|_{\infty}}|\xi|^{-|\gamma|-1}. \label{cota:b11:linfty}
        \end{align}
In order to bound $b_{12}(x,\xi)$, we first notice that
\begin{equation}\label{eq:exp:3}
        \partial^\beta_{\xi} \left(e^{-|\xi|s}\right)\partial_\xi^{\gamma-\beta}\left(e^{i\Lambda(x,s)\cdot\xi}-1\right)\\
        =e^{-|\xi|s}\sum_{\kappa}s^{k^\beta_\kappa}p^\beta_\kappa(\xi)\left(i\Lambda(x,s)\right)^{\gamma-\beta}e^{i\Lambda(x,s)\cdot\xi}.
        \end{equation}
Using \eqref{eq:exp:3}, the fact that $\beta\neq\gamma$ and Assumption \ref{asu1} we find that
\begin{align}
        \abs{b_{12}(x,\xi)} \leq \frac{\|\Lambda\|^{|\gamma-\beta|}_{C^1}}{1-\|\Theta\|_{\infty}}\sum_{\kappa}\int_0^L|\xi|^{k_\kappa^\beta-|\beta|}s^{k^\beta_\kappa+|\gamma-\beta|}e^{-|\xi|s}ds&\leq \frac{\|\Lambda\|^{|\gamma-\beta|}_{C^1}}{1-\|\Theta\|_{\infty}}|\xi|^{-|\gamma|-1}\sum_{\kappa}\int_0^{\infty}u^{k^\beta_\kappa+|\gamma-\beta|}e^{-u}du \nonumber \\
        &\lesssim \frac{\|\Lambda\|^{|\gamma-\beta|}_{C^1}}{1-\|\Theta\|_{\infty}}|\xi|^{-|\gamma|-1}.\label{cota:b12:linfty}
    \end{align}
Hence, bounds \eqref{cota:b11:linfty}-\eqref{cota:b12:linfty} yield the decay
\[ \left|\partial^\gamma_\xi \left(\frac{b_1(x,\xi)}{|\xi|}\right)\right| \lesssim \bigg( \frac{\|\Lambda\|_{C^1}}{1-\|\Theta\|_{\infty}}+ \sum_{\substack{\beta\leq\gamma\\ \beta\neq \gamma}}\frac{\|\Lambda\|^{|\gamma-\beta|}_{C^1}}{1-\|\Theta\|_{\infty}} \bigg) |\xi|^{-|\gamma|-1}\leq C\|\Lambda\|_{C^1}|\xi|^{-|\gamma|-1}, \ \forall \gamma\in\mathbb{N}^{2} \ \mbox{multi-index}.  \]

\subsubsection*{\underline{$L^{\infty}$ bounds for $\partial_{\xi}^{\gamma}\nabla_{x}\left(b_{1}(x,\xi)/|\xi|\right)$}} In order to obtain $L^{\infty}$ estimates we compute  
 \begin{align}\label{derivada}
    \nabla_{x}\left(\frac{b_1(x,\xi)}{|\xi|}\right)&=\int_0^Le^{-|\xi|s}\,i\nabla_x\Lambda\cdot\xi\frac{e^{i\Lambda(x,s)\cdot\xi}}{1+\Theta(x,s)}ds -\int_0^Le^{-|\xi|s}\frac{\left(e^{i\Lambda(x,s)\cdot\xi}-1\right)}{(1+\Theta(x,s))^2}\nabla_x\Theta(x,s)ds.
    \end{align}
Combining \eqref{derivada} with Leibniz rule we infer that
\[\partial^\gamma_\xi \nabla_{x}\left(\frac{b_1(x,\xi)}{|\xi|}\right)=b_{13}(x,\xi)-b_{14}(x,\xi),\]   
    where 
   \begin{align} 
  b_{13}(x,\xi)&=\int_0^L\partial^\gamma_\xi \left(e^{-|\xi|s}\right)i\nabla_{x}\Lambda(x,s)\cdot \xi e^{i\Lambda(x,s)\cdot\xi} \frac{ds}{1+\Theta(x,s)} \nonumber \\ 
   & \quad \quad + \displaystyle\sum_{\substack{\beta\leq\gamma\\ \beta\neq \gamma}}{\gamma\choose\beta} \int_0^L\partial^\beta_\xi \left(e^{-|\xi|s}\right)i\nabla_{x}\Lambda(x,s)\cdot \partial_\xi^{\gamma-\beta}\left(\xi e^{i\Lambda(x,s)\cdot\xi}\right) \frac{ds}{1+\Theta(x,s)},\label{summand:b13}  \\ 
 b_{14}(x,\xi)&=\int_0^L\partial^\gamma_\xi\left(e^{-|\xi|s}\right) \left(e^{i\xi\cdot \Lambda(x,s)}-1\right) \frac{\nabla_{x}\Theta(x,s)}{(1+\Theta(x,s))^{2}}  \ ds  \nonumber \\
 & \quad \quad +\displaystyle\sum_{\substack{\beta\leq\gamma\\ \beta\neq \gamma}}{\gamma\choose\beta} \int_0^L\partial^\beta_\xi \left(e^{-|\xi|s}\right)\partial_\xi^{\gamma-\beta}\left(e^{i\Lambda(x,s)\cdot\xi}-1\right)\frac{\nabla_{x}\Theta(x,s)}{(1+\Theta(x,s))^{2}} \ ds. \label{summand:b14}
\end{align}
The term $b_{14}(x,\xi)$ has the same structure as \eqref{structure:same} and hence performing the same estimates we find that
\begin{equation}\label{b14:estimate:1}
\abs{b_{14}(x,\xi)} \lesssim \left( \frac{\norm{\Lambda}_{C^1}\norm{\Theta}_{C^1}}{\left(1-\norm{\Theta}_{L^\infty}\right)^{2}}+\sum_{\substack{\beta\leq\gamma\\ \beta\neq \gamma}}\frac{\norm{\Lambda}_{C^1}^{|\gamma-\beta|}\norm{\Theta}_{C^1}}{\left(1-\norm{\Theta}_{L^\infty}\right)^{2}}\right)\abs{\xi}^{-|\gamma|-1}\leq \|\Lambda\|_{C^1}|\xi|^{-1-|\gamma|}.
\end{equation}
 Using \eqref{eq:exp:3} and the cancellation property in Assumption \ref{asu1}, namely, $\nabla_{x}\Lambda(x,0)=0$ we can compute similarly that 
 \begin{equation}\label{b13:estimate:1}
 \abs{b_{13}(x,\xi)} \lesssim \left( \frac{\|\Lambda\|_{C^2}\|\Lambda\|_{C^1}}{1-\|\Theta\|_\infty}+ \sum_{\substack{\beta\leq\gamma\\ \beta\neq \gamma}} \frac{\|\Lambda\|_{C^2}\|\Lambda\|_{C^1}^{|\gamma-\beta|}}{1-\|\Theta\|_\infty} \right)|\xi|^{-|\gamma|-1}\leq C\|\Lambda\|_{C^2}|\xi|^{-1-|\gamma|}.
 \end{equation}
 Therefore, combining \eqref{b14:estimate:1}-\eqref{b13:estimate:1}, we have that
 \begin{align}
\left|\partial^\gamma_\xi \nabla_{x}\left(\frac{b_1(x,\xi)}{|\xi|}\right)\right|
&\lesssim \left( \frac{\|\Lambda\|_{C^2}\|\Lambda\|_{C^1}}{1-\|\Theta\|_\infty}+  \sum_{\substack{\beta\leq\gamma\\ \beta\neq \gamma}}\frac{\|\Lambda\|_{C^2}\|\Lambda\|_{C^1}^{|\gamma-\beta|}}{1-\|\Theta\|_\infty} \right)|\xi|^{-|\gamma|-1} \leq C\|\Lambda\|_{C^2}|\xi|^{-1-|\gamma|} \nonumber  \\
&\quad + \left( \frac{\norm{\Lambda}_{C^1}\norm{\Theta}_{C^1}}{\left(1-\norm{\Theta}_{L^\infty}\right)^{2}}+\sum_{\substack{\beta\leq\gamma\\ \beta\neq \gamma}}\frac{\norm{\Lambda}_{C^1}^{|\gamma-\beta|}\norm{\Theta}_{C^1}}{\left(1-\norm{\Theta}_{L^\infty}\right)^{2}}\right)\abs{\xi}^{-|\gamma|-1}\leq C\|\Lambda\|_{C^2}|\xi|^{-1-|\gamma|}.
 \end{align}
\subsubsection*{\underline{$C^{\alpha}$ bounds for $\partial_{\xi}^{\gamma}\nabla_{x}\left(b_{1}(x,\xi)/|\xi|\right)$}}
To finally show that the symbol $b_{1}(x,\xi)\in S^{0}(1+\alpha)$, we have to prove that the $C^{\alpha}$ norms for the terms \eqref{summand:b13} and \eqref{summand:b14} have the desired decay. More precisely, we will obtain the following bounds
\begin{align}
\|b_{13}(\cdot,\xi)\|_{C^{1,\alpha}}&\leq C\|\Lambda\|_{C^{2,\alpha}}|\xi|^{-1-|\alpha|}, \label{bound:calpha:b13} \\
\|b_{14}(\cdot,\xi)\|_{C^{1,\alpha}}&\leq C\|\Lambda\|_{C^{2,\alpha}}|\xi|^{-1-|\alpha|}. \label{bound:calpha:b14} 
\end{align}
Through the derivation of the bounds \eqref{bound:calpha:b13}-\eqref{bound:calpha:b14} we will repeatedly employ the elementary inequality
\begin{equation}\label{inequality:calpha}
[fgh]_{\alpha}\leq [f]_\alpha\|g\|_\infty\|h\|_\infty+\|f\|_\infty[g]_\alpha\|h\|_\infty+\|f\|_\infty\|g\|_\infty[h]_\alpha.
\end{equation}
Moreover, we also have that
\[ \left[\int_0^L f(\cdot,\xi,s) \ ds \right]_\alpha\leq \int_0^L\left[f(\cdot,\xi,s\right)]_\alpha \ ds.\]
Therefore, in order to bound the $C^\alpha$ semi-norm of \eqref{summand:b13} and \eqref{summand:b14} we have to estimate
\begin{align} 
  [b_{13}(\cdot,\xi)]_{\alpha}
&\leq C \int_0^L\partial^\gamma_\xi \left(e^{-|\xi|s}\right)\left[\nabla_{x}\Lambda(\cdot,s)\cdot \xi e^{i\Lambda(\cdot,s)\cdot\xi} \frac{1}{1+\Theta(\cdot,s)}\right]_\alpha\, ds \nonumber \\ 
   & \quad + \displaystyle\sum_{\substack{\beta\leq\gamma\\ \beta\neq \gamma}}{\gamma\choose\beta} \int_0^L\partial^\beta_\xi \left(e^{-|\xi|s}\right)\left[\nabla_{x}\Lambda(\cdot,s)\cdot \partial_\xi^{\gamma-\beta}\left(\xi e^{i\Lambda(\cdot,s)\cdot\xi}\right) \frac{1}{1+\Theta(\cdot,s)}\right]_\alpha\,ds \nonumber  \\
  &\quad: =b_{131}+b_{132},\label{2summand:b13}  \\ 
 [b_{14}(\cdot,\xi)]_{\alpha}&\leq C\int_0^L\partial^\gamma_\xi\left(e^{-|\xi|s}\right) \left[\frac{\left(e^{i\xi\cdot \Lambda(\cdot,s)}-1\right)}{(1+\Theta(\cdot,s))^{2}} \nabla_{x}\Theta(\cdot,s)\right]_{\alpha}\, ds  \nonumber \\
 & \quad \quad +\displaystyle\sum_{\substack{\beta\leq\gamma\\ \beta\neq \gamma}}{\gamma\choose\beta} \int_0^L\partial^\beta_\xi \left(e^{-|\xi|s}\right)\left[\partial_\xi^{\alpha-\beta}\left(e^{i\Lambda(\cdot,s)\cdot\xi}-1\right)\frac{\nabla_{x}\Theta(\cdot,s)}{(1+\Theta(\cdot,s))^{2}}\right]_{\alpha}\, ds \nonumber\\
&\quad: =b_{141}+b_{142}. \label{2summand:b14}
\end{align}
Let us analyze the different $C^{\alpha}$ semi-norms of the quantities involved in both terms. Let us start with the ones appearing in $b_{13}$,  namely, $[\nabla_x \Lambda(x,s)]_{\alpha},[e^{i\Lambda(x,s)\cdot \xi}]_{\alpha}$ and $\left[\frac{1}{1+\Theta(x,s)}\right]_{\alpha}$.  \\

\paragraph{\underline{The $[\nabla_x \Lambda(x,s)]_{\alpha}$ bound}} Using the fundamental theorem of calculus and Assumption \ref{asu1}, we readily check that 
\[ \left|\nabla_x\Lambda(x,s)\right|=\left|\int_0^s\partial_\tau\nabla_x\Lambda(x,\tau)\,d\tau \right|\leq s\|\Lambda\|_{C^2}.\]
which yields the an estimate for the $L^\infty$ norm. Similarly, we find that
\begin{align}
\left[\nabla_x\Lambda(\cdot,s)\right]_\alpha&=\sup_{x_1\neq x_2}\frac{|\nabla_x\Lambda(x_1,s)-\nabla_x\Lambda(x_2,s)|}{|x_1-x_2|^\alpha} \nonumber \\
&\leq \sup_{x_1\neq x_2}\int_0^s\frac{|\partial_\tau\nabla_x\Lambda(x_1,\tau)-\partial_\tau\nabla_x\Lambda(x_2,\tau)|}{|x_1-x_2|^\alpha}\,d\tau\leq s\|\Lambda\|_{C^{2,\alpha}}.\label{estimate:lambda:calpha}
\end{align}
\paragraph{\underline{The $[e^{i\Lambda(x,s)\cdot \xi}]_{\alpha}$ bound}} Trivially, the $L^{\infty}$ estimate equals one. For the $C^\alpha$ semi-norm, we use again Assumption \ref{asu1}, leading to
\begin{align}
\left[e^{i\Lambda(\cdot,s)\cdot\xi}\right]_\alpha =\sup_{x_1\neq x_2}\frac{\left|e^{i\Lambda(x_1,s)\cdot\xi}-e^{i\Lambda(x_2,s)\cdot\xi}\right|}{|x_1-x_2|^\alpha}&\leq \sup_{x_1\neq x_2}\frac{1}{|x_1-x_2|^\alpha}\int_{\xi\cdot \Lambda(x_1,s)}^{\xi\cdot \Lambda(x_2,s)}\left|ie^{iw}\right|\,dw\nonumber  \\
&\leq |\xi|\sup_{x_1\neq x_2}\frac{|\Lambda(x_1,s)-\Lambda(x_2,s)|}{|x_1-x_2|^\alpha} \nonumber \\ 
&\leq |\xi|\sup_{x_1\neq x_2}\frac{1}{|x_1-x_2|^\alpha}\int_{0}^s\left|\partial_\tau \left(\Lambda(x_1,\tau)-\Lambda(x_2,\tau)\right)\right|\,ds \nonumber\\
&\leq s|\xi|\|\Lambda\|_{C^{1,\alpha}}.\label{estimacionalphaexponencial}
\end{align}
\paragraph{\underline{The $[\frac{1}{1+\Theta(x,s)}]_{\alpha}$ bound}} Invoking Assumption \ref{asu2}, the $L^{\infty}$ bound follows immediately, namely,  \[ \left|\dfrac{1}{1+\Theta(x,s)}\right|\leq \dfrac{1}{1-\|\Theta\|_{\infty}}.\] The $C^\alpha$ semi-norm can be estimated after finding an extra cancellation, this is
\begin{align}
\left[\frac{1}{1+\Theta(\cdot,s)}\right]_\alpha&=\sup_{x_1\neq x_2}\frac{1}{|x_1-x_2|^\alpha}\left|\frac{1}{1+\Theta(x_1,s)}-\frac{1}{1+\Theta(x_2,s)}\right| \nonumber \\
&\leq \sup_{x_1\neq x_2}\frac{1}{\left(1+\Theta(x_1)\right)\left(1+\Theta(x_2)\right)}\frac{|\Theta(x_1,s)-\Theta(x_2,s)|}{|x_1-x_2|^\alpha} \nonumber \\
&\leq \frac{\|\Theta\|_{C^{1,\alpha}}}{(1-\|\Theta\|_{\infty})^2}. \label{estimacionalphatheta}
\end{align}
Therefore, combining estimates \eqref{estimate:lambda:calpha}-\eqref{estimacionalphatheta}
and using \eqref{eq:exp:1} we find that
\begin{equation*}
\begin{split}
b_{131}&\leq \sum_{\kappa}\int_0^L (s|\xi|)^{k_{\kappa}^\gamma}|\xi|^{-\gamma}e^{-s|\xi|}\left[\nabla_{x}\Lambda(\cdot,s)\cdot \xi e^{i\Lambda(\cdot,s)\cdot\xi} \frac{1}{1+\Theta(\cdot,s)}\right]_\alpha\, ds\\
&\leq \sum_{\kappa}\int_0^L (s|\xi|)^{k_{\kappa}^\gamma}|\xi|^{-|\gamma|}e^{-s|\xi|}\frac{|\xi|s}{1-\|\Theta\|_\infty}\left(\|\Lambda\|_{C^{2,\alpha}}+s|\xi|\|\Lambda\|_{C^2}^{2}+\|\Lambda\|_{C^2}\frac{\|\Theta\|_{C^{1,\alpha}}}{1-\|\Theta\|_\infty}\right)\,ds\\
&\leq C|\xi|^{-|\gamma|-1}\|\Lambda\|_{C^{2,\alpha}}\int_0^{|\xi|L}u^{1+k_{\kappa}^\gamma}(u+1)e^{-u}\,du \leq C|\xi|^{-|\gamma|-1}\|\Lambda\|_{C^{2,\alpha}} \label{estimate:final:b131}
\end{split}
\end{equation*}
where $C$ can be taken independent of both $\xi$, $\Lambda$ and $\Theta$. Note that we have used the smallness assumption on the norms of $\Lambda$ and $\Theta$ to absorb them by means of a universal constant.

In order to bound $b_{132}$ in \eqref{summand:b13} we can repeat the previous computations just taking into account  the extra summands that arise when expanding the term $\partial_{\xi}^{\gamma-\beta}\left( \xi e^{i\Lambda(\cdot,s)\cdot \xi}\right)$.  Invoking Leibniz's rule for  for $k=1,2$ we find that
\begin{align*}
\partial_\xi^{\gamma-\beta}\left(\xi_k e^{i\Lambda(x,s)\cdot\xi}\right)&=\sum_{\sigma\leq\gamma-\beta}{\gamma-\beta\choose \sigma}\left(\partial_\xi^\sigma \xi_k\right)\left(\partial_\xi^{\gamma-\beta-\sigma} e^{i\Lambda(x,s)\cdot\xi}\right) \nonumber \\
&\hspace{-0.2cm}=\xi_k\left(i\Lambda(x,s)\right)^{\gamma-\beta}e^{i\Lambda(x,s)\cdot\xi}+(\gamma_k-\beta_k)\textbf{1}_{1\leq \gamma_k-\beta_k}\left(i\Lambda(x,s)\right)^{\gamma-\beta}(i\Lambda_k(x,s))^{-1} e^{i\Lambda(x,s)\cdot\xi}.
\end{align*}
Therefore, we can estimate its $L^\infty$ norm and $C^\alpha$ semi-norm rather easily by repeatedly using Assumption \ref{asu1} to obtain the estimates
\begin{align}
\left\|\partial_\xi^{\gamma-\beta}\left(\xi e^{i\Lambda(x,s)\cdot\xi}\right)\right\|_{\infty}&\leq Cs^{|\gamma-\beta|-1}\|\Lambda \|^{|\gamma-\beta|-1}_{C^1}\left(s|\xi|\|\Lambda\|_{C^1}+1\right), \\
\left[\partial_\xi^{\gamma-\beta}\left(\xi e^{i\Lambda(x,s)\cdot\xi}\right)\right]_{\alpha}&\leq C(1+s|\xi|)s^{|\gamma-\beta|-1}\left(s|\xi|\|\Lambda\|^{|\gamma-\beta|}_{C^{2,\alpha}}+\|\Lambda\|^{|\gamma-\beta|-1}_{C^{2,\alpha}}\right).
\end{align}
Hence recalling the expression of $b_{132}$ defined in \eqref{summand:b13} we conclude that
\begin{align}
b_{132}\leq C\sum_{\substack{\beta\leq\gamma\\ \beta\neq \gamma}}\sum_{\kappa}&\int_0^L s^{k^\beta_{\kappa}}|\xi|^{k_{\kappa}^\beta-|\beta|}s\|\Lambda\|_{C^{2,\alpha}}\left(\frac{s^{|\gamma-\beta|-1}\|\Lambda\|_{C^{2,\alpha}}^{|\gamma-\beta|-1}}{1+\|\Theta\|_{\infty}}\left(s|\xi|\|\Lambda\|_{C^{2,\alpha}}+1\right)\right.\nonumber \\
&\left.+\frac{1+s|\xi|}{1-\|\Theta\|_{\infty}}\left(|\xi|s^{|\gamma-\beta|}\|\Lambda\|^{|\gamma-\beta|}+s^{|\gamma-\beta|-1}\|\Lambda\|^{|\gamma-\beta|-1}\right)\right) \leq C\|\Lambda\|_{C^{2,\alpha}}|\xi|^{-1-|\gamma|},  \label{estimate:final:b132}
\end{align}
where we have used the fact that $|\gamma-\beta|\geq 1$, Assumption \ref{asu1} and a change of variable $u=s|\xi|$ to obtain an upper bound for the integral. Thus, bounds \eqref{estimate:final:b131} and \eqref{estimate:final:b132} yields
\begin{equation}\label{estimate:final:b13}
\|b_{13}(\cdot,\xi)\|_{C^{1,\alpha}}\leq C\|\Lambda\|_{C^{2,\alpha}}|\xi|^{-1-|\gamma|}.
\end{equation}
To conclude the proof, we just to check that the same estimate holds for the $C^\alpha$ semi-norm of $b_{14}$ given in \eqref{2summand:b14}. Noticing that since $\beta\neq\gamma$, we have that
$$\partial_\xi^{\gamma-\beta}\left(e^{i\Lambda(x,s)\cdot \xi}-1\right)=\partial_\xi^{\gamma-\beta}\left(e^{i\Lambda(x,s)\cdot \xi}\right).$$
Hence, using Assumption \ref{asu2} and \eqref{eq:exp:1} we can easily estimate $b_{142}$ in  \eqref{2summand:b14} can be estimated by
\begin{equation}\label{b141:alpha}
b_{142}\leq C \|\Lambda\|_{C^{2,\alpha}}|\xi|^{-1-|\gamma|}.
\end{equation}
To bound $b_{141}$ we have to deal with the term $e^{i\xi\cdot\Lambda(x,s)}-1$. Using Assumption \ref{asu1} we find that
$$\left|e^{i\xi\cdot\Lambda(x,s)}-1\right|=\left|\int_0^{\xi\cdot\Lambda(x,s)}e^{iw}\,dw\right|\leq |\xi||\Lambda(x,s)|\leq s|\xi|\|\Lambda\|_{C^{2,\alpha}},$$
and also the $C^\alpha$ semi-norm as
$$\left[e^{i\xi\cdot\Lambda(\cdot,s)}-1\right]_\alpha=\sup_{x_1\neq x_2}\frac{\left|e^{i\xi\cdot\Lambda(x_1,s)}-e^{i\xi\cdot\Lambda(x_2,s)}\right|}{|x_1-x_2|^\alpha}=\left[e^{i\xi\cdot\Lambda(\cdot,s)}\right]_\alpha\leq s|\xi|\|\Lambda\|_{C^{1,\alpha}}.$$
Recall that the later bound was already estimated in \eqref{estimacionalphaexponencial}. Thus, similarly as for $b_{131}$ we find that 
\begin{equation}\label{b142:alpha}
b_{141}\leq C \|\Lambda\|_{C^{2,\alpha}}|\xi|^{-1-|\gamma|}.
\end{equation}
Thus, combining \eqref{b141:alpha}-\eqref{b142:alpha} we conclude that
\begin{equation}\label{estimate:final:b14}
\|b_{14}(\cdot,\xi)\|_{C^{\alpha}}\leq C\|\Lambda\|_{C^{2,\alpha}}|\xi|^{-1-|\gamma|}.
\end{equation}
Therefore, we have shown that for any $\gamma\in \mathbb{N}^2$ multi-index
\[ \norm{\partial_{\xi}^{\gamma}\left(b_{1}(\cdot,\xi)/|\xi|\right)}_{C^{1,\alpha}} \leq C \|\Lambda\|_{C^{2,\alpha}}|\xi|^{-1-|\gamma|}, \]
and hence $\frac{b_{1}}{|\xi|} \in S^{-1}(1+\alpha)$ or similarly $b_{1}\in S^{0}(1+\alpha)$ as desired. 

In order to show that $\frac{b_{2}}{|\xi|} \in S^{1}(1+\alpha)$, where $b_{2}(x,\xi)$ is defined in \eqref{s2}, we can mimic the same estimates derived for $b_{1}(x,\xi)$. Actually, the estimates are easier since we have to deal with the term $\Theta$ instead of $e^{i\xi\cdot\Lambda(x,s)}-1$ which is simpler to handle. To avoid repetitive arguments, we directly provide the bound, namely,
\begin{equation*}
\norm{\partial_{\xi}^{\gamma}\left(b_{2}(\cdot,\xi)/|\xi|\right)}_{C^{1,\alpha}} \leq C \|\Theta\|_{C^{1,\alpha}}|\xi|^{-1-|\gamma|}, \forall \gamma\in \mathbb{N}^2.
\end{equation*}
Thus $\frac{b_{2}}{|\xi|} \in S^{-1}(1+\alpha)$ or similarly $b_{2}\in S^{0}(1+\alpha)$.
\end{proof}

\begin{obs}
Notice that the regularity constants are optimal due to the threshold of regularity given by the functions $\Theta$. Moreover, if we examine again the estimates in the case of $\textsf{T}_1$ the cancellation condition of $\Lambda$ at $s=0$, as it might be expected, since we are deriving optimal estimates for pseudodifferential operators. This has led to bounds on the seminorms of the symbols depending on the $C^{1,\alpha}$ norm of $\partial_s \Lambda$, which will be dominated in the sequel by the $C^{1,\alpha}$ norm of the magnetic field via the definition of flow map.
\end{obs}

\begin{obs}
Notice that by means of the previous computations, and due to the fact that the functions $\Lambda$ and $\Theta$ satisfy Assumptions \ref{asu1} and \ref{asu2} respectively, we can estimate, for any $l\in\mathbb{N}$, the semi-norms of both symbols $b_{1},b_{2}$ by 
\[\|b_1\|_{0,1+\alpha,l},\|b_2\|_{0,1+\alpha,l}\leq C_{\alpha,l}\left(\|\Lambda\|_{C^{2,\alpha}}+\|\Theta\|_{C^{1,\alpha}}\right).\]
\end{obs}
  A direct consequence of Proposition \ref{propb1:b2} is the following regularity result for the symbols $a_{1}, a_{2}$ defined in \eqref{symbol:a1},\eqref{symbol:a2} respectively:
  \begin{coro}\label{corob1:b2}
Under the same hypothesis of Proposition \ref{propb1:b2}, the symbols $a_{1}, a_{2}$ extend to symbols in $S^0(1+\alpha)$.
\end{coro}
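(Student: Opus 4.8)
The plan is to deduce the corollary directly from Proposition \ref{propb1:b2} after peeling off an exponentially decaying (hence smoothing) remainder. First I would record the elementary identity for the multiplier $\mathfrak{m}$ in \eqref{multiplier:m:lineal}: since $\tfrac{\sinh t}{\cosh t-1}=\coth(t/2)=1+\tfrac{2e^{-t}}{1-e^{-t}}$, one has
\[(\mathfrak{m}(\xi))^{-1}=\frac{|\xi|\sinh(|\xi|L)}{\cosh(|\xi|L)-1}=|\xi|+q(\xi),\qquad q(\xi):=\frac{2|\xi|e^{-|\xi|L}}{1-e^{-|\xi|L}},\]
with the value at $\xi=0$ understood in the limiting sense of Section \ref{sec:linearized} (so $q(0)=2/L$). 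Plugging this into \eqref{symbol:a1}-\eqref{symbol:a2}, and using that $e^{-i\xi\cdot\Lambda}-1=\overline{e^{i\xi\cdot\Lambda}-1}$ while $|\xi|$, $e^{-|\xi|s}$, $\Theta$ and $q$ are real, one obtains the splitting
\[a_1(x,\xi)=\overline{b_1(x,\xi)}+q(\xi)\,\frac{\overline{b_1(x,\xi)}}{|\xi|},\qquad a_2(x,\xi)=b_2(x,\xi)+q(\xi)\,\frac{b_2(x,\xi)}{|\xi|},\]
with $b_1,b_2$ the symbols in \eqref{s1}-\eqref{s2}. Because complex conjugation leaves all the $C^{1,\alpha}$-seminorm bounds in Definition \ref{symbol:class:def} unchanged, Proposition \ref{propb1:b2} already gives $\overline{b_1},b_2\in S^0(1+\alpha)$; thus it only remains to show that the remainders $\mathsf{r}_\kappa:=q\cdot(b_\kappa/|\xi|)$ (with $\overline{b_1}$ in place of $b_1$ when $\kappa=1$, which has the same seminorms) belong to $S^0(1+\alpha)$.

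For the remainders I would exploit that $q$ is an $x$-independent, exponentially decaying multiplier. For $|\xi|\ge1$ one has $1-e^{-|\xi|L}\ge1-e^{-L}>0$, so differentiating $q$ produces only bounded factors times $e^{-|\xi|L}$ times polynomials in $|\xi|$, whence $|\partial_\xi^\gamma q(\xi)|\le C_\gamma(1+|\xi|)^{|\gamma|+1}e^{-|\xi|L}$ for every multi-index $\gamma$; near the origin $|\xi|$ fails to be smooth, but by the convention of Remark \ref{observacionperiodica} the operators $\mathsf{T}_\kappa$ only read the symbol on $\Z^2$, so $q$ may be replaced by any smooth extension coinciding with it for $|\xi|\ge1$, which is harmless. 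In the course of proving Proposition \ref{propb1:b2} it was established that $b_\kappa/|\xi|\in S^{-1}(1+\alpha)$, i.e. $\|\partial_\xi^\mu(b_\kappa/|\xi|)(\cdot,\xi)\|_{C^{1,\alpha}}\le C(1+|\xi|)^{-1-|\mu|}$; combining this with the trivial identity $\|f(\xi)g(\cdot,\xi)\|_{C^{1,\alpha}}=|f(\xi)|\,\|g(\cdot,\xi)\|_{C^{1,\alpha}}$ for $x$-independent $f$ and the Leibniz rule gives
\begin{align*}
\big\|\partial_\xi^\gamma\mathsf{r}_\kappa(\cdot,\xi)\big\|_{C^{1,\alpha}}&\le\sum_{\mu\le\gamma}\binom{\gamma}{\mu}\,|\partial_\xi^{\gamma-\mu}q(\xi)|\,\big\|\partial_\xi^{\mu}(b_\kappa/|\xi|)(\cdot,\xi)\big\|_{C^{1,\alpha}}\\
&\le C_\gamma\,e^{-|\xi|L/2}(1+|\xi|)^{-1}\le C_\gamma'(1+|\xi|)^{-|\gamma|}
\end{align*}
for all $\gamma$, since the exponential factor dominates every polynomial. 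Hence $\mathsf{r}_\kappa\in S^0(1+\alpha)$ (indeed a smoothing symbol), so $a_\kappa=b_\kappa+\mathsf{r}_\kappa\in S^0(1+\alpha)$ for $\kappa=1,2$.

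The only genuinely delicate point is the bookkeeping around $\xi=0$, where $q$ is not smooth; as indicated this is resolved exactly as in Remark \ref{observacionperiodica}, because the operators $\mathsf{T}_1,\mathsf{T}_2$ are ultimately defined through the adjoint Fourier formula \eqref{adj:per:coef:four}, which involves the symbol only on the lattice $\Z^2$, so modifying $q$ smoothly inside the ball $|\xi|<1$ changes neither the operators nor the $S^0(1+\alpha)$ membership. Apart from this, the argument is an immediate corollary of Proposition \ref{propb1:b2} together with the elementary exponential bound on $q$, requiring no new estimates.
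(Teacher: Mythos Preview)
Your proof is correct and follows essentially the same route as the paper: split $(\mathfrak{m}(\xi))^{-1}=|\xi|+q(\xi)$ with $q$ exponentially decaying, invoke Proposition \ref{propb1:b2} for the leading $|\xi|$-piece, and absorb the remainder as a smoothing symbol. You are in fact a bit more careful than the paper on two points: you notice the sign discrepancy between $e^{-i\xi\cdot\Lambda}$ in \eqref{symbol:a1} and $e^{i\xi\cdot\Lambda}$ in \eqref{s1} and handle it via conjugation (the paper glosses over this), and you make the Leibniz-rule estimate for $\mathsf{r}_\kappa$ explicit rather than referring back to the techniques of Proposition \ref{propb1:b2}.
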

\begin{proof}
Note that
\[ a_{1}(x,\xi)=b_{1}(x,\xi)+ \mathsf{r}_{1}(x,\xi), \quad  a_{2}(x,\xi)= b_{2}(x,\xi)+ \mathsf{r}_{2}(x,\xi). \]
Since $b_{1}, b_{2}\in S^{0}(1+\alpha)$, it is sufficient to show that the remainder terms $\mathsf{r}_{1}(x,\xi), \mathsf{r}_{2}(x,\xi) \in S^{-\infty}(1+\alpha)$ are smoothing symbols. Indeed, recalling that $(\mathfrak{m}(\xi))^{-1}=\frac{|\xi|\sinh(|\xi|L)}{\cosh(|\xi|L)-1},$ we have that
\begin{align}
 \mathsf{r}_{1}(x,\xi)&=\left((\mathfrak{m}(\xi))^{-1}-|\xi|\right)\int_0^Le^{-|\xi|s}\frac{\left(e^{-i\xi\cdot\Lambda(x,s)}-1\right)}{1+\Theta(x,s)}ds, \\
  \mathsf{r}_{2}(x,\xi)&=\left((\mathfrak{m}(\xi))^{-1}-|\xi|\right)\int_0^Le^{-|\xi|s}\frac{\Theta(x,s)}{1+\Theta(x,s)}ds.
\end{align}
It easy to check that
\begin{equation}\label{smoothing}
|\left((\mathfrak{m}(\xi))^{-1}-|\xi|\right)|\leq C e^{-|\xi|L},
\end{equation}
and the integral terms can be estimated following the techniques developed in Proposition \ref{propb1:b2}.
\end{proof}

Next, we provide the regularity result for the smoothing symbols $a_{3}, a_{4}$ given in \eqref{symbol:a3}, \eqref{symbol:a4} respectively.
\begin{prop}\label{prop:a3:a4}
 Suppose that Assumptions \ref{asu1} and \ref{asu2} hold. Then, the symbols $a_{3},a_{4}$ given in \eqref{symbol:a3}-\eqref{symbol:a4} respectively, extend to symbols in $S^{-\infty}(1+\alpha)$.
\end{prop}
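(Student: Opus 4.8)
The plan is to reduce Proposition \ref{prop:a3:a4} to the estimates already carried out in the proof of Proposition \ref{propb1:b2} and Corollary \ref{corob1:b2}, by exploiting the one new feature of $a_3,a_4$ compared with $a_1,a_2$: the multiplier $M(\xi,s)$ of \eqref{definition:M:smoothing} carries an exponential gain in $|\xi|$ which is uniform for $s\in[0,L]$. Concretely, rewriting \eqref{definition:M:smoothing} as $M(\xi,s)=\bigl(e^{-|\xi|(2L-s)}-e^{-|\xi|(2L+s)}\bigr)/(1-e^{-2|\xi|L})$, one sees that for $|\xi|\ge 1$ and $s\in[0,L]$ the denominator is bounded below (by $1-e^{-2L}>0$) and both numerator exponentials are $\le e^{-|\xi|L}$ since $2L\pm s\ge L$. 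I would first record the elementary fact that differentiating this expression in $\xi$ — each $\xi$-derivative of $e^{-|\xi|r}$ with $L\le r\le 3L$ contributing only a bounded factor for $|\xi|\ge 1$, and the $\xi$-derivatives of $(1-e^{-2|\xi|L})^{-1}$ being bounded there — gives $\sup_{s\in[0,L]}|\partial_\xi^\gamma M(\xi,s)|\lesssim_\gamma e^{-|\xi|L}$ for $|\xi|\ge 1$, and likewise for $M(\xi,s)e^{-|\xi|s}$. I would also recall that, by \eqref{smoothing} together with the fact that $|\xi|$ is a Mikhlin--H\"ormander multiplier of degree one, $(\mathfrak{m}(\xi))^{-1}=|\xi|\sinh(|\xi|L)/(\cosh(|\xi|L)-1)$ has $\xi$-derivatives of at most polynomial growth. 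As in Corollary \ref{corob1:b2}, I would invoke Remark \ref{observacionperiodica}: since the symbols need only be defined for $\xi\in\Z^2$, it suffices to prove the decay for $|\xi|\ge 1$ and extend smoothly across the origin by a cut-off, so the non-smoothness of $|\xi|$ at $\xi=0$ never enters (the limits as $\xi\to0$ exist, with $a_3(x,0)=0$ because $e^{-i\xi\cdot\Lambda}-1\to0$, and $a_4(x,0)$ finite).

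With these preliminaries the main step is a Leibniz expansion. Writing $a_3(x,\xi)=(\mathfrak{m}(\xi))^{-1}\int_0^L M(\xi,s)e^{-|\xi|s}\,q(x,\xi,s)\,ds$ with $q(x,\xi,s)=(e^{-i\xi\cdot\Lambda(x,s)}-1)/(1+\Theta(x,s))$, I would bound $\|\partial_\xi^\gamma a_3(\cdot,\xi)\|_{C^{1,\alpha}}$ by distributing $\partial_\xi^\gamma$ over the three factors and using the product inequality \eqref{inequality:calpha} for the $C^{1,\alpha}$ norm in $x$. The factor coming from $(\mathfrak{m}(\xi))^{-1}$ contributes a polynomial in $|\xi|$; the factor from $M(\xi,s)e^{-|\xi|s}$ contributes, by the previous step, a bound $\lesssim e^{-|\xi|L}$ uniformly in $s$; and $\|\partial_\xi^{\gamma_3}q(\cdot,\xi,s)\|_{C^{1,\alpha}}$ is estimated exactly as in the proof of Proposition \ref{propb1:b2} — the $\xi$-derivatives fall only on $e^{-i\xi\cdot\Lambda(x,s)}$, producing factors of $\Lambda(x,s)$ with $|\Lambda(x,s)|\le s\|\Lambda\|_{C^1}$ by Assumption \ref{asu1}, while the $x$-derivatives needed for the $C^{1,\alpha}$ norm bring down only finitely many further powers of $\xi$; using Assumptions \ref{asu1} and \ref{asu2} this gives a purely polynomial bound in $|\xi|$ (with constant a power of $\|\Lambda\|_{C^{2,\alpha}}$ and $1+\|\Theta\|_{C^{1,\alpha}}$), uniform in $s\in[0,L]$. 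Multiplying the three contributions, $\|\partial_\xi^\gamma a_3(\cdot,\xi)\|_{C^{1,\alpha}}\lesssim_\gamma (1+|\xi|)^{K(\gamma)}e^{-|\xi|L}$ for $|\xi|\ge 1$, and since $(1+|\xi|)^{K}e^{-|\xi|L}\lesssim_{K,N}(1+|\xi|)^{-N}$ for every $K$ and every $N$, I conclude $a_3\in\bigcap_{m\in\mathbb{R}}S^m(1+\alpha)=S^{-\infty}(1+\alpha)$. For $a_4$ the argument is identical and easier: $q$ is replaced by $\Theta(x,s)/(1+\Theta(x,s))$, which is independent of $\xi$ and has $C^{1,\alpha}$-in-$x$ norm $\le C$ by Assumption \ref{asu2}, so only the $\gamma_3=0$ term survives and the same estimate yields $a_4\in S^{-\infty}(1+\alpha)$.

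I do not expect a genuine obstacle here: the proposition is essentially the statement that the single exponential factor $e^{-|\xi|L}$ built into $M(\xi,s)$ beats all of the polynomial losses already incurred in the proof of Proposition \ref{propb1:b2}. The only mild care required is the bookkeeping of those polynomial losses — from $\partial_\xi(\mathfrak{m}(\xi))^{-1}$, from differentiating $e^{-i\xi\cdot\Lambda}$ in $\xi$, and from the $x$-derivatives entering the $C^{1,\alpha}$ norm — and the verification that each is at most polynomial in $|\xi|$ uniformly in $s\in[0,L]$, hence harmless. If one prefers to argue at the level of the kernels $\mathcal{G}_3,\mathcal{G}_4$ of \eqref{op3}--\eqref{op4}, the same exponential gain shows directly that their Fourier coefficients (and all their $x$-derivatives) are absolutely summable after multiplication by any power of $|\xi|$, so that $\mathcal{G}_3,\mathcal{G}_4$ are $C^\infty$ kernels; but the symbol formulation above is the one that fits cleanly with Theorem \ref{solucion} and Remark \ref{observacionperiodica}.
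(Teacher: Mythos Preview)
Your proposal is correct and follows essentially the same approach as the paper's own proof: both observe that $M(\xi,s)$ satisfies the uniform bound $|M(\xi,s)|\le Ce^{-|\xi|L}$ for $s\in[0,L]$ (the paper states only this pointwise bound and then says ``by the same reasoning as in Corollary \ref{corob1:b2}, the result follows''), so that the exponential gain absorbs all the polynomial losses already catalogued in Proposition \ref{propb1:b2}. Your version simply makes the bookkeeping explicit---tracking the $\xi$-derivatives of $M$, of $(\mathfrak{m}(\xi))^{-1}$, and of the $x$-dependent factor---whereas the paper leaves these details implicit.
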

\begin{proof}
Notice that the function $M(\xi,s)$ defined in \eqref{definition:M:smoothing} is a smooth function $s$ and decays exponentially in $\xi$. In particular, we have the simple bound
\[ |M(\xi,s)| \leq C e^{-|\xi|L}. \]
By the same reasoning as in Corollary \ref{corob1:b2}, the result follows.

\end{proof}

\begin{obs}
Notice that following the same estimates as the one derived in Proposition \ref{propb1:b2} for the symbols $b_{1},b_{2}$, we have that
\begin{equation}\label{akappanorm}
\norm{a_{\kappa}}_{0,1+\alpha,l}\leq C_{\alpha,l}\left( \norm{\Lambda}_{C^{2,\alpha}} +\norm{\Theta}_{C^{1,\alpha}} \right), \ \kappa=1,\ldots,4.
\end{equation}
\end{obs}

We are ready to present the main result of this section, which shows the boundedness of the operators $\textsf{T}_\kappa, \kappa=1,\ldots, 4$, given in \eqref{T1j0}-\eqref{T4j0}. 
\begin{teor}\label{teoremon1}
 Suppose that Assumptions \ref{asu1} and \ref{asu2} hold. Then, the operators $\textsf{T}_\kappa, \kappa=1,\ldots, 4$, given in \eqref{T1j0}-\eqref{T4j0} respectively define bounded operators in $C^{1,\alpha}(\T^2)$. More precisely, we have that
\begin{equation}\label{estimada:teoremon1}
\|\textsf{T}_\kappa \|_{\mathcal{L}(C^{1,\alpha}(\T^2))}\leq C (\|\Lambda\|_{C^{1,\alpha}}+\|\Theta\|_{C^{1,\alpha}}), \quad \kappa=1,\ldots, 4.
\end{equation}
\end{teor}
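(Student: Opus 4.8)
The strategy, already announced in the discussion preceding \eqref{symbol:a1}, is to realise each $\textsf{T}_\kappa$ as the (periodic) adjoint of a pseudo-differential operator and then to invoke the abstract machinery of Subsections \ref{subsec:22}--\ref{subsec:23}. For $\kappa=1,\ldots,4$ let $a_\kappa(x,\xi)$ be the symbol of \eqref{symbol:a1}--\eqref{symbol:a4}; a priori these are only defined for $\xi\in\Z^2$, so following Remark \ref{observacionperiodica} I extend them smoothly to all of $\R^2$, multiplying the obvious extension by a cut-off $\varphi$ near $\xi=0$ so as to repair the behaviour of $(\mathfrak m(\xi))^{-1}$ and of $|\xi|$ at the origin. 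Reading off the kernels in \eqref{T1j0}--\eqref{T4j0}, one checks by Fubini's theorem, for a test function $\phi\in\mathcal S(\R^2)$, the formal identity $\langle\phi,\textsf{T}_\kappa j_0^\ell\rangle=\langle \p(a_\kappa)\phi,\,j_0^\ell\rangle$, i.e. $\textsf{T}_\kappa=\p(a_\kappa)^\star$ in the sense of \eqref{op:adjoint} (up to complex conjugation of the symbol, which is immaterial for everything that follows); moreover the Fourier-coefficient formula \eqref{adj:per:coef:four} of Theorem \ref{solucion} reproduces exactly the $\xi$-th coefficient appearing in \eqref{T1j0}--\eqref{T4j0}, so the identification is consistent on $C^{1,\alpha}(\T^2)$.

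The second step is to place the symbols in the correct class. Corollary \ref{corob1:b2} (which rests on Proposition \ref{propb1:b2}, the splitting $a_\kappa=b_\kappa+\mathsf r_\kappa$ for $\kappa=1,2$, and the exponential smallness \eqref{smoothing}) shows that the extensions of $a_1,a_2$ lie in $S^0(1+\alpha)$, and Proposition \ref{prop:a3:a4} shows that those of $a_3,a_4$ lie in $S^{-\infty}(1+\alpha)\subset S^0(1+\alpha)$; Assumptions \ref{asu1} and \ref{asu2} are precisely what these statements require. With $m=0$, $s=1+\alpha$ and $k=1$, Theorem \ref{solucion} then guarantees that $\p(a_\kappa)^\star v\in C^{1,\alpha}(\T^2)$ for every $v\in C^{1,\alpha}(\T^2)$; using the identification $B^{1+\alpha}_{\infty,\infty}(\R^2)\sim C^{1,\alpha}(\R^2)$ and transferring to the torus via Lemma \ref{periodico} and Remark \ref{observacionperiodica}, the corollary to Theorem \ref{boundedness:Besov} furnishes the operator-norm bound
\[
\|\textsf{T}_\kappa\|_{\mathcal{L}(C^{1,\alpha}(\T^2))}=\|\p(a_\kappa)^\star\|_{\mathcal{L}(C^{1,\alpha}(\T^2))}\le C\,\|a_\kappa\|_{0,1+\alpha,l}
\]
for a fixed order $l$ (any $l\ge 2|\gamma|$ with $|\gamma|>2$ works).

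Finally, \eqref{estimada:teoremon1} is obtained by feeding into the last display the symbol semi-norm estimates \eqref{akappanorm} established in the course of Proposition \ref{propb1:b2} and Corollary \ref{corob1:b2} (together with the analogous bounds for the smoothing pieces $\mathsf r_1,\mathsf r_2,a_3,a_4$), which control $\|a_\kappa\|_{0,1+\alpha,l}$ by $C_{\alpha,l}$ times the relevant norms of $\Lambda$ and $\Theta$; here the cancellation $\Lambda(\cdot,0)=0$ of Assumption \ref{asu1} is exactly what lets one trade each power of the $s$-weight against a $\partial_s$-derivative, so that only $\partial_s\Lambda$ — one fewer $x$-derivative of the flow map — ever enters, which is why the dependence can be phrased with $C^{1,\alpha}$-type norms as in \eqref{estimada:teoremon1}.

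The only genuinely delicate point — everything else being bookkeeping — is the rigorous identification carried out in the first step: the kernels in \eqref{T1j0}--\eqref{T4j0} are a priori only formal Fourier series, and it is Theorem \ref{solucion}, invoked through Lemma \ref{periodico} and the extension recipe of Remark \ref{observacionperiodica}, that simultaneously makes sense of $\p(a_\kappa)^\star v$ for merely $C^{1,\alpha}$ data and identifies its Fourier coefficients with the expected ones. The analytic core of the matter — verifying $a_\kappa\in S^0(1+\alpha)$ with the stated semi-norm control — has already been done in Propositions \ref{propb1:b2} and \ref{prop:a3:a4}, so no new estimate is needed at this stage; the present theorem is essentially an assembly of those facts with the abstract boundedness results.
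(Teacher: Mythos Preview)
Your proposal is correct and follows essentially the same approach as the paper: both proofs identify $\textsf{T}_\kappa$ as the periodic adjoint $\p(a_\kappa)^\star$, invoke Corollary \ref{corob1:b2} and Proposition \ref{prop:a3:a4} to place the symbols in $S^0(1+\alpha)$, and then apply Theorem \ref{solucion} with $k=1$ together with the semi-norm bound \eqref{akappanorm} to obtain \eqref{estimada:teoremon1}. Your write-up is more detailed about the identification step and the role of Remark \ref{observacionperiodica}, but the argument is the same assembly of earlier results.
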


\begin{proof}[Proof of Theorem \ref{teoremon1}]
In order to show the boundedness of the operators $\textsf{T}_\kappa, \kappa=1,\ldots, 4$, we first recall the results contained in Corollary \ref{corob1:b2} and Proposition \ref{prop:a3:a4} which demonstrate that
$a_{\kappa}(x,\xi)\in S^0(1+\alpha)$ for $\kappa=1,\ldots, 4$. To conclude the proof we invoke Theorem \ref{solucion} for $k=1$. Note that the precise estimate on the operator norm follows by the bounds on the symbols \eqref{akappanorm}.

\end{proof}

\subsection{Estimates on the differences $\textsf{T}_\kappa$}\label{subsec:52}
In this subsection, we will derive estimates for the difference operators. This will be needed in order to show the lower order  $C^{\alpha}$ contraction estimate in the general fixed point argument provided in Section \ref{sec:6}. The idea towards the proof follows the same lines as the one provided in the previous subsection, however some precise control of the constants involved in the computations is needed. To that purpose, for $\Lambda_1,\Lambda_{2}$ and $\Theta_{1},\Theta_{2}$, we denote the difference of the operators $\textsf{T}_\kappa$ as
\begin{align}\label{difference:op:13}
\textsf{T}_{1}^d=\textsf{T}_1[\Lambda_1,\Theta_1]-\textsf{T}_1[\Lambda_2,\Theta_2], \quad 
\textsf{T}_{3}^d=\textsf{T}_3[\Lambda_1,\Theta_1]-\textsf{T}_3[\Lambda_2,\Theta_2], 
\end{align}
and
\begin{align}\label{difference:op:24}
\textsf{T}_{2}^d=\textsf{T}_2[\Theta_1]-\textsf{T}_2[\Theta_2], \quad 
\textsf{T}_{4}^d=\textsf{T}_4[\Theta_1]-\textsf{T}_4[\Theta_2].
\end{align}
One big advantage of the pseudo-differential approach we are following is that $\textsf{T}^d_\kappa$ is once again the adjoint of a pseudo-differential operator, $\p(c_\kappa)$. The symbol $c_\kappa$ is, obviously, given by the difference 
\begin{align}
    c_1(x,\xi)&=(\mathfrak{m}(\xi))^{-1}\int_0^Le^{-|\xi|s}\left(\frac{e^{i\xi\cdot\Lambda_1(x,s)}-1}{1+\Theta_1(x,s)}-\frac{e^{i\xi\cdot\Lambda_2(x,s)}-1}{1+\Theta_2(x,s)}\right)ds\label{symbol:c1} \\
    c_2(x,\xi)&=(\mathfrak{m}(\xi))^{-1}\int_0^Le^{-|\xi|s}\left(\frac{\Theta_{1}(x,s)}{1+\Theta_{1}(x,s)}-\frac{\Theta_{2}(x,s)}{1+\Theta_{2}(x,s)}\right) ds,   \label{symbol:c2} \\
        c_3(x,\xi)&=(\mathfrak{m}(\xi))^{-1}\int_0^{L} M(\xi,s) e^{-|\xi|s}\left(\frac{e^{i\xi\cdot\Lambda_1(x,s)}-1}{1+\Theta_1(x,s)}-\frac{e^{i\xi\cdot\Lambda_2(x,s)}-1}{1+\Theta_2(x,s)}\right)ds, \label{symbol:c3}\\ 
    c_{4}(x,\xi)&= (\mathfrak{m}(\xi))^{-1}\int_0^L M(\xi,s) e^{-|\xi|s}\left(\frac{\Theta_{1}(x,s)}{1+\Theta_{1}(x,s)}-\frac{\Theta_{2}(x,s)}{1+\Theta_{2}(x,s)}\right) ds.\label{symbol:c4}
\end{align}
As in Subsection \ref{subsec:51}, to simplify the exposition, we work with a more simplified version of $c_{1},c_{2}$, namely
\begin{align}
    d_1(x,\xi)&=|\xi| \int_0^Le^{-|\xi|s}\left(\frac{e^{i\xi\cdot\Lambda_1(x,s)}-1}{1+\Theta_1(x,s)}-\frac{e^{i\xi\cdot\Lambda_2(x,s)}-1}{1+\Theta_2(x,s)}\right)ds\label{symbol:d1}, \\
    d_2(x,\xi)&=|\xi| \int_0^Le^{-|\xi|s}\left(\frac{\Theta_{1}(x,s)}{1+\Theta_{1}(x,s)}-\frac{\Theta_{2}(x,s)}{1+\Theta_{2}(x,s)}\right) ds.   \label{symbol:d2} 
\end{align}
Similarly as in the previous subsection, we have that
\[ c_1(x,\xi)=d_1(x,\xi)+\mathsf{r}_{1}(x,\xi), \ c_2(x,\xi)=d_2(x,\xi)+\mathsf{r}_{2}(x,\xi),\]
where the the remainder terms are smoothing symbols. Therefore, it is sufficient to provide the regularity results for the symbols $d_1(x,\xi), d_2(x,\xi)$. Furthermore, the fast decay of the function $M(\xi,s)$, provides that the kernels defined out of $c_{3}(x,\xi), c_{4}(x,\xi)$ are smooth along the same lines as in Proposition \ref{prop:a3:a4}.
One has to acknowledge that we want estimates in the case of less regularity, so we cannot directly use Theorem \ref{solucion} to conclude that $\textsf{T}^d_{\kappa}\in \mathcal{L}(C^\alpha(\T^2))$. However, if we prove adequate estimates for the symbols involved, we can get boundedness on the $C^\alpha$ norm when $\textsf{T}^d_{\kappa}$ acts on $C^{1,\alpha}$ functions. 
\begin{prop}\label{prop:d1d2}
Suppose that Assumptions \ref{asu1} and \ref{asu2} hold. Then, the symbols $d_{1},d_{2}$ given in \eqref{symbol:d1}-\eqref{symbol:d1} respectively, extend to symbols in $S^0(\alpha)$. More precisely, we have that
\begin{align}
\left\|\partial_\xi^\gamma\left(d_{1}(\cdot,\xi)\right)\right\|_{C^{\alpha}} &\leq C|\xi|^{-|\gamma|} \left( \|\Lambda_1-\Lambda_2\|_{C^{1,\alpha}}+\|\Theta_1-\Theta_2\|_{C^{\alpha}} \right),  \\
\left\|\partial_\xi^\gamma\left(d_{2}(\cdot,\xi)\right)\right\|_{C^{\alpha}}&\leq C|\xi|^{-|\gamma|} \left(\|\Theta_1-\Theta_2\|_{C^{\alpha}} \right).
\end{align}
\end{prop}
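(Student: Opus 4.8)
The plan is to reduce the difference estimates for $d_1$ and $d_2$ to the already–established machinery in the proof of Proposition \ref{propb1:b2}, by rewriting each difference of fractions as a sum of ``telescoping'' terms in which only one factor is a difference and the remaining factors are controlled by Assumptions \ref{asu1} and \ref{asu2}. Concretely, for $d_1$ I would write
\[
\frac{e^{i\xi\cdot\Lambda_1}-1}{1+\Theta_1}-\frac{e^{i\xi\cdot\Lambda_2}-1}{1+\Theta_2}
=\frac{(e^{i\xi\cdot\Lambda_1}-e^{i\xi\cdot\Lambda_2})}{1+\Theta_1}
+(e^{i\xi\cdot\Lambda_2}-1)\Big(\frac{1}{1+\Theta_1}-\frac{1}{1+\Theta_2}\Big),
\]
and for $d_2$ the simpler identity $\frac{\Theta_1}{1+\Theta_1}-\frac{\Theta_2}{1+\Theta_2}=\frac{\Theta_1-\Theta_2}{(1+\Theta_1)(1+\Theta_2)}$. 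The key new pointwise bounds are the Lipschitz-in-$\Lambda$ estimate $|e^{i\xi\cdot\Lambda_1(x,s)}-e^{i\xi\cdot\Lambda_2(x,s)}|\le |\xi|\,|\Lambda_1(x,s)-\Lambda_2(x,s)|\le s|\xi|\,\|\Lambda_1-\Lambda_2\|_{C^1}$ (using $\Lambda_i(x,0)=0$ exactly as in \eqref{eq:cota:exp1}), together with its $C^\alpha$ companion obtained as in \eqref{estimacionalphaexponencial}, namely $[e^{i\xi\cdot\Lambda_1(\cdot,s)}-e^{i\xi\cdot\Lambda_2(\cdot,s)}]_\alpha\le s|\xi|\,\|\Lambda_1-\Lambda_2\|_{C^{1,\alpha}}+ s|\xi|^2\,\|\Lambda_1-\Lambda_2\|_{C^1}(\|\Lambda_1\|_{C^{1,\alpha}}+\|\Lambda_2\|_{C^{1,\alpha}})$, and the analogous $\big[\frac{1}{1+\Theta_1}-\frac{1}{1+\Theta_2}\big]_\alpha\lesssim \frac{\|\Theta_1-\Theta_2\|_{C^\alpha}}{(1-\max_i\|\Theta_i\|_\infty)^2}$ obtained from \eqref{estimacionalphatheta}. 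All other factors ($e^{i\xi\cdot\Lambda_2}-1$, $\frac{1}{1+\Theta_i}$, $\nabla_x$ of these) have exactly the $L^\infty$ and $C^\alpha$ bounds already catalogued in Proposition \ref{propb1:b2}.

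With these substitutions in hand, the next step is to differentiate $|\xi|^{-1}d_\kappa$ in $\xi$ using Leibniz's rule and the expansion \eqref{eq:exp:1}--\eqref{eq:exp:2} for $\partial_\xi^\gamma e^{-|\xi|s}$, producing a finite sum of integrals in $s$ with integrand of the form $e^{-|\xi|s}s^{k}p(\xi)$ times a product of the factors above. Each such integral is estimated by the universal trick used throughout the section: pull out $\frac{1}{1-\max_i\|\Theta_i\|_\infty}$ and the relevant difference norm, bound all bounded factors by the smallness constants $\delta_0,\delta_1$, then change variables $u=s|\xi|$ to convert $\int_0^L e^{-|\xi|s}s^{k+\ell}\,ds$ into $|\xi|^{-k-\ell-1}\int_0^\infty u^{k+\ell}e^{-u}\,du$. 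The extra factor $s|\xi|$ coming from each Lipschitz-in-$\Lambda$ or $(e^{i\xi\cdot\Lambda_2}-1)$ term is precisely what upgrades the decay from $|\xi|^{-|\gamma|-1}$ (which would give $S^0$ for $|\xi|^{-1}d_\kappa$) down one further power, so that $|\xi|^{-1}d_\kappa\in S^{-1}(\alpha)$, i.e. $d_\kappa\in S^0(\alpha)$, with the norm bounded linearly by $\|\Lambda_1-\Lambda_2\|_{C^{1,\alpha}}+\|\Theta_1-\Theta_2\|_{C^\alpha}$ for $d_1$ and by $\|\Theta_1-\Theta_2\|_{C^\alpha}$ for $d_2$. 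For $d_2$ one only needs the single identity above and the $C^\alpha$ estimate on $\frac{\Theta_1-\Theta_2}{(1+\Theta_1)(1+\Theta_2)}$, so the argument is strictly easier. Finally, the remainder symbols $\mathsf r_1,\mathsf r_2$ and the symbols $c_3,c_4$ built from $M(\xi,s)$ are handled exactly as in Corollary \ref{corob1:b2} and Proposition \ref{prop:a3:a4}, using $|(\mathfrak m(\xi))^{-1}-|\xi||\le Ce^{-|\xi|L}$ and $|M(\xi,s)|\le Ce^{-|\xi|L}$ to absorb them into $S^{-\infty}$.

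The only genuinely delicate point — and the one I would be most careful about — is the bookkeeping of $C^\alpha$ seminorms of \emph{products} where one factor is itself a difference whose $C^\alpha$ seminorm contains a stray factor of $|\xi|$ (as in the $[e^{i\xi\cdot\Lambda_1(\cdot,s)}-e^{i\xi\cdot\Lambda_2(\cdot,s)}]_\alpha$ bound). One has to make sure, via the product inequality \eqref{inequality:calpha}, that every such $|\xi|$ is matched by an $s$ from the $e^{-|\xi|s}$ weight so the $u=s|\xi|$ substitution still closes with a convergent $\Gamma$-integral; this is exactly the mechanism already used to control $b_{131}$ and $b_{132}$ in Proposition \ref{propb1:b2}, so no new phenomenon appears, but the combinatorics of which difference-norm each summand carries must be tracked so that the final constant is linear (not quadratic) in the difference norms. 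Once that is done, the statement follows, and as a corollary (used in Section \ref{sec:6}) $\textsf{T}^d_\kappa=\p(c_\kappa)^\star$ maps $C^{1,\alpha}(\T^2)$ to $C^\alpha(\T^2)$ with $\|\textsf{T}^d_\kappa\|_{\mathcal L(C^{1,\alpha},C^\alpha)}\le C(\|\Lambda_1-\Lambda_2\|_{C^{1,\alpha}}+\|\Theta_1-\Theta_2\|_{C^\alpha})$ by the boundedness half of Theorem \ref{solucion} together with Remark \ref{observacionperiodica}.
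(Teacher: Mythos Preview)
Your approach is essentially the same as the paper's: the same telescoping decomposition of $d_1$, the same algebraic identity for $d_2$, the same Leibniz-plus-\eqref{eq:exp:1} machinery, and the same $u=s|\xi|$ change of variables. One point deserves correction, though: the $C^\alpha$ bound you state for the exponential difference has a missing factor of $s$. As written, the second term is $s|\xi|^2\|\Lambda_1-\Lambda_2\|_{C^1}(\|\Lambda_1\|_{C^{1,\alpha}}+\|\Lambda_2\|_{C^{1,\alpha}})$; with only one $s$ against two factors of $|\xi|$, the $u=s|\xi|$ substitution gives an $O(1)$ contribution to $d_{11}/|\xi|$ rather than $O(|\xi|^{-1})$, and the argument would not close. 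The correct bound is $s^2|\xi|^2$, which you do obtain if you also exploit $\Lambda_i(x,0)=0$ for the convex combination $t\Lambda_1+(1-t)\Lambda_2$ appearing in the mean-value representation (exactly as in \eqref{estimacionalphaexponencial}). Your own remark that ``every such $|\xi|$ is matched by an $s$'' suggests you are aware of this, so this is likely a slip rather than a gap.

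For comparison, the paper handles the $C^\alpha$ estimate of $d_{11}$ by first factoring $e^{i\xi\cdot\Lambda_1}-e^{i\xi\cdot\Lambda_2}=e^{i\xi\cdot\Lambda_2}\bigl(e^{i\xi\cdot(\Lambda_1-\Lambda_2)}-1\bigr)$ and then running a triple Leibniz in $\xi$ on the three factors $e^{-|\xi|s}$, $e^{i\xi\cdot\Lambda_2}$, and $e^{i\xi\cdot(\Lambda_1-\Lambda_2)}-1$. This is tidier bookkeeping-wise, because the $\xi$-derivatives of the last factor produce powers of $\Lambda_1-\Lambda_2$ directly, and the estimate \eqref{estimacionalphaexponencial} applied to $\Lambda_1-\Lambda_2$ (which also vanishes at $s=0$) gives the needed $s|\xi|\|\Lambda_1-\Lambda_2\|_{C^{1,\alpha}}$ without the cross-term that caused your slip. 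Either route works once the $s$-counting is done correctly.
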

\begin{proof}
 The proof follows closely the arguments of Proposition \ref{propb1:b2}. To avoid repetition, we provide the full details for the symbol $d_{1}$ and just give the precise final estimate for the symbol $d_{2}$. We begin by writing them in a more convenient way, that explicitly shows the dependence in the distance between the functions $\Lambda_1$ and $\Lambda_2$, and the functions $\Theta_1$ and $\Theta_2$, namely,
 \begin{equation*}
        \begin{split} 
        d_1(x,\xi)&=|\xi|\int_0^Le^{-|\xi|s}\left(\frac{e^{i\xi\cdot\Lambda_1(x,s)}-1}{1+\Theta_1(x,s)}-\frac{e^{i\xi\cdot\Lambda_2(x,s)}-1}{1+\Theta_2(x,s)}\right)ds\\
        &=|\xi|\int_0^Le^{-|\xi|s}\frac{1}{1+\Theta_1(x,s)}\left(e^{i\xi\cdot\Lambda_1(x,s)}-e^{i\xi\cdot\Lambda_2(x,s)}\right)ds\\
        &+|\xi|\int_0^Le^{-|\xi|s}\left(e^{i\xi\cdot\Lambda_1(x,s)}-1\right)\frac{\Theta_2(x,s)-\Theta_1(x,s)}{(\Theta_1(x,s)+1)(\Theta_2(x,s)+1)}ds=d_{11}(x,s)+d_{12}(x,s).
        \end{split}
    \end{equation*}
Noticing that $|\xi|$ is a Mikhlin-Hörmander multiplier of order $1$, we need to show that $d_1/|\xi|\in S^{-1}(\alpha)$.  
\subsubsection*{\underline{$L^{\infty}$ bounds for $\partial_{\xi}^{\gamma}\left(d_{1}(x,\xi)/|\xi|\right)$}}
 By applying Leibniz's rule and recalling \eqref{eq:exp:1}, we find that
  \begin{equation*}
        \begin{split}
          \partial^\gamma_\xi \left(d_{11}/|\xi|\right)&= \sum_{\beta\leq \gamma}{\gamma\choose\beta} \int_0^L \partial_\xi^\beta\left(e^{-s|\xi|}\right)\frac{\partial_\xi^{\gamma-\beta}\left(e^{i\xi\cdot\Lambda_1(x,s)}-e^{i\xi\cdot\Lambda_2(x,s)}\right)}{1+\Theta_1(x,s)}ds\\
          &=\sum_{\beta\leq \gamma}{\gamma\choose\beta} \int_0^L \partial_\xi^\beta\left(e^{-s|\xi|}\right)\frac{\left(\Lambda^{\gamma-\beta}_2(x,s)e^{i\xi\cdot\Lambda_2(x,s)}-\Lambda^{\gamma-\beta}_1(x,s)e^{i\xi\cdot\Lambda_1(x,s)}\right)}{1+\Theta_1(x,s)}ds\\
          &=\sum_{\beta\leq \gamma}{\gamma\choose\beta}\sum_{\ell} \int_0^L e^{-s|\xi|}s^{k^\beta_\ell}p^\beta_\ell(\xi)\frac{\left(\Lambda^{\gamma-\beta}_2(x,s)e^{i\xi\cdot\Lambda_2(x,s)}-\Lambda^{\gamma-\beta}_1(x,s)e^{i\xi\cdot\Lambda_1(x,s)}\right)}{1+\Theta_{1}(x,s)}ds.
        \end{split}
    \end{equation*}
Here $p^\beta_\kappa(\xi)\in C^\infty(\R^2\setminus \{0\})$ satisfying \eqref{eq:exp:2}. By the fundamental theorem of calculus,
    \begin{equation*}
        \begin{split}
        \Lambda^{\gamma-\beta}_2(x,s)e^{i\xi\cdot\Lambda_2(x,s)}&-\Lambda^{\gamma-\beta}_1(x,s)e^{i\xi\cdot\Lambda_1(x,s)}\\
        &=\left(\Lambda_1(x,s)-\Lambda_2(x,s)\right)\cdot\int_0^1\nabla \mathsf{h}_\xi (t\Lambda_1(x,s)+(1-t)\Lambda_2(x,s))dt,
        \end{split}
    \end{equation*}
  where $\mathsf{h}_\xi(r)=r^{\gamma-\beta}e^{i\xi\cdot r}$. Note that
\[ \nabla \mathsf{h}(r)=\mathscr{F}(r)e^{i\xi\cdot r}+r^{\gamma-\beta}\xi e^{i\xi\cdot r},\]
with $\mathscr{F}(r)$ a vector where every element consists on a monomial of degree $|\gamma-\beta|-1$. Furthermore, since by Assumption \ref{asu1} $\Lambda_1(x,0)=\Lambda_2(x,0)=0$, and $\|\Lambda_1\|_{C^{1}},\|\Lambda_2\|_{C^2}\leq 1$, we obtain that
    \begin{align}
    \left|\Lambda^{\gamma-\beta}_2(x,s)e^{i\xi\cdot\Lambda_2(x,s)}-\Lambda^{\gamma-\beta}_1(x,s)e^{i\xi\cdot\Lambda_1(x,s)}\right|&\leq C\left(s^{|\gamma-\beta|-1}+|\xi|s^{|\gamma-\beta|}\right)|\Lambda_1(x,s)-\Lambda_2(x,s)|\nonumber \\
    &\leq C\left(1+s|\xi|\right)s^{|\gamma-\beta|}\|\Lambda_1-\Lambda_2\|_{C^1}. \label{estimate:plugging1}
    \end{align}

Hence, combing \eqref{estimate:plugging1} and the fact that $\|\Theta_{1}\|_{C^{1,\alpha}}\leq 1$ , we conclude that
    \begin{align}
            \left|\partial_\xi^\gamma \left(d_{11}/|\xi|\right)\right|&\leq \|\Lambda_1-\Lambda_2\|_{C^1}\frac{C}{|\xi|^{|\gamma|}}\sum_{\beta\leq \gamma}\sum_{\ell} \int_0^L \left(|\xi|s\right)^{k^\beta_{\ell}+|\gamma-\beta|}\left(1+s|\xi|\right)e^{-s|\xi|}ds\nonumber \\
            &\leq C \|\Lambda_1-\Lambda_2\|_{C^1}\frac{1}{|\xi|^{|\gamma|+1}}\sum_{\beta\leq \gamma}\sum_\ell\int_0^\infty u^{k^\beta_\ell+|\gamma+\beta|}(1+u)e^{-u}du \nonumber \\
            &\leq C|\xi|^{-|\gamma|-1}\|\Lambda_1-\Lambda_2\|_{C^{1}}. \label{estimate:d11Linfty}
\end{align}
Let us proceed to derive the $L^{\infty}$ bounds for $d_{12}$. Similarly, using Leibniz rule we have that
    \begin{equation*}
        \begin{split}
            \partial_\xi^\gamma\left(\frac{d_{12}}{|\xi|}\right)&=\sum_{\beta\leq \gamma}{\gamma\choose\beta}\int_0^L\partial^\beta_\xi \left(e^{-s|\xi|}\right)\partial_\xi^{\gamma-\beta}\left(e^{i\xi\cdot \Lambda_2(x,s)}-1\right)\frac{\Theta_2(x,s)-\Theta_1(x,s)}{\left(1+\Theta_1(x,s)\right)\left(1+\Theta_2(x,s)\right)}ds\\
            &=\displaystyle\sum_{\substack{\beta\leq\gamma\\ \beta\neq\gamma}}{\gamma\choose\beta} \sum_{\ell} \int_0^L s^{k^\beta_\ell}p^\beta_\ell (\xi)\left(i\Lambda_2(x,s)\right)^{\gamma-\beta}e^{i\xi\cdot\Lambda(x,s)}\frac{\Theta_2(x,s)-\Theta_1(x,s)}{\left(1+\Theta_1(x,s)\right)\left(1+\Theta_2(x,s)\right)}ds\\
            &+\displaystyle\sum_{\ell} \int^L_0 s^{k^\beta_\ell}p^\beta_\ell (\xi)\left(e^{i\xi\cdot\Lambda_2(x,s)}-1\right)\frac{\Theta_2(x,s)-\Theta_1(x,s)}{(1+\Theta_1(x,s))(1+\Theta_2(x,s))}ds.            
        \end{split}
    \end{equation*}
Using that $\Lambda_2(x,0)=0$ by Assumption \ref{asu1}, we derive the bounds
\[ \left|\Lambda_2(x,s)\right|^{|\gamma-\beta|}\leq s^{|\gamma-\beta|}\|\Lambda\|_{C^1}^{|\gamma-\beta|}, \quad  \left|e^{i\xi\cdot\Lambda_2(x,s)}-1\right|\leq s |\xi| \|\Lambda_{2}\|_{C^1}. \] 
Thus,  
 \begin{align}
     \left|\partial_\xi^\gamma\left(\frac{d_{12}}{|\xi|}\right)\right|&\leq C|\xi|^{-|\gamma|}\|\Theta_1-\Theta_2\|_{\infty}\sum_{\substack{\beta\leq \gamma\\\beta\neq\gamma}}\sum_{\ell}\int_0^L (|\xi|s)^{k^\beta_{\ell}+|\gamma-\beta|}e^{-s|\xi|}ds \nonumber \\
         &\quad \quad +C|\xi|^{-|\gamma|}\|\Theta_1-\Theta_2\|_{\infty}\sum_{\ell} \int_0^L (s|\xi|)^{k^\beta_{\ell}}e^{-s|\xi|}ds \nonumber \\
         &\leq C|\xi|^{-|\gamma|-1}\|\Theta_1-\Theta_2\|_{\infty}. \label{estimate:d12Linfty}
 \end{align}
 Combing  \eqref{estimate:d11Linfty} and \eqref{estimate:d12Linfty}, we obtain the $L^{\infty}$ bound
 \begin{equation}
 \partial_{\xi}^{\gamma}\left( \frac{d_{1}(x,\xi)}{|\xi|}\right)\leq C|\xi|^{-|\gamma|-1}\left( \|\Lambda_1-\Lambda_2\|_{C^{1}}+\|\Theta_1-\Theta_2\|_{\infty} \right) . \label{final:d1:Linfty}
 \end{equation}
\subsubsection*{\underline{$C^{\alpha}$ bounds for $\partial_{\xi}^{\gamma}\left(d_{1}(x,\xi)/|\xi|\right)$}} Next, we derive the $C^{\alpha}$ bounds.  We begin with the estimates for $d_{11}$. It is more convenient to rewrite $d_{11}$ as follows
\[ d_{11}(x,\xi)=|\xi|\int_0^L e^{-|\xi|s}\frac{e^{i\xi\cdot\Lambda_2(x,s)}}{1+\Theta_1(x,s)}\left(e^{i\xi\cdot\left(\Lambda_1(x,s)-\Lambda_2(x,s)\right)}-1\right) \ ds.\]
Using Leibniz rule, we find that
\begin{align}
        \partial_\xi^\gamma\left(\frac{d_{11}}{|\xi|}\right)&=\sum_{\beta\leq \gamma}{\gamma\choose\beta}\int_0^L\partial_\xi^{\gamma-\beta}\left(e^{-s|\xi|}\right)\partial^\beta_\xi\left(\frac{e^{i\xi\cdot\Lambda_2(x,s)}}{1+\Theta_1(x,s)}\left(e^{i\xi\cdot\left(\Lambda_1(x,s)-\Lambda_2(x,s)\right)}-1\right)\right)ds \nonumber \\
        &=\sum_{\delta\leq \beta\leq \gamma}{\gamma\choose\beta}{\beta\choose\delta}\int_0^L\partial_\xi^{\gamma-\beta}\left(e^{-s|\xi|}\right)\frac{\partial^\delta_\xi \left(e^{i\xi\cdot\Lambda_2(x,s)}\right)}{1+\Theta_1(x,s)}\partial^{\beta-\delta}_\xi\left(e^{i\xi\cdot\left(\Lambda_1(x,s)-\Lambda_2(x,s)\right)}-1\right)ds. \label{chapa}
    \end{align}
 Let us of compute the $C^{\alpha}$ semi-norm of the different terms involved in \eqref{chapa}.  \\
 \paragraph{\underline{Bounds for $[\frac{1}{1+\Theta_1(x,s)}]_{\alpha}$}} The $L^\infty$ norm can be dominated by $\frac{1}{1-\|\Theta_1\|_\infty}$. On the other hand, the $C^\alpha$ semi-norm can be bounded by 
\begin{equation}\label{eq:alpha:theta1:diff}
 \left[\frac{1}{1+\Theta_1(\cdot,s)}\right]_\alpha=\sum_{x\neq y}\frac{|\Theta_1(y,s)-\Theta_1(x,s)|}{|1+\Theta_1(x,s)||1+\Theta_2(x,s)|}\leq \frac{[\Theta_1]_\alpha}{(1-\|\Theta_1\|_\infty)^2}.
 \end{equation}
\paragraph{\underline{ Bounds for $[\partial^\delta_\xi\left(e^{i\xi\cdot\Lambda_2(x,s)}\right)]_{\alpha}$}}
We readily check that
$\partial^\delta_\xi\left(e^{i\xi\cdot\Lambda_2(x,s)}\right)=\left(i\Lambda_2(x,s)\right)^\delta e^{i\xi\cdot\Lambda_2(x,s)}$. 
Moreover, due to Assumption \ref{asu1} we use that $\Lambda_2(x,0)=0$ to find that
    $$\left|\left(i\Lambda_2(x,s)\right)^\delta e^{i\xi\cdot\Lambda_2(x,s)}\right|\leq s^{|\delta|}\|\Lambda_2\|_{C^1}^\delta. $$
   For the $C^\alpha$ semi-norm, we realize that $[\cdot]_\alpha\leq C\|\cdot\|_{C^1}$. Since $\Lambda_{2}\in C^{1}$, we obtain 

    $$\nabla_x\partial^\delta_\xi\left(e^{i\xi\cdot\Lambda_2(x,s)}\right)=\nabla_x\Lambda_2(x,s)\cdot \left[\mathscr{F}(\Lambda_2(x,s))+\xi\left(i\Lambda(x,s)\right)^\delta\right]e^{i\xi\cdot \Lambda_2(x,s)},$$
  where $\mathscr{F}$ is a vector where each component is a monomials of degree $|\delta|-1$. Thus, since $\nabla_x\Lambda_2(x,0)=0$, and $\norm{\Lambda_2}_{C^{2,\alpha}}\leq \frac{1}{2}$ we conclude that
\begin{equation}\label{eq:alpha:exp:diff}
\left[\partial^\delta_\xi\left(e^{i\xi\cdot\Lambda_2(\cdot,s)}\right)\right]_\alpha\leq \left\|\partial^\delta_\xi\left(e^{i\xi\cdot\Lambda_2(\cdot,s)}\right)\right\|_{C^1}\leq C s^{|\delta|}\left(1+s|\xi|\right).
\end{equation}

\paragraph{\underline{Bounds for $[\partial^{\beta-\delta}_\xi\left(e^{i\xi\cdot\left(\Lambda_1(x,s)-\Lambda_2(x,s)\right)}-1\right)]_{\alpha}$}}
The $L^\infty$ and the $C^\alpha$ bounds are obtained similarly as in the proof of Proposition \ref{propb1:b2}, just by substituting $\Lambda$ by $\Lambda_1-\Lambda_2$. Then, we conclude that 
\begin{equation}\label{eq:alpha:theta3:diff}
 \left\|\partial^{\beta-\delta}_\xi\left(e^{i\xi\cdot\left(\Lambda_1(x,s)-\Lambda_2(x,s)\right)}-1\right)\right\|_{C^{1,\alpha}}\leq C\left(|\xi|s\right)^{|\beta-\delta|}|\xi|^{-|\beta-\delta|}\|\Lambda_1-\Lambda_2\|_{C^{1,\alpha}}.
 \end{equation}

Hence, gathering bounds \eqref{eq:alpha:theta1:diff}-\eqref{eq:alpha:theta3:diff} and using the H\"older semi-norm estimate \eqref{inequality:calpha} we infer that
\begin{equation}
\left[\partial_\xi^\gamma \left(\frac{d_{11}}{|\xi|}\right)\right]_{\alpha}\leq C\sum_{\delta\leq \beta\leq\gamma}\sum_{\ell}\|\Lambda_1-\Lambda_2\|_{C^{1,\alpha}} |\xi|^{-|\beta|}\int_0^L s^{k^{\gamma-\beta}_{\ell}}p^{\gamma-\beta}_{\ell} (\xi) \left(s|\xi|\right)^{|\delta|}(1+s|\xi|)\left(|\xi|s\right)^{|\beta-\delta|}|ds.
\end{equation}
Since  $|p_{\ell}|\lesssim |\xi|^{k_\ell} $, we obtain that
\begin{equation}\label{final:calphad11}
 \left\|\partial_\xi^\gamma \left(\frac{d_{11}}{|\xi|}\right)\right\|_{C^\alpha}\leq C |\xi|^{-|\gamma|-1}\|\Lambda_1-\Lambda_2\|_{C^{1,\alpha}},
\end{equation}
and therefore, $d_{11}\in S^{0}(\alpha)$.

 To conclude that $d_{1}\in S^{0}(\alpha)$, we have to perform similar estimates for $d_{12}$. Indeed, we first compute the $\partial_\xi^\gamma\left(\frac{d_{12}}{|\xi|}\right)$, namely
\[ 
\partial_\xi^\gamma\left(\frac{d_{12}}{|\xi|}\right)=\sum_{\beta\leq\gamma}{\gamma\choose\beta}\int_0^L \partial_\xi^\beta\left(e^{-s|\xi|}\right)\partial_\xi^{\gamma-\beta}\left(e^{i\xi\cdot\Lambda_2(x,s)}-1\right)\frac{\Theta_2(x,s)-\Theta_1(x,s)}{(1+\Theta_1(x,s))(1+\Theta_2(x,s))}ds.
\]
The $L^{\infty}$ estimate is straightforward. Using the fact that $\Lambda_2(x,0)=0$ and formula \eqref{eq:exp:3} to expand $e^{-s|\xi|}$ we infer that
\begin{align} 
\left|\partial_\xi^\gamma\left(\frac{d_{12}}{|\xi|}\right)\right|&\leq C\|\Theta_1-\Theta_2\|_{\infty}|\xi|^{-|\gamma|}\sum_{\beta\leq \gamma} \sum_{\ell}\int_0^L e^{-s|\xi|} \left(s|\xi|\right)^{k^\beta_{\ell}} \left(s|\xi|\right)^{|\gamma-\beta|}ds\nonumber \\
&\leq C|\xi|^{-|\gamma|-1}\|\Theta_1-\Theta_2\|_{\infty}. \label{d12:theta:Linfty}
\end{align}
For the $C^{\alpha}$, we just notice that
$$\left\|\frac{\Theta_2(x,s)-\Theta_1(x,s)}{(1+\Theta_1(x,s))(1+\Theta_2(x,s))}\right\|_{C^{\alpha}}\leq \|\Theta_2(x,s)-\Theta_1(x,s)\|_{C^{\alpha}}\left\|\frac{1}{(1+\Theta_1(x,s))(1+\Theta_2(x,s))}\right\|_{C^{\alpha}}.$$
The right factor above can be bounded easily by using Assumption \ref{asu2} and estimate
$$\left[\frac{1}{1+\Theta}\right]_\alpha\leq \frac{\|\Theta\|_{C^{\alpha}}}{\left(1-\|\Theta\|_{\infty}\right)^2}.$$
Therefore, we have obtained that
\begin{equation}\label{final:d12:calpha}
\left\|\partial_\xi^\gamma\left(\frac{d_{12}}{|\xi|}\right)\right\|_{C^{\alpha}}\leq C|\xi|^{-|\gamma|-1}\|\Theta_1-\Theta_2\|_{C^{\alpha}}.
\end{equation}

Combining \eqref{final:calphad11} and \eqref{final:d12:calpha} we finally conclude that
\[ \left\|\partial_\xi^\gamma\left(\frac{d_{1}}{|\xi|}\right)\right\|_{C^{\alpha}}\leq C|\xi|^{-|\gamma|-1} \left( \|\Lambda_1-\Lambda_2\|_{C^{1,\alpha}}+\|\Theta_1-\Theta_2\|_{C^{\alpha}} \right), \]
showing that $d_{1}/|\xi|\in S^{-1}(\alpha)$ and hence $d_{1}\in S^{0}(\alpha)$.

It is not difficult to check that we can mimic the same arguments derived to provide the final bound for $d_{1}$ for the symbol $d_{2}$. More precisely, one can derive the estimate
\[ \left\|\partial_\xi^\gamma\left(\frac{d_{2}}{|\xi|}\right)\right\|_{C^{\alpha}}\leq C|\xi|^{-|\gamma|-1} \left(\|\Theta_1-\Theta_2\|_{C^{\alpha}} \right), \]
which shows that $d_{2}/|\xi|\in S^{-1}(\alpha)$ and hence $d_{2}\in S^{0}(\alpha)$.
\end{proof}
Using the same ideas as in Corollary \ref{corob1:b2}, we can derive its counterpart for the difference operators.
\begin{coro}\label{prop:c1c2}
Under the same hypothesis of Proposition \ref{prop:d1d2}, the symbols $c_{1}, c_{2}$ extend to symbols in $S^0(\alpha)$. More precisely, we have that
\begin{align}
\left\|\partial_\xi^\gamma\left(c_{1}(\cdot,\xi)\right)\right\|_{C^{\alpha}} &\leq C|\xi|^{-|\gamma|} \left( \|\Lambda_1-\Lambda_2\|_{C^{1,\alpha}}+\|\Theta_1-\Theta_2\|_{C^{\alpha}} \right),  \label{c1:estimatesymbol} \\
\left\|\partial_\xi^\gamma\left(c_{2}(\cdot,\xi)\right)\right\|_{C^{\alpha}}&\leq C|\xi|^{-|\gamma|} \left(\|\Theta_1-\Theta_2\|_{C^{\alpha}} \right).
\end{align}
\end{coro}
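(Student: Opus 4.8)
The plan is to transfer the estimates of Proposition \ref{prop:d1d2} from the model symbols $d_1,d_2$ to the genuine symbols $c_1,c_2$ by controlling the smoothing remainders, exactly as in the passage from Proposition \ref{propb1:b2} to Corollary \ref{corob1:b2}. Writing $(\mathfrak{m}(\xi))^{-1}=\tfrac{|\xi|\sinh(|\xi|L)}{\cosh(|\xi|L)-1}$, we decompose
\[
c_1(x,\xi)=d_1(x,\xi)+\mathsf{r}_1(x,\xi),\qquad c_2(x,\xi)=d_2(x,\xi)+\mathsf{r}_2(x,\xi),
\]
where $d_1,d_2$ are given in \eqref{symbol:d1}--\eqref{symbol:d2} and
\[
\mathsf{r}_1(x,\xi)=\bigl((\mathfrak{m}(\xi))^{-1}-|\xi|\bigr)\int_0^L e^{-|\xi|s}\left(\frac{e^{i\xi\cdot\Lambda_1(x,s)}-1}{1+\Theta_1(x,s)}-\frac{e^{i\xi\cdot\Lambda_2(x,s)}-1}{1+\Theta_2(x,s)}\right)ds,
\]
\[
\mathsf{r}_2(x,\xi)=\bigl((\mathfrak{m}(\xi))^{-1}-|\xi|\bigr)\int_0^L e^{-|\xi|s}\left(\frac{\Theta_1(x,s)}{1+\Theta_1(x,s)}-\frac{\Theta_2(x,s)}{1+\Theta_2(x,s)}\right)ds.
\]
By Proposition \ref{prop:d1d2} we already know $d_1,d_2\in S^0(\alpha)$ together with the bounds $\|\partial_\xi^\gamma d_1(\cdot,\xi)\|_{C^\alpha}\leq C|\xi|^{-|\gamma|}\bigl(\|\Lambda_1-\Lambda_2\|_{C^{1,\alpha}}+\|\Theta_1-\Theta_2\|_{C^\alpha}\bigr)$ and $\|\partial_\xi^\gamma d_2(\cdot,\xi)\|_{C^\alpha}\leq C|\xi|^{-|\gamma|}\|\Theta_1-\Theta_2\|_{C^\alpha}$, so it remains only to show that the remainders $\mathsf{r}_1,\mathsf{r}_2$ belong to $S^{-\infty}(\alpha)$ with the analogous difference-type bounds.

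For this, first note that the factor $(\mathfrak{m}(\xi))^{-1}-|\xi|$ is smooth away from the origin and all of its $\xi$-derivatives decay exponentially: a direct differentiation of the hyperbolic quotient gives
\[
\bigl|\partial_\xi^\gamma\bigl((\mathfrak{m}(\xi))^{-1}-|\xi|\bigr)\bigr|\leq C_\gamma\,(1+|\xi|)\,e^{-|\xi|L},\qquad \gamma\in\mathbb{N}^2,
\]
which refines \eqref{smoothing} (near the origin one uses the convention stated after \eqref{Z:3:computation}, so there is no singularity there either). On the other hand, the integral terms multiplying this factor have the very same structure as the symbols $d_1,d_2$ treated in Proposition \ref{prop:d1d2}: applying Leibniz's rule to $\partial_\xi^\gamma$ and repeating verbatim the $L^\infty$ and $C^\alpha$ computations carried out there — using $\Lambda_i(x,0)=0$, $\nabla_x\Lambda_i(x,0)=0$ from Assumption \ref{asu1}, the smallness in Assumption \ref{asu2}, the fundamental theorem of calculus bound \eqref{estimate:plugging1}, and the substitution $u=s|\xi|$ — one obtains for each multi-index $\gamma$ a bound of the form $C\,P(|\xi|)\,\bigl(\|\Lambda_1-\Lambda_2\|_{C^{1,\alpha}}+\|\Theta_1-\Theta_2\|_{C^\alpha}\bigr)$ for the $C^\alpha$ norm of the $\gamma$-th $\xi$-derivative of the integral, with $P$ a fixed polynomial depending only on $|\gamma|$ (and on $\alpha,L$). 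Multiplying by the exponentially decaying factor above and invoking the product rule one more time, the polynomial $P(|\xi|)(1+|\xi|)$ is absorbed by $e^{-|\xi|L}$, which yields
\[
\bigl\|\partial_\xi^\gamma \mathsf{r}_1(\cdot,\xi)\bigr\|_{C^\alpha}\leq C_{N,\gamma}\,(1+|\xi|)^{-N}\bigl(\|\Lambda_1-\Lambda_2\|_{C^{1,\alpha}}+\|\Theta_1-\Theta_2\|_{C^\alpha}\bigr)
\]
for every $N\in\mathbb{N}$, and similarly $\|\partial_\xi^\gamma \mathsf{r}_2(\cdot,\xi)\|_{C^\alpha}\leq C_{N,\gamma}(1+|\xi|)^{-N}\|\Theta_1-\Theta_2\|_{C^\alpha}$. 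Hence $\mathsf{r}_1,\mathsf{r}_2\in S^{-\infty}(\alpha)\subset S^0(\alpha)$ with the claimed dependence on the difference norms.

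Adding the two contributions gives $c_\kappa=d_\kappa+\mathsf{r}_\kappa\in S^0(\alpha)$ together with the estimates \eqref{c1:estimatesymbol} and the bound for $c_2$, which completes the proof. I expect no genuine obstacle here; the only point requiring a little care is justifying that \emph{all} $\xi$-derivatives of the multiplier correction $(\mathfrak{m}(\xi))^{-1}-|\xi|$ decay exponentially (including the behaviour near $\xi=0$ dictated by the conventions of Section \ref{sec:linearized}), after which everything reduces to reusing the computations of Proposition \ref{prop:d1d2}.
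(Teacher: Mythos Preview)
Your proof is correct and follows exactly the approach the paper indicates: decompose $c_\kappa=d_\kappa+\mathsf{r}_\kappa$, invoke Proposition~\ref{prop:d1d2} for $d_\kappa$, and show the remainders are smoothing via the exponential decay \eqref{smoothing} of $(\mathfrak{m}(\xi))^{-1}-|\xi|$ combined with the same integral estimates. The paper's own proof is a one-line reference to Corollary~\ref{corob1:b2}, and you have simply written out those details faithfully.
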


Next, we provide in the same spirit as in Proposition \ref{prop:a3:a4}, the regularity result for the smoothing symbols $c_{3}, c_{4}$ given in \eqref{symbol:c3}, \eqref{symbol:c4} respectively that reads
\begin{prop}\label{prop:c3c4}
Let Assumptions \ref{asu1} and \ref{asu2} hold. Then, the symbols $c_{3},c_{4}$ given in \eqref{symbol:c3}-\eqref{symbol:c4} respectively, extend to symbols in $S^{-\infty}(\alpha)$. In particular, we have that
\begin{align}
\left\|\partial_\xi^\gamma\left(c_{3}(\cdot,\xi)\right)\right\|_{C^{\alpha}} &\leq C|\xi|^{-|\gamma|} \left( \|\Lambda_1-\Lambda_2\|_{C^{1,\alpha}}+\|\Theta_1-\Theta_2\|_{C^{\alpha}} \right),  \\
\left\|\partial_\xi^\gamma\left(c_{4}(\cdot,\xi)\right)\right\|_{C^{\alpha}}&\leq C|\xi|^{-|\gamma|} \left(\|\Theta_1-\Theta_2\|_{C^{\alpha}} \right). \label{c4:estimatesymbol}
\end{align}
\end{prop}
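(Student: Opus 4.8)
The plan is to reduce the statement for $c_3$ and $c_4$ to the estimates already obtained for the difference symbols $c_1$ and $c_2$ in Corollary \ref{prop:c1c2}, exploiting the fact that the only structural difference between $c_3,c_4$ and $c_1,c_2$ is the replacement of the factor $e^{-|\xi|s}$ by $M(\xi,s)e^{-|\xi|s}$ (in the case of $c_3$) or by $M(\xi,s)e^{-|\xi|s}$ in place of $e^{-|\xi|s}$ (in the case of $c_4$). Since $M(\xi,s)$ defined in \eqref{definition:M:smoothing} satisfies the crude bound $|M(\xi,s)|\leq Ce^{-|\xi|L}$ uniformly in $s\in[0,L]$, all the $s$-integrals that occur carry an extra exponentially decaying factor $e^{-|\xi|L}$, which is more than enough to gain arbitrary polynomial decay in $|\xi|$. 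This is exactly the mechanism used in Proposition \ref{prop:a3:a4} and Corollary \ref{corob1:b2}, so the argument here is the ``difference'' analogue of that one.

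Concretely, first I would record that for every multi-index $\gamma$ one has $|\partial_\xi^\gamma M(\xi,s)|\leq C_\gamma (1+s)^{|\gamma|} e^{-|\xi|L}$, which follows by differentiating the explicit expression \eqref{definition:M:smoothing} and noting that each $\xi$-derivative produces polynomial factors in $s$ and $|\xi|$ that are absorbed by the exponential $e^{-2|\xi|L}$ together with the uniform lower bound on $1-e^{-2|\xi|L}$ away from $\xi=0$ (at $\xi=0$ one uses the conventions fixed earlier). Then I would write $c_3$ and $c_4$ exactly as $c_1$ and $c_2$ but with the kernel $e^{-|\xi|s}$ replaced by $M(\xi,s)$; applying Leibniz' rule in $\xi$ and splitting, every term is a product of (i) a derivative of $M(\xi,s)$, hence a factor $\lesssim (1+s)^{|\gamma|}e^{-|\xi|L}$, and (ii) exactly the same differences of the $\Lambda$- and $\Theta$-dependent factors that were estimated in the proof of Proposition \ref{prop:d1d2}, whose $C^\alpha$ norms are controlled by $\|\Lambda_1-\Lambda_2\|_{C^{1,\alpha}}+\|\Theta_1-\Theta_2\|_{C^\alpha}$ (respectively $\|\Theta_1-\Theta_2\|_{C^\alpha}$) times at most polynomial growth in $s|\xi|$. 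Carrying out the $s$-integral over $[0,L]$, the polynomial factors in $s$ and $|\xi|$ are harmless because of the global factor $e^{-|\xi|L}$, so that for every $N\geq 0$
\begin{equation*}
\left\|\partial_\xi^\gamma c_3(\cdot,\xi)\right\|_{C^\alpha}\leq C_{\gamma,N}\,(1+|\xi|)^{-N}\left(\|\Lambda_1-\Lambda_2\|_{C^{1,\alpha}}+\|\Theta_1-\Theta_2\|_{C^\alpha}\right),
\end{equation*}
and analogously for $c_4$ with $\|\Theta_1-\Theta_2\|_{C^\alpha}$ on the right-hand side. In particular $c_3,c_4\in S^{-\infty}(\alpha)$ and, taking $N=|\gamma|$, the stated inequalities follow.

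The only point requiring a little care — and the closest thing to an obstacle — is the behaviour near $\xi=0$: the prefactor $(\mathfrak m(\xi))^{-1}=\tfrac{|\xi|\sinh(|\xi|L)}{\cosh(|\xi|L)-1}$ and the function $M(\xi,s)$ are \emph{a priori} defined only on $\Z^2$, and one must extend them smoothly to $\R^2$ so that Theorem \ref{solucion} and Remark \ref{observacionperiodica} apply. This is handled exactly as in Remark \ref{observacionperiodica}: one uses the smooth extension away from the origin (where all the exponential bounds above are valid) and patches it near $0$ with a cut-off, using the conventions for the $\xi\to 0$ limits fixed in Section \ref{sec:linearized}. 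With that understood, the extended symbols are smooth, satisfy the displayed estimates for all $\xi\in\R^2$, and hence lie in $S^{-\infty}(\alpha)$, completing the proof.
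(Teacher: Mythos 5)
Your proposal is correct and matches the paper's approach: the paper gives no explicit proof of Proposition~\ref{prop:c3c4}, deferring to the reasoning in Proposition~\ref{prop:a3:a4} (exponential decay of $M(\xi,s)$, then the same $\Lambda$-, $\Theta$-difference estimates as in Proposition~\ref{prop:d1d2} / Corollary~\ref{prop:c1c2}), which is exactly what you spell out. The derivative bound $|\partial_\xi^\gamma M(\xi,s)|\lesssim (1+s)^{|\gamma|}e^{-|\xi|L}$ and the remark on extending the symbol near $\xi=0$ are the right supporting observations.
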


The precise statement of the main result for the difference of the operators $\mathsf{T}_{\kappa}$ is the following
\begin{teor}\label{diferencia}
Let $\Lambda_{1},\Lambda_{2}$ satisfy Assumption \ref{asu1} and $\Theta_{1},\Theta_{2}$  satisfy Assumption \ref{asu2}. Then, for $j_{0}^{\ell}\in C^{1,\alpha}(\mathbb{T}^{2})$, $\ell=1,2,$ the operators $\textsf{T}^{d}_\kappa$, \ $\kappa=1,\ldots, 4$ given in \eqref{difference:op:13}-\eqref{difference:op:24} satisfy
 \begin{align}
  \|\textsf{T}^{d}_{1}j_{0}^{\ell}\|_{C^{\alpha}(\mathbb{T}^{2})}+ \|\textsf{T}^{d}_{3}j^{\ell}_{0}\|_{C^{\alpha}(\mathbb{T}^{2})}&\leq C (\|\Lambda_1-\Lambda_2\|_{C^{1,\alpha}(\Omega)}+\|\Theta_1-\Theta_2\|_{C^{\alpha}(\Omega)}), \label{estimate:T1T3:diff} \\
  \|\textsf{T}^{d}_{2}j_0^{\ell}\|_{C^{\alpha}(\mathbb{T}^{2})}+\|\textsf{T}^{d}_{4}j^{\ell}_{0}\|_{C^{\alpha}(\mathbb{T}^{2})}&\leq C (\|\Theta_1-\Theta_2\|_{C^{\alpha}(\Omega)}), \label{estimate:T2T4:diff}
\end{align}
 \end{teor}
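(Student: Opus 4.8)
The plan is to reduce the statement to the symbol estimates already proven in Corollary~\ref{prop:c1c2} and Proposition~\ref{prop:c3c4}, combined with the periodic pseudo-differential calculus developed in Subsection~\ref{subsec:23}. First I would recall that, as noted in the discussion preceding the theorem, each difference operator $\textsf{T}^{d}_\kappa$ is formally the adjoint of a pseudo-differential operator $\p(c_\kappa)$, where the symbols $c_\kappa$ are given explicitly in \eqref{symbol:c1}--\eqref{symbol:c4}. The point is therefore to invoke the results of the preceding propositions: Corollary~\ref{prop:c1c2} gives $c_1,c_2\in S^0(\alpha)$ with the quantitative bounds
\begin{align*}
\|\partial_\xi^\gamma c_1(\cdot,\xi)\|_{C^\alpha}&\leq C|\xi|^{-|\gamma|}\left(\|\Lambda_1-\Lambda_2\|_{C^{1,\alpha}}+\|\Theta_1-\Theta_2\|_{C^\alpha}\right),\\
\|\partial_\xi^\gamma c_2(\cdot,\xi)\|_{C^\alpha}&\leq C|\xi|^{-|\gamma|}\|\Theta_1-\Theta_2\|_{C^\alpha},
\end{align*}
and Proposition~\ref{prop:c3c4} gives the analogous (in fact stronger, since $S^{-\infty}\subset S^0$) bounds for $c_3,c_4$. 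In particular, each $c_\kappa$ has finite semi-norms $\|c_\kappa\|_{0,\alpha,l}$ for every $l\in\N$, controlled by $\|\Lambda_1-\Lambda_2\|_{C^{1,\alpha}}+\|\Theta_1-\Theta_2\|_{C^\alpha}$ (respectively by $\|\Theta_1-\Theta_2\|_{C^\alpha}$ for $\kappa=2,4$).

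Next I would deal with the periodicity and the mapping property. Since $\Lambda_i,\Theta_i$ are periodic in the $x$ variable (being built from the flow maps on $\T^2\times[0,L]$), the symbols $c_\kappa$ are periodic in $x$, so Lemma~\ref{periodico} applies and $\p(c_\kappa)^\star$ maps periodic functions to periodic functions; as in Remark~\ref{observacionperiodica} we extend the symbol (a~priori only defined on $\xi\in\Z^2$) smoothly to $\R^2$ with the obvious cut-off near the origin, without changing the operator on periodic inputs. The crucial subtlety, already flagged in the text before the theorem, is that $c_\kappa\in S^0(\alpha)$ only with $\alpha<1$ regularity in $x$, so Theorem~\ref{solucion} cannot be applied with $k=1$ to conclude $\textsf{T}^d_\kappa\in\mathcal{L}(C^\alpha)$ directly. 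Instead I would apply Theorem~\ref{boundedness:Besov} and its Corollary: $\p(c_\kappa)\colon B^{-\alpha}_{1,1}\to B^{-\alpha}_{1,1}$ is bounded (here $m=0$, $s=\alpha$), hence by duality $\p(c_\kappa)^\star\colon B^{\alpha}_{\infty,\infty}\to B^{\alpha}_{\infty,\infty}$, i.e. $\textsf{T}^d_\kappa\colon C^\alpha(\T^2)\to C^\alpha(\T^2)$, with operator norm bounded by $C\|c_\kappa\|_{0,\alpha,2|\gamma|}$ for $|\gamma|>2$. Combining this with the symbol estimates above yields exactly \eqref{estimate:T1T3:diff}--\eqref{estimate:T2T4:diff}, once we restrict the input $j_0^\ell$ to $C^{1,\alpha}(\T^2)\hookrightarrow C^\alpha(\T^2)$; the continuous embedding absorbs $\|j_0^\ell\|_{C^{1,\alpha}}$ into the generic constant (or, if one prefers to keep it explicit, the right-hand sides should carry a factor $\|j_0^\ell\|_{C^{1,\alpha}}$, which is $\leq M$ in the eventual fixed-point application and so again absorbed).

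I would spell out the reduction for $\kappa=1$ in detail and remark that $\kappa=3$ is identical (indeed easier, the kernel being smooth by the exponential decay of $M(\xi,s)$ in \eqref{definition:M:smoothing}), that $\kappa=2$ follows the same route using the second bound of Corollary~\ref{prop:c1c2}, and $\kappa=4$ from Proposition~\ref{prop:c3c4}; the remainder symbols $\mathsf{r}_1,\mathsf{r}_2$ in the splittings $c_\kappa=d_\kappa+\mathsf{r}_\kappa$ are handled exactly as in Corollary~\ref{corob1:b2} via the exponential bound \eqref{smoothing}. The main obstacle, which is really the content of the preceding two propositions rather than of this theorem, is the careful tracking of the dependence of the symbol semi-norms on $\|\Lambda_1-\Lambda_2\|$ and $\|\Theta_1-\Theta_2\|$ while differentiating the integral representations in $\xi$; once those quantitative symbol bounds are in hand, the present statement is a clean application of the Besov-space boundedness of pseudo-differential operators together with the duality $B^{-\alpha}_{1,1}{}'=B^\alpha_{\infty,\infty}\sim C^\alpha$. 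I would close by noting that no positive lower bound on any multiplier is needed here since $\mathsf{m}(\xi)^{-1}$ is incorporated into the symbols $c_\kappa$ themselves and has been shown to behave like $|\xi|$ (a Mikhlin--Hörmander multiplier of degree one) modulo a smoothing remainder.
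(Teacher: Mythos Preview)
Your proposal is correct and follows essentially the same route as the paper: invoke Corollary~\ref{prop:c1c2} and Proposition~\ref{prop:c3c4} to place $c_\kappa\in S^0(\alpha)$ with the stated quantitative control, then use the Besov boundedness (Theorem~\ref{boundedness:Besov} and its Corollary) to conclude $\textsf{T}^d_\kappa=\p(c_\kappa)^\star\in\mathcal{L}(C^\alpha)$ with operator norm dominated by the symbol semi-norms. You are in fact more careful than the paper in flagging that Theorem~\ref{solucion} as stated requires $k\geq 1$ and in explaining why one falls back on the duality $\bigl(B^{-\alpha}_{1,1}\bigr)'=B^\alpha_{\infty,\infty}\sim C^\alpha$ directly; the paper's proof is a terse two lines that tacitly use the same mechanism.
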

 
 \begin{proof}[Proof of Theorem \ref{diferencia}]
 Due to Proposition \ref{prop:d1d2}, Corollary \ref{prop:c1c2}
 and Proposition \ref{prop:c3c4} we infer that the symbols $c_{\kappa}(x,\xi)\in S^{0}(\alpha)$, for $\kappa=1,\ldots, 4$. Therefore, 
$\textsf{T}^d_{\kappa}|_{C^{1,\alpha}}$ is the restriction of a pseudo-differential operator given by a symbol in $S^0(\alpha)$. Therefore, $\textsf{T}^d_{\kappa}\in \mathcal{L}(C^{\alpha})$ and, in particular, due to the estimates \eqref{c1:estimatesymbol}- \eqref{c4:estimatesymbol} we have that the bounds \eqref{estimate:T1T3:diff} and \eqref{estimate:T2T4:diff} hold.
 \end{proof}

\subsection{H\"older estimates on the functions $\textsf{G}_\ell$}\label{subsec:5:3}
In this section, we will provide the H\"older estimates for the operators 
\[ \textsf{G}_\ell=\mathcal{T}_{0}^{-1}\tilde{g}_{\ell}, \ \ell=1,2, \] 
defined in \eqref{varios:def:operators}. Let us also recall that $\tilde{g}_{\ell}=g_{\ell}(r)-\mathcal{Z}_{\ell}(r)$ and $\mathcal{Z}_{\ell}$ are defined in \eqref{freakz1:computation} and \eqref{freakz2:computation} respectively. It is convenient to recall the precise the definitions of $\mathcal{Z}_{\ell}$, namely, 
\begin{align}
\z_{1}(r)&= -\sum_{\xi\in \Z^2}\reallywidehat{h}_1^{-}(\xi)\left(\frac{mn}{|\xi|^2}\chi_{\{\xi\neq0\}}\right)\frac{|\xi|\cosh(|\xi|L)}{\sinh(|\xi|L)}\foul +\sum_{\xi\in \Z^2}\reallywidehat{h}_1^{+}(\xi)\left(\frac{mn}{|\xi|^2}\chi_{\{\xi\neq0\}}\right)\frac{|\xi|}{\sinh(|\xi|L)}\foul\nonumber \\
&-\sum_{\xi\in \Z^2}\reallywidehat{h}_2^{-}(\xi)\left(\frac{n^2}{|\xi|^2}\chi_{\{\xi\neq0\}}-1\right)\frac{|\xi|\cosh(|\xi|L)}{\sinh(|\xi|L)}\foul +\sum_{\xi\in \Z^2}\reallywidehat{h}_2^{+}(\xi)\left(\frac{n^2}{|\xi|^2}\chi_{\{\xi\neq0\}}-1\right)\frac{|\xi|}{\sinh(|\xi|L)}\foul \label{freakz1:computation:new}, \\
\z_{2}(r)&=\sum_{\xi\in \Z^2}\reallywidehat{h}_1^{-}(\xi)\left(\frac{m^2}{|\xi|^2}\chi_{\{\xi\neq0\}}-1\right)\frac{|\xi|\cosh(|\xi|L)}{\sinh(|\xi|L)}\foul -\sum_{\xi\in \Z^2}\reallywidehat{h}_1^{+}(\xi)\left(\frac{m^2}{|\xi|^2}\chi_{\{\xi\neq0\}}-1\right)\frac{|\xi|}{\sinh(|\xi|L)}\foul \nonumber \\
&+\sum_{\xi\in \Z^2}\reallywidehat{h}_2^{-}(\xi)\left(\frac{mn}{|\xi|^2}\chi_{\{\xi\neq0\}}\right)\frac{|\xi|\cosh(|\xi|L)}{\sinh(|\xi|L)}\foul-\sum_{\xi\in \Z^2}\reallywidehat{h}_2^{+}(\xi)\left(\frac{mn}{|\xi|^2}\chi_{\{\xi\neq0\}}\right)\frac{|\xi|}{\sinh(|\xi|L)}\foul. \label{freakz2:computation:new}
\end{align}
Furthermore, the functions $h^{\pm}_\ell$ ($\ell=1,2$) are defined as 
\begin{equation} \label{boundary:f:h:new}
\begin{split}
h_2^-(x,y)&=\int_0^x f(s,y,0)\,ds-\frac{x}{2\pi}\int_0^{2\pi}f(s,y,0)\,ds,\\
h_1^{-}(x,y)&=-\frac{1}{2\pi}\int_0^{2\pi}\int_0^yf(s,t,0)\,dtds+\frac{y}{(2\pi)^2}\int_0^{2\pi}\int_0^{2\pi}f(s,t,0)\,dtds,\\
h_2^+(x,y)&=\int_0^xf(s,y,L)\,ds-\frac{x}{2\pi}\int_0^{2\pi}f(s,y,L)\,ds,\\
h_1^{+}(x,y)&=-\frac{1}{2\pi}\int_0^{2\pi}\int_0^yf(s,t,L)\,dtds+\frac{y}{(2\pi)^2}\int_0^{2\pi}\int_0^{2\pi}f(s,t,L)\,dtds.
\end{split}
\end{equation}
We need to give a precise sense of the different operators listed above. In the following Lemma we derive common estimates for the different $\mathcal{Z}_\ell$ in terms of the $C^{2,\alpha}$ norm of the boundary value $f$, in spite of the different level of regularity of the $h_\ell^{\pm}$.

\begin{lema}\label{lema:z1z2}
Let $f\in C^{2,\alpha}(\partial\Omega)$. Then, $\mathcal{Z}_1\in C^{2,\alpha}(\mathbb{T}^{2})$ and $\mathcal{Z}_{2}\in C^{2,\alpha}(\mathbb{T}^{2})$. Moreover, there exists a constant $C>0$ such that
\begin{equation}\label{estimate:Z1Z2}
\|\mathcal{Z}_{\ell}\|_{C^{2,\alpha}}\leq C\|f\|_{C^{2,\alpha}}, \quad \ell=1,2,
\end{equation}
holds.
\end{lema}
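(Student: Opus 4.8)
\textbf{Proof strategy for Lemma \ref{lema:z1z2}.} The plan is to reduce each $\mathcal{Z}_\ell$ to a finite sum of classical Fourier multiplier operators applied to the auxiliary functions $h_\ell^\pm$, and then track how the regularity of $f$ passes through the definition \eqref{boundary:f:h:new} of the $h_\ell^\pm$. The key observation is that all of the $\xi$-dependent factors appearing in \eqref{freakz1:computation:new}--\eqref{freakz2:computation:new} are Mikhlin--H\"ormander multipliers: the ratios $\frac{mn}{|\xi|^2}$, $\frac{n^2}{|\xi|^2}-1$, $\frac{m^2}{|\xi|^2}-1$ are (products of) Riesz-type symbols of order $0$, and are hence bounded on $C^{k,\alpha}(\mathbb{T}^2)$ for all $k\geq 0$ by the discussion in Subsection \ref{subsec:22}. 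The remaining factors $\frac{|\xi|\cosh(|\xi|L)}{\sinh(|\xi|L)}$ and $\frac{|\xi|}{\sinh(|\xi|L)}$ carry the convention stated in Section \ref{sec:linearized} at $\xi=0$; the first is a symbol of order $1$ (it behaves like $|\xi|$ at infinity, with smooth bounded corrections away from the origin), while the second is smoothing (it decays like $e^{-|\xi|L}$), so the second type of term is harmless and the worst contributions are those containing $\frac{|\xi|\cosh(|\xi|L)}{\sinh(|\xi|L)}$ paired with $h_\ell^{-}$.

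First I would record the elementary regularity of the boundary functions: from \eqref{boundary:f:h:new}, since $f\in C^{2,\alpha}(\partial\Omega)$, the functions $h_2^{\pm}$ are obtained from $f(\cdot,\cdot,0)$ (resp. $f(\cdot,\cdot,L)$) by one integration in $x$ plus a correction linear in $x$, so $h_2^\pm$ gains one derivative in $x$ but only keeps $C^{2,\alpha}$ regularity in $y$; it is thus convenient to note that $\partial_x h_2^\pm$ equals $\widetilde{f}(\cdot,\cdot,0)$ (resp. at $L$), which is $C^{2,\alpha}$, while $h_1^\pm$ depends only on $y$ and is $C^{3,\alpha}$ (one integration of a $C^{2,\alpha}$ function of $y$). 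The point of the asymmetry in \eqref{freakz1:computation:new}--\eqref{freakz2:computation:new} is that each $\widehat{h}_2^\pm(\xi)$ is always multiplied by a symbol carrying a factor $m$ (coming from $\frac{m^2}{|\xi|^2}$ or $\frac{mn}{|\xi|^2}$ after clearing the $|\xi|^{-1}$ from $\frac{|\xi|\cosh}{\sinh}$), so that the combination $\frac{m}{|\xi|}\,\widehat{h}_2^\pm(\xi)$ is the symbol of $\mathcal{R}_x$ applied to $h_2^\pm$, and $\mathcal{R}_x h_2^\pm = \mathcal{R}_x \partial_x^{-1}\widetilde f(\cdot,0)$ is morally $(-\Delta)^{-1/2}$ applied to a $C^{2,\alpha}$ function — bounded operations on H\"older spaces. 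Concretely I would rewrite, for instance, the first sum in $\mathcal{Z}_1$ as $-\mathcal{R}_x\mathcal{R}_y \mathsf{N}(h_1^-)$ where $\mathsf{N}$ has symbol $\frac{|\xi|\cosh(|\xi|L)}{\sinh(|\xi|L)}$; but $h_1^-$ is $C^{3,\alpha}$, so $\mathsf{N}(h_1^-)\in C^{2,\alpha}$ with the appropriate bound, and the two Riesz transforms preserve $C^{2,\alpha}$.

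The second step is to carry this bookkeeping through all eight sums in \eqref{freakz1:computation:new}--\eqref{freakz2:computation:new}: pair each $\widehat{h}_\ell^\pm$ with its multiplier, peel off one power of $|\xi|$ as a gradient hitting the extra regularity ($h_1^\pm\in C^{3,\alpha}$; $h_2^\pm$ paired with a $\mathcal{R}_x$ from the $m$-factor reduces to $\partial_x h_2^\pm = \widetilde f\in C^{2,\alpha}$ followed by a $0$-order multiplier), and bound the residual $0$-order Mikhlin--H\"ormander multiplier on $C^{2,\alpha}(\mathbb{T}^2)$ using Subsection \ref{subsec:22}. The terms with the $\frac{|\xi|}{\sinh(|\xi|L)}$ factor (those carrying $h_\ell^+$) are even easier since that symbol is smoothing. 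Summing the finitely many contributions gives $\|\mathcal{Z}_\ell\|_{C^{2,\alpha}}\leq C\|f\|_{C^{2,\alpha}}$.

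\textbf{Main obstacle.} The only genuinely delicate point is the regularity mismatch: $h_2^\pm$ is \emph{not} in $C^{3,\alpha}$, only $C^{2,\alpha}$ with an extra $x$-derivative, so one cannot naively say ``$\mathsf{N}$ is order $1$, loses one derivative, done.'' One must verify in each of the relevant sums that every occurrence of $\widehat{h}_2^\pm$ comes multiplied by a symbol homogeneous of degree $-1$ \emph{containing the factor $m$}, so that the pairing $\tfrac{m}{|\xi|}\widehat{h}_2^\pm$ is exactly $\widehat{\mathcal{R}_x h_2^\pm}=\widehat{\mathcal{R}_x\partial_x^{-1}\widetilde f(\cdot,0)}$, and then that $\mathcal{R}_x\partial_x^{-1}$ acting on a mean-free $C^{2,\alpha}$ function of the periodic variables lands back in $C^{2,\alpha}$; equivalently, one checks directly that $\frac{m}{|\xi|^2}\cdot(\text{$0$-order factor})\cdot\frac{|\xi|\cosh(|\xi|L)}{\sinh(|\xi|L)}$ is a $0$-order Mikhlin--H\"ormander symbol acting on $\partial_x h_2^\pm$. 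Once this symmetry between the $m$-factors and the $x$-integration defining $h_2^\pm$ (and the analogous $n$/$y$ statement, which does not arise here since $h_1^\pm$ involves a $y$-integration but is paired against $\frac{mn}{|\xi|^2}$ and $\frac{n^2}{|\xi|^2}-1$, both of which leave $h_1^\pm$ in $C^{3,\alpha}$ territory) is made explicit, the estimate \eqref{estimate:Z1Z2} follows and, in particular, $\mathcal{Z}_\ell\in C^{2,\alpha}(\mathbb{T}^2)$.
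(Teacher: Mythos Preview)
Your proposal is correct and follows essentially the same approach as the paper's own proof: both decompose $\mathcal{Z}_\ell$ into the four Fourier-multiplier summands, dispose of the $h_\ell^+$ terms via the exponentially decaying symbol $\tfrac{|\xi|}{\sinh(|\xi|L)}$, use $h_1^-\in C^{3,\alpha}$ directly against the order-$1$ multiplier, and for the delicate $h_2^-$ term exploit that $\tfrac{n^2}{|\xi|^2}-1=-\tfrac{m^2}{|\xi|^2}$ (resp.\ $\tfrac{mn}{|\xi|^2}$) always contributes a factor $m$, which is absorbed as $im\,\reallywidehat{h}_2^-=\reallywidehat{\partial_1 h_2^-}$ with $\partial_1 h_2^-\in C^{2,\alpha}$, leaving a zero-order Mikhlin--H\"ormander multiplier. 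The paper writes this last manipulation slightly more directly than your $\mathcal{R}_x\partial_x^{-1}$ phrasing, but the content is identical.
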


\begin{proof}[Proof of Lemma \ref{lema:z1z2}]
We have to understand the functions $\mathcal{Z}_{\ell}$, $\ell=1,2,$ as periodic distributions with Fourier coefficients given by those in expressions \eqref{freakz1:computation:new} and \eqref{freakz2:computation:new}. The operators defined by $f\mapsto \mathcal{Z}_{\ell}$ are Fourier multipliers, $\ell=1,2$. Since both functions are the same, we will just focus on $\mathcal{Z}_1$. This can clearly be written as the sum of four summands that we denote $\mathcal{Z}_{11}$, $\mathcal{Z}_{12}$, $\mathcal{Z}_{13}$ and $\mathcal{Z}_{14}$. 
The multiplier in $\mathcal{Z}_{11}$ is given by 
\begin{equation}\label{mult:1}
\left(\frac{mn}{|\xi|^2}\chi_{\{\xi\neq 0\}}\right)\frac{|\xi|\cosh(|\xi|L)}{\sinh(|\xi|L)},
\end{equation}
that corresponds with the consecutive application of the multipliers 
$\frac{m}{|\xi|}$ and $\frac{n}{|\xi|}.$
Notice that both are zero order Mikhlin-Hörmander multipliers. On the other hand, we have that
\[ \frac{|\xi|\cosh(|\xi|L)}{\sinh(|\xi|L)}=|\xi| + \mathsf{r}(\xi),\]
this is Mikhlin-Hörmander multiplier $|\xi|$ plus a smoothing multiplier $\mathsf{r}(\xi)$, similarly as in \eqref{smoothing}. Combining the previous ideas, we conclude that \eqref{mult:1} equals is Mikhlin-Hörmander multiplier of order 1 plus a smoothing operator. Theferore, taking into account that $h_{1}^{-}\in C^{3,\alpha}$ we find that
$$\left\| \mathcal{Z}_{11}\right\|_{C^{2,\alpha}}\leq C\|h_1^-\|_{C^{3,\alpha}}\leq C\|f\|_{C^{2,\alpha}}.$$
The second and fourth term in $\mathcal{Z}_1$ are straightforward since the involved multiplier 
$\frac{|\xi|}{\sinh(|\xi|L)}$ decays exponentially in $\xi$ and hence $\mathcal{Z}_{12}, \mathcal{Z}_{14}$ are
smoothing operator. Moreover, we have the estimates 
\[ \|\mathcal{Z}_{12}\|_{C^{2,\alpha}},\,\|\mathcal{Z}_{14}\|_{C^{2,\alpha}}\leq C\|f\|_{C^{2,\alpha}}.\]

The remaining term $\mathcal{Z}_{13}$ is more subtle, because $h_2^-\notin C^{3,\alpha}$. Therefore, to obtain the desired estimate, we need to modify the previous argument. We notice that although $h_2^-\notin C^{3,\alpha}$ in both variables, it has more differentiability in the $x$-variable. Thus, we have rearrange the multipliers in the definition of $\mathcal{Z}_{12}$ to obtain the correct estimate. More precisely, notice that 
\[ \partial_1h^-_2=f(x,y,0)-\frac{1}{2\pi}\int_0^{2\pi}f(s,y,0) \ ds,\]
which shows that
 \[ \|\partial_1 h_2^-\|_{C^{2,\alpha}}\lesssim \|f\|_{C^{2,\alpha}}.\]
Furthermore, recalling that $|\xi|^{2}=n^2+m^2$ we infer that
    \begin{equation}
    \begin{split}
    \reallywidehat{h}_2^{-}(\xi)|\xi|\left(\frac{n^2}{|\xi|^2}\chi_{\{\xi\neq0\}}-1\right)\frac{\cosh(|\xi|L)}{\sinh(|\xi|L)}&=-\reallywidehat{h}_2^{-}(\xi)|\xi|\frac{m^2}{|\xi|^2}\frac{\cosh(|\xi|L)}{\sinh(|\xi|L)}\chi_{\{\xi\neq0\}}-\reallywidehat{h}^-_2(0)\frac{\chi_{\{\xi=0\}}}{L}\\
    &=i\reallywidehat{\partial_1h^{-}_{2}}(\xi)\frac{m}{|\xi|}\frac{\cosh(|\xi|L)}{\sinh(|\xi|L)}\chi_{\{\xi\neq0\}}-\reallywidehat{h}^-_2(0)\frac{\chi_{\{\xi=0\}}}{L}.
    \end{split}
    \end{equation}
As a consequence, $\mathcal{Z}_{13}$ is given by the map
\begin{equation}\label{map:tercer:termino} f \to \sum_{\xi\in \Z^2}\reallywidehat{h}_2^{-}(\xi)\left(\frac{n^2}{|\xi|^2}\chi_{\{\xi\neq0\}}-1\right)\frac{|\xi|\cosh(|\xi|L)}{\sinh(|\xi|L)}\foul,
\end{equation}
and can be understood as the multiplier 
\[ \frac{im}{|\xi|}\frac{\cosh(|\xi|L)}{\sinh(|\xi|L)}\chi_{\{\xi\neq 0\}}-\frac{\chi_{\{\xi=0\}}}{L},\]
applied to the function $\partial_1 h_2^-$. It is easy to check that the multiplier 
\[ \frac{i m}{|\xi|}\frac{\cosh(|\xi|L)}{\sinh(|\xi|L)}= \frac{i m}{|\xi|}+\mathsf{r}(\xi),\]
is a Mihklin-Hörmander multiplier in $\R^{2}$ of degree $0$ plus a smoothing multiplier $\mathsf{r}(\xi)$, so the map \eqref{map:tercer:termino} is bounded from $C^{2,\alpha}(\T^2)\to C^{2,\alpha}(\T^2)$. As a consequence, we obtain the bound 
$$\|\mathcal{Z}_{13}\|_{C^{2,\alpha}}\leq C\|\partial_1 h^-_2\|_{C^{2,\alpha}}\leq C\|f\|_{C^{2,\alpha}}.$$
Combining the derived bounds for $\mathcal{Z}_{1\kappa}$ for $\kappa=1,\ldots, 4,$ shows the desired estimate for $\mathcal{Z}_{1}$ concluding the proof.
\end{proof}
Therefore, using the previous lemma we can easily show the following bounds for the functions $\textsf{G}_\ell$, $\ell=1,2$.
 \begin{coro}\label{coro:estimate:G}
Let $f\in C^{2,\alpha}(\partial\Omega), g\in C^{2,\alpha}(\partial\Omega_{-})$. Then, $\textsf{G}_\ell= 
\mathcal{T}_{0}^{-1}\left(g_{\ell}-\mathcal{Z}_{\ell}\right)\in C^{1,\alpha}(\mathbb{T}^{2})$ for $\ell=1,2$. More precisely, we have that
\begin{equation}\label{estimate:coro:estimate:G}
\norm{\textsf{G}_{\ell}}_{C^{1,\alpha}}\leq C\left( \norm{f}_{C^{2,\alpha}}+\norm{g}_{C^{2,\alpha}} \right), \quad \ell=1,2.
\end{equation}
 \end{coro}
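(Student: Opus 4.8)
The plan is to prove Corollary \ref{coro:estimate:G} as a direct consequence of Lemma \ref{lema:z1z2} together with the mapping properties of the operator $\mathcal{T}_0^{-1}$ that were established in Section \ref{sec:linearized} and Subsection \ref{subsec:51}. Recall that $\textsf{G}_\ell=\mathcal{T}_0^{-1}(g_\ell-\mathcal{Z}_\ell)$, so the estimate \eqref{estimate:coro:estimate:G} will follow once we (i) control $g_\ell-\mathcal{Z}_\ell$ in $C^{2,\alpha}(\mathbb{T}^2)$ and (ii) show that $\mathcal{T}_0^{-1}$ is bounded from $C^{2,\alpha}(\mathbb{T}^2)$ to $C^{1,\alpha}(\mathbb{T}^2)$ (indeed it even gains a derivative, mapping $C^{k,\alpha}$ to $C^{k+1,\alpha}$).

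First I would treat step (i). By hypothesis $g=(g_1,g_2)\in C^{2,\alpha}(\partial\Omega_-)$, so $\|g_\ell\|_{C^{2,\alpha}}\leq \|g\|_{C^{2,\alpha}(\partial\Omega_-)}$ for $\ell=1,2$. By Lemma \ref{lema:z1z2} we have $\mathcal{Z}_\ell\in C^{2,\alpha}(\mathbb{T}^2)$ with $\|\mathcal{Z}_\ell\|_{C^{2,\alpha}}\leq C\|f\|_{C^{2,\alpha}(\partial\Omega)}$. Hence the function $\widetilde g_\ell=g_\ell-\mathcal{Z}_\ell$ belongs to $C^{2,\alpha}(\mathbb{T}^2)$ and satisfies
\begin{equation*}
\|\widetilde g_\ell\|_{C^{2,\alpha}}\leq \|g_\ell\|_{C^{2,\alpha}}+\|\mathcal{Z}_\ell\|_{C^{2,\alpha}}\leq C\left(\|f\|_{C^{2,\alpha}(\partial\Omega)}+\|g\|_{C^{2,\alpha}(\partial\Omega_-)}\right).
\end{equation*}

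Next I would carry out step (ii), namely identify $\mathcal{T}_0^{-1}$ as a Fourier multiplier operator and determine its order. The operator $\mathcal{T}_0=\mathsf{T}$ is the Fourier multiplier with symbol $\mathfrak{m}(\xi)=\tfrac{\cosh(|\xi|L)-1}{|\xi|\sinh(|\xi|L)}$ (see \eqref{multiplier:m:lineal}), so $\mathcal{T}_0^{-1}$ is the Fourier multiplier with symbol $(\mathfrak{m}(\xi))^{-1}=\tfrac{|\xi|\sinh(|\xi|L)}{\cosh(|\xi|L)-1}$. As already noted in \eqref{smoothing}, one has $(\mathfrak{m}(\xi))^{-1}=|\xi|+\mathsf r(\xi)$ where $\mathsf r(\xi)$ decays exponentially; since $|\xi|$ is a Mikhlin–Hörmander multiplier of degree $1$ and $\mathsf r(\xi)$ is smoothing, the machinery of Subsection \ref{subsec:22}–\ref{subsec:23} (in particular the discussion of Mikhlin–Hörmander multipliers, via Remark \ref{observacionperiodica}) shows that $\mathcal{T}_0^{-1}$ is bounded $C^{k+1,\alpha}(\mathbb{T}^2)\to C^{k,\alpha}(\mathbb{T}^2)$ for every $k\geq 0$. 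Applying this with $k=1$ gives
\begin{equation*}
\|\textsf{G}_\ell\|_{C^{1,\alpha}}=\|\mathcal{T}_0^{-1}\widetilde g_\ell\|_{C^{1,\alpha}}\leq C\|\widetilde g_\ell\|_{C^{2,\alpha}}\leq C\left(\|f\|_{C^{2,\alpha}(\partial\Omega)}+\|g\|_{C^{2,\alpha}(\partial\Omega_-)}\right),
\end{equation*}
which is precisely \eqref{estimate:coro:estimate:G}.

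The only genuinely delicate point is making sure the bookkeeping of regularity is right: $\mathcal{T}_0^{-1}$ raises the differential order by one, so to land in $C^{1,\alpha}$ we genuinely need $\widetilde g_\ell\in C^{2,\alpha}$, and this is exactly the regularity that Lemma \ref{lema:z1z2} was designed to provide despite the components $h^\pm_\ell$ not all lying in $C^{3,\alpha}$. Everything else is the triangle inequality plus boundedness of a Mikhlin–Hörmander-type multiplier on periodic Hölder spaces, which has already been set up. I would therefore keep the write-up short, essentially chaining the two displayed estimates above and citing Lemma \ref{lema:z1z2} and the multiplier boundedness from Subsection \ref{subsec:23}.
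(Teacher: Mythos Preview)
Your proof is correct and follows essentially the same route as the paper: bound $\widetilde g_\ell$ in $C^{2,\alpha}$ via Lemma \ref{lema:z1z2}, then use that $(\mathfrak m(\xi))^{-1}=|\xi|+\mathsf r(\xi)$ is a first-order Mikhlin--H\"ormander multiplier plus a smoothing remainder to conclude $\mathcal{T}_0^{-1}:C^{2,\alpha}\to C^{1,\alpha}$. One small wording slip to fix: your parenthetical ``indeed it even gains a derivative, mapping $C^{k,\alpha}$ to $C^{k+1,\alpha}$'' and the phrase ``raises the differential order by one'' are backwards---$\mathcal{T}_0^{-1}$ is an order-$1$ operator and \emph{loses} a derivative, exactly as in your (correct) displayed mapping $C^{k+1,\alpha}\to C^{k,\alpha}$.
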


 \begin{proof}
 The proof follows easily by combining estimate \eqref{estimate:Z1Z2} and recalling that
 \[ \widehat{\mathcal{T}_{0}^{-1} f}(\xi)=\frac{|\xi|\sinh(|\xi|L)}{\cosh(|\xi|L)-1} \widehat{f}(\xi).\]
Indeed, since
\[ \frac{|\xi|\sinh(|\xi|L)}{\cosh(|\xi|L)-1}=|\xi|+\mathsf{r}(\xi),\]
is a Mikhlin-Hörmander multiplier in $\R^{2}$ of degree $1$ plus a smoothing multiplier $\mathsf{r}(\xi)$ we conclude that
\begin{align}
\norm{\textsf{G}_{\ell}}_{C^{1,\alpha}}&=\norm{\mathcal{T}_{0}^{-1}\left(g_{\ell}-\mathcal{Z}_{\ell}\right)}_{C^{1,\alpha}} \leq C \norm{g_{\ell}-\mathcal{Z}_{\ell}}_{C^{2,\alpha}} \leq C \left( \norm{g}_{C^{2,\alpha}}+\norm{f}_{C^{2,\alpha}}\right), \quad \ell=1,2.
\end{align}

 \end{proof}

\subsection{H\"older estimates for the operator $\mathsf{H}_{\ell}$} \label{subsec:5:4}
In this subsection, we will derive H\"older estimates for the operator $\mathsf{H}_{\ell}$, $\ell=1,2$ given by 
\begin{align}
\mathsf{H}_{1}&=\mathcal{B}_y \left(b_1\partial_1j^3_0+b_2\partial_2j^3_0-\partial_{\ell}b_{3}j^\ell_0-b_3\partial_1j_0^1-b_3\partial_2j_0^2\right), \label{new:H1} \\
\mathsf{H}_{2}&=\mathcal{B}_x \left(b_1\partial_1j^3_0+b_2\partial_2j^3_0-\partial_{\ell}b_{3}j^\ell_0-b_3\partial_1j_0^1-b_3\partial_2j_0^2\right).\label{new:H2}
\end{align}
We recall that $\mathcal{B}_x, \mathcal{B}_y$ are defined via the multipliers 
\[ \widehat{\mathcal{B}_{x}f}(\xi)=-i\frac{m}{|\xi|^{2}}\widehat{f}(\xi), \ \widehat{\mathcal{B}_{y}f}(\xi)=-i\frac{n}{|\xi|^{2}}\widehat{f}(\xi).\]
\begin{prop}\label{Prop:H:estimadas:ori} 
Let $b\in C^{2,\alpha}(\Omega)$ and $j_0\in C^{1,\alpha}(\partial\Omega_{-})$. Then, $\textsf{H}_{\ell}\in C^{1,\alpha}(\mathbb{T}^{2})$ for $\ell=1,2$. More precisely, we have that
\begin{equation}\label{estimada:Horiginal}
\norm{\textsf{H}_{\ell}}_{C^{1,\alpha}}\leq C\left( \norm{b}_{C^{2,\alpha}}\norm{j_0}_{C^{1,\alpha}} \right), \quad \ell=1,2.
\end{equation}
Furthermore, we have that
\begin{equation}\label{diff:estim:H:orig}
\norm{\textsf{H}_{\ell}[b_{1}]-{\textsf{H}_{\ell}[b_{2}]}}_{C^{\alpha}}\leq C\left( \norm{b_{1}-b_{2}}_{C^{1,\alpha}}\norm{j_0}_{C^{\alpha}} \right), \quad \ell=1,2.
\end{equation}
\end{prop}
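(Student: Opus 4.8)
\textbf{Proof strategy for Proposition \ref{Prop:H:estimadas:ori}.}
The plan is to reduce everything to two basic facts: (i) the multipliers $\mathcal{B}_x,\mathcal{B}_y$ are Mikhlin--H\"ormander multipliers of degree $-1$, and hence define bounded operators $C^{k,\alpha}(\T^2)\to C^{k+1,\alpha}(\T^2)$ for every $k\ge 0$ (this follows from the discussion of Mikhlin--H\"ormander multipliers in Subsection \ref{subsec:22}); and (ii) the H\"older spaces $C^{1,\alpha}(\T^2)$ and $C^{\alpha}(\T^2)$ are Banach algebras, so products of such functions are controlled by products of norms, together with the elementary inequality \eqref{inequality:calpha} when we need to be careful about which factor carries the derivative. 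Then the bound \eqref{estimada:Horiginal} is obtained by estimating the argument of $\mathcal{B}_y$ (resp. $\mathcal{B}_x$) in $C^{\alpha}$ and gaining one derivative.

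First I would write $\Phi_{\ell}:=b_1\partial_1 j_0^3+b_2\partial_2 j_0^3-\partial_\ell b_3\, j_0^\ell - b_3\partial_1 j_0^1 - b_3\partial_2 j_0^2$ for the common argument of both operators (the index $\ell$ inside is summed), so that $\mathsf{H}_1=\mathcal{B}_y\Phi$ and $\mathsf{H}_2=\mathcal{B}_x\Phi$ with $\Phi$ this single expression. Recalling $j_0^3=\partial_1 g_2-\partial_2 g_1$ (cf. \eqref{alm:new3:final}) we have $j_0^3\in C^{1,\alpha}(\T^2)$ with $\|j_0^3\|_{C^{1,\alpha}}\le C\|g\|_{C^{2,\alpha}}$; however, to keep the statement in the form claimed it is cleaner simply to treat $j_0=(j_0^1,j_0^2,j_0^3)\in C^{1,\alpha}$ as given, so that $\partial_1 j_0^3,\partial_2 j_0^3\in C^{\alpha}$. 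Since $b\in C^{2,\alpha}(\Omega)$, its trace and that of $\partial_\ell b_3$ on $\partial\Omega_-=\T^2\times\{0\}$ lie in $C^{2,\alpha}(\T^2)$ and $C^{1,\alpha}(\T^2)$ respectively; hence every term of $\Phi$ is a product of a factor in $C^{1,\alpha}(\T^2)$ (a component of $b$ or of $Db_3$) with a factor in $C^{\alpha}(\T^2)$ (a first derivative of a component of $j_0$), so by the algebra property
\[
\|\Phi\|_{C^{\alpha}(\T^2)}\le C\,\|b\|_{C^{2,\alpha}(\Omega)}\,\|j_0\|_{C^{1,\alpha}(\partial\Omega_-)}.
\]
Applying (i) with $k=\alpha$ (the index-$0$ case, gaining one derivative) gives $\mathsf{H}_\ell=\mathcal{B}_\bullet\Phi\in C^{1,\alpha}(\T^2)$ with $\|\mathsf{H}_\ell\|_{C^{1,\alpha}}\le C\|\Phi\|_{C^{\alpha}}\le C\|b\|_{C^{2,\alpha}}\|j_0\|_{C^{1,\alpha}}$, which is \eqref{estimada:Horiginal}.

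For the difference estimate \eqref{diff:estim:H:orig} the same scheme applies: by linearity of $\mathcal{B}_x,\mathcal{B}_y$ we have $\mathsf{H}_\ell[b^1]-\mathsf{H}_\ell[b^2]=\mathcal{B}_\bullet(\Phi[b^1]-\Phi[b^2])$, and $\Phi[b^1]-\Phi[b^2]$ is a sum of terms in each of which exactly one factor is a component of $b^1-b^2$ (or of $D(b^1-b^2)$) and the remaining factors are components of $b^2$, of $Db^2$, or of first derivatives of $j_0$. Estimating each such term in $C^{\alpha}(\T^2)$ with \eqref{inequality:calpha}, putting the $C^{\alpha}$-norm (which suffices here) on the $b^1-b^2$ factor and $C^{1,\alpha}$-norms on the rest, yields $\|\Phi[b^1]-\Phi[b^2]\|_{C^{\alpha}}\le C\|b^1-b^2\|_{C^{1,\alpha}}\|j_0\|_{C^{\alpha}}$ after absorbing the (bounded) $C^{1,\alpha}$-norm of $b^2$ into the constant; note one must use $C^{1,\alpha}$ on $b^1-b^2$ rather than $C^\alpha$ precisely because of the term $\partial_\ell(b^1_3-b^2_3)\,j_0^\ell$. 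Applying the degree $-1$ multiplier once more gains a derivative and gives \eqref{diff:estim:H:orig}. There is essentially no genuine obstacle here; the only point requiring a little care is bookkeeping in $\Phi$ to ensure every product is split as ($C^{1,\alpha}$)$\times$($C^{\alpha}$) so that the algebra/product estimates close, and to verify that the multiplier symbols $-im/|\xi|^2$, $-in/|\xi|^2$ indeed satisfy the Mikhlin--H\"ormander bounds of degree $-1$ away from the origin (with the convention that they vanish at $\xi=0$), so that Remark \ref{observacionperiodica} applies on $\T^2$.
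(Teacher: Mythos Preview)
Your proposal is correct and follows essentially the same approach as the paper: recognise that $\mathcal{B}_x,\mathcal{B}_y$ are Mikhlin--H\"ormander multipliers of degree $-1$ (hence gain one derivative in H\"older scales), then estimate the argument $\Phi$ in $C^{\alpha}$ using the algebra property of H\"older spaces, and use linearity for the difference estimate. The paper's proof is a one-line version of exactly this argument; your write-up simply fills in the bookkeeping (trace regularity of $b$, splitting each product as $C^{1,\alpha}\times C^{\alpha}$, and invoking Remark~\ref{observacionperiodica} for the periodic setting) that the paper leaves implicit.
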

\begin{proof}
The proof follows easily by noticing that the multipliers associated to $\mathcal{B}_x, \mathcal{B}_y$ are Mikhlin-Hörmander multipliers in $\R^{2}$ of degree $-1$. Indeed, using the previous assertion we readily check that
\begin{align}
\norm{\textsf{H}_{\ell}}_{C^{1,\alpha}} &\leq C\norm{\left(b_1\partial_1j^3_0+b_2\partial_2j^3_0-\partial_{\ell}b_{3}j^\ell_0-b_3\partial_1j_0^1-b_3\partial_2j_0^2\right)}_{C^{\alpha}}\nonumber \\
&\leq C \norm{b}_{C^{2,\alpha}}\norm{j_0}_{C^{1,\alpha}}, \quad \ell=1,2.
\end{align}
The difference estimate \eqref{diff:estim:H:orig} follows directly by the linearity of the operator.
\end{proof}
\subsection{H\"older estimates for the operator $\mathsf{S}_{\ell}$}\label{subsec:5:5}
In this subsection, we established the estimates for the operator  $\mathsf{S}_{\ell}$, is defined as 
$\mathsf{S}_{\ell}= \mathcal{T}_0^{-1}\mathcal{S}_{\ell}$,  $\ell=1,2$ where
\begin{align}
\mathcal{S}_{1}&=\mathcal{R}_x\mathcal{R}_y\mathcal{A}(\delta j^1)+\left(\mathcal{R}^{2}_y-\text{id}\right)\mathcal{A}(\delta j^2), \label{new:S1} \\
\mathcal{S}_{2}&=\left(\mathcal{R}^{2}_x-\text{id}\right)\mathcal{A}(\delta j^1)+\mathcal{R}_x\mathcal{R}_y\mathcal{A}(\delta j^2). \label{new:S2}
\end{align}
Therefore, in order to give a precise meaning and derive the H\"older bounds for the operator $\mathsf{S}_{\ell}$, we first need to analyze the operator $\mathcal{A}$ introduced in \eqref{operator:A:general}. Although the expression \eqref{operator:A:general} seems complicated, the regularity of the operator can be understood in simpler way.  For a generic function $f\in C^{1,\alpha}(\Omega)$, the function  $\mathcal{A}(f)$ equals $\partial_{3}g(r)$ where $g\in C^{3,\alpha}(\Omega)$ is the unique solution to 
$$\left\{\begin{array}{ll}
  \Delta g=f,   & \text{in }\Omega, \\
  g=0,   & \text{on }\partial\Omega. 
\end{array}\right. $$
More precisely, we can show the following regularity result.
\begin{prop}\label{prop:estimate:S}
Let $\delta j^{\ell}\in C^{1,\alpha}(\Omega)$, for $\ell=1,2$. Then, $\mathsf{S}_{\ell}= \mathcal{T}_0^{-1}\mathcal{S}_{\ell}\in C^{1,\alpha}(\Omega)$. Moreover, the bound
\begin{equation}\label{estimate:S:PropS}
\norm{\mathsf{S}_{\ell}}_{C^{1,\alpha}(\Omega)}\leq C \left( \norm{\delta j^{1}}_{C^{1,\alpha}}+\norm{\delta j^{2}}_{C^{1,\alpha}}\right), \quad \ell=1,2,
\end{equation}
holds true.
\end{prop}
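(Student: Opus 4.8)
The plan is to reduce the estimate for $\mathsf{S}_{\ell}=\mathcal{T}_0^{-1}\mathcal{S}_{\ell}$ to a chain of elementary mapping properties, using the observation made just before the statement: for $f\in C^{1,\alpha}(\Omega)$, the function $\mathcal{A}(f)$ coincides with $\partial_3 g|_{\partial\Omega_-}$ (more precisely the trace at $z=0$ of $\partial_3 g$), where $g\in C^{3,\alpha}(\Omega)$ solves the Dirichlet problem $\Delta g=f$ in $\Omega$, $g=0$ on $\partial\Omega$. First I would make this identification rigorous by checking it on Fourier modes: with the Green function $\mathfrak{G}(\xi,z;z_0)$ of \eqref{green:function:laplace}, the solution is $\widehat{g}(\xi,z)=\int_0^L\mathfrak{G}(\xi,z;z_0)\widehat{f}(\xi,z_0)\,dz_0$, and differentiating in $z$ and evaluating at $z=0$ reproduces exactly the multiplier $\partial_3\mathfrak{G}(\xi,0;z_0)$ appearing in \eqref{operator:A:general}. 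Then classical Schauder theory for the Dirichlet problem (stated in the commented-out block and used already in Proposition \ref{wellposedness}) gives $\|g\|_{C^{3,\alpha}(\Omega)}\leq C\|f\|_{C^{1,\alpha}(\Omega)}$, hence
\[
\|\mathcal{A}(f)\|_{C^{2,\alpha}(\mathbb{T}^2)}\leq \|\partial_3 g\|_{C^{2,\alpha}(\Omega)}\leq C\|f\|_{C^{1,\alpha}(\Omega)}.
\]

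Next I would assemble the bound for $\mathcal{S}_{\ell}$ from \eqref{new:S1}--\eqref{new:S2}. The Riesz transforms $\mathcal{R}_x,\mathcal{R}_y$ (and hence the compositions $\mathcal{R}_x\mathcal{R}_y$, $\mathcal{R}_x^2-\mathrm{id}$, $\mathcal{R}_y^2-\mathrm{id}$) are bounded on $C^{2,\alpha}(\mathbb{T}^2)$ by the Mikhlin--Hörmander discussion in Subsection \ref{subsec:22}. Therefore, applying $\mathcal{A}$ to $\delta j^1,\delta j^2\in C^{1,\alpha}(\Omega)$ and then these zero-order operators,
\[
\|\mathcal{S}_{\ell}\|_{C^{2,\alpha}(\mathbb{T}^2)}\leq C\bigl(\|\mathcal{A}(\delta j^1)\|_{C^{2,\alpha}}+\|\mathcal{A}(\delta j^2)\|_{C^{2,\alpha}}\bigr)\leq C\bigl(\|\delta j^1\|_{C^{1,\alpha}}+\|\delta j^2\|_{C^{1,\alpha}}\bigr).
\]
Finally $\mathsf{S}_\ell=\mathcal{T}_0^{-1}\mathcal{S}_\ell$, and as recorded in the proof of Corollary \ref{coro:estimate:G} the inverse multiplier $(\mathfrak{m}(\xi))^{-1}=\tfrac{|\xi|\sinh(|\xi|L)}{\cosh(|\xi|L)-1}=|\xi|+\mathsf{r}(\xi)$ is a Mikhlin--Hörmander multiplier of degree $1$ plus a smoothing term, so $\mathcal{T}_0^{-1}:C^{2,\alpha}(\mathbb{T}^2)\to C^{1,\alpha}(\mathbb{T}^2)$ is bounded. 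Chaining the three estimates yields \eqref{estimate:S:PropS}. (Strictly speaking the output of $\mathcal{A}$ and of $\mathsf{S}_\ell$ is a function of $r\in\mathbb{T}^2$ only, but since everything is $z$-independent the $C^{1,\alpha}(\Omega)$ norm in the statement agrees with the $C^{1,\alpha}(\mathbb{T}^2)$ norm, which I would remark on.)

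The main obstacle is the justification of the identity $\mathcal{A}(f)=(\partial_3 g)|_{z=0}$ at the level of regularity we have, i.e. making the Fourier-series manipulations in \eqref{operatorA:newvariable}--\eqref{kernel:original:G} and the differentiation under the sum legitimate when $f$ is only $C^{1,\alpha}$. The clean way around this is not to differentiate the series termwise but to invoke the already-established Schauder solvability: define $g$ as the genuine $C^{3,\alpha}$ solution of the Dirichlet problem, note its Fourier coefficients solve the ODE $-\partial_z^2\widehat g+|\xi|^2\widehat g=\widehat f$ with zero boundary data, hence are given by the Green function, and then observe that the series defining $\mathcal{A}(f)$ is precisely the Fourier expansion of $(\partial_3 g)|_{z=0}\in C^{2,\alpha}(\mathbb{T}^2)$, which converges in $C^{2,\alpha}$ by Lemma \ref{decay}. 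A secondary technical point is that in \eqref{new:S1}--\eqref{new:S2} the operator $\mathcal{A}$ is composed with Riesz transforms that act on the $\mathbb{T}^2$ variable only, while $\mathcal{A}(\delta j^\ell)$ is a priori a function on $\Omega$; again the $z$-independence of the trace removes the ambiguity, and one just works on $\mathbb{T}^2$ throughout.
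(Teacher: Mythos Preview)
Your proposal is correct and follows essentially the same approach as the paper: both use Schauder theory for the Dirichlet problem to bound $\|\mathcal{A}(\delta j^\ell)\|_{C^{2,\alpha}}\leq C\|\delta j^\ell\|_{C^{1,\alpha}}$, the boundedness of the Riesz transforms on $C^{2,\alpha}$, and the fact that $\mathcal{T}_0^{-1}$ is an order-$1$ Mikhlin--H\"ormander multiplier plus a smoothing term. Your treatment is somewhat more careful than the paper's in rigorously justifying the identification $\mathcal{A}(f)=(\partial_3 g)|_{z=0}$ and in noting the $z$-independence of the output, but these are elaborations of the same argument rather than a different route.
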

\begin{proof}
First, similarly as in the proof of Corollary \ref{coro:estimate:G}, we notice that the multiplier associated with $\mathcal{T}_0^{-1}$ 
is a Mikhlin-Hörmander multiplier in $\R^{2}$ of degree $1$ plus a smoothing multiplier.  Furthermore,  $\mathcal{R}_{x},\mathcal{R}_{y}$ are zero order operators with Mikhlin-Hörmander multipliers in $\R^{2}$ of degree $0$.
Thus,
\begin{equation}
\norm{\mathsf{S}_{\ell}}_{C^{1,\alpha}(\Omega)}\leq C\norm{\mathcal{S}_{\ell}}_{C^{2,\alpha}(\Omega)}\leq C \left(\norm{\mathcal{A}(\delta j^{1})}_{C^{2,\alpha}}+\norm{\mathcal{A}(\delta j^{2})}_{C^{2,\alpha}}\right), \ \ell=1,2. 
\end{equation}
To conclude, we notice that the operator $\mathcal{A}$ understood as above coincides with the Dirichlet to Neumann operator and therefore
\[ \norm{\mathcal{A}(\delta j^{\ell})}_{C^{2,\alpha}(\Omega)}\leq C \norm{\delta j^{\ell}}_{C^{1,\alpha}(\Omega)}, \ \ell=1,2.\]
Combining both bounds we show the desired estimate. 
\end{proof}
A straightforward consequence of Proposition \ref{prop:estimate:S} combined with the H\"older estimates for the transport problem collected in Corollary \ref{direct:coro:proptransport} reads as follows
\begin{coro}\label{teoremon2}
There exists $M_0$ small enough so that for every $b\in C^{2,\alpha}(\Omega;\R^3)$ with $\|b\|_{C^{2,\alpha}}\leq M<M_0$, there exists a constat $C=C(\alpha,L)$ such that 
$$\norm{\mathsf{S}_{\ell}}_{C^{1,\alpha}(\Omega)}\leq CM \|j_0\|_{C^{1,\alpha}}.$$
 Moreover, we have that 
$$\norm{\mathsf{S}_{\ell}[b_{1}]-\mathsf{S}_{\ell}[b_{2}]}_{C^{\alpha}}\leq C\|b_1-b_2\|_{C^{1,\alpha}}.$$
\end{coro}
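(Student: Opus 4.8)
The plan is to obtain Corollary \ref{teoremon2} with essentially no new work, simply by composing Proposition \ref{prop:estimate:S} with the transport estimates of Corollary \ref{direct:coro:proptransport}. First I would fix $M_0<1$ small enough for both Proposition \ref{existenciatransporte} and Corollary \ref{direct:coro:proptransport} to apply, and take $b$ with $\|b\|_{C^{2,\alpha}}\leq M<M_0$. Proposition \ref{prop:estimate:S} bounds $\|\mathsf{S}_{\ell}\|_{C^{1,\alpha}(\Omega)}$ by $C\big(\|\delta j^1\|_{C^{1,\alpha}}+\|\delta j^2\|_{C^{1,\alpha}}\big)$, and feeding in the bound $\|\delta j^{\ell}\|_{C^{1,\alpha}}\leq C\|b\|_{C^{2,\alpha}}\|j_0\|_{C^{1,\alpha}}$ from \eqref{perturbacion1} gives at once $\|\mathsf{S}_{\ell}\|_{C^{1,\alpha}(\Omega)}\leq CM\|j_0\|_{C^{1,\alpha}}$; since the constants in Corollary \ref{direct:coro:proptransport} depend only on $\alpha$ and $L$, the resulting $C=C(\alpha,L)$ is uniform in $M\in(0,M_0)$.

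For the difference estimate I would exploit that, by \eqref{new:S1}--\eqref{new:S2} and $\mathsf{S}_{\ell}=\mathcal{T}_0^{-1}\mathcal{S}_{\ell}$, the map $b\mapsto\mathsf{S}_{\ell}[b]$ is a \emph{fixed} linear combination — built from the $b$-independent operators $\mathcal{T}_0^{-1}$, $\mathcal{R}_x$, $\mathcal{R}_y$ and $\mathcal{A}$ — of the quantities $\mathcal{A}(\delta j^1)$ and $\mathcal{A}(\delta j^2)$. Hence $\mathsf{S}_{\ell}[b^1]-\mathsf{S}_{\ell}[b^2]$ is the same combination applied to $\mathcal{A}\big(\delta j^m[b^1]-\delta j^m[b^2]\big)$, $m=1,2$. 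Using that $\mathcal{T}_0^{-1}$ is, modulo a smoothing remainder, a degree-$1$ Mikhlin--H\"ormander multiplier, that $\mathcal{R}_x,\mathcal{R}_y$ are degree $0$, and that $\mathcal{A}$ — being the Dirichlet-to-Neumann operator described before Proposition \ref{prop:estimate:S} — maps $C^{\alpha}(\Omega)$ boundedly into $C^{1,\alpha}$, one reduces matters to $\|\mathsf{S}_{\ell}[b^1]-\mathsf{S}_{\ell}[b^2]\|_{C^{\alpha}}\leq C\sum_{m=1}^{2}\|\delta j^m[b^1]-\delta j^m[b^2]\|_{C^{\alpha}}$. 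I would then invoke estimate \eqref{perturbation2} of Corollary \ref{direct:coro:proptransport} with common initial datum $j_0^1=j_0^2=j_0$: the term $\|j_0^1-j_0^2\|_{C^{\alpha}}$ drops out, and combining it with $\|\delta j^1\|_{C^{1,\alpha}}\leq CM\|j_0\|_{C^{1,\alpha}}$ and $\|b^1-b^2\|_{C^{\alpha}}\leq\|b^1-b^2\|_{C^{1,\alpha}}$ yields $\|\delta j^m[b^1]-\delta j^m[b^2]\|_{C^{\alpha}}\leq C\|b^1-b^2\|_{C^{1,\alpha}}$, with $C$ now also absorbing $M_0$ and $\|j_0\|_{C^{1,\alpha}}$.

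There is no genuine obstacle here; the only subtlety is bookkeeping. One must check that the apparent loss of one derivative between the $C^{1,\alpha}$ bound for $\mathsf{S}_{\ell}$ itself and the merely $C^{\alpha}$ bound for its $b$-differences is the correct and unavoidable statement — it comes entirely from the fact that the transport difference estimate \eqref{perturbation2} is only available in the lower-order norm — and that all constants remain uniform as $M\to0$. I would also record, for later use in Section \ref{sec:6}, that the same computation in fact produces the sharper factor $CM\|j_0\|_{C^{1,\alpha}}\|b^1-b^2\|_{C^{1,\alpha}}$, which is precisely what makes $\mathsf{S}_{\ell}$ a contraction in the perturbative regime.
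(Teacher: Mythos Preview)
Your proposal is correct and follows essentially the same approach as the paper, which in fact gives no explicit proof at all and merely flags Corollary~\ref{teoremon2} as ``a straightforward consequence of Proposition~\ref{prop:estimate:S} combined with the H\"older estimates for the transport problem collected in Corollary~\ref{direct:coro:proptransport}.'' Your writeup is a faithful elaboration of precisely that combination, including the observation that the difference estimate must be taken in the lower-order $C^{\alpha}$ norm because that is all \eqref{perturbation2} provides.
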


To conclude this subsection, let us first show a result regarding H\"older estimates of the functions $\Lambda$ and $\Theta$. 
\begin{prop}\label{estimations:Lambda:Theta}
Let $M_0$ be sufficiently small and let $b\in C^{2,\alpha}(\Omega;\R^3)$ with $\|b\|_{C^{2,\alpha}}\leq M<M_0$. Moreover,  define the functions
\begin{equation}\label{def:lambda:theta}
\Lambda(r,z)=\Psi_z(r,z)-r\qquad \text{and} \qquad \Theta(r,z)=|J_g(r,z)|-1.
\end{equation}
Here, $\Psi_{z}$ is the unique solution of the system of ODEs \eqref{ode} with $v=b$ and $J_g$ denotes the  determinant of the Jacobian of $\Psi_z$. The we have that
$\Lambda \in C^{2,\alpha}(\Omega),  \Theta\in C^{1,\alpha}(\Omega)$ and
\begin{equation}\label{estimate:1:lambda:theta}
\|\Lambda\|_{C^{2,\alpha}}\leq CM, \quad \|\Theta\|_{C^{1,\alpha}}\leq CM.
\end{equation}
Moreover, for $b_{1},b_{2}\in C^{2,\alpha}(\Omega;\R^3)$ with $\|b_{1},b_{2}\|_{C^{2,\alpha}}\leq M<M_0$ we have that
\begin{align}
 \|\Lambda[b_1]-\Lambda[b_2]\|_{C^{1,\alpha}}&\leq C\|b_1-b_2\|_{C^{1,\alpha}}, \label{estimate:lambda:diff} \\
 \|\Theta[b_1]-\Theta[b_2]\|_{C^{\alpha}}&\leq C\|b_1-b_2\|_{C^{\alpha}}. \label{estimate:theta:diff}
\end{align}
\end{prop}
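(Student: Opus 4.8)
The plan is to derive all four bounds directly from the ODE/PDE characterizations of $\Lambda$ and $\Theta$, leaning on the regularity theory for the flow map already established in Proposition~\ref{prop:regularidad:odes} applied to the vector field $v=(b_1/(1+b_3),b_2/(1+b_3))$ (which lies in $C^{2,\alpha}(\Omega)$ with norm comparable to $M$ once $M_0<1$). First I would record that $\Psi_z$ solves \eqref{car}, so $\Lambda(r,z)=\Psi_z(r,z)-r$ satisfies $\partial_z\Lambda = v(\Psi_z,z) = v(r+\Lambda,z)$ with $\Lambda(r,0)=0$. Since $v\in C^{2,\alpha}$ and $\Psi_z\in C^{2,\alpha}$ with $C^{2,\alpha}$ bounds depending only on $\|v\|_{2,\alpha}$ and $L$ (Proposition~\ref{prop:regularidad:odes}), one gets $\Lambda\in C^{2,\alpha}(\Omega)$ immediately; the smallness $\|\Lambda\|_{C^{2,\alpha}}\leq CM$ follows because $\|\partial_z^k\Psi_z - \partial_z^k(\mathrm{id})\|$ is controlled by integrating the ODE and its derivatives in $z$, each integrand carrying a factor $\|v\|\lesssim M$, and Gronwall produces a constant $C=C(\alpha,L)$ (the dependence on $\|b\|_{C^{2,\alpha}}$ is harmless since it is an increasing function bounded by $M_0$, exactly as in the proof of Proposition~\ref{existenciatransporte}). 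For $\Theta=|J_g|-1$ one uses that $J_g=D_r\Psi_z$ solves the linear matrix ODE $\partial_z(D_r\Psi_z) = (Dv)(\Psi_z,z)\, D_r\Psi_z$ with $D_r\Psi_0=\mathrm{Id}$; hence $|J_g|$ satisfies a scalar ODE (via the Liouville/Jacobi formula, $\partial_z\log|J_g| = \operatorname{tr}(Dv(\Psi_z,z))$) whose right-hand side is $C^{1,\alpha}$ of size $\lesssim M$, so $|J_g|=1+\Theta$ with $\Theta\in C^{1,\alpha}(\Omega)$ and $\|\Theta\|_{C^{1,\alpha}}\leq CM$, again by Gronwall. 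Note the loss of one derivative relative to $\Lambda$ is intrinsic: $\Theta$ involves $D_r\Psi_z$ and $Dv$, which is only $C^{1,\alpha}$.

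For the difference estimates I would set up the standard ``subtract two copies of the equation'' argument. Writing $v^i = (b^i_1/(1+b^i_3),b^i_2/(1+b^i_3))$, one checks $\|v^1-v^2\|_{C^{1,\alpha}}\leq C\|b^1-b^2\|_{C^{1,\alpha}}$ (quotient rule, using $\|b^i_3\|_\infty<1$), and similarly $\|v^1-v^2\|_{C^\alpha}\leq C\|b^1-b^2\|_{C^\alpha}$. Then $\widetilde\Lambda := \Lambda[b^1]-\Lambda[b^2]$ solves
\[
\partial_z\widetilde\Lambda = v^1(r+\Lambda[b^1],z) - v^2(r+\Lambda[b^2],z), \qquad \widetilde\Lambda(r,0)=0,
\]
and the right-hand side is bounded by $\|v^1-v^2\|_{C^{1,\alpha}} + \|Dv^1\|_{C^{1,\alpha}}\,|\widetilde\Lambda|$ plus lower-order terms; integrating in $z$ and applying Gronwall yields $\|\widetilde\Lambda\|_{C^{1,\alpha}}\leq C\|b^1-b^2\|_{C^{1,\alpha}}$, which is \eqref{estimate:lambda:diff}. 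The only subtlety is that one measures the difference in $C^{1,\alpha}$, one derivative below the a priori $C^{2,\alpha}$ regularity — this is exactly the mechanism already used in Proposition~\ref{existenciatransporte} and Corollary~\ref{direct:coro:proptransport}, so the same Gronwall-in-$C^{1,\alpha}$ scheme applies. For \eqref{estimate:theta:diff}, since $\Theta=|D_r\Psi_z|-1$, I would estimate $\widetilde{D_r\Psi_z}$ first: it solves $\partial_z(\widetilde{D_r\Psi_z}) = (Dv^1)(\Psi_z^1,z)\widetilde{D_r\Psi_z} + \big((Dv^1)(\Psi_z^1,z)-(Dv^2)(\Psi_z^2,z)\big)D_r\Psi_z^2$, and the inhomogeneous term is $C^\alpha$ of size $\lesssim \|b^1-b^2\|_{C^\alpha}$ (using the already-controlled $\|\widetilde\Psi_z\|_{C^\alpha}\lesssim \|b^1-b^2\|_{C^\alpha}$ and $Dv^i\in C^{1,\alpha}$). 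Gronwall in $C^\alpha$ gives $\|\widetilde{D_r\Psi_z}\|_{C^\alpha}\leq C\|b^1-b^2\|_{C^\alpha}$, and since the determinant is a smooth (polynomial) function of the matrix entries with the entries bounded, $\|\widetilde\Theta\|_{C^\alpha}\leq C\|b^1-b^2\|_{C^\alpha}$.

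The main obstacle I anticipate is bookkeeping the derivative counts carefully: $\Theta$ genuinely only lives in $C^{1,\alpha}$, so when differentiating the Jacobi equation once in $x$ to get the $C^{1,\alpha}$ estimate on $\Theta$ one already uses $b\in C^{2,\alpha}$ at full strength, and there is no room to spare; likewise the difference estimates must be taken in the one-lower norm, and one has to be disciplined about which quantities ($\Psi_z$, $D_r\Psi_z$, $v^i$, $Dv^i$) are being bounded in which norm before feeding them into Gronwall. A secondary technical point is the passage from ``$D_r\Psi_z$ close to the identity and $Dv$ small'' to ``$\Theta$ small in $C^{1,\alpha}$'': one should expand $|J_g| = 1 + (\text{linear in } D_r\Psi_z - \mathrm{Id}) + O(\|D_r\Psi_z-\mathrm{Id}\|^2)$ and observe each term is $O(M)$ in $C^{1,\alpha}$, so no cancellation is needed — just the algebra of the determinant together with the product/quotient rules in H\"older spaces. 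None of this requires new ideas beyond what is used in Proposition~\ref{prop:regularidad:odes}, Proposition~\ref{existenciatransporte}, and Corollary~\ref{direct:coro:proptransport}; the proof is essentially an assembly of those Gronwall arguments specialized to $\Lambda$ and $\Theta$.
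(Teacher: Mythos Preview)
Your proposal is correct and follows essentially the same approach as the paper: the paper's proof is a brief sketch noting that $\Lambda$ satisfies the flow ODE of Proposition~\ref{prop:regularidad:odes}, that one controls incremental quotients via Gr\"onwall-type estimates (referring to \cite[Lemma 3.7]{Alo-Velaz-2021} for the analogous two-dimensional $C^{1,\alpha}$ case), and that the difference bounds follow by writing the ODE for the difference of the flows. You have simply filled in this sketch with more detail --- the Liouville/Jacobi formula for $\Theta$, the explicit subtraction of the two characteristic equations, and the bookkeeping of derivative counts --- all of which is exactly what the paper intends.
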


\begin{proof}
The proof follows standard arguments to compute the dependence of the solution of an ODE in their parameters. More precisely, it should be notice that $\Lambda$ basically satisfies the ODE \eqref{ode} of Proposition \ref{prop:regularidad:odes} and therefore we can repeat the same ideas as the one developed there. In order to do so, we have to control incremental quotients combined with Gr\"onwall type estimates. A bound similar to \eqref{estimate:1:lambda:theta} is shown in \cite[Lemma 3.7]{Alo-Velaz-2021} in the two-dimensional case and for $C^{1,\alpha}$ regularity instead of $C^{2,\alpha}$ regularity. Moreover, in order to show \eqref{estimate:lambda:diff}-\eqref{estimate:theta:diff} we calculate the ODE satisfied by the differences of the solutions which defined $\Lambda,\Theta$ (cf. \eqref{ode} and \eqref{def:lambda:theta}) for $v=b_{1}, v=b_{2}$.
\end{proof}

\subsection{Solution to the integral equation for $j_0$}\label{subsec:5:6}
In this subsection we are interested in studying the existence of a solution $j_0\in C^{1,\alpha}(\partial \Omega_{-})$ to the integral equation
\begin{align}
   j^{1}_{0}&=(\mathcal{R}^{2}_x-\text{id})\sum_{\kappa=1}^4 \widetilde{\textsf{T}_{\kappa} j_0^1}+\mathcal{R}_x\mathcal{R}_y\sum_{\kappa=1}^4\textsf{T}_{\kappa} j_0^2 + \mathsf{S}_{2}[\widetilde{\delta j^{1}},\delta j^{2}]-\mathsf{H}_{2}+\left(\text{id}-\mathcal{R}_x^{2}\right)\widetilde{\textsf{G}_2}+\mathcal{R}_x\mathcal{R}_y\textsf{G}_1 \nonumber \\
    &\quad \quad -\langle(\partial_1 g_2-\partial_2 g_1)g_1\rangle-\langle j^1_0 f\rangle, \label{alm:new4:final2} \\
    j^{2}_{0}&=\left(\mathcal{R}^{2}_y-\text{id}\right)\sum_{\kappa=1}^4\widetilde{\textsf{T}_\kappa j_0^2}+\mathcal{R}_x\mathcal{R}_y\sum_{\kappa=1}^4\textsf{T}_\kappa j^1_0- \mathsf{S}_{1}[\delta j^{1},\widetilde{\delta j^{2}}]+\mathsf{H}_{1}  -\left(\text{id}-\mathcal{R}_y^{2}\right)\widetilde{\textsf{G}_1}-\mathcal{R}_x\mathcal{R}_y\textsf{G}_2  \nonumber \\
    &\quad \quad  -\langle(\partial_1 g_2-\partial_2 g_1)g_2\rangle-\langle j^2_0 f\rangle.  \label{alm:new5:final2}
\end{align}
Recall that in the previous subsections, namely in Subsection \ref{subsec:52}-Subsection \ref{subsec:5:5}, we have provided a rigorous approach to define all the operators contained in \eqref{alm:new4:final2}-\eqref{alm:new5:final2}. Moreover, before solving the integral equation, it is convenient to introduce the modified operators $\mathsf{H}^{\star}_{\ell}$, $\ell=1,2$ given by
\begin{align}
\mathsf{H}^{\star}_{1}&=\mathsf{H}_{1}-\mathcal{B}_y \left(b_1\partial_1j^3_0+b_2\partial_2j^3_0-\partial_{3}b_{3}j^3_0 \right), \label{H1:Star1} \\
\mathsf{H}^{\star}_{2}&=\mathsf{H}_{2}-\mathcal{B}_{x} \left(b_1\partial_1j^3_0+b_2\partial_2j^3_0-\partial_{3}b_{3}j^3_0 \right). \label{H2:Star2}
\end{align}
The modified operators $\mathsf{H}^{\star}_{\ell}$ do not depend on $j_0^{3}$ which is already given in terms of the boundary data $g$. Indeed, recalling \eqref{alm:new3:final} we find that
\[ j_0^{3}=\partial_1 g_2-\partial_2 g_{1}, \ \mbox{on } \partial\Omega_{-}. \]
Hence, in order to solve the integral equation \eqref{alm:new4:final2}-\eqref{alm:new5:final2}
we introduce the following operator $\Upsilon_{\ell}: C^{1,\alpha}(\T^2)\times C^{1,\alpha}(\T^2)\longrightarrow C^{1,\alpha}(\T^2)$, $\ell=1,2$ given by
\begin{align}
    \Upsilon_{1}(j^{1}_{0},j^{2}_{0})&=(\mathcal{R}^{2}_x-\text{id})\sum_{\kappa=1}^4 \widetilde{\textsf{T}_{\kappa} j_0^1}+\mathcal{R}_x\mathcal{R}_y\sum_{\kappa=1}^4\textsf{T}_{\kappa} j_0^2 + \mathsf{S}_{2}[\widetilde{\delta j^{1}},\delta j^{2}]-\mathsf{H}_{2}^{\star} -\langle j^1_0 f\rangle, \label{alm:new4:final3} \\
     \Upsilon_{2}(j^{1}_{0},j^{2}_{0})&=\left(\mathcal{R}^{2}_y-\text{id}\right)\sum_{\kappa=1}^4\widetilde{\textsf{T}_\kappa j_0^2}+\mathcal{R}_x\mathcal{R}_y\sum_{\kappa=1}^4\textsf{T}_\kappa j^1_0- \mathsf{S}_{1}[\delta j^{1},\widetilde{\delta j^{2}}]+\mathsf{H}_{1}^{\star}-\langle j^2_0 f\rangle.  \label{alm:new5:final3}
\end{align}
Therefore, if $\Upsilon=(\Upsilon_1,\Upsilon_2)$, one could argue naively that the solution of the integral equations \eqref{alm:new4:final2}-\eqref{alm:new5:final2} should be given by 
\begin{equation}\label{equation:j_0:modified}
(j_0^1,j_0^2)=\left(\textbf{1}+\Upsilon\right)^{-1}\left(\mathscr{G}_1,\mathscr{G}_2\right),
\end{equation}
where 
\begin{align}
  \mathscr{G}_1&=\left(\text{id}-\mathcal{R}_x^{2}\right)\widetilde{\textsf{G}_2}+\mathcal{R}_x\mathcal{R}_y\textsf{G}_1 -\langle(\partial_1 g_2-\partial_2 g_1)g_1\rangle+\mathcal{B}_{x}\left(b_1\partial_1j^3_0+b_2\partial_2j^3_0-\partial_{3}b_{3}j^3_0 \right), \label{G1:modified} \\
        \mathscr{G}_2&= -\left(\text{id}-\mathcal{R}_y^{2}\right)\widetilde{\textsf{G}_1}-\mathcal{R}_x\mathcal{R}_y\textsf{G}_2 -\langle(\partial_1 g_2-\partial_2 g_1)g_2\rangle-\mathcal{B}_{y} \left(b_1\partial_1j^3_0+b_2\partial_2j^3_0-\partial_{3}b_{3}j^3_0 \right).\label{G2:modified}
\end{align}
Let us a provide a rigorous proof of the previous naively approach. 
\begin{prop}\label{eqintegral}
Let $M_0$ be sufficiently small (in particular, $M_{0}\leq \displaystyle{\delta_{0},\delta_{1}}$) and let $b\in C^{2,\alpha}(\Omega;\R^3)$ with $\|b\|_{C^{2,\alpha}}\leq M<M_0$.  
   Moreover, assume that 
   \begin{equation}\label{smallness:eqintegral}
   \max\left\{\|f\|_{C^{2,\alpha}}\,\|g\|_{C^{2,\alpha}} \right\}\leq M.
   \end{equation}
Then $\left(\textbf{1}+\Upsilon\right)$ is invertible in $C^{1,\alpha}$. More precisely, there exists an operator $\Pi=\left(\textbf{1}+\Upsilon\right)^{-1}$ satisfying the bound
\begin{equation}\label{Pi:estimate:prop}
\norm{\Pi}_{C^{1,\alpha}}\leq C \norm{b}_{C^{1,\alpha}}.
\end{equation}
Furthermore, for any two magnetic fields $b_1$ and $b_2$ satisfying \eqref{smallness:eqintegral} we have that
\begin{equation}\label{estimate:2:eq:integral}
\|\Pi[b_1]-\Pi[b_2]\|_{\mathcal{L}(C^{\alpha})}\leq C\|b_1-b_2\|_{C^{1,\alpha}}.
\end{equation}
\end{prop}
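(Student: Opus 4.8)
The plan is to invert $\mathbf{1}+\Upsilon$ via a Neumann series, which requires showing that $\Upsilon$ is a contraction in $\mathcal{L}(C^{1,\alpha}(\T^2)\times C^{1,\alpha}(\T^2))$ once the smallness parameter $M_0$ is taken small enough. First I would collect the operator-norm bounds already established in the preceding subsections and assemble them into a single estimate for $\Upsilon=(\Upsilon_1,\Upsilon_2)$. Concretely: the Riesz transforms $\mathcal{R}_x,\mathcal{R}_y$ are bounded on $C^{1,\alpha}$ with norm depending only on $\alpha$; by Theorem \ref{teoremon1} each $\mathsf{T}_\kappa$ satisfies $\|\mathsf{T}_\kappa\|_{\mathcal{L}(C^{1,\alpha})}\le C(\|\Lambda\|_{C^{1,\alpha}}+\|\Theta\|_{C^{1,\alpha}})$, and by Proposition \ref{estimations:Lambda:Theta} the flow-generated functions $\Lambda,\Theta$ satisfy $\|\Lambda\|_{C^{2,\alpha}}+\|\Theta\|_{C^{1,\alpha}}\le CM$ (so in particular Assumptions \ref{asu1} and \ref{asu2} hold provided $M_0\le\delta_0,\delta_1$); by Corollary \ref{teoremon2} we have $\|\mathsf{S}_\ell\|_{C^{1,\alpha}}\le CM\|j_0\|_{C^{1,\alpha}}$; by Proposition \ref{Prop:H:estimadas:ori} we have $\|\mathsf{H}_\ell^\star\|_{C^{1,\alpha}}\le C\|b\|_{C^{2,\alpha}}\|j_0\|_{C^{1,\alpha}}\le CM\|j_0\|_{C^{1,\alpha}}$ (the subtracted terms in \eqref{H1:Star1}--\eqref{H2:Star2} are handled exactly as in that proposition since $\mathcal{B}_x,\mathcal{B}_y$ are order $-1$ multipliers); and finally the zero-mean corrections $\widetilde{(\cdot)}$ and the scalar terms $\langle j_0^\ell f\rangle$ only improve matters, the latter contributing a factor $\|f\|_{C^{2,\alpha}}\le M$. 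Combining, $\|\Upsilon(j_0^1,j_0^2)\|_{C^{1,\alpha}\times C^{1,\alpha}}\le CM\,\|(j_0^1,j_0^2)\|_{C^{1,\alpha}\times C^{1,\alpha}}$, so choosing $M_0$ with $CM_0<\tfrac12$ gives $\|\Upsilon\|<\tfrac12$ and hence $\Pi:=(\mathbf{1}+\Upsilon)^{-1}=\sum_{k\ge0}(-\Upsilon)^k$ converges in $\mathcal{L}(C^{1,\alpha})$ with $\|\Pi\|\le 2\le C$. A mild point here is the claimed bound $\|\Pi\|_{C^{1,\alpha}}\le C\|b\|_{C^{1,\alpha}}$: since $\Pi=\mathbf{1}-\Upsilon\Pi$ and $\|\Upsilon\Pi\|\le CM\le C\|b\|_{C^{2,\alpha}}$, one in fact obtains $\|\Pi-\mathbf{1}\|\le C\|b\|_{C^{1,\alpha}}$, which is the content intended by \eqref{Pi:estimate:prop}.

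Next I would prove the Lipschitz-in-$b$ estimate \eqref{estimate:2:eq:integral}. The standard resolvent identity gives
\[
\Pi[b_1]-\Pi[b_2]=-\Pi[b_1]\bigl(\Upsilon[b_1]-\Upsilon[b_2]\bigr)\Pi[b_2],
\]
so since both $\Pi[b_1],\Pi[b_2]$ are uniformly bounded on $C^\alpha$ (a Neumann series converges in $\mathcal{L}(C^\alpha)$ as well, using that the symbols lie in $S^0(\alpha)$), it suffices to bound $\|\Upsilon[b_1]-\Upsilon[b_2]\|_{\mathcal{L}(C^\alpha)}$. For this one uses the difference estimates already in hand: Theorem \ref{diferencia} controls $\|\mathsf{T}_\kappa^d j_0^\ell\|_{C^\alpha}$ by $\|\Lambda_1-\Lambda_2\|_{C^{1,\alpha}}+\|\Theta_1-\Theta_2\|_{C^\alpha}$, which via \eqref{estimate:lambda:diff}--\eqref{estimate:theta:diff} is $\le C\|b_1-b_2\|_{C^{1,\alpha}}$; Corollary \ref{teoremon2} gives $\|\mathsf{S}_\ell[b_1]-\mathsf{S}_\ell[b_2]\|_{C^\alpha}\le C\|b_1-b_2\|_{C^{1,\alpha}}$; Proposition \ref{Prop:H:estimadas:ori} gives the analogous bound for $\mathsf{H}_\ell^\star$. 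The operators $\mathcal{R}_x,\mathcal{R}_y$ and the zero-mean/scalar corrections are $b$-independent (or depend on $b$ only through the already-estimated pieces). Summing these yields $\|\Upsilon[b_1]-\Upsilon[b_2]\|_{\mathcal{L}(C^\alpha)}\le C\|b_1-b_2\|_{C^{1,\alpha}}$, and feeding this back into the resolvent identity produces \eqref{estimate:2:eq:integral}.

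I would also need to verify the mapping property $\Upsilon_\ell:C^{1,\alpha}\times C^{1,\alpha}\to C^{1,\alpha}$ genuinely holds — i.e. that every operator appearing on the right of \eqref{alm:new4:final3}--\eqref{alm:new5:final3} is well defined and bounded on $C^{1,\alpha}$ — but this is precisely what Theorem \ref{teoremon1}, Proposition \ref{prop:estimate:S}, Proposition \ref{Prop:H:estimadas:ori}, and the classical boundedness of Mikhlin--Hörmander multipliers on periodic Hölder spaces (Subsection \ref{subsec:23}) deliver; the argument is a matter of quoting them in the right order. The main obstacle is not any single estimate but the bookkeeping: one must be careful that the smallness of $\|b\|_{C^{2,\alpha}}$, of $\|f\|_{C^{2,\alpha}}$, and of $\|g\|_{C^{2,\alpha}}$ all feed correctly into the single smallness constant $M$, that the loss of one derivative in passing from $C^{2,\alpha}$ data to $C^{1,\alpha}$ currents is consistently absorbed, and — the genuinely delicate point — that when one proves the contraction on the \emph{lower-regularity} space $C^\alpha$ one cannot invoke Theorem \ref{solucion} directly (it needs $S^0(1+\alpha)$ symbols acting on $C^{1,\alpha}$), so the difference operators must be understood as restrictions to $C^{1,\alpha}$ of operators with $S^0(\alpha)$ symbols, exactly as arranged in Proposition \ref{prop:d1d2} and Theorem \ref{diferencia}. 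Keeping that distinction straight is where care is required; the rest is a routine Neumann-series argument.
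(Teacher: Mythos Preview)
Your proposal is correct and follows essentially the same strategy as the paper: collect the operator bounds from Theorem~\ref{teoremon1}, Proposition~\ref{Prop:H:estimadas:ori}, Corollary~\ref{teoremon2}, and Proposition~\ref{estimations:Lambda:Theta} to obtain $\|\Upsilon\|_{\mathcal{L}(C^{1,\alpha})}\le CM<1$, invert by Neumann series, and then control $\Pi[b_1]-\Pi[b_2]$ in $\mathcal{L}(C^{\alpha})$ via the difference estimates of Theorem~\ref{diferencia}, Corollary~\ref{teoremon2}, and Proposition~\ref{Prop:H:estimadas:ori}. The only cosmetic difference is that you use the resolvent identity $\Pi[b_1]-\Pi[b_2]=-\Pi[b_1](\Upsilon[b_1]-\Upsilon[b_2])\Pi[b_2]$, whereas the paper expands the Neumann series and telescopes $A^n-B^n$; your observation that \eqref{Pi:estimate:prop} should really be read as a bound on $\Pi-\mathbf{1}$ (or simply as uniform boundedness of $\Pi$) is also apt, since the paper's own proof yields $\|\Pi\|\le(1-CM)^{-1}$.
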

\begin{proof}
Recalling \eqref{alm:new4:final3}-\eqref{alm:new5:final3} and using the fact that the operators $\mathcal{R}_{x},\mathcal{R}_{y}$ are Mikhlin-H\"ormander mutipliers in $\mathbb{R}^{2}$ of degree $0$, we readily check that
\begin{align*}
\norm{\Upsilon_{1}}_{C^{1,\alpha}} &\leq C\left( \sum_{\kappa=1}^4 \norm{\widetilde{\textsf{T}_{\kappa} j_0^1}}_{C^{1,\alpha}} + \sum_{\kappa=1}^4\norm{\textsf{T}_{\kappa} j_0^2}_{C^{1,\alpha}} +\norm{\mathsf{S}_{2}[\widetilde{\delta j^{1}},\delta j^{2}]}_{C^{1,\alpha}} + \norm{\mathsf{H}_{2}^{\star}}_{C^{1,\alpha}} + \norm{\langle j^1_0 f\rangle}_{C^{1,\alpha}}  \right), \\
\norm{\Upsilon_{2}}_{C^{1,\alpha}} &\leq C\left( \sum_{\kappa=1}^4 \norm{\widetilde{\textsf{T}_{\kappa} j_0^2}}_{C^{1,\alpha}} + \sum_{\kappa=1}^4\norm{\textsf{T}_{\kappa} j_0^1}_{C^{1,\alpha}} +\norm{\mathsf{S}_{1}[\delta j^{1},\widetilde{\delta j^{2}}]}_{C^{1,\alpha}} + \norm{\mathsf{H}_{1}^{\star}}_{C^{1,\alpha}} + \norm{\langle j^2_0 f\rangle}_{C^{1,\alpha}}  \right).
\end{align*}
Applying estimates \eqref{estimada:teoremon1}, \eqref{estimada:Horiginal} and \eqref{estimate:S:PropS} we find that for $\ell=1,2$
\begin{align*}
\norm{\Upsilon_{{\ell}}}_{C^{1,\alpha}} &\leq C\left( (\|\Lambda\|_{C^{1,\alpha}}+\|\Theta\|_{C^{1,\alpha}}+\norm{b}_{C^{2,\alpha}}+\norm{f}_{C^{2,\alpha}})(\norm{j_0^{1}}_{C^{1,\alpha}}+\norm{j_0^{2}}_{C^{1,\alpha}}) + \norm{\delta j^{1}}_{C^{1,\alpha}}+ \norm{\delta j^{1}}_{C^{1,\alpha}} \right).
\end{align*}
Moreover, using Corollary \ref{teoremon1}, Proposition \ref{estimations:Lambda:Theta} and the smallness assumption \eqref{smallness:eqintegral} we obtain that
\begin{align*}
\norm{\Upsilon_{{\ell}}}_{C^{1,\alpha}} &\leq CM (\norm{j_0^{1}}_{C^{1,\alpha}}+\norm{j_0^{2}}_{C^{1,\alpha}}), \ \ell=1,2,
\end{align*}
and thus $\|\Upsilon\|_{\mathcal{L}(C^{1,\alpha})}$.  Since $M$ is small enough, $\|\Upsilon\|_{\mathcal{L}(C^{1,\alpha})}<1$ and hence, by means of a classical Neumann series argument (c.f. \cite{Alo-Velaz-2022,ReedsSimon}), we conclude that $\textbf{1}+\Upsilon$ is invertible. In addition, we also obtain the simple estimate 
\begin{equation}
\norm{\Pi}_{C^{1,\alpha}}=\norm{(\textbf{1}+\Upsilon)^{-1}}_{C^{1,\alpha}}=\norm{\displaystyle\sum_{n=0}^{\infty} (-1)^{n}\Upsilon^{n}}_{C^{1,\alpha}}\leq \frac{1}{1-CM}(\norm{j_0^{1}}_{C^{1,\alpha}}+\norm{j_0^{2}}_{C^{1,\alpha}}).
\end{equation}
Here, we have used that $\|\Upsilon\|_{\mathcal{L}(C^{1,\alpha})}$. Moreover, for $n=0$, we have that $\Upsilon^{0}=\mathbf{1}$.
For the second assertion, we expand $\Pi=(\textbf{1}+\Upsilon)^{-1}$ in its Neumann series expansion, namely, 
\[ \Pi[b_1]-\Pi[b_2]=\sum_{n=1}^\infty (-1)^{n}\left(\Upsilon[b_1]^n-\Upsilon[b_2]^n\right) 
\]
Next, noting that $ A^n-B^n=\sum_{i=1}^n A^{n-i}(A-B)B^{i-1}$ we have that
\[ \|A^n-B^n\|_{\mathcal{L}(C^{\alpha})}\leq \|A-B\|_{\mathcal{L}(C^{\alpha})}\sum_{i=1}^n\|A\|_{\mathcal{L}(C^{\alpha})}^{n-i}\|B\|^{i-1}_{\mathcal{L}(C^{\alpha})}.\]
As a result, we conclude that 
\[
 \|\Pi[b_1]-\Pi[b_2]\|_{\mathcal{L}(C^{\alpha})}\leq \|\Upsilon[b_1]-\Upsilon[b_2]\|_{\mathcal{L}(C^{\alpha})}\sum_{n=1}^\infty \left(\|\Upsilon[b_1]\|_{\mathcal{L}(C^{\alpha})}+\|\Upsilon[b_2]\|_{\mathcal{L}(C^{\alpha})}\right)^n.
\]
Estimate \eqref{estimate:2:eq:integral} follows by recalling the precise expression of the operator $\Upsilon$ given in \eqref{alm:new4:final3}-\eqref{alm:new5:final3} combined with smallness assumption \eqref{smallness:eqintegral}, the difference estimates for $\textsf{T}^d_{\kappa}$, $\mathsf{S}_{\ell}$, $\mathsf{H}_{\ell}$ and $\Lambda, \Theta$ in Proposition \ref{diferencia}, Proposition \ref{teoremon2}, Proposition \ref{Prop:H:estimadas:ori}, Proposition \ref{estimations:Lambda:Theta}, respectively. 
\end{proof}

 \begin{coro}\label{coro:estimate:j0}
 Let the hypothesis of Proposition \ref{eqintegral} hold. Then there exists a solution $(j_0^{1},j_0^{2})\in C^{1,\alpha}$ to \eqref{equation:j_0:modified} given by 
 \begin{equation}\label{j_0:solution:coro}
 (j_0^1,j_0^2)=\Pi\left(\mathscr{G}_1,\mathscr{G}_2\right),
 \end{equation}
with $\mathscr{G}_1$ and $\mathscr{G}_2$ given in \eqref{G1:modified} and \eqref{G2:modified}, respectively. Furthermore,
\begin{equation}\label{bound:finalj0}
\norm{(j_0^1,j_0^2)}_{C^{1,\alpha}}\leq C\left(  \norm{g}_{C^{2,\alpha}}+\norm{f}_{C^{2,\alpha}}  \right).
 \end{equation}
 \end{coro}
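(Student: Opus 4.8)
The plan is to read off the claim directly from the structure already assembled in Proposition \ref{eqintegral}. The equation \eqref{equation:j_0:modified} to be solved is $(\mathbf{1}+\Upsilon)(j_0^1,j_0^2)=(\mathscr{G}_1,\mathscr{G}_2)$, and Proposition \ref{eqintegral} has already shown that under the smallness hypothesis $\|b\|_{C^{2,\alpha}}\leq M<M_0$ and \eqref{smallness:eqintegral}, the operator $\mathbf{1}+\Upsilon$ is invertible on $C^{1,\alpha}(\T^2)\times C^{1,\alpha}(\T^2)$ with inverse $\Pi$. Hence the unique solution is $(j_0^1,j_0^2)=\Pi(\mathscr{G}_1,\mathscr{G}_2)$, which is exactly \eqref{j_0:solution:coro}. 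So the only thing that genuinely needs to be proved is the quantitative bound \eqref{bound:finalj0}: that the solution is controlled by the data norms $\|f\|_{C^{2,\alpha}}+\|g\|_{C^{2,\alpha}}$.

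For that, the key step is to estimate $\|\mathscr{G}_\ell\|_{C^{1,\alpha}}$ for $\ell=1,2$ from the explicit formulas \eqref{G1:modified}--\eqref{G2:modified}. Each $\mathscr{G}_\ell$ is a sum of four kinds of terms: (i) $(\mathrm{id}-\mathcal{R}_x^2)\widetilde{\textsf{G}_2}$ and $\mathcal{R}_x\mathcal{R}_y\textsf{G}_1$ (and the analogous pair), which are bounded since $\mathcal{R}_x,\mathcal{R}_y$ are zeroth-order Mikhlin--H\"ormander multipliers and, by Corollary \ref{coro:estimate:G}, $\|\textsf{G}_\ell\|_{C^{1,\alpha}}\leq C(\|f\|_{C^{2,\alpha}}+\|g\|_{C^{2,\alpha}})$; (ii) the scalar averages $\langle(\partial_1 g_2-\partial_2 g_1)g_\ell\rangle$, which are trivially bounded by $C\|g\|_{C^{2,\alpha}}^2\leq C\|g\|_{C^{2,\alpha}}$ using the smallness assumption to absorb the square; (iii) the term $\mathcal{B}_x(b_1\partial_1 j_0^3+b_2\partial_2 j_0^3-\partial_3 b_3 j_0^3)$ and its $\mathcal{B}_y$ counterpart, where $\mathcal{B}_x,\mathcal{B}_y$ are $(-1)$-order Mikhlin--H\"ormander multipliers, and $j_0^3=\partial_1 g_2-\partial_2 g_1$ by \eqref{alm:new3:final}, so this term is bounded by $C\|b\|_{C^{2,\alpha}}\|g\|_{C^{2,\alpha}}\leq CM\|g\|_{C^{2,\alpha}}$. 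Collecting these, $\|\mathscr{G}_\ell\|_{C^{1,\alpha}}\leq C(\|f\|_{C^{2,\alpha}}+\|g\|_{C^{2,\alpha}})$.

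Finally one applies $\Pi=(\mathbf{1}+\Upsilon)^{-1}$, which by the Neumann series argument in Proposition \ref{eqintegral} has operator norm $\|\Pi\|_{\mathcal{L}(C^{1,\alpha})}\leq \frac{1}{1-CM}$, to conclude
\[
\|(j_0^1,j_0^2)\|_{C^{1,\alpha}}=\|\Pi(\mathscr{G}_1,\mathscr{G}_2)\|_{C^{1,\alpha}}\leq \frac{C}{1-CM}(\|f\|_{C^{2,\alpha}}+\|g\|_{C^{2,\alpha}})\leq C(\|f\|_{C^{2,\alpha}}+\|g\|_{C^{2,\alpha}}),
\]
which is \eqref{bound:finalj0}. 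One subtlety worth flagging: the operators $\mathsf{S}_\ell$ and the $\textsf{T}_\kappa$ implicitly depend on $j_0$ itself (through $\delta j$ and through $\Lambda,\Theta$), so strictly speaking the bound on $\|\mathscr{G}_\ell\|$ and the invertibility of $\mathbf{1}+\Upsilon$ must be understood with $b$ fixed and $j_0$ ranging over the space on which $\Upsilon$ acts linearly; this is precisely how Proposition \ref{eqintegral} is set up, so no extra work is needed here. The main obstacle is not any single estimate but rather bookkeeping: making sure every term in $\mathscr{G}_1,\mathscr{G}_2$ is genuinely controlled by the \emph{linear} data norms (and that the quadratic-in-$g$ averages are legitimately absorbed by smallness), after which the result is immediate from Proposition \ref{eqintegral}.
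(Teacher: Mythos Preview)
Your proof is correct and follows essentially the same approach as the paper: existence comes directly from the invertibility of $\mathbf{1}+\Upsilon$ established in Proposition~\ref{eqintegral}, and the bound \eqref{bound:finalj0} is obtained by estimating each term in $\mathscr{G}_\ell$ via Corollary~\ref{coro:estimate:G}, the Mikhlin--H\"ormander orders of $\mathcal{R}_x,\mathcal{R}_y,\mathcal{B}_x,\mathcal{B}_y$, the identity $j_0^3=\partial_1 g_2-\partial_2 g_1$, and smallness to absorb the quadratic terms, then applying the Neumann-series bound on $\Pi$. Your write-up is in fact slightly more explicit than the paper's, and your remark on the subtlety regarding the linear dependence of $\Upsilon$ on $(j_0^1,j_0^2)$ is well placed.
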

 \begin{proof}
The fact that $j_0$ as given in \eqref{j_0:solution:coro} is a solution to \eqref{equation:j_0:modified} is a direct consequence of the definition of the operator $\Pi$ in Proposition \ref{eqintegral}. Moreover, recalling the multipliers associated to $\mathcal{R}_{x},\mathcal{R}_{y}$ and $\mathcal{B}_x, \mathcal{B}_y$ are Mikhlin-Hörmander multipliers in $\R^{2}$ of degree $0$ and $-1$, respectively we find that
\begin{align}
\norm{\mathscr{G}_{\ell}}_{C^{1,\alpha}} &\leq C\left( \norm{\widetilde{\mathsf{G}}_{\ell}}_{C^{1,\alpha}} +\norm{\mathsf{G}_{\ell}}_{C^{1,\alpha}} +\norm{g}^{2}_{C^{2,\alpha}} + \norm{b}\norm{j_0^3}_{C^{1,\alpha}} \right), \nonumber \\
&\leq C\left( \norm{g}_{C^{2,\alpha}}+\norm{f}_{C^{2,\alpha}} +\norm{g}^{2}_{C^{2,\alpha}} + \norm{b}\norm{g}_{C^{2,\alpha}} \right), 
\end{align}
where in the second inequality we have used estimate \eqref{estimate:coro:estimate:G} of Proposition \ref{coro:estimate:G} and the fact that $j_0^{3}=\partial_1 g_2-\partial_2 g_{1}, \ \mbox{on } \partial\Omega_{-}.$ Therefore, this estimate combined with \eqref{Pi:estimate:prop} shows bound \eqref{bound:finalj0}.
 \end{proof}

To conclude this section we outlining the reason why the solution of $(j_0^1,j_0^2)=\Pi(\mathscr{G}_1,\mathscr{G}_2)$ yields the unique choice of $(j_0^1,j_0^2)$ that ensures that the new magnetic field $W$ satisfies the correct boundary conditions. Note that in Theorem \ref{wellposedness}, we proved the existence and uniqueness of the div-curl problem given $j$ divergence-free, $f\in C^{2,\alpha}(\partial\Omega)$ and $J_1,J_2\in\R$. In such construction, we employed a combination of a scalar and a vector potential for the magnetic field, while in Section \ref{sec:linearized} and \ref{sec:4} we employed an a priori different construction. In the following, we show why this solution is the unique solution of the div-curl system, by using the uniqueness result we showed in Proposition \ref{wellposedness}. To that end, we take the vector field $A$ defined as the unique solution for the following elliptic problem
\begin{equation}\label{problemaA:elliptic}
    \left\{\begin{array}{rl}
        -\Delta A=j & \text{ in }\Omega \\
         A_1=h_1 & \text{ on }\partial\Omega\\
         A_2=h_2 & \text{ on }\partial\Omega\\
         \partial_3 A_3=-\partial_1 A_1-\partial_2 A_2 & \text{ on }\partial\Omega
    \end{array}\right. ,
\end{equation}
where $h_1$ and $h_2$ are defined in $\partial\Omega_-$ and $\partial\Omega_+$ in \eqref{aux}. Notice that the functions $h_1$ and $h_2$ only gain extra regularity in one of the variables, so we cannot expect to directly obtain $C^{3,\alpha}$  bounds for $A$. However, we can separate \eqref{problemaA:elliptic} into 
\begin{equation}
    \left\{\begin{array}{rl}
        -\Delta A^1=0 & \text{ in }\Omega \\
         A^1_1=h_1 & \text{ on }\partial\Omega\\
         A^1_2=h_2 & \text{ on }\partial\Omega\\
         \partial_3 A^1_3=-\partial_1 A_1-\partial_2 A_2 & \text{ on }\partial\Omega
    \end{array}\right. ,\quad \text{and}\quad 
    \left\{\begin{array}{rl}
        -\Delta A^2=j & \text{ in }\Omega \\
         A^2_1=0 & \text{ on }\partial\Omega\\
         A^2_2=0 & \text{ on }\partial\Omega\\
         \partial_3 A^2_3=0 & \text{ on }\partial\Omega 
    \end{array}\right. .
\end{equation}
The left-hand side problem yields an harmonic $A^1$ function which can be extended to a $C^{2,\alpha}$ function in $\Omega$. Notice that, since it is harmonic in the interior, it must be smooth. On the other hand $A^2$ is clearly in $C^{3,\alpha}$. The resulting $A$ needs to have zero divergence. Indeed, since $A$ is $C^{3}$ in the interior of our domain, we can permute the Laplacian with the divergence, and conclude that $\text{div}\,A$ is harmonic and with zero boundary values. Thus, it is zero. Therefore, and since the maps $h_1$ and $h_2$ are constructed to this end, we conclude that $\nabla\times A$ yields the unique solution for \eqref{divcurlotravez}. Now, once we have seen that $\nabla\times A$ yields the unique $C^{2,\alpha}$ solution of the div-curl system, we can take Fourier coefficients to yield the relations that $(j_0^1,j_0^2)$ need to satisfy which, together with the divergence free condition, are equivalent to the integral equations \eqref{alm:new4:final2}-\eqref{alm:new5:final2}, whose existence and uniqueness of solutions have been proved in the previous result, cf. Corollary \ref{coro:estimate:j0}.

\section{The fixed point argument and proof of Theorem \ref{mainteor}}\label{sec:6}
In this final section, we will provide the fixed point argument, this is, we will defined an adequate operator $\Gamma$ which has a fixed point such that $B=(0,0,1)+b$ is a solution satisfying the boundary value problem \eqref{mhs2}, thus proving Theorem \ref{mainteor}.

So far, we have constructed a map that takes an initial perturbation $b\in C^{2,\alpha}$, and maps it to a current $j=\Pi[b]\in C^{1,\alpha}$ so that the corresponding solution of the div-curl system, 

\begin{equation}\label{pesao}
    \left\{\begin{array}{rl}
        \nabla \times W=j & \text{ in }\Omega \\
        \text{div}\,W=0 & \text{ in }\Omega\\
        W\cdot n =f & \text{ on }\partial\Omega\\
    \end{array}\right.
    \,\text{ and }\,
    \left\{\begin{array}{l}
        \int_{\{x=0\}}W\cdot d\vec{S}=J_1[j_0,f,g,b]\\
        \int_{\{y=0\}}W\cdot d\vec{S}=J_2[j_0,f,g,b]
    \end{array}\right.,
\end{equation}
satisfies (formally) the boundary condition $W_\tau=g$. Denote $\mathscr{B}[J,f,j]=W[b,j]$ such solution. In this section we will prove that this resulting map
\begin{equation}\label{gama}
b\mapsto \Gamma[b]=\mathscr{B}\left[J,f,\Pi[b]\right]
\end{equation}
attains a unique fixed point in an adequate space, that corresponds to the solution of our problem. 

Due to Proposition \ref{wellposedness}, the problem \eqref{pesao} is well-posed in $C^{2,\alpha}$, so that the map $\Gamma$ is well defined.  Next, we will use all the previous results we have obtained in the previous section to prove that \eqref{gama}  admits a unique fixed point in some particular subset of $C^{1,\alpha}(\Omega)$. This fixed point will be a solution to our problem.

\subsection{Fixed point for the map $\Gamma$ }

In order to prove that a fixed point for the map $\Gamma$ exists, we must make precise where such fixed point is attained, as well as a precise description on how the map $\Gamma$ is constructed. We begin with this second task.

First of all, we need to fix a (small) $M>0$, so that $M<M_0$, and $M_0$ is given by Proposition \ref{existenciatransporte}. Then, given an initial perturbation $b\in C^{2,\alpha}$ satisfying $\|b\|_{C^{2,\alpha}(\Omega)}\leq M$, we can construct a map 

\begin{equation*}
    \begin{array}{rcl}
    T[b]:C^{1,\alpha}(\T^2)\times C^{1,\alpha}(\T^2)   & \longrightarrow & C^{1,\alpha}(\Omega;\R^3)  \\
      (j_0^1,j_0^2)   & \mapsto &(j^1,j^2,j^3) 
    \end{array},
\end{equation*}
where $j=(j^1,j^2,j^3)$ is the unique solution for the equation \eqref{transporte:1} with initial condition $j_0=(j^1_0,j^2_0,j_0^3)$, and $j_0^3=\partial_2 g_1-\partial_1 g_2$. Due to Proposition \ref{existenciatransporte}, this map is well defined and is bounded. 

Now, after making $M$ smaller, as in Proposition \ref{eqintegral}, if we have boundary data $f\in C^{2,\alpha}(\partial\Omega)$ and $g\in C^{2,\alpha}(\partial\Omega_-)$ satisfying $\|f\|_{C^{2,\alpha}(\partial\Omega)},\|g\|_{C^{2,\alpha}(\partial\Omega_-)}\leq M$, we can take $(j_0^1,j_0^2)$ to be equal to 

$$(j_0^1,j_0^2)=\Pi[b](\mathscr{G}_1,\mathscr{G}_2)=\left(\textbf{1}+\Upsilon[b]\right)^{-1}(\mathscr{G}_1,\mathscr{G}_2),$$
where $\mathscr{G}_1$ and $\mathscr{G}_2$ are given by \eqref{G1:modified} and \eqref{G2:modified}. Note that these functions are well defined and belong to $C^{1,\alpha}(\T^2)$, so the formula for $(j_0^1,j_0^2)$ is well defined. 

It is worth noticing that the resulting current $j=T[b](\Pi[b](\mathscr{G}_1,\mathscr{G}_2))$ is divergence free by construction. Therefore, we can solve the div-curl problem \eqref{divcurlotravez}, where we take $(J_1,J_2)$ as in \eqref{j1}-\eqref{j2}. We then conclude that the map $\Gamma$ is given by the following composition
$$\Gamma[b]=\mathscr{B}\left[f,J,T[b]\left(\Pi[b](\mathscr{G}_1,\mathscr{G}_2)\right)\right].$$

We next prove that, in some subspace of $C^{1,\alpha}$, this operator admits a fixed point. 

\begin{teor}
    Let $f\in C^{2,\alpha}(\partial\Omega)$ satisfying the integrability condition \eqref{integrability:condition} and $g\in C^{2,\alpha}(\partial\Omega_-)$. There exists $\varepsilon_0>0$ and $M=M(L,\alpha)$ small enough such that if
    $$\|f\|_{C^{2,\alpha}(\partial\Omega)}+\|g\|_{C^{2,\alpha}(\partial\Omega_-)}\leq M\varepsilon_0 ,$$
    then $\Gamma$ maps $B_M(C^{2,\alpha})$ into itself. Furthermore, the operator $\Gamma$ admits a unique fixed point in $B_M(C^{2,\alpha})$.
\end{teor}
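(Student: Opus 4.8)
The strategy is the standard Banach fixed point argument applied to $\Gamma$ on the closed ball $B_M(C^{2,\alpha})=\{b\in C^{2,\alpha}(\Omega;\R^3):\|b\|_{C^{2,\alpha}(\Omega)}\leq M\}$, equipped with the $C^\alpha(\Omega)$ metric (so that it becomes a complete metric space, as the $C^\alpha$-closure of the $C^{2,\alpha}$-ball stays inside it by interpolation/weak-$*$ compactness). The two things to verify are: (i) $\Gamma$ maps $B_M(C^{2,\alpha})$ into itself, and (ii) $\Gamma$ is a contraction in the $C^\alpha$ metric on that ball. Both follow by chaining the estimates already established in Sections \ref{sec:2}--\ref{sec:5}.

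\textbf{Step 1: self-mapping.} Fix $b\in B_M(C^{2,\alpha})$. First, Proposition \ref{estimations:Lambda:Theta} gives $\|\Lambda\|_{C^{2,\alpha}},\|\Theta\|_{C^{1,\alpha}}\leq CM$, so for $M$ small enough Assumptions \ref{asu1}--\ref{asu2} hold (with $\delta_0,\delta_1$ as required), and hence Theorem \ref{teoremon1} applies to the operators $\textsf{T}_\kappa$. Next, with the smallness hypothesis $\|f\|_{C^{2,\alpha}}+\|g\|_{C^{2,\alpha}}\leq M\varepsilon_0$, Corollary \ref{coro:estimate:j0} produces $(j_0^1,j_0^2)=\Pi[b](\mathscr{G}_1,\mathscr{G}_2)\in C^{1,\alpha}(\T^2)$ with
\[
\|(j_0^1,j_0^2)\|_{C^{1,\alpha}}\leq C(\|f\|_{C^{2,\alpha}}+\|g\|_{C^{2,\alpha}})\leq CM\varepsilon_0,
\]
and $j_0^3=\partial_1g_2-\partial_2g_1$ is bounded by $C\|g\|_{C^{2,\alpha}}$. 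Proposition \ref{existenciatransporte} then gives the transported current $j=T[b](j_0)\in C^{1,\alpha}(\Omega;\R^3)$ with $\|j\|_{C^{1,\alpha}}\leq C\|j_0\|_{C^{1,\alpha}}\leq CM\varepsilon_0$, and Proposition \ref{transport:div:prop} guarantees $j$ is divergence free (using that $\nabla\cdot j_0=0$, which is built into the definition of $j_0$ via \eqref{label:expansion2}). The choice \eqref{J1}--\eqref{J2} of $J_1,J_2$ gives $|J_1|+|J_2|\leq CM\varepsilon_0$. Finally Proposition \ref{wellposedness} yields $W=\Gamma[b]\in C^{2,\alpha}(\Omega)$ with
\[
\|\Gamma[b]-(0,0,1)\|_{C^{2,\alpha}}=\|\nabla\times Z+A\vec e_3-(0,0,1)\|_{C^{2,\alpha}}\leq C(\|j\|_{C^{1,\alpha}}+\|f\|_{C^{2,\alpha}}+|J_1|+|J_2|)\leq CM\varepsilon_0.
\]
Choosing $\varepsilon_0$ so that $C\varepsilon_0\leq 1$ gives $\|\Gamma[b]-(0,0,1)\|_{C^{2,\alpha}}\leq M$, i.e.\ $\Gamma$ maps the ball into itself. (One must also check that the $W$ produced by Proposition \ref{wellposedness} with these fluxes actually satisfies $W_\tau=g$ on $\partial\Omega_-$: this is precisely the content of the integral equation \eqref{alm:new4:final2}--\eqref{alm:new5:final2} together with the uniqueness argument at the end of Section \ref{sec:5}, so no new work is needed.)

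\textbf{Step 2: contraction.} Take $b^1,b^2\in B_M(C^{2,\alpha})$ and track the difference through the same chain, but now measuring everything in one-order-lower H\"older norms. By Proposition \ref{estimations:Lambda:Theta}, $\|\Lambda[b^1]-\Lambda[b^2]\|_{C^{1,\alpha}}+\|\Theta[b^1]-\Theta[b^2]\|_{C^{\alpha}}\leq C\|b^1-b^2\|_{C^{1,\alpha}}$. Feeding this into Theorem \ref{diferencia} (differences of $\textsf{T}_\kappa$), Proposition \ref{teoremon2} (differences of $\mathsf{S}_\ell$), Proposition \ref{Prop:H:estimadas:ori} (differences of $\mathsf{H}_\ell$), and the Neumann-series difference estimate \eqref{estimate:2:eq:integral} of Proposition \ref{eqintegral}, one obtains $\|\Pi[b^1](\mathscr G)-\Pi[b^2](\mathscr G)\|_{C^\alpha}\leq CM\varepsilon_0\|b^1-b^2\|_{C^\alpha}$ (the $M\varepsilon_0$ coming from the size of the data $\mathscr{G}_1,\mathscr{G}_2$). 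Then the transport difference estimate \eqref{regularidad2} of Proposition \ref{existenciatransporte} and Corollary \ref{direct:coro:proptransport} give $\|j^1-j^2\|_{C^\alpha}\leq C(\|j_0^1-j_0^2\|_{C^\alpha}+\|j_0^1\|_{C^{1,\alpha}}\|b^1-b^2\|_{C^\alpha})\leq CM\varepsilon_0\|b^1-b^2\|_{C^\alpha}$, and similarly $|J_1[b^1]-J_1[b^2]|+|J_2[b^1]-J_2[b^2]|\leq CM\varepsilon_0\|b^1-b^2\|_{C^\alpha}$. Feeding the difference $j^1-j^2$ (and the flux differences) into the \emph{linear} div-curl estimate \eqref{estimate:div:curl}, but one H\"older order lower — i.e.\ Proposition \ref{wellposedness} with data in $C^{\alpha}$ rather than $C^{1,\alpha}$ — gives
\[
\|\Gamma[b^1]-\Gamma[b^2]\|_{C^{1,\alpha}}\leq C(\|j^1-j^2\|_{C^{\alpha}}+|J_1^1-J_1^2|+|J_2^1-J_2^2|)\leq CM\varepsilon_0\|b^1-b^2\|_{C^{\alpha}}.
\]
Since $C^{1,\alpha}\hookrightarrow C^\alpha$ with norm $\leq 1$ on bounded domains (or simply $\|\cdot\|_{C^\alpha}\leq\|\cdot\|_{C^{1,\alpha}}$), this yields $\|\Gamma[b^1]-\Gamma[b^2]\|_{C^\alpha}\leq CM\varepsilon_0\|b^1-b^2\|_{C^\alpha}$, and shrinking $M\varepsilon_0$ so that $CM\varepsilon_0<1/2$ makes $\Gamma$ a contraction on $(B_M(C^{2,\alpha}),\|\cdot\|_{C^\alpha})$.

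\textbf{Conclusion and main obstacle.} The Banach fixed point theorem then gives a unique $b\in B_M(C^{2,\alpha})$ with $\Gamma[b]=b$; unwinding the construction, $B=(0,0,1)+b$ together with the pressure $p$ recovered by line integration of $(-j^2,j^1)$ (well-defined and single-valued thanks to the choice \eqref{J1}--\eqref{J2} of $J_1,J_2$) solves \eqref{mhs2}, which proves Theorem \ref{mainteor}. The main obstacle is \emph{not} any single estimate — all of them are already in hand — but rather the bookkeeping that makes the contraction argument legitimate: one must verify that the natural function space is the $C^{2,\alpha}$-ball with the $C^\alpha$ metric (so that a $C^{1,\alpha}$-gain in the self-mapping step buys the contraction in the weaker metric), check that this metric space is complete, and confirm that a $C^\alpha$-fixed point of $\Gamma$ restricted to the $C^{2,\alpha}$-ball is automatically $C^{2,\alpha}$ (which holds because $\Gamma$ has image in $C^{2,\alpha}$ with uniformly bounded norm). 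A secondary point requiring care is consistency of the two div-curl constructions — the scalar-plus-vector-potential one of Proposition \ref{wellposedness} versus the Fourier-series one of Sections \ref{sec:linearized}--\ref{sec:4} — but this has already been settled by the uniqueness discussion closing Section \ref{sec:5}, so it only needs to be invoked.
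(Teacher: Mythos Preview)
Your Step 1 (self-mapping) is correct and essentially identical to the paper's argument.

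Your Step 2 has a genuine gap: you run the contraction in the $C^\alpha$ metric, but the difference estimates actually available in Section \ref{sec:5} do not close there. Concretely, look at the inequalities you invoke: Proposition \ref{estimations:Lambda:Theta} gives $\|\Lambda[b^1]-\Lambda[b^2]\|_{C^{1,\alpha}}\leq C\|b^1-b^2\|_{C^{1,\alpha}}$ (and Theorem \ref{diferencia} needs exactly $\|\Lambda_1-\Lambda_2\|_{C^{1,\alpha}}$ as input); Corollary \ref{teoremon2} gives $\|\mathsf S_\ell[b^1]-\mathsf S_\ell[b^2]\|_{C^\alpha}\leq C\|b^1-b^2\|_{C^{1,\alpha}}$; Proposition \ref{Prop:H:estimadas:ori} gives $\|\mathsf H_\ell[b^1]-\mathsf H_\ell[b^2]\|_{C^\alpha}\leq C\|b^1-b^2\|_{C^{1,\alpha}}\|j_0\|_{C^\alpha}$; and the Neumann-series estimate \eqref{estimate:2:eq:integral} reads $\|\Pi[b^1]-\Pi[b^2]\|_{\mathcal L(C^\alpha)}\leq C\|b^1-b^2\|_{C^{1,\alpha}}$. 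In every case the right-hand side carries $\|b^1-b^2\|_{C^{1,\alpha}}$, not $\|b^1-b^2\|_{C^{\alpha}}$. You even quote the $\Lambda,\Theta$ estimate correctly with $C^{1,\alpha}$ on the right, and then in the very next sentence silently drop to $C^\alpha$ when stating the bound on $\Pi[b^1](\mathscr G)-\Pi[b^2](\mathscr G)$. That step is unjustified, and with it the claimed contraction $\|\Gamma[b^1]-\Gamma[b^2]\|_{C^\alpha}\leq CM\varepsilon_0\|b^1-b^2\|_{C^\alpha}$ does not follow from the lemmas at hand.

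The fix is exactly what the paper does: equip $B_M(C^{2,\alpha})$ with the $C^{1,\alpha}$ norm (still strictly weaker than $C^{2,\alpha}$, so the ball is complete by Arzel\`a--Ascoli compactness) and chain the same estimates to obtain $\|\Gamma[b^1]-\Gamma[b^2]\|_{C^{1,\alpha}}\leq CM\varepsilon_0\|b^1-b^2\|_{C^{1,\alpha}}$. Everything else in your write-up --- the completeness discussion, the observation that the fixed point automatically inherits $C^{2,\alpha}$ regularity, and the remark on consistency of the two div-curl constructions --- is fine and survives verbatim once the metric is corrected.
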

\begin{proof}

For the first step, we use that all of the operators involved in the definition of $\Gamma$ satisfy suitable bounds, so that 

\begin{equation*}
    \begin{split}
        \|\Gamma[b]\|_{C^{2,\alpha} (\Omega)}&\leq C\left(\|f\|_{C^{2,\alpha}(\partial\Omega)}+|J|+\|T[b]\left(\Pi[b](\mathscr{G}_1,\mathscr{G}_2)\right)\|_{C^{1,\alpha}}\right)\\
        &\leq C\left(\|f\|_{C^{2,\alpha}(\partial\Omega)}+|J|+\|g\|_{C^{2,\alpha}(\partial\Omega_-)}+\|\Pi[b](\mathscr{G}_1,\mathscr{G}_2)\|_{C^{1,\alpha}(\partial\Omega_-)}\right)\\
        &\leq C \left(\|f\|_{C^{2,\alpha}(\partial\Omega)}+\|g\|_{C^{2,\alpha}(\partial\Omega_-)}\right)\leq CM\epsilon_{0}.
    \end{split}
\end{equation*}

In the first line, we have used the estimates for $\mathscr{B}$ from Theorem \ref{wellposedness}, in the second line we have used boundedness of the operator $T[b]$, which arises from Theorem \ref{existenciatransporte}, and the last line comes from writing the explicit expresion for $J$, the fact that $\|\Pi\|_{C^{1,\alpha}}$ is bounded, and the bounds for the $\mathscr{G}_{\ell}$ defined in \eqref{G1:modified} and \eqref{G2:modified}. Thus, if we take $\varepsilon_0<1/(C+1)$, we get that $\Gamma$ maps $B_M(C^{2,\alpha})$ into itself. 

In order to prove that a fixed point is attained, we use Banach's fixed point. However, we will not equip $B_M(C^{2,\alpha})$ with the $C^{2,\alpha}$ norm, but with the $C^{1,\alpha}$ one. The reason behind this is that all the estimates we have made in terms of differences are given in terms of $C^{1,\alpha}$ norms. We can use Banach's fixed point theorem, since by Arzelà-Ascoli the closed balls of $C^{2,\alpha}$ are compact in $C^{1,\alpha}$, c.f. \cite{Alo-Velaz-2022}. Then, we can see that

{\small
\begin{equation*} 
    \begin{split} 
        \|\Gamma[b_1]-\Gamma[b_2]\|_{C^{1,\alpha}}&\leq C\|b_1-b_2\|_{C^{1,\alpha}}\\
        &=\|\mathscr{B}[f,J^1,T[b_1]\left(\Pi[b_1](\mathscr{G}_1,\mathscr{G}_2)\right)]-\mathscr{B}[f,J^2,T[b_2]\left(\Pi[b_2](\mathscr{G}_1,\mathscr{G}_2)\right)]\|_{C^{1,\alpha}}\\
        &\leq C\left(|J^1-J^2|+\|T[b_1]\left(\Pi[b_1](\mathscr{G}_1,\mathscr{G}_2)\right)-T[b_2]\left(\Pi[b_2](\mathscr{G}_1,\mathscr{G}_2)\right)\|_{C^{\alpha}(\Omega)}\right)\\
        &\leq C\left(|J^1-J^2|+\|(\Pi[b_1]-\Pi[b_2])(\mathscr{G}_1,\mathscr{G}_2)\|_{C^{\alpha}}+\|\Pi[b_1](\mathscr{G}_1,\mathscr{G}_2)\|_{C^{1,\alpha}}\|b_1-b_2\|_{C^{1,\alpha}}\right).
    \end{split}
\end{equation*}
}

Invoking Proposition \ref{eqintegral}, we have that $\|\Pi[b_1]-\Pi[b_2]\|_{\mathcal{L}(C^{\alpha})}\leq C\|b_1-b_2\|_{C^{1,\alpha}}$. Therefore, 

$$\|\Gamma[b_1]-\Gamma[b_2]\|_{C^{1,\alpha}}\leq C\left(\|f\|_{C^{2,\alpha}(\partial\Omega)}+\|g\|_{C^{2,\alpha}(\partial\Omega_-}\right)\|b_1-b_2\|_{C^{1,\alpha}}\leq CM\varepsilon_0\|b_1-b_2\|_{C^{1,\alpha}}$$

Taking $\varepsilon_0\leq 1/(CM+1)$, we conclude that $\Gamma$ is a contraction mapping in the complete metric space $X=(B_M(C^{2,\alpha}),\|\cdot\|_{C^{1,\alpha}})$, so by Banach's fixed point, we conclude that it admits a unique fixed point. 
\end{proof}

\subsection{Proof of Theorem \ref{mainteor}} During this work, we have constructed a map $\Gamma$ which attains a unique fixed point $b$ in the metric space $(B_M(C^{2,\alpha}),\|\cdot\|_{C^{1,\alpha}})$ that solves the div-curl problem \eqref{pesao}. The last step remaining to prove Theorem \ref{mainteor} is checking that this fixed point is indeed the solution for \eqref{mhs1} with our set of boundary conditions. Notice that, by construction, $B=(0,0,1)+b$ satisfies the normal boundary condition, namely, $B\cdot n=1+f$ with $n$ being the outer normal vector in $\partial\Omega_+$ and the inner normal vector in $\partial\Omega_-$. Furthermore, since in the very definition of $\Gamma$ we have ensured that $\Gamma[b]$ satisfies the tangential boundary condition (c.f. check the last digression in Section \ref{sec:5}), we just need to check that there exists a unique well defined pressure function $p\in C^{2,\alpha}(\Omega)$ such that 
\[ j\times B=\nabla p, \ \mbox{in } \Omega,\]
with $j=\nabla\times B$.
One might think that the pressure can be written simply as 
\begin{equation}\label{presion}
p=\int_0^{\textbf{\textit{x}}}j\times B,\ \mbox{in } \Omega,
\end{equation}
where the integral is taken between any curve that connects $(0,0,0)$ and $\textbf{\textit{x}}=(x,y,z)$. However, for this to make sense, we need to ensure the following two conditions:
\begin{itemize}
    \item There cannot be any dependence on the chosen curve. At least near zero, this is achieved only if $j\times B$ has zero curl. This is indeed the case, since
    $$\nabla\times (j\times B)=(B\cdot\nabla)-j(\nabla\cdot A)+j(\nabla\cdot B)-(j\cdot\nabla)B.$$
    As $j$ and $B$ are divergence free by construction, the last two items vanish. On the other hand, since $b$ is a fixed point of $\Gamma$, it satisfies $(j\cdot\nabla)B-(B\cdot\nabla)j=0$, so we get the result.
    \item In our domain the points $(x,y,z)$ and $(x+2\pi l_1,y+2\pi l_2,z)$ both represent the same point for every $l_1,l_2\in\Z$. Thus, in order for $p$ to be a well defined function in $\T^2\times[0,L]$, we need 
\[ \int_0^{2\pi}\left(j\times B\right)_1(x,y,0) \ dx=0, \ \forall\,y\in\R, \quad  \int_0^{2\pi}\left(j\times B\right)_{2}(x,y,0) \, dy=0, \ \forall\,x\in\R. \]
Using the fact that $\nabla\times(j\times B)=0$ and the periodicity of the integrand, we infer that the integrals above are constant functions of $y$ and $x$, respectively. Hence, we can then integrate on the other variable, so that these two conditions are equivalent to 

    \begin{equation}\label{j1}
        \f\int_0^{2\pi}\int_0^{2\pi}\left(j\times B\right)_1(x,y,0) \ dxdy=\langle j_0^2\rangle+\langle j_0^2f\rangle-\left\langle\left(\partial_1 g_2-\partial_2 g_1\right)g_2\right\rangle=0,
    \end{equation}

    \begin{equation}\label{j2}
        \f\int_0^{2\pi}\int_0^{2\pi}\left(j\times B\right)_2(x,y,0) \ dxdy=-\langle j_0^1\rangle-\langle j_0^1f\rangle+\left\langle\left(\partial_1 g_2-\partial_2 g_1\right)g_1\right\rangle=0.
    \end{equation}
Note that these equations are satisfied due to our particular tailor-made choice of $J_1$ and $J_2$ in $\eqref{J1}-\eqref{J2}$. This also justifies the selection of the constants $J_{1},J_{2}$, which might have looked unjustified and arbitrary at the beginning.
\end{itemize}

Summarizing, we have defined a solution pair $(B,p)\in C^{2,\alpha}(\Omega)$ that satisfies the MHS equations with Grad-Rubin conditions, i.e., system \eqref{mhs2}. Furthermore, due to our construction, and the uniqueness granted by the Banach's fixed point, we conclude that the solutions is the only one satisfying 
$$\|B-(0,0,1)\|_{C^{1,\alpha}}\leq M.$$
Thus, Theorem \ref{mainteor} follows.

\begin{appendix}
\section{Complementary results Besov spaces and pseudo-differential operators}\label{appendix1}
In this appendix we include for the sake of completeness a detailed proof of Theorem  \ref{boundedness:Besov} which can be traced back to \cite[Lemma 4.5]{Jurgen-87}.

\subsection{Proof of Theorem \ref{boundedness:Besov}}
The main step of this proof is based on a delicate decomposition of $a$ into building blocks of the form $a_{jk}=\widecheck{\varphi}_j*a(\cdot,\xi)\varphi_k(\xi)$. Note that, for any $x\in \R^d$ the new symbols $a_{jk}$ are $C^\infty_c(\R^d)$ in the $\xi $ variable. Therefore, the inverse Fourier Transform with respect to $\xi$ yields a kernel $K_{jk}(x,z)$ which is integrable with respect to the $z$ variable. Indeed, 

    \begin{equation*} \begin{split}\int_{\R^d}|K_{jk}(x,z)|dz&=\frac{1}{(2\pi)^d}\int_{\R^d}\left|\int_{\R^d}a_{jk}(x,\xi)e^{i\xi\cdot z}\right|dz\\
    &=\frac{1}{(2\pi)^d}\int_{\R^d}\frac{1}{(1+|z|^2)}\left|\int_{\R^d}a_{jk}(x,\xi)\left(1-\Delta_\xi\right)e^{i\xi\cdot z}d\xi\right|dz\\
    &=\frac{1}{(2\pi)^d}\int_{\R^d}\frac{1}{(1+|z|^2)}\left|\int_{\R^d}\left(1-\Delta_\xi\right)a_{jk}(x,\xi)e^{i\xi\cdot z}d\xi\right|dz.
    \end{split}
    \end{equation*}
Iterating this procedure, we find that $K_{jk}$ is integrable with respect to the $z$ variable. Moreover, $\p (a_{jk})u=\p (a_{jk})(u_{k-1}+u_k+u_{k+1})$, with $u_k=\varphi_k(D)u$. This justifies the use of the dyadic partition of unity in the $\xi$ variable, as we inmediately obtain information about the $u_k$, which are the main ingredients of the Besov norm. 

Next, we compute the decay of $z^\alpha K_{jk}$ by usual estimates that involve the Fourier Transform. We see that, since the $\varphi_j$ are supported in an annulus, then $\partial^\alpha \varphi_j(0)=0$ for any multiindex $\alpha$, which means that the integral of $x^\alpha\widecheck{\varphi}$ vanishes for every $\alpha$. By means of Taylor's theorem, we have that for every $x,y\in\R^n$, there is some $c\in [-y,0]$ such that 
\[ \partial^\gamma _\xi a(x-y,\xi)=\sum_{|\alpha|\leq \lfloor s\rfloor}\frac{1}{\alpha!}\partial^\gamma _\xi \partial_x^\alpha a(x,\xi)(-y)^\alpha +\sum_{|\alpha|=\lfloor s\rfloor}\frac{1}{\alpha!}\partial^\gamma _\xi \partial_x^\alpha a(x+c,\xi)(-y)^\alpha.\]
Therefore, convoluting the previous equality agains $\widecheck{\varphi}$, we have the bound
\begin{equation*} 
\begin{split} 
|\widecheck{\varphi}_j*\partial^\gamma _\xi a(\cdot,\xi)|&\leq \sum_{|\alpha|=\lfloor s\rfloor}\frac{1}{\alpha!}\|\partial^\gamma _\xi a(\cdot,\xi)\|_{C^s}\int|y|^s|2^{jd}\widecheck{\varphi}\left(2^j y\right)dy\\
&\leq 2^{-js}\|\partial^\gamma _\xi a(\cdot,\xi)\|_{C^s} \sum_{|\alpha|=\lfloor s\rfloor}\frac{1}{\alpha!}\int_{\R^d}|y|^s|\widecheck{\varphi}\left(y\right)|dy.
\end{split}
\end{equation*}
By means of integration by parts we find that 
\begin{align*}
    z^\alpha K_{jk}(x,z)=\frac{z^\alpha}{(2\pi)^d}\int_{\R^d}\widecheck{\varphi}_j*a(\cdot,\xi)\varphi_k(\xi)e^{i\xi\cdot z}d\xi&=\frac{1}{(2\pi)^d}\int_{\R^d}\widecheck{\varphi}_j*a(\cdot,\xi)\varphi_k(\xi)\frac{\partial_\xi^\alpha}{i^\alpha}e^{i\xi\cdot z}d\xi\\
    &=\frac{i^\alpha}{(2\pi)^d}\int_{\R^d}\partial_\xi^\alpha\left[\widecheck{\varphi}_j*a(\cdot,\xi)\varphi_k(\xi)\right]e^{i\xi\cdot z}d\xi\\
    &=\frac{i^\alpha}{(2\pi)^d}\sum_{\beta\leq\alpha}{\alpha\choose\gamma}\int_{\R^d} \widecheck{\varphi}_j*\partial_\xi^\beta a(\cdot,\xi)\cdot\partial_{\xi}^{\alpha-\beta}\varphi_k(\xi)e^{i\xi\cdot z}d\xi.
    \end{align*}
 Hence, using the fact that    $\varphi_k$ is supported in the ball of radius $2^{k}$, and $\varphi_k=\varphi(2^{-k}\cdot)$, we invoke the decay if $\|\partial^\gamma _\xi a(\cdot,\xi)\|_{C^s}$ to obtain the estimate
    \begin{align*}
    |z^\alpha K_{jk}(x,z)|&\leq \sum_{\beta\leq\alpha}{\alpha\choose\gamma}\|\widecheck{\varphi}_j*\partial_\xi^\beta a(\cdot,\xi)\cdot \partial_\xi^{\alpha-\beta}\varphi_k\|_{L^1_\xi}\\
    &\hspace{-1cm}\leq \frac{C_{\alpha,s}}{\alpha!} \sum_{\beta\leq\alpha}{\alpha\choose\gamma} 2^{-js}2^{-k|\alpha-\beta|}\int_{3/4\cdot2^{k-1}<|\xi|<8/3\cdot 2^{k+1}}(1+|\xi|)^{m-|\beta|}d\xi \leq C\cdot \|a\|_{m,s,0} \cdot 2^{-js+mk+dk-k|\alpha|},
    \end{align*}
where the constant $C$ does not depend on $a$ nor on $j$,$k$ or $\textit{x}$. Next, fix $|\alpha|>n$ and define the function 
    $$\omega_{k}(y)=\left\{\begin{array}{ll}
        2^{-kd}, & |y|\leq 2^{-k} \\
        2^{-kd+2|\alpha|}|y|^{2|\alpha|}, & |y|\geq 2^{-k}
    \end{array}\right. .$$
Due to the above computations, we have that
\[ \|\omega_{k}K_{jk}\|_{\infty}\leq C\cdot \|a\|_{m,s,2|\alpha|}\cdot 2^{-js+km}.\]
Furthermore, noticing that 
\[ \int_{\mathbb{R}^{d}}\frac{1}{\omega_{k}}dy<C, \]
with $C$ independent of $j$ and $k$, leads to the estimate
    \begin{align*}
    \|\p(a_{jk})u\|_{L^1}=\|\p(a_{jk})u_k\|_{L^1}&=\int_{\R^d} \left|\int_{\R^d} K_{jk}(x,x-z)u(z) dz\right|dx\\
    &\leq \|\omega_{k}K_{jk}\|_{\infty}\int_{\R^d} \frac{1}{\omega_{k}}dy\int_{\R^d} |u_k(y)|dy\leq C\cdot \|a\|_{m,s,2|\alpha|}\cdot 2^{-js+km}\|u_k\|_{L^1}.
    \end{align*}
  
We have derived proper estimates on the norms of $\p(a)$, so we can divide $a$ into adequate combinations of these building blocks. Notice that the convolution with $\widecheck{\varphi}_j$ gives us the contribution of the Fourier modes (with respect to the first variable) contained in the annulus $2^{j-1}<|\zeta|<2^{j+1}$. Thus, it is convenient to divide $a$ in pieces where such Fourier modes are smaller than $\xi$ and greater than $\xi$, namely,
\[ a=\sum_{k=-1}^{\infty}\sum_{j=-1}^{k-5}a_{jk}+\sum_{k=-1}^{\infty}\sum_{j=k-4}^{\infty}a_{jk}=a_1+a_2.\]
Taking $u\in \mathcal{S}(\R^n)$, let use separate the different computations for $a_1$ and $a_2$. \\

\paragraph{\underline{Estimates for $\p(a_{1})$}}  
This is the simplest case, as the spectrum of $a_1$ is well controlled by $\xi$. To that purpose, we consider the symbol  $b_{1k}=\sum_{j=-1}^{k-5}a_{jk}$. Then, by the same arguments as above, the kernel $K_{k}$ satisfies
\[ \|K_k w_k\|_{L^1}\leq C\cdot \|a\|_{m,s,2|\alpha|}\cdot 2^{km}\Longrightarrow \|\p(b_{1k})u\|_{L^1}\leq C\cdot \|a\|_{m,s,2|\alpha|}\cdot 2^{km} \|u_k\|_{L^1}.\]
Making use of the fact that the spectrum of $\p(b_{1k})u$ is well controlled due to the construction of the building blocks, we find testing againts a given Schwartz function $v$ that
    \begin{align*}
        \langle\reallywidehat{\p(b_{1k})u_{k'}},v\rangle=\langle\p(b_{1,k})u_{k'},\reallywidehat{v}\rangle &=\frac{1}{(2\pi)^d}\int_{\R^d}\left(\int_{\R^d}b_{1k}(x,\xi)\reallywidehat{u}_{k'}(\xi)e^{i\xi\cdot x}d\xi\right)\reallywidehat{v}(x)dx\\
        &=\frac{1}{(2\pi)^d}\int_{\R^d}\reallywidehat{u}_{k'}(\xi)\left(\int_{\R^d}b_{1k}(x,\xi)e^{i\xi\cdot x}\reallywidehat{v}(x)dx\right)d\xi\\
        &=\frac{1}{(2\pi)^d}\int_{\R^d}\reallywidehat{u}_{k'}(\xi)\left(\int_{\R^d}b_{1k}(x,\xi)\reallywidehat{v(\cdot-\xi)}dx\right)d\xi\\
        &=\frac{1}{(2\pi)^d}\int_{\R^d}\reallywidehat{u}_{k'}(\xi)\tau_{\xi}\reallywidehat{b}_{1,k}(\cdot,\xi)(v)d\xi,
    \end{align*}
where in the last line, $\reallywidehat{b}_{1k}$ is the Fourier transform (as a distribution) with respect to the $x$ variable of $b_{1k}$. Now, $b_{1k}=\chi_{k-5}*a\cdot \varphi_k$ and then its Fourier Transform with respect to $x$ is supported in the ball of radius $C2^{k-5}$. Futhermore, since the functions $\reallywidehat{u}_{k'}$ are supported in an annulus of radius $c2^{k'}\leq |\xi|\leq c2^{k'+1}$, the distribution $\p(b_{1k})u$ has a Fourier transform which is supported in an annulus given by $C2^k\leq |\xi|\leq C2^{k+1}$, where $C$ is independent of $u$ or $k$. Moreover, 
\begin{equation*}
        \begin{split}
            \sum_{k=-1}^\infty 2^{-ks}\|\p(b_{1k})u\|_{L^1}\leq C\|a\|_{m,s,2|\alpha|}\sum_{k=-1}^\infty \sum_{|k-k'|\leq 1} 2^{-k(s-m)}\|u_{k'}\|_{L^1}&\leq C\|a\|_{m,s,2|\alpha|}\sum_{k=-1}^\infty 2^{k(m-s)}\|u_k\|_{L^1}\\
            &=C\|a\|_{m,s,2|\alpha|}\|u\|_{B^{(m-s)_{1,1}}}.
        \end{split}
\end{equation*}
\paragraph{\underline{Estimates for $\p(a_{2})$}}  
Here we can use our previous computations directly, so that 
\begin{align*}
    \|\p(a_2)u\|_{L^1}\leq \sum_{k=-1}^\infty\sum_{j=k-4}^{\infty}\|\p(a_{jk})u\|_{L^1}&\leq C\cdot \|a\|_{m,s,2|\alpha|}\cdot\sum_{k=1}^\infty \sum_{j=k-4}^\infty 2^{-js+km}\|u_k\|_{L^1}\\
    &\leq C\cdot \|a\|_{m,s,2|\alpha|} \cdot  \sum_{k=1}^\infty 2^{k(m-s)}\|u_k\|_{L^1}.
 \end{align*}
Therefore, due to the density of Schwartz functions, and the fact that  $L^1$ embeds continuously into $B^{-s}_{1,1}$, the result follows.
\end{appendix}

\vspace{.2in}
\subsection*{Acknowledgments.}
D. Alonso-Or\'{a}n is supported by the Spanish MINECO through Juan de la Cierva Fellowship FJC2020-046032-I and by the project “An\'alisis Matem\'atico Aplicado y Ecuaciones Diferenciales” Grant PID2022-141187NB-I00 funded by MCIN/ AEIand acronym “AMAED”. Daniel Sánchez-Simón del Pino is funded by the Deutsche Forschungsgemeinschaft (DFG, German Research Foundation) under Germany's Excellence Strategy - GZ 2047/1, Projekt-ID 390685813 and by the Bonn International Graduate School of Mathematics (BIGS) at the Hausdorff Center for Mathematics. J. J. L. Vel\'azquez gratefully acknowledge the support by the Deutsche Forschungsgemeinschaft (DFG) through the collaborative research centre The mathematics of emerging effects (CRC 1060, Project-ID 211504053) and the DFG under Germany's Excellence Strategy-EXC2047/1-390685813.

\bibliographystyle{plain}

\end{document}